\documentclass[12pt,reqno,final]{amsart}
\usepackage[margin=1in]{geometry}
\usepackage{amsmath,amssymb,amsthm,color,url}
\numberwithin{figure}{section}
\usepackage{graphicx}

\DeclareMathOperator{\M}{M}

\DeclareMathOperator{\re}{Re}

\DeclareMathOperator{\spn}{span}
\DeclareMathOperator{\codim}{codim}
\DeclareMathOperator{\rank}{rank}
\DeclareMathOperator{\Img}{Im}

\numberwithin{equation}{section}
\newtheorem{theorem}{Theorem}[section]

\newtheorem{lemma}[theorem]{Lemma}

\newtheorem{corollary}[theorem]{Corollary}

\newtheorem*{question*}{Question}

\theoremstyle{definition}
\newtheorem{definition}[theorem]{Definition}

\newtheorem*{definition*}{Definition}

\theoremstyle{remark}

\newtheorem*{remark*}{Remark}

\usepackage[normalem]{ulem}

\title{Another proof of the $U^4(\mathbf{F}_p^n)$-inverse theorem}
\author{Sarah Peluse}
\address{Department of Mathematics, Stanford University, 450 Jane Stanford Way, Building 380, Stanford, CA 94305, USA}
\email{speluse@stanford.edu}

\begin{document}

\begin{abstract}
  We give a new and shorter proof of a quantitative inverse theorem
  for the Gowers $U^4$-norm in the setting of high-dimensional vector
  spaces over finite fields.
\end{abstract}

\maketitle

\section{Introduction}\label{sec:intro}

In his proof of Szemer\'edi's theorem with reasonable quantitative
bounds, Gowers~\cite{Gowers1998,Gowers2001} introduced an important
family of norms: for any finite abelian group $G$, integer $s\geq 2$,
and $f:G\to\mathbf{C}$, the \textit{$U^s(G)$-norm} of $f$ is defined
by
  \begin{equation*}
    \|f\|_{U^s(G)}:=\big(\mathbf{E}_{x,h_1,\dots,h_s\in G}\Delta_{h_1}\cdots\Delta_{h_s}f(x)\big)^{1/2^s},
  \end{equation*}
  where
  $\mathbf{E}_{x,h_1,\dots,h_s\in
    G}:=|G|^{-(s+1)}\sum_{x,h_1,\dots,h_s\in G}$ denotes the average
  and $\Delta_{h}f(x):=f(x)\overline{f(x+h)}$ denotes the
  multiplicative discrete derivative. Gowers proved local inverse
  theorems for these norms on cyclic groups, showing that if
  $f:\mathbf{Z}/N\mathbf{Z}\to\mathbf{C}$ is bounded and
  $\|f\|_{U^s(\mathbf{Z}/N\mathbf{Z})}\geq\delta$, then there is a
  partition of $\mathbf{Z}/N\mathbf{Z}$ into long arithmetic
  progressions of almost equal length on which $f$ has large
  correlation on average with (possibly different) polynomial phases
  $e_N(P(x))$ of degree $\deg{P}\leq s-1$, where
  $e_N(z):=e^{2\pi i z/N}$. This result was sufficient to prove the
  first reasonable bounds in Szemer\'edi's theorem for progressions of
  length at least four when combined with the fact, also first shown
  by Gowers, that the $U^{s}(\mathbf{Z}/N\mathbf{Z})$-norm controls the count of
  $(s+1)$-term arithmetic progressions in subsets of $\mathbf{Z}/N\mathbf{Z}$.

  There are numerous applications in analytic number theory and
  additive combinatorics that require global inverse theorems, which
  say that a bounded function with large $U^s(G)$-norm must have large
  correlation over the entire group with a single structured
  object. It is easy to show that
  $\|f\|_{U^2(G)}=\|\widehat{f}\|_{\ell^4}$, and thus to deduce a
  global inverse theorem for the $U^2(G)$-norm on any finite abelian
  group: if $\|f\|_{U^2(G)}\geq\delta$ and $f$ is $1$-bounded, then
  there exists a character $\xi\in\widehat{G}$ such that
  $|\mathbf{E}_{x\in G}f(x)\xi(x)|\geq\delta^2$. Proving global
  inverse theorems for the $U^s(G)$-norms when $s\geq 3$ is
  significantly more difficult. The first global inverse theorems for
  the $U^s(\mathbf{Z}/N\mathbf{Z})$-norms were proven by Green and
  Tao~\cite{GreenTao2008II} in the case $s=3$ and in full generality
  by Green, Tao, and
  Ziegler~\cite{GreenTaoZiegler2011,GreenTaoZiegler2012}. In addition
  to cyclic groups, the other main setting of interest has been
  high-dimensional vector spaces over a fixed finite field. The first
  global inverse theorems for the $U^s(\mathbf{F}_p^n)$-norms were
  proven by Green and Tao~\cite{GreenTao2008II} and
  Samorodnitsky~\cite{Samorodnitsky2007} in the case $s=3$ and in full
  generality by Bergelson, Tao, and
  Ziegler~\cite{BergelsonTaoZiegler2010,TaoZiegler2010} in high
  characteristic ($p\geq s$) and by Tao and
  Ziegler~\cite{TaoZiegler2012} in low characteristic.

  The aforementioned proofs of global inverse theorems for the
  $U^s$-norms yield no quantitative bounds when $s\geq 4$. The first
  general quantitative versions of these theorems were proven by
  Manners~\cite{Manners2018} in the cyclic group setting and by Gowers
  and Mili\'cevi\'c~\cite{GowersMilicevic2017,GowersMilicevic2020} in
  the high-characteristic finite field vector space setting. The
  current best bounds in the cyclic group setting are due to
  Raghavan~\cite{Raghavan2025} for $s=3$, Leng~\cite{Leng2023} for
  $s=4$, and Leng, Sah, and Sawhney~\cite{LengSahSawhney2024I} for
  $s\geq 5$, while the current best bounds in the finite field vector
  space setting are due to Gowers, Green, Manners, and
  Tao~\cite{GowersGreenMannersTao2025,GowersGreenMannersTao2026} for
  $s=3$ and Mili\'cevi\'c~\cite{Milicevic2024} for $s=4$; the
  bounds of Gowers and Mili\'cevi\'c from~\cite{GowersMilicevic2020} still
  remain the strongest when $s\geq 5$.

  The proofs of inverse theorems for the $U^s$-norms when $s\geq 4$
  are all quite long and involved, especially those giving
  quantitative bounds. The purpose of this paper is to present a new
  and shorter proof of the inverse theorem for the
  $U^4(\mathbf{F}_p^n)$-norm, which also yields quantitative bounds
  (though not as strong as those obtained by Mili\'cevi\'c).
    
  \begin{theorem}\label{thm:main}
    Fix a prime $p\geq 5$. There exists a constant $C=C_p\geq 1$ such
    that the following holds. If $f:\mathbf{F}_p^n\to\mathbf{C}$ is a
    nonzero $1$-bounded function satisfying
    $\|f\|_{U^4(\mathbf{F}_p^n)}\geq\delta$, then there exists a
    polynomial $P:\mathbf{F}_p^n\to\mathbf{F}_p$ of degree at most $3$
    for which
    \begin{equation*}
      \left|\mathbf{E}_{x\in\mathbf{F}_p^n}f(x)e_p(P(x))\right|\geq \exp\left(-C\delta^{-C}\right).
    \end{equation*}
  \end{theorem}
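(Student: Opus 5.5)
We argue by induction on the degree, using the quantitative $U^3(\mathbf{F}_p^n)$-inverse theorem (with polynomial bounds, as in Gowers--Green--Manners--Tao) as a black box. Since
\[
\|f\|_{U^4}^{16}=\mathbf{E}_{h}\|\Delta_h f\|_{U^3}^8,
\]
the hypothesis gives a set $H\subseteq\mathbf{F}_p^n$ of density $\gg\delta^{O(1)}$ such that $\|\Delta_hf\|_{U^3}\gg\delta^{O(1)}$ for every $h\in H$. For each $h\in H$ the $U^3$-inverse theorem then supplies a quadratic polynomial $q_h$ with
\[
\bigl|\mathbf{E}_x\Delta_hf(x)e_p(q_h(x))\bigr|\gg\delta^{O(1)}.
\]
The goal is to show that the map $h\mapsto q_h$ can be chosen, on a dense set, to be essentially the derivative of a single cubic, and then to integrate.

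\textbf{Additive structure of the $q_h$.} Write $q_h(x)=x^TM_hx+\ell_h(x)$, with $M_h$ a symmetric matrix. The first step is the standard Cauchy--Schwarz argument (as in Gowers' and Green--Tao's $U^3$ proofs, one level up). It shows that for $\gg\delta^{O(1)}|H|^3$ additive quadruples $h_1+h_2=h_3+h_4$ in $H$, the quadratic phase
\[
q_{h_1}+q_{h_2}-q_{h_3}-q_{h_4}
\]
is biased. Because we are working with a quadratic form, this forces $M_{h_1}+M_{h_2}-M_{h_3}-M_{h_4}$ to have rank $O(\log(1/\delta))$. We therefore have a map $h\mapsto M_h$ that is a ``Freiman homomorphism up to low rank'' on many additive quadruples.

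\textbf{Main obstacle: linearizing $h\mapsto M_h$.} Next we want a dense subset $H'\subseteq H$, an affine-linear map $h\mapsto B(h)$ into symmetric matrices, and a subspace $W$ of codimension $\delta^{-O(1)}$, such that $M_h$ agrees with $B(h)$ on $W$ up to low rank. I would first use Balog--Szemer\'edi--Gowers to pass to a set on which the quadruple structure is dense. The low-rank error is the real difficulty: the approximate homomorphism only holds modulo a ``low rank'' relation, which is not a subgroup, so Freiman--Ruzsa cannot be applied directly.

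My first attempt would be to use the fact that low-rank symmetric matrices are covered by few subspaces of codimension $O(\log(1/\delta))$ in the variable $x$. Passing to a random subspace $V$ of bounded codimension in $x$ on which the error forms vanish for many quadruples, I would then apply polynomial Freiman--Ruzsa (Gowers--Green--Manners--Tao) on $\mathbf{F}_p^n\times\mathrm{Sym}(V)$ to the graph of $h\mapsto M_h|_V$. This yields a genuinely affine-linear $B$ on a $\delta^{O(1)}$-dense set. Losses at this stage are polynomial, but iterating over the subspace decomposition should produce the $\exp(-C\delta^{-C})$ bound.

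\textbf{Symmetry and integration.} With $M_h\approx B(h)$, the trilinear form $T(h,x,y)=x^TB(h)y$ should be shown, via a further Cauchy--Schwarz, to be symmetric in $(h,x,y)$ up to low partition rank on a bounded-codimension subspace. Since $p\ge5$, we may then set
\[
P(x)=\tfrac{1}{6}\,T(x,x,x)\cdot 3,
\]
suitably normalized, so that $\Delta_hP$ agrees with $q_h$ modulo lower-degree terms. It then follows that $\Delta_h(f\,e_p(-P))$ correlates with linear phases for many $h$, which gives
\[
\|f\,e_p(-P)\|_{U^3}\ \ge\ \exp(-C\delta^{-C}).
\]
A second application of the $U^3$ argument (or the easier $U^2$ step applied after repeating the process) produces a quadratic $Q$, and $P+Q$ is the desired cubic. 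The restriction to subspaces costs only averaging over cosets, and the linear parts on cosets are absorbed by standard pigeonholing.
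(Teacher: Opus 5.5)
Your opening steps (the dense set $H$, the quadratic phases $q_h$, and the rank-$O(\log(1/\delta))$ relation on many additive quadruples) are standard and fine. The gap is the step you yourself flag as the main obstacle: you never carry it out, and the mechanism you suggest cannot work.

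The error $E=M_{h_1}+M_{h_2}-M_{h_3}-M_{h_4}$ genuinely varies with the quadruple. Each individual $E$ vanishes on a subspace of small codimension (its radical), but these subspaces are unrelated to one another. So in general no bounded-codimension $V$, random or otherwise, kills many of them.

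Concretely, take $M_h=B(h)+v_hv_h^T$ with $B$ linear and the $v_h$ independent and uniformly random. The conclusion you are after holds trivially here, with $W$ the whole space. Yet with high probability every $V$ of bounded codimension has $E|_{V\times V}=0$ for only a $p^{-\Omega(n)}$-proportion of quadruples, since vanishing forces $v_{h_1},\dots,v_{h_4}$ to be linearly dependent modulo $V^{\perp}$.

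For the same reason, neither Balog--Szemer\'edi--Gowers nor polynomial Freiman--Ruzsa can be applied to the graph $\{(h,M_h)\}$. Its additive quadruples close up only modulo the set of low-rank matrices, which is not a subgroup, so there is no group in which the graph is an honest approximate subgroup. Controlling such ``Freiman homomorphisms modulo low rank'' is the real content of the quantitative $U^4$ problem. As a sanity check: if your linearization step cost only polynomially, as you claim, the whole argument would give polynomial bounds, far better than anything currently known. ``Iterating over the subspace decomposition'' does not supply a replacement argument.

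A smaller point concerns the symmetrization step. Cauchy--Schwarz only shows that $T(h_1,h_2,h_3)-T(h_2,h_1,h_3)$ is biased, i.e.\ has small analytic rank. You need a partition-rank versus analytic-rank bound such as Theorem~\ref{thm:rank} to pass to a bounded-codimension subspace on which $T$ is symmetric.

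The paper avoids the low-rank ambiguity altogether by differentiating once more. Pigeonholing in $\widehat{S_f}$ gives, on a dense set of pairs, a frequency $\phi(h_1,h_2)$ such that $\Delta_{h_1,h_2}f$ correlates with $e_p(\phi(h_1,h_2)\cdot x)$. Because these are linear phases, bias forces exact equalities. After a random extension, Manners's generalized-convolution Cauchy--Schwarz argument shows that $\phi$ is an honest $\Omega(\delta^{O(1)})$-approximately quadratic map $G^2\to G$.

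The bulk of the work is then the inverse theorem for such maps (Theorem~\ref{thm:approxpoly}). It is proved via:
\begin{itemize}
\item dependent random choice on second-order $3$-dimensional cubes;
\item a quadratic Bogolyubov-type lemma for $U^3$-dual functions;
\item a random sampling step producing a $99\%$-approximate quadratic on a high-rank quadratic level set;
\item a cohomological proof of the Kazhdan--Ziegler extension result.
\end{itemize}
Low rank re-enters only at the very end, when the resulting trilinear form is symmetrized, and Theorem~\ref{thm:rank} handles it there. The $\exp(-C\delta^{-C})$ loss comes from the density of the quadratic level set, cut out by $\delta^{-O(1)}$ forms, on which the $99\%$ structure is found.
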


  The proof of Theorem~\ref{thm:main} we present is almost
  self-contained, with the exception of our use of sufficiently strong
  bounds for the partition rank of trilinear forms in terms of their
  analytic rank (which have been known since work of Haramaty and
  Shpilka~\cite{HaramatyShpilka2010}), results belonging purely to the
  $U^2$- or $U^3$-theory such as the inverse theorem for the
  $U^3(\mathbf{F}_p^n)$-norm with polynomial
  bounds~\cite{GowersGreenMannersTao2026}, and basic facts such as the
  Gowers--Cauchy--Schwarz inequality. We give new and shorter proofs
  of some results already in the literature, most notably one of
  Kazhdan and
  Ziegler~\cite{KazhdanZiegler2017,KazhdanZiegler2019,KazhdanZiegler2020}
  on extending approximate quadratics from high-rank quadratic level
  sets. Further, in an attempt to make this paper more accessible,
  some results that are very minor variants of existing ones in the
  literature have been given full proofs. If these proofs of existing
  or almost existing ingredients were omitted, the proof of
  Theorem~\ref{thm:main} would require only fifteen pages.

  \subsection*{Asymptotic notation} In addition to the standard
  asymptotic notation $O$ and $\Omega$, we will also use
  Vinogradov's notation $\ll$ and $\gg$ throughout the paper. This
  means that, for any two positive quantities $X$ and $Y$, we will
  write $X=O(Y)$, $Y=\Omega(X)$, $X\ll Y$, and $Y\gg X$ to mean that
  $X\leq CY$ for some absolute constant $C>0$. When any of $O$,
  $\Omega$, $\ll$, or $\gg$ appears with a subscript, this means that
  the implied constant $C$ may depend on the parameters in the
  subscript. We will often write $O(1)$ or $\Omega(1)$ to represent a
  quantity that is bounded above or below, respectively, by an
  absolute constant.
  
  \section*{Acknowledgments}
  The author is partially supported by NSF grant DMS-2516641 and a
  Sloan Research Fellowship, and thanks Dan Altman, Ben Green, James
  Leng, and Tamar Ziegler for helpful conversations.

  \section{Preliminaries and an outline of the
    proof}\label{sec:prelims}

  Like previous proofs of the $U^s(G)$-inverse theorems when
  $s\geq 3$, the proof of Theorem~\ref{thm:main} requires classifying
  certain ``approximately structured'' functions. To describe these
  functions, we will need a few definitions.

  Let $G$ and $G'$ be abelian groups, $\phi:G\to G'$, and $h\in G$. We
  define the \textit{additive discrete derivative}
  $\partial_h\phi:G\to G'$ by $\partial_h\phi(x):=\phi(x)-\phi(x+h)$
  and, given $h_1,\dots,h_s\in G$, the \textit{$s$-fold additive
    discrete derivative} $\partial_{h_1,\dots,h_s}\phi:G\to G'$ by
  $\partial_{h_1,\dots,h_s}\phi:=\partial_{h_1}\cdots\partial_{h_s}\phi$. We
  will, analogously, denote the \textit{$s$-fold multiplicative
    discrete derivative} operator $\Delta_{h_1}\cdots\Delta_{h_s}$ by
  $\Delta_{h_1,\dots,h_s}$. Observe that $\partial_{h_1,\dots,h_s}$
  and $\Delta_{h_1,\dots,h_s}$ do not depend on the ordering of
  $h_1,\dots,h_s$, and also that if $\Phi(x)=\xi(\phi(x))$ for some
  character $\xi\in\widehat{G'}$, then
  $\Delta_{h_1,\dots,h_s}\Phi(x)=\xi(\partial_{h_1,\dots,h_s}\phi(x))$.
  
  It is a simple exercise to show that if
  $\phi:\mathbf{F}_p^n\to\mathbf{F}_p$ satisfies
  $\partial_{h_1,\dots,h_s}\phi(x)=0$ for all
  $x,h_1,\dots,h_s\in\mathbf{F}_p^n$, then $\phi$ is a polynomial of
  degree at most $s-1$. The bulk of this paper will be concerned with
  the study of ``approximate polynomials'', which are functions
  $\phi$ satisfying more relaxed conditions:
  $\partial_{h_1,\dots,h_s}\phi(x)=0$ may only hold a small fraction of the time and
  the domain of $\phi$ may be an arbitrary subset having no subgroup structure
  at all. To that end, for any $x,h_1,\dots,h_s\in G$, we define the
  associated \textit{(first-order) $s$-dimensional cube} to be
  \begin{equation*}
    \square(x;h_1,\dots,h_s):=\left\{x+\omega\cdot(h_1,\dots,h_s):\omega\in\{0,1\}^s\right\}\subset G;
  \end{equation*}
  $2$-dimensional cubes are usually called \textit{additive
    quadruples}. An $s$-dimensional cube is \textit{nondegenerate} if
  $|\square(x;h_1,\dots,h_s)|=2^s$ and is \textit{degenerate}
  otherwise. Observe that, when $G$ is finite, the number of
  $s$-dimensional cubes in any $A\subset G$ is
  $\|1_{A}\|_{U^s(G)}^{2^s}|G|^{s+1}$. A map $\phi:A\to G'$ is said to
  \textit{respect} an $s$-dimensional cube
  $\square(x;h_1,\dots,h_s)\subset A$ if
  $\partial_{h_1,\dots,h_s}\phi(x)=0$, i.e., if
  \begin{equation*}
      \sum_{\omega\in\{0,1\}^s}(-1)^{|\omega|}\phi(x+\omega\cdot(h_1,\dots,h_s))=0,
    \end{equation*}
    where $|\omega|$ denotes the number of $1$'s in $\omega$.

    Now, we can define approximate polynomials of bounded degree.
    \begin{definition}
      Let $G$ and $G'$ be abelian groups with $G$ finite, $A\subset G$, and
      $\phi:A\to G'$. We say that $\phi$ is a
      \textit{$\delta$-approximate polynomial of degree at most $s-1$
        on $A$} if $\partial_{h_1,\dots,h_s}\phi(x)=0$ for at least
      $\delta\|1_A\|^{2^s}_{U^s(G)}|G|^{s+1}$
      $(s+1)$-tuples $(x,h_1,\dots,h_s)\in G^{s+1}$ for which
      $\square(x;h_1,\dots,h_s)\subset A$, i.e., if the proportion of $s$-dimensional
      cubes in $A$ respected by $\phi$ is at least $\delta$.
    \end{definition}
    We will only encounter approximate polynomials of degree at most $1$ or
    $2$ in the proof of Theorem~\ref{thm:main}. For brevity, we will
    write ``$\phi$ is $\delta$-approximately linear'' to mean that
    $\phi$ is a $\delta$-approximate polynomial of degree at most $1$
    and, analogously, ``$\phi$ is $\delta$-approximately quadratic'' to
    mean that $\phi$ is a $\delta$-approximate polynomial of degree at
    most $2$.
    
    The first step in the proof of Theorem~\ref{thm:main} is to show
    that it follows from an inverse theorem for approximately
    quadratic functions, which says that such functions must agree
    with a genuine polynomial map of degree at most $2$ a positive
    proportion of the time.
    \begin{theorem}\label{thm:approxpoly}
      Fix a prime $p\geq 5$. If $\phi:\mathbf{F}_p^n\to\mathbf{F}_p^n$
      is $\delta$-approximately quadratic, then there exists a
      polynomial map $Q:\mathbf{F}_p^n\to\mathbf{F}_p^n$ of degree at
      most $2$ such that $\phi(x)=Q(x)$ for
      $\gg_p\exp(-O_p(\delta^{-O_p(1)}))p^n$ elements
      $x\in\mathbf{F}_p^n$.
    \end{theorem}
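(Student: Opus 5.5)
My plan is to reduce Theorem~\ref{thm:approxpoly} to the degree-$1$ theory, following the pattern of the $U^3$ proofs by way of a ``symmetric trilinear structure,'' but one level higher.

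\textbf{Step 1: linear structure in one derivative.} Write $\phi$ for the $\delta$-approximately quadratic map. By averaging over the base point and one direction, I will find a set $H\subset\mathbf{F}_p^n$ of density $\gg\delta^{O(1)}$ such that, for each $h\in H$, the derivative $\partial_h\phi$ is a $\delta^{O(1)}$-approximately linear map, restricted to some dense set. Applying the Balog--Szemer\'edi--Gowers and Freiman--Ruzsa type results (with polynomial bounds in $\mathbf{F}_p^n$, from the $U^2$/$U^3$ theory) gives, for each such $h$, a dense set $X_h$ and an affine map $x\mapsto A_hx+b_h$ agreeing with $\partial_h\phi$ on $X_h$.

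\textbf{Step 2: structure of $h\mapsto A_h$.} I will show that the matrix-valued map $h\mapsto A_h$ respects many additive quadruples. Applying the approximate-linear inverse theorem again, I then pass to a subspace of bounded codimension on a dense subset of which $A_h$ agrees with $Bh+c$, where $B$ is a bilinear form. I expect this step to lose an exponential factor: the linearization happens on a Bohr-type set or on a quadratic level set rather than on all of $\mathbf{F}_p^n$. Symmetry of $B$, as far as $p\ge5$ allows, comes from comparing $\partial_h\partial_k\phi$ with $\partial_k\partial_h\phi$ on many pairs.

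\textbf{Step 3: integration, the main obstacle.} We need a cubic-type phase $Q$ with $\partial_h Q\approx A_h x$. When $B$ is symmetric this is a genuine quadratic map, namely half of $B(x,x)$. The catch is that the agreement holds only on a high-rank quadratic level set, so to integrate we must extend an approximate quadratic from such a set to the whole space; this is the Kazhdan--Ziegler-type statement the introduction mentions. I would handle it by a regularity step. First, use partition rank versus analytic rank (Haramaty--Shpilka) to make the defining quadratics high rank. Then show that cube counts on the level set match the global ones, so that the approximately linear derivative data there extend.

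\textbf{Step 4: concluding.} Subtracting the integrated $Q$ leaves $\phi-Q$ approximately linear on a dense set. A final application of the degree-$1$ inverse result then gives agreement with an affine map on $\gg\exp(-O(\delta^{-O(1)}))p^n$ points.
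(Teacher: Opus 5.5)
There is a genuine gap in Step 2, and it is exactly where the difficulty of the theorem lies. The affine map $x\mapsto A_hx+b_h$ from Step 1 is far from unique. If it agrees with $\partial_h\phi$ on a dense set $X_h$, then for any $u,w$ the map $x\mapsto (A_h+uw^T)x+b_h'$ agrees with $\partial_h\phi$ on the part of $X_h$ where $w^Tx$ takes its most popular value, which is still dense. So $A_h$ is only determined modulo low-rank matrices, and nothing makes the choices cohere in $h$.

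A concrete example: let $\phi(x)=q(x)\ell(x)$, with $q(x)=x^TMx$ of high rank and $\ell:\mathbf{F}_p^n\to\mathbf{F}_p^n$ invertible linear. Since $\partial_{h_1,h_2,h_3}\phi=-2[(h_1^TMh_2)\ell(h_3)+(h_1^TMh_3)\ell(h_2)+(h_2^TMh_3)\ell(h_1)]$, $\phi$ is $\Omega(p^{-3})$-approximately quadratic. On each set $\{x:q(x)=j,\ 2h^TMx=m\}$, of density about $p^{-2}$, one has $\partial_h\phi(x)=A_hx+b_h$ with $A_h=-(m+q(h))\ell+2u_h(Mh)^T$ for an \emph{arbitrary} $u_h\in\mathbf{F}_p^n$. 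Every such choice is a legitimate output of Step 1. Yet for generic $u_h$ the map $h\mapsto A_h$ respects only a negligible proportion of additive quadruples.

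What the natural Cauchy--Schwarz argument (the analogue of Gowers's argument for the frequencies $\xi_h$ in the $U^3$ case) actually gives is weaker. It shows only that $A_{h_1}-A_{h_2}-A_{h_3}+A_{h_4}$ has rank $O(\log(1/\delta))$ for many additive quadruples, i.e., an approximate homomorphism modulo bounded-rank matrices. There is no off-the-shelf $1\%$ inverse theorem for such objects. Building one is essentially the approximately bilinear problem that Gowers and Mili\'cevi\'c solve at considerable length. For the same reason, symmetry cannot come from comparing $\partial_h\partial_k\phi$ with $\partial_k\partial_h\phi$, which are identical. It would need points $x$ with $x,x+k\in X_h$ and $x,x+h\in X_k$, which arbitrary dense sets need not supply.

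Step 3 fails for a related reason. The Kazhdan--Ziegler-type extension (Lemma~\ref{lem:quadext}) is a $99\%$ statement: it needs a $(1-\varepsilon)$-approximate quadratic on a high-rank level set, and your argument never leaves the $1\%$ regime. The paper's key move is exactly this $1\%$-to-$99\%$ passage, and it never selects linear data for individual $h$ at $1\%$. Instead it proceeds as follows:
\begin{enumerate}
\item It shows that $\phi$ respects many second-order $3$-dimensional cubes.
\item It uses dependent random choice to find a set $A$ on which almost all of them are respected (Lemma~\ref{lem:DRC}).
\item It approximates $\mathcal{D}[1_A]$ by a function measurable with respect to a quadratic factor (Lemma~\ref{lem:bogo}).
\item It defines $\psi(x)$ as a popular value of $\sum_{\mathbf{0}\neq\omega}(-1)^{|\omega|}\phi(x+\omega\cdot(h_1,h_2,h_3))$, which is then a $(1-\varepsilon)$-approximate quadratic on a high-rank quadratic level set (Lemma~\ref{lem:psiapprox}).
\end{enumerate}
Only in that setting does your Step 1--2 strategy work, and it is in fact what the proof of Lemma~\ref{lem:quadext} does. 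There the low-rank ambiguity becomes the structured term $D(h,k)M(h)+E(h,k)M(k)$ in $d^1L[h,k]$, which the cocycle argument removes. Your Step 4 matches the paper's final step.
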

    This is analogous to the fact that polynomial bounds in the
    inverse theorem for approximately linear functions, now proven by
    Gowers, Green, Manners, and
    Tao~\cite{GowersGreenMannersTao2025,GowersGreenMannersTao2026},
    imply polynomial bounds in the $U^3(\mathbf{F}_p)$-inverse
    theorem:
    \begin{theorem}\label{thm:approxlinear}
      If $\phi:\mathbf{F}_p^n\to\mathbf{F}_p^n$ is
      $\delta$-approximately linear, then there exists an
      affine-linear $L:\mathbf{F}_p^n\to\mathbf{F}_p^n$ such that
      $\phi(x)=L(x)$ for $\gg_p\delta^{O_p(1)}p^n$ elements
      $x\in\mathbf{F}_p^n$.
    \end{theorem}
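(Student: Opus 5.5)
The plan is the standard route from approximate homomorphisms to affine maps. We go through the polynomial Freiman--Ruzsa (Marton) theorem over $\mathbf{F}_p^n$, proved by Gowers, Green, Manners, and Tao, applied to the graph of $\phi$.

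\textbf{Step 1: an additively structured set.} Let $\Gamma=\{(x,\phi(x)):x\in\mathbf{F}_p^n\}\subset\mathbf{F}_p^{2n}$, so $|\Gamma|=p^n$. Saying that $\phi$ respects the cube $\square(x;h_1,h_2)$ means that the four graph points form an additive quadruple in $\Gamma$. Since $\phi$ is $\delta$-approximately linear, $\Gamma$ therefore has additive energy $E(\Gamma)\geq\delta p^{3n}=\delta|\Gamma|^3$.

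\textbf{Step 2: pass to small doubling.} By the Balog--Szemer\'edi--Gowers theorem with polynomial bounds, there is $\Gamma'\subset\Gamma$ with $|\Gamma'|\gg\delta^{O(1)}|\Gamma|$ and $|\Gamma'+\Gamma'|\ll\delta^{-O(1)}|\Gamma'|$.

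\textbf{Step 3: cover by a subgroup.} By the polynomial Freiman--Ruzsa theorem in $\mathbf{F}_p^{2n}$, there is a subspace $H\leq\mathbf{F}_p^{2n}$ with $|H|\leq|\Gamma'|$ such that $\Gamma'$ is covered by $K\ll_p\delta^{-O_p(1)}$ translates of $H$. By pigeonhole, some coset $v+H$ contains $\gg_p\delta^{O_p(1)}p^n$ points of $\Gamma'$.

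\textbf{Step 4: read off the affine map.} Let $\pi:\mathbf{F}_p^{2n}\to\mathbf{F}_p^n$ be projection to the first coordinate, and set $V=H\cap(\{0\}\times\mathbf{F}_p^n)$, the kernel of $\pi$ restricted to $H$. Each fiber of $\pi$ on $\Gamma$ has exactly one point, because $\Gamma$ is a graph. Hence each fiber of $\pi|_{v+H}$, which is a coset of $V$, contains at most one graph point.

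Now choose a complement $W$ to $V$ inside $H$. Then $v+H$ is partitioned into the $|V|$ translates $v+u+W$ with $u\in V$. By pigeonhole, one of them, say $v+u+W$, contains at least $|(v+H)\cap\Gamma'|/|V|$ graph points. On $W$ the projection $\pi$ is injective, so $v+u+W$ is the graph of an affine map defined on the affine subspace $\pi(v+u+W)$.

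This gives agreement on $\gg\delta^{O(1)}p^n/|V|$ points, so the loss from $|V|$ must be controlled. We have $|V|\leq|H|/|\pi(H)|$ and $|H|\leq p^n$. We also have $|\pi(H)|\geq|\pi((v+H)\cap\Gamma')|$, which is $\gg\delta^{O(1)}p^n$ because $\pi$ is injective on $\Gamma$. Hence $|V|\ll\delta^{-O(1)}$.

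Finally, extend the affine map from $\pi(v+u+W)$ to an affine $L$ on all of $\mathbf{F}_p^n$ by choosing any linear extension. Then $\phi=L$ on $\gg_p\delta^{O_p(1)}p^n$ points, as required.

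\textbf{Main obstacle.} The real depth is entirely in Step 3: polynomial bounds are exactly what the PFR theorem of Gowers--Green--Manners--Tao supplies, and I would simply invoke it. The step needing care is the bookkeeping in Step 4, namely bounding $|V|$ so that the vertical part of $H$ costs only polynomially.
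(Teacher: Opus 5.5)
Your argument is correct: the paper does not prove this statement itself but quotes it from the Gowers--Green--Manners--Tao work on the polynomial Freiman--Ruzsa conjecture. Your chain is the standard deduction that this citation relies on: additive energy $E(\Gamma)\geq\delta|\Gamma|^3$ of the graph, then Balog--Szemer\'edi--Gowers, then PFR in $\mathbf{F}_p^{2n}$ with $(2K)^{O(p^3)}$ cosets, then controlling the vertical kernel via $|V|=|H|/|\pi(H)|\leq p^n/|(v+H)\cap\Gamma'|$. This yields agreement with an affine map on at least $|(v+H)\cap\Gamma'|^2/p^n\gg_p\delta^{O_p(1)}p^n$ points, as required.
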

    \begin{theorem}\label{thm:U3inv}
      If $f:\mathbf{F}_p^n\to\mathbf{C}$ is $1$-bounded and
      $\|f\|_{U^3(\mathbf{F}_p^n)}\geq\delta$, then there exists a
      polynomial $Q:\mathbf{F}_p^n\to\mathbf{F}_p$ of degree at most
      $2$ such that
      \begin{equation*}
        \left|\mathbf{E}_{x\in \mathbf{F}_p^n}f(x)e_p(Q(x))\right|\gg_p\delta^{O_p(1)}.
      \end{equation*}
    \end{theorem}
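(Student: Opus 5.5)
The plan is to deduce Theorem~\ref{thm:U3inv} from Theorem~\ref{thm:approxlinear} by the standard route of Green and Tao and of Samorodnitsky.

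First, expand
\begin{equation*}
\|f\|_{U^3}^8=\mathbf{E}_h\|\Delta_h f\|_{U^2}^4 .
\end{equation*}
By Parseval, $\|\Delta_h f\|_{U^2}^4=\sum_\xi|\widehat{\Delta_h f}(\xi)|^4\le\max_\xi|\widehat{\Delta_hf}(\xi)|^2$. Hence for a set $H$ of at least $\delta^8p^n/2$ values of $h$ there is a frequency $\phi(h)\in\mathbf{F}_p^n$ with $|\widehat{\Delta_hf}(\phi(h))|^2\ge\delta^8/2$. Outside $H$, set $\phi$ arbitrarily.

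Second, I would show that $\phi$ is $\delta^{O(1)}$-approximately linear. I plan to use the standard identity of Gowers' proof: expanding and applying Cauchy--Schwarz gives
\begin{equation*}
\sum_{h_1+h_2=h_3+h_4}|\widehat{\Delta_{h_1}f}(\xi_1)|^2|\widehat{\Delta_{h_2}f}(\xi_2)|^2|\widehat{\Delta_{h_3}f}(\xi_3)|^2|\widehat{\Delta_{h_4}f}(\xi_4)|^2 .
\end{equation*}
Summed over $\xi_1+\xi_2=\xi_3+\xi_4$, this is bounded by $p^{3n}$ times a quantity controlled by $\|f\|$. Conversely, restricting to $h_i\in H$ and $\xi_i=\phi(h_i)$, the sum is bounded below by $\delta^{O(1)}$ times the number of additive quadruples in $H$ that $\phi$ respects. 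More carefully, I will use the Cauchy--Schwarz argument in the form
\begin{equation*}
\mathbf{E}_{h}\sum_\xi|\widehat{\Delta_hf}(\xi)|^2 1_{\xi=\phi(h)}\ge\delta^{O(1)},
\end{equation*}
and then apply the $U^2$-style inequality to the function $(h,\xi)\mapsto|\widehat{\Delta_hf}(\xi)|^2$ on $\mathbf{F}_p^n\times\mathbf{F}_p^n$, using that its Fourier transform in $h$ is controlled by $f$. The outcome should be that at least $\delta^{O(1)}p^{3n}$ quadruples in $H$ are respected by $\phi$, which is $\delta^{O(1)}$-approximate linearity. Extending $\phi$ to all of $\mathbf{F}_p^n$ only loses constants. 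This step is where I expect the main technical care to be needed, though it is classical.

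Third, apply Theorem~\ref{thm:approxlinear} to obtain an affine $L(h)=Mh+c$ with $\phi(h)=L(h)$ for $\delta^{O(1)}p^n$ values $h\in H$. This gives
\begin{equation*}
\mathbf{E}_h|\widehat{\Delta_hf}(Mh+c)|^2\ge\delta^{O(1)}.
\end{equation*}
The symmetrization step, which I consider the main obstacle, is to replace $M$ by a symmetric matrix. Expanding $|\widehat{\Delta_hf}(Mh)|^2$ and using Cauchy--Schwarz as in Green--Tao, one shows that $M-M^T$ has $\delta^{O(1)}$-bounded rank on average. Here $p$ odd allows halving, and one passes to a subspace of bounded codimension where $M$ is symmetric.

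Finally, take $Q(x)=-\tfrac12 x\cdot Mx$ plus a linear term, which uses $p\ne2$. Then $\Delta_h e_p(Q)$ equals $e_p(-Mh\cdot x)$ up to $h$-dependent constants. The lower bound above becomes
\begin{equation*}
\|f e_p(Q)\|_{U^2}\ge\delta^{O(1)}
\end{equation*}
after averaging over the cosets of the subspace. A final application of the $U^2$ inverse theorem, absorbing the coset restriction into linear phases, yields the desired correlation $\gg_p\delta^{O_p(1)}$.
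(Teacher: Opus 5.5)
Your route is the one the paper relies on. The paper cites Theorem~\ref{thm:U3inv} rather than proving it, noting that it follows from Theorem~\ref{thm:approxlinear} by the standard Green--Tao/Samorodnitsky argument, and your steps are the $U^3$ analogues of Lemmas~\ref{lem:ftophi}, \ref{lem:SfQ}, and~\ref{lem:symmetrization}. But one step fails as written. You set $\phi$ \emph{arbitrarily} off $H$, apply Theorem~\ref{thm:approxlinear} to this extension, and then claim $\phi=L$ for $\delta^{O(1)}p^n$ values $h\in H$. Theorem~\ref{thm:approxlinear} only gives agreement somewhere in $\mathbf{F}_p^n$, and $H$ may have density as small as $\delta^8/2$. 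If, say, $\phi\equiv 0$ off $H$, the extension is $\Omega(1)$-approximately linear for trivial reasons, and $L\equiv 0$ is a legitimate output that may agree with $\phi$ nowhere on $H$; then Step~3 gives no lower bound for $\mathbf{E}_h|\widehat{\Delta_hf}(L(h))|^2$. The repair is the device of Lemmas~\ref{lem:randomfn} and~\ref{lem:SfQ}. Choose the values off $H$ independently and uniformly at random; a first-moment bound and Markov's inequality then show the extension respects at most $O(p^{5n/2})$ additive quadruples not contained in $H$. Now suppose $\phi=L$ on $\theta p^n$ points with $\theta=\delta^{O(1)}$, and at least half of these lie off $H$. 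Those points contain at least $(\theta/2)^4p^{3n}$ additive quadruples, each respected because $L$ is affine, which is impossible once $p^n>(2/\theta)^8$. When $p^n\le(2/\theta)^8$ the theorem is trivial, since $\|f\|_{L^1}\ge\|f\|_{U^3}^8$ and Parseval give some $|\widehat{f}(\xi)|\ge\delta^8p^{-n/2}$. Alternatively, apply the approximate-homomorphism theorem in its version for maps defined only on a subset of $\mathbf{F}_p^n$.

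Two smaller points. First, in Step~2 your first heuristic runs the wrong way. An upper bound on the sum, combined with ``the sum is at least $\delta^{O(1)}$ times the number of respected quadruples'', only bounds that number from above. The clean argument is two applications of Cauchy--Schwarz. Set $\eta:=\mathbf{E}_h1_H(h)|\widehat{\Delta_hf}(\phi(h))|^2\ge\delta^8/2$ and expand $|\widehat{\Delta_hf}(\xi)|^2=\mathbf{E}_{x,y}\Delta_hf(x)\overline{\Delta_hf(y)}e_p(\xi\cdot(y-x))$. Apply Cauchy--Schwarz in $(x,y)$, substitute $y=x+k$, $x\mapsto x-h$, $h'=h+a$, and apply Cauchy--Schwarz in $(k,a)$. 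One factor is $\|f\|_{U^3}^8\le 1$. The other is the proportion of $(h,h',a)$ with $h,h+a,h',h'+a\in H$ and $\phi(h)-\phi(h+a)=\phi(h')-\phi(h'+a)$, which is therefore at least $\eta^4$. Second, in Step~4 the precise output is $\rank(M-M^T)\ll\log_p(1/\delta)$. One Cauchy--Schwarz doubling $h$ removes the one-variable factor $e_p(c\cdot k)$, and Lemma~\ref{lem:bilsym} then gives $\mathbf{E}_{x,y}e_p(x^T(M-M^T)y)\ge\delta^{O(1)}$. You then finish as in the proof of Lemma~\ref{lem:symmetrization} on $V=\ker(M-M^T)$, fixing $x$ before applying Theorem~\ref{thm:GCSbox} so that you land in $U^2$, at a cost of only $p^{-2\codim V}=\delta^{O(1)}$.
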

     The reverse implication also holds (see,
     e.g.,~\cite{GreenTao2010}), and one can analogously show that
     Theorem~\ref{thm:main} implies Theorem~\ref{thm:approxpoly}
     (though we will not do so here).
    
    Instead of deriving their $U^4(\mathbf{F}_p^n)$-inverse theorem
    from an inverse theorem for approximately quadratic functions,
    Gowers and Mili\'cevi\'c~\cite{GowersMilicevic2017} derive it from
    an inverse theorem for approximately bilinear functions. Proving
    that Theorem~\ref{thm:approxpoly} implies Theorem~\ref{thm:main}
    takes about the same amount of work as the analogous deduction in
    the work of Gowers and Mili\'cevi\'c and does not contain any real new ideas (we adapt an argument of Manners~\cite{Manners2018} from
    his work in cyclic groups). Our proof of
    Theorem~\ref{thm:approxpoly} and their proof of an inverse theorem
    for approximately bilinear functions have the same broad
    structure, pioneered by Gowers in~\cite{Gowers2001}: prove that
    the functions respect almost all ``second-order'' configurations
    in a dense subset, use this to show that it suffices to classify
    functions respecting almost all first-order configurations in some
    algebraically structured set, and then solve the classification
    problem. It turns out that one can execute these steps with less
    effort for approximately quadratic functions than for
    approximately bilinear functions. In particular, $3$-dimensional
    cubes and $U^3$-dual functions are easier to work with than the
    ``$4$-arrangements'' and ``mixed convolutions'' from~\cite{GowersMilicevic2017}.
    
    We will now describe our proof of Theorem~\ref{thm:approxpoly},
    which occupies the bulk of the paper. For any
    $x,h_1,\dots,h_s\in\mathbf{F}_p^n$ and
    $\mathbf{k}_1,\dots,\mathbf{k}_s\in(\mathbf{F}_p^n)^{\{0,1\}^s}$, define the associated \textit{second-order $s$-dimensional
      cube} $\square^2(x;h_1,\dots,h_s;\mathbf{k}_1,\dots,\mathbf{k}_s)$ to be the subset
    \begin{equation*}
      \left\{x+\omega\cdot(h_1,\dots,h_s)+\omega'\cdot(k^{\omega}_1,\dots,k^{\omega}_s):\omega,\omega'\in\{0,1\}^s\text{ with }\omega'\neq\mathbf{0}\right\}
    \end{equation*}
    of $\mathbf{F}_p^n$, where $\mathbf{0}\in\{0,1\}^s$ denotes the vector of all $0$'s. Analogously to the definition for first-order
    $s$-dimensional cubes, we say that $\phi$ \textit{respects} $\square^2(x;h_1,\dots,h_s;\mathbf{k}_1,\dots,\mathbf{k}_s)$ if
    \begin{equation*}
      \sum_{\omega\in\{0,1\}^s}\sum_{\mathbf{0}\neq\omega'\in\{0,1\}^s}(-1)^{|\omega|+|\omega'|}\phi\left(x+\omega\cdot(h_1,\dots,h_s)+\omega'\cdot(k^{\omega}_1,\dots,k^{\omega}_s)\right)=0.
    \end{equation*}

    Recall, now, the Gowers--Cauchy--Schwarz inequality (see, e.g.,~\cite[Lemma B.2]{GreenTao2010II}):
    \begin{theorem}\label{thm:GCSnorm}
      Let $s\geq 2$ be an integer and, for each $\omega\in\{0,1\}^{s}$,
      $f_\omega:\mathbf{F}_p^n\to\mathbf{C}$. Then,
      \begin{equation*}
        \Big|\mathbf{E}_{x,h_1,\dots,h_s\in\mathbf{F}_p^n}\prod_{\omega\in\{0,1\}^s}\mathcal{C}^{|\omega|}f_\omega(x+\omega\cdot(h_1,\dots,h_s))\Big|\leq\prod_{\omega\in\{0,1\}^s}\|f_\omega\|_{U^s(\mathbf{F}_p^n)},
      \end{equation*}
      where $\mathcal{C}:\mathbf{C}\to\mathbf{C}$ denotes the complex conjugation operator.
    \end{theorem}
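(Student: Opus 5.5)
The plan is induction on $s$, applying the ordinary Cauchy--Schwarz inequality once in each of the $s$ directions. By the symmetry of the cube under relabelling coordinates, it suffices to prove a single ``one-direction'' inequality: the left-hand side, call it $\Lambda((f_\omega)_\omega)$, satisfies
\begin{equation*}
  |\Lambda((f_\omega)_\omega)|\leq\prod_{\eta\in\{0,1\}}\Lambda\big((f_{\eta,\omega_2,\dots,\omega_s})_{\omega}\big)^{1/2}.
\end{equation*}
On the right of this inequality, the family $(f_\omega)$ is replaced by the family in which $f_\omega$ depends only on $(\omega_2,\dots,\omega_s)$ and is evaluated with first coordinate fixed to $\eta$. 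Both averages on the right are nonnegative, as the proof of the inequality will show.

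To prove the one-direction inequality, I will write $y=x+\omega'\cdot(h_2,\dots,h_s)$ for $\omega'\in\{0,1\}^{s-1}$. Then
\begin{equation*}
  \Lambda=\mathbf{E}_{h_2,\dots,h_s}\mathbf{E}_{h_1}\Big(\mathbf{E}_{x}F_0(x)\,\overline{F_1(x+h_1)}\Big),
\end{equation*}
with $F_\eta(x)=\prod_{\omega'}\mathcal{C}^{|\omega'|}f_{\eta,\omega'}(x+\omega'\cdot(h_2,\dots,h_s))$, where the conjugations have been arranged so that the $\eta=1$ factor carries an extra conjugate. Substituting $u=x$ and $v=x+h_1$, which are independent and uniform for fixed $h_2,\dots,h_s$, this becomes
\begin{equation*}
  \mathbf{E}_{h_2,\dots,h_s}\big(\mathbf{E}_uF_0(u)\big)\overline{\big(\mathbf{E}_vF_1(v)\big)}.
\end{equation*}
Cauchy--Schwarz in $(h_2,\dots,h_s)$ then bounds this by
\begin{equation*}
  \prod_\eta\big(\mathbf{E}_{h_2,\dots,h_s}|\mathbf{E}_uF_\eta(u)|^2\big)^{1/2}.
\end{equation*}
Reversing the substitution, $\mathbf{E}_{h_2,\dots,h_s}|\mathbf{E}_uF_\eta(u)|^2$ is exactly $\Lambda$ evaluated on the family in which $f_{\eta,\omega'}$ is placed at both $\omega_1=0$ and $\omega_1=1$. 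This also shows it is nonnegative.

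Iterating the one-direction inequality over directions $1,2,\dots,s$ proceeds as follows. After applying it in direction $1$, each resulting family is independent of $\omega_1$. Applying it in direction $2$ to each such family produces families independent of $\omega_1$ and $\omega_2$, and so on. After all $s$ steps,
\begin{equation*}
  |\Lambda|\leq\prod_{\eta\in\{0,1\}^s}\Lambda\big((f_\eta)_{\omega\in\{0,1\}^s}\big)^{1/2^s}.
\end{equation*}
The constant family $\Lambda((f_\eta)_\omega)$ is by definition $\|f_\eta\|_{U^s}^{2^s}$, since $\prod_\omega\mathcal{C}^{|\omega|}f(x+\omega\cdot h)=\Delta_{h_1,\dots,h_s}f(x)$ up to the convention on which factor is conjugated. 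This gives the claimed bound.

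The main obstacle is bookkeeping, not a genuine difficulty. Three points need checking. The exponents must compose correctly, with each step halving the exponent and the $2^s$ families collectively producing a geometric mean. Each intermediate quantity must be real and nonnegative so that taking square roots is legitimate; this follows from the $|\cdot|^2$ form. The conjugation pattern must be preserved when a family is symmetrized in one coordinate. I would handle the last point by checking that the map $\omega\mapsto$ (parity of $|\omega|$) interacts correctly with fixing $\omega_1=\eta$ and duplicating. Alternatively, one could simply cite~\cite[Lemma B.2]{GreenTao2010II}, as the paper does.
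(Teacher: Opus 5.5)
Your proposal is correct. It is the standard proof that applies Cauchy--Schwarz once in each of the $s$ directions, and the bookkeeping you flag all checks out: the intermediate averages are nonnegative, the exponents halve at each step, and the pattern $\mathcal{C}^{|\omega|}$ is preserved when you fix $\omega_i=\eta$ and duplicate. It even holds exactly with the paper's convention $\Delta_h f(x)=f(x)\overline{f(x+h)}$, so your ``up to convention'' caveat is unnecessary. The paper does not prove this itself but cites \cite[Lemma B.2]{GreenTao2010II}, whose proof is this same iterated Cauchy--Schwarz argument.
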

    Combining an application of the Cauchy--Schwarz inequality with
    the Gowers--Cauchy--Schwarz inequality shows that if $\phi$ is
    $\delta$-approximately quadratic, then $\phi$ respects at least a
    $\delta^{16}$-proportion of second-order $3$-dimensional cubes in
    $\mathbf{F}_p^n$. Adapting a dependent random choice argument of
    Gowers~\cite[Sections 9, 12, and 14]{Gowers2001} then produces a
    subset $A\subset\mathbf{F}_p^n$ containing many second-order
    $3$-dimensional cubes, almost all of which are respected
    by $\phi|_A$.
    
    Define, given $f_\omega:\mathbf{F}_p^n\to\mathbf{C}$ for each $\mathbf{0}\neq \omega\in\{0,1\}^3$, the ($U^3$-)\textit{dual function}
    \begin{equation}\label{eq:dualdef}
      \mathcal{D}[(f_\omega)_{\omega}](x):=\mathbf{E}_{h_1,h_2,h_3\in G}\prod_{\mathbf{0}\neq\omega\in\{0,1\}^3}\mathcal{C}^{|\omega|}f_\omega(x+\omega\cdot(h_1,h_2,h_3)).
    \end{equation}
    We will simply write $\mathcal{D}[f]$ for
    $\mathcal{D}[(f_\omega)_\omega]$ when $f_\omega=f$ for all
    $\mathbf{0}\neq\omega\in\{0,1\}^3$. The next step of the argument
    is to prove a ``Bogolyubov-type'' lemma for $\mathcal{D}[1_A]$. One
    variant of Bogolyubov's lemma says that $U^2$-dual functions,
    which are just three-fold convolutions, can be very well
    approximated by functions that are constant on cosets of a
    subspace of small codimension. We show, analogously, that there
    exists a subspace $V\leq \mathbf{F}_p^n$ and quadratic polynomials
    $Q_1,\dots,Q_s:\mathbf{F}_p^n\to\mathbf{F}_p$ with
    $\codim{V},s\ll\delta^{-O(1)}$ such that $\mathcal{D}[1_A]$ can be
    very well approximated by a function that is constant on
    \textit{quadratic level sets}
    $(a+V)\cap Q_1^{-1}(b_1)\cap\dots\cap Q_s^{-1}(b_s)$. A similar
    quadratic Bogolyubov-type result has been established
    independently by Castro-Silva, Golovnev, Gur, and Wolf and will be
    published in forthcoming work~\cite{Wolf2026}.

    Now, we define a function $\psi:\mathbf{F}_p^n\to\mathbf{F}_p^n$
    by choosing its values randomly as follows: if
    $\mathcal{D}[1_A](x)$ is large, so there are many choices of
    $h_1,h_2,h_3\in \mathbf{F}_p^n$ for which
    $x+\omega\cdot(h_1,h_2,h_3)\in A$ for all
    $\mathbf{0}\neq \omega\in\{0,1\}^3$, then we pick such a triple
    $(h_1,h_2,h_3)$ uniformly at random and set $\psi(x)$ to equal
    \begin{equation}\label{eq:psiprelim}
      \sum_{\mathbf{0}\neq\omega\in\{0,1\}^3}(-1)^{|\omega|}\phi|_A(x+\omega\cdot(h_1,h_2,h_3))
    \end{equation}
    if there are many other such triples giving the same value
    of~\eqref{eq:psiprelim}; otherwise, we pick $\psi(x)$ to be a
    uniformly random element of $\mathbf{F}_p^n$. A typical $\psi$ is
    closely related to $\phi$ (in a precise sense implying that if
    $\psi(x)$ often agrees with a polynomial map of degree at most
    $2$, then so does~\eqref{eq:psiprelim}) and, by our
    Bogolyubov-type lemma combined with a bit more work, respects
    almost all $3$-dimensional cubes in a high-rank quadratic level
    set of small codimension.

    It thus remains to classify $(1-\varepsilon)$-approximate
    quadratics on high-rank quadratic level sets when $\varepsilon$ is
    small. Kazhdan and Ziegler have already shown that such functions
    agree almost everywhere with the restriction of a polynomial map
    of degree at most $2$ (by combining~\cite{KazhdanZiegler2019} with
    either~\cite{KazhdanZiegler2017} or the more
    general~\cite{KazhdanZiegler2020}). We give a new and shorter
    proof of this fact using ideas from group cohomology, and also
    record the quantitative dependence on rank.

    The upshot is that, under the assumptions of
    Theorem~\ref{thm:main}, there exists a polynomial map
    $Q:\mathbf{F}_p^n\to\mathbf{F}_p^n$ of degree at most $2$ such that
    \begin{equation*}
      \sum_{\mathbf{0}\neq\omega\in\{0,1\}^3}(-1)^{|\omega|}\phi(x+\omega\cdot(h_1,h_2,h_3))=Q(x)
    \end{equation*}
    for many $x,h_1,h_2,h_3\in\mathbf{F}_p^n$. Then, since
    $\partial_{h_1,h_2,h_3}Q(x)=0$, setting $\phi':=\phi+Q$ yields that
    \begin{equation*}
      \sum_{\mathbf{0}\neq\omega\in\{0,1\}^3}(-1)^{|\omega|}\phi'(x+\omega\cdot(h_1,h_2,h_3))=0
    \end{equation*}
    for many $x,h_1,h_2,h_3\in\mathbf{F}_p^n$. By a couple of
    applications of the Cauchy--Schwarz inequality, this implies that
    $\phi'$ is approximately linear. Theorem~\ref{thm:approxpoly} now
    follows from Theorem~\ref{thm:approxlinear}.

    \section{Reduction to classification of approximate quadratics}\label{sec:approxpolys}
    The goal of this section is to prove that
    Theorem~\ref{thm:approxpoly} implies Theorem~\ref{thm:main}. Since
    we fix a prime $p\geq 5$ in Theorems~\ref{thm:main}
    and~\ref{thm:approxpoly}, we will suppress the dependence of
    implied constants (which we will denote by $C$ or $C_i$) on
    $p$ and set $G:=\mathbf{F}_p^n$ for the remainder of the paper.

    Following Manners~\cite[Section 5]{Manners2018} (with slightly
    different notation), for $f:G\to\mathbf{C}$ we define the function
    $S_{f}:G^3\to\mathbf{C}$ by
    \begin{equation*}
      S_f(h_1,h_2,h_3):=\mathbf{E}_{x\in G}\Delta_{h_1,h_2,h_3}f(x).
    \end{equation*}
    For any $\xi\in G$, we will write $\widehat{S_f}(h_1,h_2,\xi)$ to
    denote the Fourier transform of $S_f$ in the third component,
    i.e.,
    \begin{equation*}
      \widehat{S_f}(h_1,h_2,\xi):=\mathbf{E}_{x,h_3\in G}\Delta_{h_1,h_2,h_3}f(x)e_p(-\xi\cdot h_3)
    \end{equation*}
    for all $h_1,h_2\in G$. Observe that $\widehat{S_f}(h_1,h_2,\xi)=|\widehat{\Delta_{h_1,h_2}f}(-\xi)|^2$, and so
    \begin{equation}\label{eq:parse}
      \sum_{\xi\in G}\widehat{S_f}(h_1,h_2,\xi)=\|\Delta_{h_1,h_2}f\|_{L^2}^2
    \end{equation}
    for all $h_1,h_2\in G$ by Parseval's identity and
    \begin{equation}\label{eq:U4identity}
      \mathbf{E}_{h_1,h_2\in G}\sum_{\xi\in G}\widehat{S_f}(h_1,h_2,\xi)^2=\|f\|^{16}_{U^4(G)},
    \end{equation}
    by the identities
    $\|\Delta_{h_1,h_2}f\|_{U^2(G)}=\|\widehat{\Delta_{h_1,h_2}f}\|_{\ell^4}$
    and
    $\|f\|^{16}_{U^4(G)}=\mathbf{E}_{h_1,h_2\in
      G}\|\Delta_{h_1,h_2}f\|_{U^2(G)}^4$.

    A simple application of the pigeonhole principle shows that if $f$
    has large $U^4$-norm, then there are many $h_1,h_2\in G$ for which
    $\|\widehat{S}_f(h_1,h_2,\cdot)\|_{L^\infty}$ is large.
    \begin{lemma}\label{lem:ftophi}
      Let $f:G\to\mathbf{C}$ be $1$-bounded. If
      $\|f\|_{U^4(G)}\geq\delta$, then there exists $A\subset G^2$
      with $|A|\geq\frac{\delta^{16}}{2}|G|^2$ and $\phi:A\to G$ such
      that $\widehat{S_f}(a,a',\phi(a,a'))\geq\frac{\delta^{16}}{2}$
      whenever $(a,a')\in A$.
    \end{lemma}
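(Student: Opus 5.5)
The idea is to combine the identities \eqref{eq:parse} and \eqref{eq:U4identity} with the nonnegativity of $\widehat{S_f}(h_1,h_2,\xi)=|\widehat{\Delta_{h_1,h_2}f}(-\xi)|^2$, then pigeonhole twice: once in $\xi$ and once over pairs $(h_1,h_2)$.

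First, for each fixed pair $(h_1,h_2)\in G^2$, define
\begin{equation*}
M(h_1,h_2):=\max_{\xi\in G}\widehat{S_f}(h_1,h_2,\xi).
\end{equation*}
Every value $\widehat{S_f}(h_1,h_2,\xi)$ is nonnegative. Since $f$ is $1$-bounded, so is $\Delta_{h_1,h_2}f$, and \eqref{eq:parse} gives $\sum_{\xi}\widehat{S_f}(h_1,h_2,\xi)\leq 1$. Therefore
\begin{equation*}
\sum_{\xi\in G}\widehat{S_f}(h_1,h_2,\xi)^2\leq M(h_1,h_2)\sum_{\xi\in G}\widehat{S_f}(h_1,h_2,\xi)\leq M(h_1,h_2).
\end{equation*}
Averaging over $h_1,h_2$ and using \eqref{eq:U4identity} yields $\mathbf{E}_{h_1,h_2\in G}M(h_1,h_2)\geq\|f\|_{U^4(G)}^{16}\geq\delta^{16}$.

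Next, set $A:=\{(a,a')\in G^2: M(a,a')\geq\delta^{16}/2\}$. Note that $0\leq M\leq 1$, because each $\widehat{S_f}(h_1,h_2,\xi)$ is at most the full sum, which is at most $1$. Splitting the average according to membership in $A$ gives
\begin{equation*}
\delta^{16}\leq\mathbf{E}\,M\leq\frac{|A|}{|G|^2}+\frac{\delta^{16}}{2},
\end{equation*}
so $|A|\geq\frac{\delta^{16}}{2}|G|^2$.

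Finally, for each $(a,a')\in A$, let $\phi(a,a')$ be any $\xi$ attaining the maximum in the definition of $M(a,a')$. Then $\widehat{S_f}(a,a',\phi(a,a'))\geq\delta^{16}/2$, as required.

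There is no real obstacle here. The only points needing care are that $\widehat{S_f}$ is real and nonnegative, since it equals a squared modulus, and that the $L^2$ norm in \eqref{eq:parse} is the normalized one, so the bound $\sum_\xi\widehat{S_f}\leq 1$ holds.
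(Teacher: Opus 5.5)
Your proof is correct and is essentially the paper's argument: both use \eqref{eq:U4identity}, the bound $\sum_\xi \widehat{S_f}(h_1,h_2,\xi)^2 \leq \max_\eta \widehat{S_f}(h_1,h_2,\eta)$ coming from \eqref{eq:parse} and $1$-boundedness, and a pigeonhole over pairs $(h_1,h_2)$. The only difference is the order: the paper first pigeonholes on $\sum_\xi \widehat{S_f}^2$ and then passes to the maximum, whereas you pass to the maximum pointwise before pigeonholing.
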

    \begin{proof}
      By~\eqref{eq:U4identity}, we have
    \begin{equation*}
      \mathbf{E}_{h_1,h_2\in G}\sum_{\xi\in G}\widehat{S_f}(h_1,h_2,\xi)^2\geq\delta^{16}.
    \end{equation*}
    Thus, there exists a subset $A\subset G^2$ of density at least
    $\frac{\delta^{16}}{2}$ for which
    \begin{equation*}
      \sum_{\xi\in G}\widehat{S_f}(a,a',\xi)^2\geq\frac{\delta^{16}}{2}
    \end{equation*}
    for all $(a,a')\in A$. For each such pair $(a,a')$,
    \begin{equation*}
      \frac{\delta^{16}}{2}\leq \max_{\eta\in G}\widehat{S_f}(a,a',\eta)\cdot\sum_{\xi\in G}\widehat{S_f}(a,a',\xi)\leq\max_{\eta\in G}\widehat{S_f}(a,a',\eta),
    \end{equation*}
    by~\eqref{eq:parse} since $f$ is $1$-bounded. Setting $\phi(a,a'):=\eta$ for $\eta$
    attaining the maximum on the right-hand side above yields the
    conclusion of the lemma.
  \end{proof}

  Now, we diverge from Manners's proof a bit, using a random sampling
  argument to extend $\phi$ to a function on all of $G^2$ that respects
  very few more $3$-dimensional cubes than $\phi$ itself.

  \begin{lemma}\label{lem:randomfn}
    Let $A\subset G^2$ and $\phi:A\to G$. Consider the function
    $\psi:G^2\to G$ where $\psi|_{A}=\phi$ and, for all
    $x\in G^2\setminus A$, the value of $\psi(x)$ is chosen
    independently and uniformly at random from $G$. Then, with probability at
    least $1-\frac{30}{p^{n/2}}$, $\psi$ respects at most $p^{15n/2}$
    $3$-dimensional cubes $\square(x;h_1,h_2,h_3)\not\subset A$.
  \end{lemma}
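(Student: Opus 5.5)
Let $N$ denote the number of $3$-dimensional cubes $\square(x;h_1,h_2,h_3)\not\subset A$, where $x,h_i\in G^2$, that are respected by $\psi$. The plan is to show that $\mathbf{E}N\leq 30p^{7n}$ and then apply Markov's inequality. Here a cube means an ordered tuple $(x,h_1,h_2,h_3)\in(G^2)^4$, so there are $p^{8n}$ cubes in total. Markov gives
\begin{equation*}
  \mathbf{P}\big(N>p^{15n/2}\big)\leq \frac{\mathbf{E}N}{p^{15n/2}}\leq\frac{30}{p^{n/2}},
\end{equation*}
which is exactly the claimed bound. So everything reduces to bounding the expected number of respected cubes not contained in $A$.

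First consider a nondegenerate cube not contained in $A$. Its $8$ vertices $x+\omega\cdot(h_1,h_2,h_3)$ are distinct points of $G^2$, and at least one vertex $y$ lies outside $A$. The alternating sum can be written as $\pm\psi(y)$ plus a sum of values of $\psi$ at the other vertices. Condition on the values of $\psi$ at all points other than $y$. Then $\psi(y)$ is uniform on $G$ and independent of the rest, since the vertices are distinct and $y\notin A$. Hence the cube is respected with probability exactly $1/|G|=p^{-n}$. Summing over at most $p^{8n}$ such cubes contributes at most $p^{7n}$ to $\mathbf{E}N$.

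The main obstacle is the degenerate cubes, where $y$ may coincide with other vertices and its coefficients may cancel. I would simply bound their number trivially. A cube is degenerate exactly when two vertices coincide, that is, when $(\omega-\omega')\cdot(h_1,h_2,h_3)=0$ for some distinct $\omega,\omega'\in\{0,1\}^3$. The difference $\omega-\omega'$ is a nonzero vector with entries in $\{-1,0,1\}$, and there are $3^3-1=26$ such vectors.

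Fix one such vector, with some nonzero coordinate $j$. Since $p\geq 5$, that coordinate is $\pm1$, which is invertible. So the relation determines $h_j\in G^2$ (a group of size $p^{2n}$) from the other $h_i$. The number of tuples $(x,h_1,h_2,h_3)$ satisfying it is therefore $p^{2n}\cdot p^{4n}=p^{6n}$. Summing over the $26$ vectors, degenerate cubes number at most $26p^{6n}\leq 29p^{7n}$, even counting each as respected with probability $1$.

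Combining the two cases gives $\mathbf{E}N\leq p^{7n}+26p^{6n}\leq 30p^{7n}$, and Markov's inequality finishes the proof. The only care needed is in the conditioning step, where independence requires the vertex $y$ outside $A$ to be distinct from all other vertices; this is exactly why degenerate cubes are handled separately.
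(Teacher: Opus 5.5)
Your proof is correct and follows the paper's argument: a nondegenerate cube not contained in $A$ is respected with probability exactly $p^{-n}$, degenerate cubes are bounded trivially by $O(p^{6n})$, and Markov's inequality finishes. The only differences are cosmetic: you count degenerate cubes via the $26$ nonzero difference vectors in $\{-1,0,1\}^3$ where the paper uses the $\binom{8}{2}=28$ vertex pairs, and invertibility of the coefficient $\pm1$ holds for every $p$, so no appeal to $p\geq 5$ is needed.
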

  \begin{proof}
    If $\square(x;h_1,h_2,h_3)\not\subset A$ is nondegenerate, then
    $\mathbf{E}_{\psi}1_{\partial_{h_1,h_2,h_3}\psi(x)=0}=p^{-n}$
    since the elements $x+\omega\cdot(h_1,h_2,h_3)$ for
    $\omega\in\{0,1\}^3$ are distinct. The number of degenerate
    $3$-dimensional cubes in $G^2$ is certainly at most
    $\binom{8}{2}|G|^{6}= 28p^{6n}$. Thus, the expected number of $3$-dimensional cubes respected by $\psi$
    that are not wholly contained in $A$ is at most
    $p^{7n}+28p^{6n}$. The conclusion of the lemma now follows from
    Markov's inequality.
  \end{proof}

  Combining the previous two lemmas yields $\psi: G^2\to G$ respecting basically the same set of $3$-dimensional cubes as $\phi$ for
  which, setting
  $\Psi_{h_3}(h_1,h_2):=e_p(-\psi(h_1,h_2)\cdot h_3)$, the average
  \begin{equation}\label{eq:psireplace}
    \mathbf{E}_{h_3\in G}\mathbf{E}_{x,h_1,h_2\in G}\Delta_{h_3}f(x)\overline{\Delta_{h_3}f(x+h_1)}\overline{\Delta_{h_3}f(x+h_2)}\Delta_{h_3}f(x+h_1+h_2)\Psi_{h_3}(h_1,h_2)
  \end{equation}
  is large. The same sequence of applications of the Cauchy--Schwarz
  inequality used by Manners in~\cite[Section~5]{Manners2018} will
  then show that $\psi$ is $\Omega(\delta^{O(1)})$-approximately quadratic. To carry
  out Manners's argument, we will require a second version of the
  Gowers--Cauchy--Schwarz inequality (see, e.g.,~\cite[Lemma
  B.3]{GreenTao2010II}), as well as the notion of a generalized
  convolution.
  \begin{theorem}\label{thm:GCSbox}
    Let $s\geq 2$ be an integer, $f:G\to\mathbf{C}$ and, for each
    $i=1,\dots,s$, $f_i:G^{s}\to\mathbf{C}$ be a $1$-bounded function where $f_i(x_1,\dots,x_s)$ does not depend on the
    $i^{th}$ variable. Then,
    \begin{equation*}
      \Big|\mathbf{E}_{h_1,\dots,h_s\in G}f(h_1+\dots+h_s)\prod_{i=1}^sf_i(h_1,\dots,h_s)\Big|\leq\|f\|_{U^s(G)}.
    \end{equation*}
  \end{theorem}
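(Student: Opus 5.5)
We prove Theorem~\ref{thm:GCSbox} by induction on $s$, peeling off one bounded factor at a time with the Cauchy--Schwarz inequality. The key observation is that $f_i$ does not depend on $h_i$. If we isolate the variable $h_s$ and apply Cauchy--Schwarz in the remaining variables $h_1,\dots,h_{s-1}$, the factor $f_s$, which is independent of $h_s$, can be pulled outside the inner average over $h_s$. Since $f_s$ is $1$-bounded, it is then discarded. Concretely, write the average as
\begin{equation*}
  \mathbf{E}_{h_1,\dots,h_{s-1}}f_s(h_1,\dots,h_s)\,\mathbf{E}_{h_s}f(h_1+\dots+h_s)\prod_{i<s}f_i(h_1,\dots,h_s).
\end{equation*}
Applying Cauchy--Schwarz in $(h_1,\dots,h_{s-1})$ bounds its square by
\begin{equation*}
  \mathbf{E}_{h_1,\dots,h_{s-1}}\mathbf{E}_{h_s,h_s'}f(h_1+\dots+h_{s-1}+h_s)\overline{f(h_1+\dots+h_{s-1}+h_s')}\prod_{i<s}f_i(\dots,h_s)\overline{f_i(\dots,h_s')}.
\end{equation*}

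Next we reparametrize by $h_s'=h_s+k$ and set $F_k:=\Delta_k f$, so that the product of the two $f$'s becomes $F_k(h_1+\dots+h_s)$ (up to the conjugation convention). For each fixed $k$, we must rewrite the expression as an $(s-1)$-variable sum. To do this, we merge $h_{s-1}$ and $h_s$ into the single variable $h_{s-1}+h_s$, keeping $h_s$ as an auxiliary averaged variable.

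That is, for fixed $k$ and $h_s$, we consider the variables $h_1,\dots,h_{s-2}$ and $y:=h_{s-1}+h_s$. The argument of $F_k$ is $h_1+\dots+h_{s-2}+y$. For $i\le s-2$, the factor $f_i(\cdot,h_s)\overline{f_i(\cdot,h_s+k)}$ is independent of $h_i$. The factor with $i=s-1$ is independent of $h_{s-1}$, hence depends only on $h_1,\dots,h_{s-2},h_s$. Viewed as a function of $(h_1,\dots,h_{s-2},y)$ for fixed $h_s$, it is therefore independent of $y$. So all the hypotheses of the $(s-1)$ case hold, with $1$-bounded factors, for each fixed $(k,h_s)$.

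The inductive hypothesis then gives
\begin{equation*}
  |\cdot|^2\le\mathbf{E}_k\|\Delta_k f\|_{U^{s-1}(G)}\le\Big(\mathbf{E}_k\|\Delta_kf\|_{U^{s-1}}^{2^{s-1}}\Big)^{1/2^{s-1}}=\|f\|_{U^s}^2,
\end{equation*}
where the middle step is H\"older's inequality and the last is the standard recursive identity for Gowers norms. The base case should be $s=1$, where the statement reads $|\mathbf{E}_h f(h)|=\|f\|_{U^1}$ with $\|f\|_{U^1}=|\mathbf{E}f|$ (no $f_i$ survives). Alternatively, if one insists on starting at $s=2$, the base case is a direct two-fold Cauchy--Schwarz giving $\|f\|_{U^2}$. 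To avoid needing $U^1$, we do the base case $s=2$ directly. Cauchy--Schwarz in $h_1$ removes $f_2$. Substituting $h_2'=h_2+k$ and applying Cauchy--Schwarz again in $(h_2,k)$ removes $f_1(\cdot,h_2)\overline{f_1(\cdot,h_2+k)}$, which is independent of $h_1$. This leaves $\mathbf{E}_{k}|\mathbf{E}_{h}\Delta_kf(h)|^2=\|f\|_{U^2}^4$.

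The main obstacle is the bookkeeping in the change of variables. We must verify that, after merging $h_{s-1}$ and $h_s$, each new factor genuinely omits the correct variable, and that the averages remain uniform over $G^{s-1}$ for each fixed auxiliary parameter. Because $G$ is a group, the shifts are bijections, so uniformity is preserved. This is where care is needed.
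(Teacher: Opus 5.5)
The paper does not prove this statement; it quotes it as a standard fact (Lemma~B.3 of the Green--Tao reference), so there is no in-paper argument to compare against. Your induction is correct, and the points that need checking all hold:
\begin{itemize}
\item For fixed $h_s$, the substitution $y=h_{s-1}+h_s$ is a bijection of the remaining variables.
\item With the paper's convention $\Delta_kf(x)=f(x)\overline{f(x+k)}$, the two copies of $f$ combine exactly into $\Delta_kf(h_1+\dots+h_{s-2}+y)$.
\item The factors with $i\le s-2$ still omit $h_i$. The $i=s-1$ factor depends only on $h_1,\dots,h_{s-2}$ (and the frozen $h_s,k$), so it omits $y$.
\end{itemize}
The triangle inequality over $(k,h_s)$, the inductive bound, H\"older, and the identity $\mathbf{E}_k\|\Delta_kf\|_{U^{s-1}(G)}^{2^{s-1}}=\|f\|_{U^s(G)}^{2^s}$ then bound the square of the left-hand side by $\|f\|_{U^s(G)}^2$. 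Your direct $s=2$ base case is also fine: it gives the fourth power at most $\mathbf{E}_k|\mathbf{E}_x\Delta_kf(x)|^2=\|f\|_{U^2(G)}^4$. You never need $f$ itself to be bounded, which matches the statement.

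For comparison, the other standard proof avoids induction. Apply Cauchy--Schwarz $s$ times, the $i$-th time in all variables except $h_i$. Each application discards the current product of copies of $f_i$ and doubles $h_i$ into $h_i^{(0)},h_i^{(1)}$. This bounds the $2^s$-th power of the left-hand side by $\mathbf{E}_{h^{(0)},h^{(1)}\in G^s}\prod_{\omega\in\{0,1\}^s}\mathcal{C}^{|\omega|}f(h_1^{(\omega_1)}+\dots+h_s^{(\omega_s)})$. That quantity equals $\|f\|_{U^s(G)}^{2^s}$ after the substitution $x=\sum_ih_i^{(0)}$, $k_i=h_i^{(1)}-h_i^{(0)}$, which is uniformly $|G|^{s-1}$-to-one.

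The direct route needs neither H\"older nor the recursive formula for the Gowers norm, but it tracks $2^s$ copies of every factor at once. Yours handles one variable at a time, at the price of the merging substitution. Both give exactly the stated bound.
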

  \begin{definition}
    A \textit{generalized convolution} is any function $F:G\to\mathbf{C}$ of the form
    \begin{equation*}
      F(x)=\mathbf{E}_{\substack{y_1,y_2,y_3\in G \\ y_1+y_2+y_3=x}}f_1(y_2,y_3)f_2(y_1,y_3)f_3(y_1,y_2)
    \end{equation*}
    for some $f_1,f_2,f_3:G^2\to\mathbf{C}$.
  \end{definition}

  Applying the Cauchy--Schwarz inequality four times to the inner
  average of~\eqref{eq:psireplace} allows one to replace each instance
  of $\Delta_{h_3}f$ by a generalized convolution of $1$-bounded
  functions, and then an application of the Gowers--Cauchy--Schwarz inequality bounds
  the resulting expression by the $U^3(G^2)$-norm of
  $\Psi_{h_3}$. This is the content of the next two lemmas.

  \begin{lemma}\label{lem:genconvolv2}
    Let $\Psi:G^2\to\mathbf{C}$ and
    $f_{00},f_{10},f_{01},f_{11}:G\to\mathbf{C}$ all be
    $1$-bounded. Then, for any $\omega\in\{0,1\}^2$, there exists a
    generalized convolution $F_\omega$ of $1$-bounded functions such
    that
    \begin{align*}
      \Big|\mathbf{E}_{x,h_1,h_2\in G}\Psi(h_1,h_2)&\prod_{\omega'\in\{0,1\}^2}f_{\omega'}(x+\omega'\cdot(h_1,h_2))\Big|^2\\
      &\leq \mathbf{E}_{x,h_1,h_2\in G}\Psi(h_1,h_2)F_{\omega}(x+\omega\cdot(h_1,h_2))\prod_{\omega\neq \omega'\in\{0,1\}^2}f_{\omega'}(x+\omega'\cdot(h_1,h_2)).
    \end{align*}
  \end{lemma}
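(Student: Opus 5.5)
By symmetry it suffices to treat $\omega=00$; the other three cases are identical after a change of variables. For instance, for $\omega=10$ one substitutes $x\mapsto x-h_1$ and $h_1\mapsto -h_1$, which permutes the four vertices of the additive quadruple and replaces $\Psi$ by $\Psi(-h_1,h_2)$, still a $1$-bounded function of $(h_1,h_2)$. The plan is to isolate $f_{00}(x)$, apply Cauchy--Schwarz in $x$, and recognize the resulting dual-type function as a generalized convolution.

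For $\omega=00$, reparametrize with $y_1:=x+h_1$ and $y_2:=x+h_2$, so that $h_1=y_1-x$, $h_2=y_2-x$ and $x+h_1+h_2=y_1+y_2-x$. Write the left-hand average as $\mathbf{E}_x f_{00}(x)\,\overline{F(x)}$, where
\begin{equation*}
  \overline{F(x)}:=\mathbf{E}_{h_1,h_2\in G}\Psi(h_1,h_2)f_{10}(x+h_1)f_{01}(x+h_2)f_{11}(x+h_1+h_2).
\end{equation*}
By Cauchy--Schwarz and $1$-boundedness of $f_{00}$, the square of the left-hand side is at most $\mathbf{E}_x|F(x)|^2=\mathbf{E}_x F(x)\overline{F(x)}$.

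Next, expand one factor $\overline{F(x)}$ back into its defining average, which reproduces the original expression with $f_{00}$ replaced by $F_{00}:=F$:
\begin{equation*}
  \mathbf{E}_x|F(x)|^2=\mathbf{E}_{x,h_1,h_2\in G}\Psi(h_1,h_2)F_{00}(x)f_{10}(x+h_1)f_{01}(x+h_2)f_{11}(x+h_1+h_2).
\end{equation*}
This is exactly the required right-hand side. (If the intended normalization puts a conjugate on $F_\omega$, one simply uses $\overline F$ instead; the left-hand side $\mathbf{E}|F|^2$ is real and nonnegative, so the inequality holds in either form.)

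It remains to check that $F$ is a generalized convolution of $1$-bounded functions. In the variables above,
\begin{equation*}
  F(x)=\mathbf{E}_{y_1,y_2\in G}\overline{\Psi(y_1-x,y_2-x)f_{10}(y_1)f_{01}(y_2)f_{11}(y_1+y_2-x)}.
\end{equation*}
Set $y_3:=x-y_1-y_2$, so that $y_1+y_2+y_3=x$ and the condition $\sum y_i=x$ is a free parametrization of $(y_1,y_2)$. Then $y_1-x=-(y_2+y_3)$, $y_2-x=-(y_1+y_3)$ and $y_1+y_2-x=-y_3$. The summand therefore splits as a product $g_1(y_2,y_3)\,g_2(y_1,y_3)\,g_3(y_1,y_2)$ with
\begin{align*}
  g_3(y_1,y_2)&:=\overline{f_{10}(y_1)f_{01}(y_2)},\\
  g_1(y_2,y_3)&:=\overline{f_{11}(-y_3)},
\end{align*}
and $g_2(y_1,y_3)$ absorbing the $\Psi$-factor. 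The main obstacle is this last point: $\Psi(-(y_2+y_3),-(y_1+y_3))$ depends on all three variables and does not factor, so it cannot simply be placed in $g_2$. To handle it, I would first Fourier-expand $\Psi$ in its second coordinate, writing $\Psi(a,b)=\sum_\xi \widehat{\Psi}_\xi(a)e_p(\xi\cdot b)$, or equivalently pigeonhole after a Cauchy--Schwarz in $h_2$. The character factor $e_p(-\xi\cdot(y_1+y_3))=e_p(-\xi\cdot y_1)\,e_p(-\xi\cdot y_3)$ splits into pieces that can be absorbed into $g_3$ and $g_1$. This route loses some constants, which is worth checking against the precise form of the claimed inequality.

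Alternatively, and more simply, one can Cauchy--Schwarz only in the variables on which $\Psi$ does not depend, so that the $\Psi$-dependence is carried along inside the average rather than placed into $F$. This is the route I would try first.
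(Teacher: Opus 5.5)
Your reduction to $\omega=00$, the Cauchy--Schwarz step in $x$, and the observation that $\mathbf{E}_x|F(x)|^2$ is exactly the required right-hand side with $F_{00}=F$ are all correct and match the paper. The gap is in the remaining step, which is the real content of the lemma: you never show that $F$ is a generalized convolution of $1$-bounded functions, and neither proposed fix achieves this. Fourier-expanding $\Psi$ in its second coordinate writes $F$ as a sum over $\xi\in G$ of $|G|$ generalized convolutions rather than a single one. Pigeonholing in $\xi$ (or in $h_2$ after a Cauchy--Schwarz) then costs a factor as large as $|G|$, not a constant. That destroys the lossless inequality being claimed, and would be fatal in the proof of Lemma~\ref{lem:SfQ}. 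Your ``simpler'' alternative is not a new argument: $x$ is the only variable $\Psi$ does not depend on, you have already applied Cauchy--Schwarz in it, and the resulting $F$ unavoidably contains $\overline{\Psi}$.

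The obstacle comes from your choice of coordinates. You spent the two free coordinates $y_1,y_2$ on the arguments of the one-variable functions $f_{10},f_{01}$, which leaves the genuinely two-variable factor $\Psi$ depending on all of $y_1,y_2,y_3$. Do the opposite and let $\Psi$ dictate the choice. Set $y_1:=-h_1$, $y_2:=-h_2$, $y_3:=x+h_1+h_2$, so that $y_1+y_2+y_3=x$, $x+h_1=y_2+y_3$, $x+h_2=y_1+y_3$, and $x+h_1+h_2=y_3$. Then
\[
  F(x)=\mathbf{E}_{\substack{y_1,y_2,y_3\in G\\ y_1+y_2+y_3=x}}\overline{f_{10}(y_2+y_3)f_{11}(y_3)}\cdot\overline{f_{01}(y_1+y_3)}\cdot\overline{\Psi(-y_1,-y_2)}.
\]
This is a generalized convolution of the $1$-bounded functions $f_1(y_2,y_3)=\overline{f_{10}(y_2+y_3)f_{11}(y_3)}$, $f_2(y_1,y_3)=\overline{f_{01}(y_1+y_3)}$, and $f_3(y_1,y_2)=\overline{\Psi(-y_1,-y_2)}$. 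A one-variable function evaluated at a sum of two coordinates is automatically a function of that pair, so it is the bivariate factor that should occupy the free pair. This is exactly how the paper finishes, using signs $\epsilon_1,\epsilon_2\in\{-1,1\}$ to handle general $\omega$ directly, and no Fourier analysis or loss is needed.
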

  \begin{proof}
    By the change of variables $x\mapsto x-\omega\cdot(h_1,h_2)$, the left-hand side above equals
    \begin{equation*}
      \Big|\mathbf{E}_{x,h_1,h_2\in G}\Psi(h_1,h_2)f_{\omega}(x)\prod_{\omega\neq \omega'\in\{0,1\}^2}f_{\omega'}(x+(\omega'-\omega)\cdot(h_1,h_2))\Big|^2.
    \end{equation*}
    An application of the Cauchy--Schwarz inequality shows that this is bounded above by
    \begin{equation*}
      \mathbf{E}_{x,h_1,h_2,h_1',h_2'\in G}\Psi(h_1,h_2)\overline{\Psi(h_1',h_2')}\prod_{\omega\neq \omega'\in\{0,1\}^2}f_{\omega'}(x+(\omega'-\omega)\cdot(h_1,h_2))\overline{f_{\omega'}(x+(\omega'-\omega)\cdot(h_1',h_2'))},
    \end{equation*}
    which, setting
    \begin{equation*}
      F_\omega(x):=\mathbf{E}_{h_1',h_2'\in G}\overline{\Psi(h_1',h_2')}\prod_{\omega\neq\omega'\in\{0,1\}^2}\overline{f_{\omega'}(x+(\omega'-\omega)\cdot(h_1',h_2'))}
    \end{equation*}
    and then making the change of variables $x\mapsto x+\omega\cdot(h_1,h_2)$, can be written as 
    \begin{equation*}
      \mathbf{E}_{x,h_1,h_2\in G}\Psi(h_1,h_2)F_\omega(x+\omega\cdot(h_1,h_2))\prod_{\omega\neq \omega'\in\{0,1\}^2}f_{\omega'}(x+\omega'\cdot(h_1,h_2)).
    \end{equation*}
    Observe that
    $F_\omega(x)=\mathbf{E}_{h_1,h_2\in
      G}\overline{\Psi(h_1,h_2)}g_1(x+\epsilon_1h_1)g_2(x+\epsilon_2h_2)g_3(x+\epsilon_1h_1+\epsilon_2h_2)$
    for some $\epsilon_1,\epsilon_2\in\{-1,1\}$ and $1$-bounded
    $g_1,g_2,g_3:G\to\mathbf{C}$, and so writing $y_1=-\epsilon_1h_1$,
    $y_2=-\epsilon_2h_2$, and $y_3=x+\epsilon_1h_1+\epsilon_2h_2$, we
    have
    \begin{equation*}
      F_\omega(x)=\mathbf{E}_{\substack{y_1,y_2,y_3\in G \\ y_1+y_2+y_3=x}}f_1(y_2,y_3)f_2(y_1,y_3)f_3(y_1,y_2)
    \end{equation*}
    with $f_1(y_2,y_3)=g_1(y_2+y_3)g_3(y_3)$,
    $f_2(y_1,y_3)=g_2(y_1+y_3)$, and
    $f_3(y_1,y_2)=\Psi(-\epsilon_1y_1,-\epsilon_2y_2)$. Thus
    $F_\omega$ is a generalized convolution of $1$-bounded functions.
  \end{proof}
 
  \begin{lemma}\label{lem:genconvolv1}
    Let $F_{00},F_{10},F_{01},F_{11}:G\to\mathbf{C}$ be generalized convolutions of $1$-bounded functions and $\Psi:G^2\to\mathbf{C}$. Then,
    \begin{equation}\label{eq:genconvolveU3}
      \left|\mathbf{E}_{x,h_1,h_2\in G}F_{00}(x)F_{10}(x+h_1)F_{01}(x+h_2)F_{11}(x+h_1+h_2)\Psi(h_1,h_2)\right|\leq\|\Psi\|_{U^3(G^2)}.
    \end{equation}
  \end{lemma}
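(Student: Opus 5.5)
The idea is to bring the left-hand side of \eqref{eq:genconvolveU3} into the shape of Theorem~\ref{thm:GCSbox} with $s=3$, applied on the group $G^2$ with $f=\Psi$. Write each generalized convolution as
\begin{equation*}
F_\omega(x)=\mathbf{E}_{\substack{y_1,y_2,y_3\in G\\ y_1+y_2+y_3=x}}f^\omega_1(y_2,y_3)f^\omega_2(y_1,y_3)f^\omega_3(y_1,y_2),
\end{equation*}
where the $f^\omega_i$ are $1$-bounded.

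\textbf{Step 1: split the shifts.} Set
\begin{equation*}
K(h_1,h_2):=\mathbf{E}_{x}F_{00}(x)F_{10}(x+h_1)F_{01}(x+h_2)F_{11}(x+h_1+h_2),
\end{equation*}
so the left-hand side is $|\mathbf{E}_{h_1,h_2}\Psi(h_1,h_2)K(h_1,h_2)|$. Since $(a,b)\mapsto(a_1+a_2+a_3,\,b_1+b_2+b_3)$ pushes uniform measure on $G^3\times G^3$ forward to uniform measure on $G^2$, we have
\begin{equation*}
\mathbf{E}_{h_1,h_2}\Psi K=\mathbf{E}_{a,b\in G^3}\Psi(u_1+u_2+u_3)\,K(a_1+a_2+a_3,\,b_1+b_2+b_3),
\end{equation*}
where $u_j:=(a_j,b_j)\in G^2$.

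\textbf{Step 2: rewrite each factor of $K$.} For fixed $a$ with $\sum a_j=h_1$, the substitution $z=y'+a$ in the definition of $F_{10}(x+h_1)$ gives the exact identity
\begin{equation*}
F_{10}(x+h_1)=\mathbf{E}_{\substack{y'\in G^3\\ \sum y'_j=x}}\prod_{i=1}^3f^{10}_i\big((y'_j+a_j)_{j\neq i}\big).
\end{equation*}
In the same way:
\begin{itemize}
\item $F_{01}(x+h_2)$ is an average over $y''$ with $\sum y''_j=x$, with shifts $b_j$;
\item $F_{11}(x+h_1+h_2)$ is an average over $y'''$ with $\sum y'''_j=x$, with shifts $a_j+b_j$;
\item $F_{00}(x)$ uses unshifted $y$ with $\sum y_j=x$.
\end{itemize}
Each of these is an identity for every fixed $a,b$, so no bijectivity of the parametrization is needed. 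Substituting into $K$ gives
\begin{equation*}
\mathbf{E}_{h_1,h_2}\Psi K=\mathbf{E}_{x}\,\mathbf{E}_{y,y',y'',y'''}\,\mathbf{E}_{u_1,u_2,u_3\in G^2}\Psi(u_1+u_2+u_3)\prod_{i=1}^3g_i(u_1,u_2,u_3),
\end{equation*}
where the inner averages over $y,y',y'',y'''$ are over tuples summing to $x$.

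\textbf{Step 3: the key structural point.} For each fixed choice of $x,y,y',y'',y'''$, the function $g_i$ collects the four factors $f^{00}_i,f^{10}_i,f^{01}_i,f^{11}_i$. Every one of these depends only on coordinates $j\neq i$, and hence only on $u_j$ for $j\neq i$. So $g_i$ is $1$-bounded and independent of $u_i$. Theorem~\ref{thm:GCSbox} with $s=3$ on the group $G^2$ then bounds the inner average over $u_1,u_2,u_3$ in absolute value by $\|\Psi\|_{U^3(G^2)}$. The triangle inequality over the outer averages gives \eqref{eq:genconvolveU3}. The $1$-boundedness of $\Psi$ is not needed, since Theorem~\ref{thm:GCSbox} only requires the $g_i$ to be $1$-bounded.

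\textbf{Main obstacle.} The only delicate point is the bookkeeping in Step 2. One must pick the shifted parametrizations of the four convolution variables so that, after the change of variables, each $f^\omega_i$ really avoids the $i$-th pair $u_i=(a_i,b_i)$. Using the same index $i$ to split both $h_1$ and $h_2$ is exactly what guarantees this.
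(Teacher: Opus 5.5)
Your proof is correct and is essentially the paper's argument: you split the shifts as $h_1=a_1+a_2+a_3$ and $h_2=b_1+b_2+b_3$, translate the $j$-th convolution variable of each $F_\omega$ by $\omega\cdot(a_j,b_j)$ so that every $f^\omega_i$ avoids the pair $u_i=(a_i,b_i)$, and then apply Theorem~\ref{thm:GCSbox} on $G^2$ with $s=3$. The only difference is cosmetic: the paper also splits $x=x_1^\omega+x_2^\omega+x_3^\omega$ and works with variables $z_i^\omega$ summing to $0$, and your direct parametrization by tuples summing to $x$ makes that extra splitting unnecessary.
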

  \begin{proof}
    Using the definition of generalized convolution and relabeling $(h_1,h_2)$ as $(h,h')$, the average on
    the left-hand side of~\eqref{eq:genconvolveU3} can be written as
    \begin{equation*}
      \mathbf{E}_{x,h,h'\in G}\mathbf{E}_{\substack{y^\omega_1,y^\omega_2,y^\omega_3\in G \\ y^\omega_1+y^\omega_2+y^\omega_3=x+\omega\cdot(h,h')\\\omega\in\{0,1\}^2}}\Psi(h,h')\prod_{\omega\in\{0,1\}^2}f_1^\omega(y_2^\omega,y_3^\omega)f_2^\omega(y_1^\omega,y_3^\omega)f_3^\omega(y_1^\omega,y_2^\omega)
    \end{equation*}
    for some $1$-bounded functions
    $f_1^{\omega},f_2^{\omega},f_3^{\omega}:G^2\to\mathbf{C}$ for each
    $\omega\in\{0,1\}^2$. Inserting extra averaging, the above equals
    \begin{align*}
      \mathbf{E}_{\substack{h_1,h_2,h_3\in G \\ h_1',h_2',h_3'\in G}}\mathbf{E}_{\substack{x,x_1^\omega,x_2^\omega,x_3^\omega,y^\omega_1,y^\omega_2,y^\omega_3\in G \\ x_1^\omega+x_2^\omega+x_3^\omega=x \\ y^\omega_1+y^\omega_2+y^\omega_3=x+\omega\cdot(h_1+h_2+h_3,h_1'+h_2'+h_3')\\\omega\in\{0,1\}^2}}\Big[&\Psi(h_1+h_2+h_3,h_1'+h_2'+h_3') \\
                                                                                                                                                                                                                                                                                                                                                                                                                                   &\prod_{\omega\in\{0,1\}^2}f_1^\omega(y_2^\omega,y_3^\omega)f_2^\omega(y_1^\omega,y_3^\omega)f_3^\omega(y_1^\omega,y_2^\omega)\Big],
    \end{align*}
    which, after making the change of variables
    $z_i^\omega= y_i^\omega-x_i^\omega-\omega\cdot(h_i,h_i')$ for
    each $i=1,2,3$ and $\omega\in\{0,1\}^2$ and swapping the order of
    summation, equals
    \begin{align*}
      \mathbf{E}_{\substack{x,x_1^\omega,x_2^\omega,x_3^\omega,z^\omega_1,z^\omega_2,z^\omega_3\in G \\ x_1^\omega+x_2^\omega+x_3^\omega=x \\ z^\omega_1+z^\omega_2+z^\omega_3=0 \\ \omega\in\{0,1\}^2}}\mathbf{E}_{\substack{h_1,h_2,h_3\in G \\ h'_1,h'_2,h'_3\in G}}\Psi(h_1+h_2+h_3,h'_1+h'_2+h'_3)\prod_{1\leq i<j\leq 3} g^{(i,j)}_{\mathbf{x},\mathbf{z}}(h_i,h_j,h_i',h_j')
    \end{align*}
    where
    \begin{equation*}
      g^{(i,j)}_{\mathbf{x},\mathbf{z}}(h_i,h_j,h'_i,h'_j):=\prod_{\omega\in\{0,1\}^2}f_k^\omega(z_i^\omega+x_i^\omega+\omega\cdot(h_i,h'_i),z_j^\omega+x_j^\omega+\omega\cdot(h_j,h'_j))
    \end{equation*}
    with $k$ being the remaining number in $[3]\setminus\{i,j\}$ for each $1\leq i<j\leq 3$. The conclusion of the lemma now follows from the
    Gowers--Cauchy--Schwarz inequality in the form of
    Theorem~\ref{thm:GCSbox}.
  \end{proof}

  Now, we can combine the previous three lemmas with
  Theorem~\ref{thm:approxpoly} to replace the $\phi$ produced by
  Lemma~\ref{lem:ftophi} with a polynomial map of degree at most $2$.
  \begin{lemma}\label{lem:SfQ}
    Fix a prime $p\geq 5$. If $f:G\to\mathbf{C}$ is a nonzero $1$-bounded function
    and $\|f\|_{U^4(G)}\geq\delta$, then there exists a polynomial map
    $Q:G^2\to G$ of degree at most $2$ such that
    \begin{equation}\label{eq:conclu}
      \mathbf{E}_{h_1,h_2\in G}\widehat{S_f}(h_1,h_2,Q(h_1,h_2))\gg\exp(-O(\delta^{-O(1)})).
    \end{equation}
  \end{lemma}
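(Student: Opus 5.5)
We start from Lemma~\ref{lem:ftophi}, which gives $A\subset G^2$ of density at least $\delta^{16}/2$ and $\phi:A\to G$ with $\widehat{S_f}(a,a',\phi(a,a'))\geq\delta^{16}/2$ on $A$. Extend $\phi$ to $\psi:G^2\to G$ as in Lemma~\ref{lem:randomfn}, fixing an outcome for which $\psi$ respects at most $p^{15n/2}$ cubes $\square(x;h_1,h_2,h_3)\not\subset A$. Identifying $G^2$ with $\mathbf{F}_p^{2n}$ (and $G$ with a subspace of $\mathbf{F}_p^{2n}$), we will show that $\psi$ is $\Omega(\delta^{O(1)})$-approximately quadratic. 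Theorem~\ref{thm:approxpoly} then supplies a polynomial map $Q$ of degree at most $2$ with $\psi=Q$ on $\gg\exp(-O(\delta^{-O(1)}))|G|^2$ points. Composing $Q$ with a projection to $G$ keeps it a degree-$2$ map and preserves agreement wherever $\psi$ takes values in $G$. We may assume $\delta\geq p^{-cn}$ for a small $c$, since otherwise the bound is trivial or handled by adjusting constants.

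To obtain approximate quadraticity, expand
\[
\mathbf{E}_{h_1,h_2}1_A(h_1,h_2)\widehat{S_f}(h_1,h_2,\psi(h_1,h_2))\geq\delta^{32}/4.
\]
The left-hand side equals the real part of the average~\eqref{eq:psireplace}, with $\Psi_{h_3}(h_1,h_2)=1_A(h_1,h_2)e_p(-\psi(h_1,h_2)\cdot h_3)$ and the sign convention adjusted. Fix $h_3$ and apply Lemma~\ref{lem:genconvolv2} four times, once for each $\omega\in\{0,1\}^2$, to replace each $\Delta_{h_3}f$ by a generalized convolution. Then Lemma~\ref{lem:genconvolv1} bounds the inner average, raised to the $16$th power, by $\|\Psi_{h_3}\|_{U^3(G^2)}$. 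Averaging over $h_3$ and applying H\"older gives
\[
\mathbf{E}_{h_3}\|\Psi_{h_3}\|_{U^3(G^2)}^{8}\gg\delta^{O(1)}.
\]
Expanding the $U^3$ norm and averaging over $h_3$ first turns the character sum into an indicator:
\[
\mathbf{E}_{h_3}e_p\big(-h_3\cdot\partial_{k_1,k_2,k_3}\psi(y)\big)=1_{\partial_{k_1,k_2,k_3}\psi(y)=0}.
\]
So the proportion of $3$-dimensional cubes in $G^2$ contained in $A$ and respected by $\psi$ is $\gg\delta^{O(1)}$. Since $A$ has density at most $1$, $\|1_A\|_{U^3}\leq1$, and the count against all cubes in $A$ is at least this proportion. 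Hence $\psi|_A$, and therefore $\psi$, is $\Omega(\delta^{O(1)})$-approximately quadratic on $G^2$. Cubes not in $A$ contribute only $p^{15n/2}=o(|G^2|^4)$ and do not affect this.

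Finally, let $B=\{(h_1,h_2)\in A:\psi=Q\}$. We need $|B|$ large, not merely the set where $\psi=Q$. The set $\{\psi=Q\}\setminus A$ can be large only with tiny probability over the random choice: for fixed $Q$, each point outside $A$ agrees with probability $p^{-n}$. There are $p^{O(n^3)}$ choices of $Q$, so a union bound combined with Chernoff shows that, with high probability, every $Q$ agrees with $\psi$ on at most $p^{-n/2}|G|^2$ points outside $A$. Alternatively, approximate quadraticity of $\psi$ comes only from cubes inside $A$, and we could apply Theorem~\ref{thm:approxpoly} to $\psi$ while choosing the random values to avoid agreement. The union bound over $Q$ is the delicate point; I would handle it through the Chernoff bound $\exp(-\Omega(p^{3n/2}))$, which beats $p^{O(n^3)}$ only if $\delta$ is not exponentially small in $n$, and otherwise fall back on the trivial regime. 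Given $|B|\gg\exp(-O(\delta^{-O(1)}))|G|^2$, the positivity $\widehat{S_f}\geq0$ yields
\[
\mathbf{E}_{h_1,h_2}\widehat{S_f}(h_1,h_2,Q(h_1,h_2))\geq\frac{|B|}{|G|^2}\cdot\frac{\delta^{16}}{2},
\]
which is~\eqref{eq:conclu}.

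The main obstacle is making the Cauchy--Schwarz chain produce the cube-counting conclusion with the correct normalization. A second obstacle is guaranteeing that agreement with $Q$ occurs on $A$ rather than at random points.
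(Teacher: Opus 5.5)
Your route works in outline and matches the paper up to the point where $\psi$ is shown to be $\Omega(\delta^{O(1)})$-approximately quadratic. Putting $1_A$ into $\Psi_{h_3}$ is harmless: the respected cubes then come from inside $A$, so you never actually use the cube bound of Lemma~\ref{lem:randomfn}. However, your case split is wrong as stated and must be fixed (second paragraph).

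The real difference is in forcing the agreement with $Q$ to occur inside $A$. The paper argues deterministically. If $\psi=Q$ on a set $E\subset G^2\setminus A$ of density $\theta(\delta)/2$, then, since $Q$ respects every cube, $\psi$ respects all $\geq(\theta(\delta)/2)^8p^{8n}$ cubes contained in $E$. None of these lies in $A$, which contradicts the $p^{15n/2}$ bound. This needs only Markov's inequality and handles every degree-$2$ map at once without counting them. Your alternative also works. For fixed $Q$, the number of agreements off $A$ is a sum of at most $p^{2n}$ independent Bernoulli$(p^{-n})$ variables, so it exceeds $p^{3n/2}$ with probability $\exp(-\Omega(p^{3n/2}))$; then you take a union bound over the $p^{O(n^3)}$ degree-$2$ maps $G^2\to G$. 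This genuinely needs a Chernoff-type tail rather than a first-moment bound. Contrary to your remark, that comparison does not involve $\delta$ at all: it holds for every $n\geq n_0$.

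Where $\delta$ actually enters is the subtraction $\theta(\delta)|G|^2-p^{3n/2}$, with $\theta(\delta)=\exp(-C\delta^{-C})/C$ from Theorem~\ref{thm:approxpoly}. Your reduction to $\delta\geq p^{-cn}$ does not make this positive. For example, $\delta=n^{-2/C}$ satisfies $\delta\geq p^{-cn}$ for large $n$, yet $\theta(\delta)=e^{-Cn^2}/C$ is far below $p^{-n/2}$. The split must instead be on $p^n$ versus a power of $1/\theta(\delta)$, as in the paper.

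The small case also needs an argument, which you do not supply. Since $0\leq\widehat{S_f}\leq 1$, we have $\delta^{16}\leq\mathbf{E}_{h_1,h_2}\sum_{\xi}\widehat{S_f}(h_1,h_2,\xi)^2\leq\sum_{\xi}\mathbf{E}_{h_1,h_2}\widehat{S_f}(h_1,h_2,\xi)$. So some constant map $Q\equiv\xi$ gives at least $\delta^{16}p^{-n}$. This is $\gg\exp(-O(\delta^{-O(1)}))$ whenever $p^n\leq 4\theta(\delta)^{-2}$, and it likewise covers $n<n_0$. With that correction your proof is complete.
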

  \begin{proof}    
    By Lemma~\ref{lem:ftophi}, we can find $A\subset G^2$ with
    $|A|\geq\frac{\delta^{16}}{2}|G|^2$ and $\phi:A\to G$ such that
    $\widehat{S_f}(a,a',\phi(a,a'))\geq\frac{\delta^{16}}{2}$ whenever
    $(a,a')\in A$. If $p^n>900$, then Lemma~\ref{lem:randomfn} says
    that there exists $\psi:G^2\to G$ with $\psi|_{A}=\phi$ such that
    $\psi$ respects at most $p^{15n/2}$ $3$-dimensional cubes that are
    not wholly contained in $A$. Since $\widehat{S_f}$ is always
    nonnegative, we therefore have
    \begin{equation*}
      \mathbf{E}_{h_1,h_2\in G}\widehat{S_{f}}(h_1,h_2,\psi(h_1,h_2))\gg\delta^{O(1)}.
    \end{equation*}
    Inserting the definition of $\widehat{S_f}$, the left-hand side above equals
    \begin{equation}\label{eq:avgabove}
      \mathbf{E}_{x,h_1,h_2,h_3\in G}\Delta_{h_3}f(x)\overline{\Delta_{h_3}f(x+h_1)}\overline{\Delta_{h_3}f(x+h_2)}\Delta_{h_3}f(x+h_1+h_2)\Psi_{h_3}(h_1,h_2),
    \end{equation}
    where, as in our exposition,
    $\Psi_{h_3}(h_1,h_2):=e_p(-\psi(h_1,h_2)\cdot h_3)$.

    Using Lemma~\ref{lem:genconvolv2} four times to replace each
    instance of $\Delta_{h_3}f$ in~\eqref{eq:avgabove} by a generalized
    convolution of $1$-bounded functions, Lemma~\ref{lem:genconvolv1},
    and then H\"older's inequality, we get
    \begin{equation*}
      \mathbf{E}_{h_3\in G}\|\Psi_{h_3}\|_{U^3(G^2)}^{8}\gg\delta^{O(1)}.
    \end{equation*}
    Expanding the definition of the $U^3$-norm and using orthogonality
    of characters, the above says that $\psi$ is
    $\Omega(\delta^{O(1)})$-approximately quadratic. Viewing $G$ as
    $G\times\{0\}\subset G^2$, Theorem~\ref{thm:approxpoly} then
    produces a polynomial map $Q:G^2\to G$ of degree at most $2$ such
    that $\psi(h_1,h_2)=Q(h_1,h_2)$ for at least
    $\frac{\exp(-C\delta^{-C})}{C}|G|^2$ pairs $(h_1,h_2)\in G^2$.

    Set $\theta(\delta):=\frac{\exp(-C\delta^{-C})}{C}$. We will show
    that, in fact, $\psi(a,a')=Q(a,a')$ for at least
    $\frac{\theta(\delta)}{2}|G|^2$ elements $(a,a')\in A$. Indeed, if
    $\psi(x,y)=Q(x,y)$ for more than $\frac{\theta(\delta)}{2}|G|^2$
    elements of $B:=G^2\setminus A$, then by the
    Gowers--Cauchy--Schwarz inequality in the form of
    Theorem~\ref{thm:GCSnorm} (with
    $f_{\mathbf{0}}(x,y)=1_{B}(x,y)1_{\psi(x,y)=Q(x,y)}$ and
    $f_\omega=1$ for all $\mathbf{0}\neq\omega\in\{0,1\}^3$), $\psi$
    must respect more than
    $\frac{\theta(\delta)^8}{2^8}|G|^8=\frac{\theta(\delta)^8}{2^8}p^{8n}$
    $3$-dimensional cubes contained in $B$. However, $\psi$ respects
    at most $p^{15n/2}$ $3$-dimensional cubes not contained in $A$,
    and so we have a contradiction as long as
    $p^n\geq \frac{2^{16}}{\theta(\delta)^{16}}$ (which, since
    $\theta(\delta)<1$, is certainly greater than $900$).

    Thus, $\phi(a,a')=Q(a,a')$ for at least
    $\frac{\theta(\delta)}{2}|G|^2$ pairs $(a,a')\in A$ when $p^{n}\geq \frac{2^{16}}{\theta(\delta)^{16}}$, so
    since $\widehat{S_f}(a,a',\phi(a,a'))\geq\frac{\delta^{16}}{2}$
    for all $(a,a')\in A$,~\eqref{eq:conclu} follows by the
    nonnegativity of $\widehat{S_f}$.

    When $p^n<\frac{2^{16}}{\theta(\delta)^{16}}$,~\eqref{eq:conclu} can be obtained almost
    immediately. Indeed,
    \begin{equation*}
      \delta^{16}\leq \mathbf{E}_{h_1,h_2\in G}\sum_{\xi\in G}\widehat{S_f}(h_1,h_2,\xi)^2\leq \sum_{\xi\in G}\mathbf{E}_{h_1,h_2\in G}\widehat{S_f}(h_1,h_2,\xi)\leq p^{n}\max_{\xi\in G}\mathbf{E}_{h_1,h_2\in G}\widehat{S_f}(h_1,h_2,\xi),
    \end{equation*}
    and so there exists $\xi\in G$ for which
    $\mathbf{E}_{h_1,h_2\in G}\widehat{S_f}(h_1,h_2,\xi)\geq
    \delta^{16}p^{-n}$. Thus, if $p^n< \frac{2^{16}}{\theta(\delta)^{16}}$, then by taking
    $Q\equiv \xi$, we get
    $\mathbf{E}_{h_1,h_2\in G}\widehat{S_f}(h_1,h_2,Q(h_1,h_2))\geq
    \frac{\delta^{16}\theta(\delta)^{16}}{2^{16}}$, which is acceptable.
  \end{proof}

  Combining Lemma~\ref{lem:SfQ} with two applications of the
  Cauchy--Schwarz inequality yields
  \begin{equation*}
    \mathbf{E}_{x,h_1,h_2,h_3\in G}\Delta_{h_1,h_2,h_3}f(x) e_p(T(h_1,h_2,h_3))\gg_\delta 1.
  \end{equation*}
  for some trilinear form $T:G^3\to \mathbf{F}_p$, and we are now in
  the same position as Gowers and Mili\'cevi\'c at the beginning of
  Section~10 of~\cite{GowersMilicevic2017} (except that we have not
  proven Theorem~\ref{thm:approxpoly} yet). To complete the proof of
  Theorem~\ref{thm:main}, one would ideally like to ``integrate'' $T$
  when it is nonzero, writing it as the $3$-fold additive discrete
  derivative of a cubic polynomial, so that the phase
  $e_p(T(h_1,h_2,h_3))$ can be absorbed into
  $\Delta_{h_1,h_2,h_3}f(x)$. This is not possible, however, unless
  $T$ is symmetric. The $T$ produced by the argument so far is not
  necessarily symmetric, but it does have large correlation with
  $\Delta_{h_1,h_2,h_3}f(x)$, which is symmetric in $h_1,h_2,$ and
  $h_3$. This is enough information to ``symmetrize'' $T$ up to a low
  (analytic) rank error.

  One can then handle the low rank error by invoking bounds for the
  partition rank of trilinear forms in terms of their analytic
  rank. Polynomial bounds, which are sufficient for our purposes, have
  been known since work of Haramaty and
  Shpilka~\cite{HaramatyShpilka2010}, and Gowers and Mili\'cevi\'c
  gave their own proof along similar lines in~\cite[Subsection
  10.2]{GowersMilicevic2017}. Linear bounds, which we state below, have
  now been proven by Adiprasito, Kazhdan, and
  Ziegler~\cite{AdiprasitoKazhdanZiegler2021} and Cohen and
  Moshkovitz~\cite{CohenMoshkovitz2022}; Lampert~\cite{Lampert2025}
  also recently gave a short and elementary proof.

  \begin{theorem}\label{thm:rank}
    Let $T:G^3\to\mathbf{F}_p$ be a trilinear form satisfying
    \begin{equation*}
      \mathbf{E}_{x,y,z\in G}e_p(T(x,y,z))\geq\gamma.
    \end{equation*}
    Then, there exist linear forms $L_i,L_j',L_k'':G\to\mathbf{F}_p$
    and bilinear forms $B_i,B_j',B_k'':G^2\to\mathbf{F}_p$, for
    $i=1,\dots,d_1$, $j=1,\dots,d_2$, and $k=1,\dots,d_3$ with
    $d_1,d_2,d_3\ll\log(2/\gamma)$, such that
    \begin{equation*}
      T(x,y,z)=\sum_{i=1}^{d_1}L_i(x)B_i(y,z)+\sum_{j=1}^{d_2}L_j'(y)B_j'(x,z)+\sum_{k=1}^{d_3}L_k''(z)B_k''(x,y).
    \end{equation*}
  \end{theorem}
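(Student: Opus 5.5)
This is the linear partition-rank-versus-analytic-rank bound for trilinear forms. I would follow Lampert's elementary approach, in the style of Adiprasito--Kazhdan--Ziegler and Cohen--Moshkovitz.

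\textbf{Step 1: pass to a variety.} Write $T(x,y,z)=\langle M(x,y),z\rangle$, where $M:G^2\to G$ is bilinear. Orthogonality of characters gives
\begin{equation*}
\mathbf{E}_{x,y,z}e_p(T(x,y,z))=\Pr_{x,y}[M(x,y)=0].
\end{equation*}
So the hypothesis says the bilinear variety $Z=\{(x,y):M(x,y)=0\}$ has density at least $\gamma$ in $G^2$. For each $x$, the fibre $Z_x$ is the kernel of the linear map $y\mapsto M(x,y)$. Hence $\Pr_y[(x,y)\in Z]=p^{-\rank M_x}$, and $\mathbf{E}_x p^{-\rank M_x}\geq\gamma$.

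\textbf{Step 2: find a large subspace where $T$ nearly vanishes.} The goal is subspaces $U\le G$ in $x$ (and similarly in $y$, $z$) of codimension $O(\log(2/\gamma))$ on which $T$ vanishes after subtracting $O(\log(2/\gamma))$ terms of the form $L(\cdot)B(\cdot,\cdot)$. Vanishing of $T$ on $U_1\times U_2\times U_3$, with $\codim U_i=d_i$, already suffices. Indeed, choose complements and write each variable as its $U_i$-part plus its complement-part. Every term of $T$ that involves a complement coordinate factors as a linear form (one of the $d_i$ coordinate functionals) times a bilinear form in the other two variables. This is exactly the decomposition in the statement.

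To produce the $U_i$, I would run an iterative or pigeonhole argument. First, the set $X=\{x:\rank M_x\le r\}$ has density at least $\gamma/2$ once $p^{-r}\le\gamma/2$, so $r\approx\log_p(2/\gamma)$. Second, a density argument of Bogolyubov type, using sums of elements of $X$ and subadditivity of rank, gives a subspace $U_1$ of codimension $O(\log(2/\gamma))$ on which $\rank M_x\ll\log(2/\gamma)$ for all $x\in U_1$.

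\textbf{Main obstacle.} A uniform rank bound on a subspace does not give a bounded number of common linear forms in $y$ and $z$ for free, and the Bogolyubov step naively loses polynomially in the codimension. Lampert's trick handles both by working with the variety directly and choosing a generic point $x_0\in X$ of maximal rank $r$. For $x$ in $X$ near $x_0$, the image of $M_x$ is forced to lie in $\Img M_{x_0}$, a space of dimension $r$, up to a correction controlled by the derivative $M_{x}$ restricted to $\ker M_{x_0}$. Iterating this "restrict to the kernel" step, each round either lowers the density by only a constant factor or adds $O(1)$ to the codimension. This bookkeeping yields the linear bound $d_1,d_2,d_3\ll\log(2/\gamma)$. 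If only polynomial bounds were needed, one could instead cite Haramaty--Shpilka directly; the proof of Theorem~\ref{thm:main} only needs $d_i\ll\delta^{-O(1)}$-type bounds in any case.
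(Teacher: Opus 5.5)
There is a genuine gap: your proposal correctly reduces the problem, but it does not supply the step that makes the theorem true. For context, the paper gives no proof of Theorem~\ref{thm:rank}; it quotes the linear bound from Adiprasito--Kazhdan--Ziegler, Cohen--Moshkovitz and Lampert.

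\textbf{What is correct.} Your Step~1 holds: $\mathbf{E}_{x,y,z}e_p(T)=\Pr_{x,y}[M(x,y)=0]=\mathbf{E}_x p^{-\rank M_x}$. The reduction in Step~2 also holds, and is in fact an equivalence: vanishing of $T$ on $U_1\times U_2\times U_3$ is the same as a decomposition with $d_i=\codim U_i$. So the entire content of the theorem lies in producing the $U_i$ with $\codim U_i\ll\log(2/\gamma)$.

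\textbf{The Bogolyubov step.} This cannot deliver codimension $O(\log(2/\gamma))$. For a set known only to have density $\gamma/2$, that would be the open polynomial Bogolyubov conjecture; the best known bounds (Sanders) give $O(\log^4(2/\gamma))$.

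\textbf{The compression step.} This is the more serious problem. A uniform bound $\rank M_x\le r'$ on a subspace $U_1$ does not by itself give the decomposition. The tool you implicitly rely on is the lemma that if $M_{x_0}$ has maximal rank on $U_1$, then $M_x(\ker M_{x_0})\subseteq\Img M_{x_0}$ for all $x\in U_1$. That lemma would make $T$ vanish on $U_1\times\ker M_{x_0}\times(\Img M_{x_0})^{\perp}$. However, it is proved by reading $\rank(M_{x_0}+tM_x)\le r'$ as a polynomial identity in $t$, which needs $p$ large compared with $r'$. Over $\mathbf{F}_p$ with $p$ fixed and $r'\asymp\log(2/\gamma)$ it is false. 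For example, in $\M_{p+1}(\mathbf{F}_p)$ take $A_0=\mathrm{diag}(1,\dots,1,0)$ and $A=\mathrm{diag}(0,1,2,\dots,p-1,1)$. Every member of their span has rank at most $p=\rank A_0$, yet $Ae_{p+1}=e_{p+1}\notin\Img A_0$.

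\textbf{The ``Lampert's trick'' paragraph.} This does not close the gap.
\begin{itemize}
\item $X$ is not a subspace, so ``$x\in X$ near $x_0$'' has no meaning.
\item The claimed confinement of $\Img M_x$ to $\Img M_{x_0}$ is exactly the large-field lemma just shown to fail.
\item The dichotomy ``each round either lowers the density by a constant factor or adds $O(1)$ to the codimension'' is asserted without any mechanism or potential function. That dichotomy is the whole content of a linear bound.
\end{itemize}
So as written you have proved only the easy reduction.

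\textbf{The Haramaty--Shpilka fallback.} Citing Haramaty--Shpilka gives $d_i\ll\log(2/\gamma)^{O(1)}$. As you say, and as the paper remarks, this suffices for Theorem~\ref{thm:main}. In Lemma~\ref{lem:symmetrization} the factor $p^{-3\codim Z}$ becomes $\exp(-O(\log(2/\gamma)^{O(1)}))$, which is still $\exp(-O(\delta^{-O(1)}))$ when $\gamma=\exp(-O(\delta^{-O(1)}))$. But it is strictly weaker than the statement claimed. For $d_1,d_2,d_3\ll\log(2/\gamma)$ you must actually reproduce, or cite, the Adiprasito--Kazhdan--Ziegler, Cohen--Moshkovitz or Lampert argument.
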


  Combining the symmetrization argument of Gowers and Mili\'cevi\'c
  from~\cite[Section~10]{GowersMilicevic2017} with
  Theorem~\ref{thm:rank} yields the following useful lemma.
  
  \begin{lemma}\label{lem:symmetrization}
    Let $p\geq 5$, $f:G\to\mathbf{C}$ be $1$-bounded, and
    $T:G^3\to\mathbf{F}_p$ be a trilinear form with
    \begin{equation}\label{eq:largetri}
      \left|\mathbf{E}_{x,h_1,h_2,h_3\in G}\Delta_{h_1,h_2,h_3}f(x)e_p(T(h_1,h_2,h_3))\right|\geq\gamma.
    \end{equation}
    Then, there exists a polynomial $P:G\to\mathbf{F}_p$ of degree at most $3$ such that
    \begin{equation*}
      \left|\mathbf{E}_{x\in\mathbf{F}_p^n}f(x)e_p(P(x))\right|\gg \gamma^{O(1)}.
    \end{equation*}
  \end{lemma}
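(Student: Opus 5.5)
The plan is to symmetrize $T$ up to a low-rank error, integrate the symmetric part to a cubic phase, and absorb the low-rank error into lower-order structure.

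\emph{Symmetrization.} Since $\Delta_{h_1,h_2,h_3}f(x)$ is invariant under permuting $h_1,h_2,h_3$, the hypothesis \eqref{eq:largetri} also holds with $T$ replaced by $T^\sigma(h_1,h_2,h_3):=T(h_{\sigma(1)},h_{\sigma(2)},h_{\sigma(3)})$ for each $\sigma\in S_3$. Take $\sigma$ to be a transposition, say swapping $h_1$ and $h_2$. For fixed $h_3$, write $g_{h_3}:=\Delta_{h_3}f$; the inner average over $(x,h_1,h_2)$ is then a $U^2$-type average of $g_{h_3}$ twisted by the bilinear phase $e_p(T(h_1,h_2,h_3))$.

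Apply Cauchy--Schwarz twice in $h_1,h_2$, following Gowers--Mili\'cevi\'c, to compare the correlations with $T$ and with $T^\sigma$. Concretely, I would show that
\[
\mathbf{E}_{h_1,h_2,h_3}\, e_p\bigl(T(h_1,h_2,h_3)-T(h_2,h_1,h_3)\bigr)\gg\gamma^{O(1)}.
\]
One way to obtain this is to view $T(\cdot,\cdot,h_3)$ as a matrix $M_{h_3}$. Large correlation means that $\Delta_{h_3}f$ correlates with the quadratic phase $e_p(x^{T}M_{h_3}x/2)$ in an appropriate sense, and this phase sees only the symmetric part of $M_{h_3}$. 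Hence the antisymmetric part $M_{h_3}-M_{h_3}^{T}$ must have low rank on average, which is exactly the displayed bound. Theorem~\ref{thm:rank} then gives that $T-T^{(12)}$ has partition rank $O(\log(1/\gamma))$. Doing the same for the transposition of $h_2,h_3$ shows that
\[
T=S+R,
\]
where $S:=\frac16\sum_{\sigma\in S_3}T^\sigma$ is symmetric (this uses $p\geq5$ to divide by $6$) and $R$ is a sum of $O(\log(1/\gamma))$ terms of the form $L(h_i)B(h_j,h_k)$.

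\emph{Removing $R$.} Partition $G$ into the cosets of $W:=\bigcap\ker L$, where $L$ ranges over the $O(\log(1/\gamma))$ linear forms appearing in $R$; the codimension of $W$ is $O(\log(1/\gamma))$. Fourier-expand $e_p(R)$ in the values $L(h_i)\in\mathbf{F}_p$; each $e_p(cB(h_j,h_k))$ is then a function independent of $h_i$. Pigeonholing over the at most $\gamma^{-O(1)}$ choices of the values $L(h_i)$ gives
\[
\Bigl|\mathbf{E}_{x,h}\,\Delta_{h_1,h_2,h_3}f(x)\,e_p(S(h))\prod_{i}b_i(h)\Bigr|\gg\gamma^{O(1)},
\]
where each $b_i$ is a $1$-bounded function not depending on $h_i$.

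Now integrate $S$: set $P_0(x):=-S(x,x,x)/6$, again using $p\geq5$, so that $\partial_{h_1,h_2,h_3}P_0$ equals $S(h_1,h_2,h_3)$ up to sign. The phase then absorbs into $f_1:=f\,e_p(\pm P_0)$, at the cost of lower-order terms that do not depend on all three $h_i$. The remaining factors are functions of $x$ together with at most two of the $h_i$, or of the $h_i$ alone, so they are lower-degree terms. A Gowers--Cauchy--Schwarz argument in the spirit of Theorem~\ref{thm:GCSbox}, applied with $x$ and the $h_i$ as box variables, then gives
\[
\|f_1\|_{U^3}\gg\gamma^{O(1)}.
\]
Theorem~\ref{thm:U3inv} produces a quadratic $Q$ with $|\mathbf{E}_x f_1(x)e_p(Q(x))|\gg\gamma^{O(1)}$, and $P:=\pm P_0+Q$ has degree at most $3$, as required.

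\emph{Main obstacle.} I expect the hardest step to be the symmetrization inequality with polynomial losses: converting symmetry of $\Delta_{h}f$ into low analytic rank of $T-T^\sigma$, and then correctly handling the factors $b_i$ when passing to a $U^3$ bound. For the first part, I would begin with the per-$h_3$ matrix argument above. That argument reduces the claim to a statement about bilinear phases, where $|\mathbf{E}\,e_p(x^{T}Ay)|=p^{-\rank A}$ makes the rank bound transparent.
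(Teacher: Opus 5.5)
Your proof is correct. Its first half is exactly the paper's: apply Lemma~\ref{lem:bilsym} to $\Delta_{h_3}f$ for $h_3$ in a set of density at least $\gamma/2$, average over $h_3$ using that $\mathbf{E}_{h_1,h_2}e_p(T(h_1,h_2,h_3)-T(h_2,h_1,h_3))$ is nonnegative for every $h_3$ (it is $p$ to minus the rank of the antisymmetric part), then use Theorem~\ref{thm:rank} for the transpositions $(1\ 2)$ and $(2\ 3)$. Your heuristic about correlation with $e_p(x^TM_{h_3}x/2)$ is not how that lemma is proved, but the Cauchy--Schwarz argument you cite is.

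You diverge in how the asymmetry is removed. The paper intersects the kernels of all the linear forms to get a subspace $Z$ of codimension $O(\log(2/\gamma))$ on which $T$ is exactly symmetric, and pigeonholes a triple of cosets $(a+Z,b+Z,c+Z)$. It then expands $T(a+z_1,b+z_2,c+z_3)$ by trilinearity and $\partial_{a+z_1,b+z_2,c+z_3}P$ by the cocycle identity, so that both equal $\partial_{z_1,z_2,z_3}P$ modulo terms involving at most two of the $z_i$.

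You instead stay on all of $G$. The remainder $R=T-S=\frac16\sum_{\sigma}(T-T^\sigma)$ has partition rank $O(\log(1/\gamma))$, since each $T-T^\sigma$ is a sum of at most three variable-permuted copies of $T-T^{(1\ 2)}$ and $T-T^{(2\ 3)}$. Splitting by the values of its linear forms writes $e_p(R)$ as a sum of $\gamma^{-O(1)}$ products of weights, each independent of one $h_i$.

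Your $P_0$ is literally the paper's $P=-T(x,x,x)/6$, since $S(x,x,x)=T(x,x,x)$. Moreover $\partial_{h_1,h_2,h_3}P_0=S(h_1,h_2,h_3)$ identically, so $\Delta_{h_1,h_2,h_3}(f\,e_p(P_0))=\Delta_{h_1,h_2,h_3}f\cdot e_p(S(h_1,h_2,h_3))$ with no lower-order terms at all. Your route trades the coset bookkeeping and cocycle computation for explicit handling of $R$. Both lose the same factor $p^{O(\log(1/\gamma))}=\gamma^{-O(1)}$.

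One detail to state correctly: in the last step you must fix $x$ rather than use it as a box variable, since four box variables would only give $\|f_1\|_{U^4}$. For fixed $x$, absorb the seven factors $f_1(x+\omega\cdot h)$ with $\omega\neq(1,1,1)$ into the weights. Then apply Theorem~\ref{thm:GCSbox} with $s=3$ to $y\mapsto\overline{f_1(x+y)}$ to get a bound by $\|f_1\|_{U^3(G)}$, and finally average over $x$.
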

  
  For the sake of completeness, we will prove
  Lemma~\ref{lem:symmetrization} here. Our argument is basically the
  same as the one in~\cite{GowersMilicevic2017}, but we avoid invoking
  the subadditivity of analytic rank and are able to be more efficient
  by using a cocycle identity. As in~\cite{GowersMilicevic2017}, we
  require a symmetrization result for bilinear forms from work of
  Green and Tao on the $U^3$-norm~\cite[Section~6, Step
  2]{GreenTao2008II}. We record this lemma, whose proof is just two
  applications of the Cauchy--Schwarz inequality, below, and then
  prove Lemma~\ref{lem:symmetrization}.
  \begin{lemma}\label{lem:bilsym}
    Let $p\geq 3$ and $B:G^2\to\mathbf{F}_p$ be a bilinear form. If,
    for $f:G\to\mathbf{C}$ $1$-bounded,
    \begin{equation*}
      \left|\mathbf{E}_{x,h_1,h_2\in G}\Delta_{h_1,h_2}f(x)e_p(B(h_1,h_2))\right|\geq\gamma,
    \end{equation*}
    then $\mathbf{E}_{x,y\in G}e_p(B(x,y)-B(y,x))\geq\gamma^4$.
  \end{lemma}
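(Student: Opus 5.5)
We will prove Lemma~\ref{lem:bilsym} by applying the Cauchy--Schwarz inequality twice to eliminate $f$ entirely, leaving an average of a pure bilinear phase. That average is then computed explicitly.

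\textbf{First Cauchy--Schwarz.} Write
\begin{equation*}
\Delta_{h_1,h_2}f(x)=f(x)\overline{f(x+h_1)}\,\overline{f(x+h_2)}f(x+h_1+h_2).
\end{equation*}
Substitute $y=x+h_1$ and $z=x+h_2$, so that $h_1=y-x$ and $h_2=z-x$. The average becomes
\begin{equation*}
\mathbf{E}_{x,y,z}f(x)\overline{f(y)}\,\overline{f(z)}f(y+z-x)\,e_p(B(y-x,z-x)).
\end{equation*}
Expanding the bilinear phase gives
\begin{equation*}
B(y-x,z-x)=B(y,z)-B(x,z)-B(y,x)+B(x,x).
\end{equation*}
Fix $x$ and $y$. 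The factors $f(x)$, $\overline{f(y)}$ and $e_p(B(x,x)-B(y,x))$ do not depend on $z$, and $f$ is $1$-bounded. Cauchy--Schwarz in $(x,y)$ therefore bounds $\gamma^2$ by
\begin{equation*}
\mathbf{E}_{x,y}\Big|\mathbf{E}_z\overline{f(z)}f(y+z-x)e_p(B(y-x,z))\Big|^2.
\end{equation*}

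\textbf{Reparametrize and apply Cauchy--Schwarz again.} Put $u=y-x$, which is a valid reparametrization of the outer average. The quantity becomes
\begin{equation*}
\mathbf{E}_{u}\Big|\mathbf{E}_z\overline{f(z)}f(z+u)e_p(B(u,z))\Big|^2
=\mathbf{E}_{u,z,z'}\overline{f(z)}f(z+u)f(z')\overline{f(z'+u)}\,e_p(B(u,z-z')).
\end{equation*}
Now put $z'=z+w$, so that
\begin{equation*}
e_p(B(u,z-z'))=e_p(-B(u,w)),
\end{equation*}
and the $f$-terms read
\begin{equation*}
\overline{f(z)}f(z+u)f(z+w)\overline{f(z+u+w)}.
\end{equation*}
Apply Cauchy--Schwarz once more in $(z,w)$, keeping $u$ inside. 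This yields
\begin{equation*}
\gamma^4\leq \mathbf{E}_{z,w}\Big|\mathbf{E}_u f(z+u)\overline{f(z+u+w)}e_p(-B(u,w))\Big|^2.
\end{equation*}
Expanding the square and substituting $t=z+u$ and $t'=z+u'$ makes the $f$-factors depend only on $t,t',w$. The phase becomes
\begin{equation*}
e_p(-B(u-u',w))=e_p(-B(t-t',w)).
\end{equation*}
So this route only returns a bound of the form $\gamma^4\leq\mathbf{E}_w|\ldots|^2$, with $f$ still present. This symmetric-in-$z$ grouping loses the information we need.

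\textbf{A better grouping.} The second Cauchy--Schwarz should instead place the averaged variable so that the $f$-terms become independent of it while two copies of $B$ with swapped roles survive. Write the first-stage expression as
\begin{equation*}
\mathbf{E}_{u,z,w}\,g_u(z)\overline{g_u(z+w)}\,e_p(-B(u,w)),\qquad g_u(z)=\overline{f(z)}f(z+u).
\end{equation*}
Note that $g_u(z)\overline{g_u(z+w)}=\Delta_{u,w}\overline{f}(z)$, which is symmetric in $u$ and $w$. Hence we may also replace $e_p(-B(u,w))$ by $e_p(-B(w,u))$.

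Write the expression as $\mathbf{E}_{z,u,w}F(z,u,w)\,e_p(-B(u,w))$ with $F$ the $\Delta$ term. Apply Cauchy--Schwarz in $(z,u)$ with $w$ duplicated. The $f$-terms then depend on $(z,u)$ only through pairs that can be re-symmetrized. After the change of variables $u\leftrightarrow w$ on one copy, the $f$-contributions cancel by the symmetry noted above. What remains is a pure phase average,
\begin{equation*}
\mathbf{E}_{x,y}e_p(B(x,y)-B(y,x)).
\end{equation*}
This average is automatically nonnegative: it equals $p^{-\rank}$ of the antisymmetric form $B-B^{T}$, by orthogonality in $y$.

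\textbf{Main obstacle.} The hardest step is choosing the correct grouping in the second Cauchy--Schwarz so that all $f$-factors cancel and the phase $B(u,w)-B(w,u)$ survives. I would try first to exploit the identity $\Delta_{u,w}\overline{f}=\Delta_{w,u}\overline{f}$. Concretely, average the two expressions obtained with $B(u,w)$ and with $B(w,u)$. Then apply Cauchy--Schwarz in $u$ after fixing $w$, writing
\begin{equation*}
e_p(-B(u,w))=e_p(-B(w,u))\,e_p\big(B(w,u)-B(u,w)\big),
\end{equation*}
so that the antisymmetric part is isolated as a function of $u$ for each $w$.
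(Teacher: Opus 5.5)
There is a genuine gap. The step that is supposed to eliminate $f$ is never carried out, and each Cauchy--Schwarz you do carry out is circular.

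Write $S:=\mathbf{E}_{x,h_1,h_2}\Delta_{h_1,h_2}f(x)e_p(B(h_1,h_2))$. For fixed $h_1$ the phase is a linear character in $h_2$, so $S=\mathbf{E}_{h_1}|\mathbf{E}_x\Delta_{h_1}f(x)e_p(-B(h_1,x))|^2\geq 0$.

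Your first Cauchy--Schwarz fixes the base point $x$ together with $y=x+h_1$, i.e.\ it fixes $h_1$. The bound it produces, $\mathbf{E}_u|\mathbf{E}_z\overline{f(z)}f(z+u)e_p(B(u,z))|^2$, is exactly $\overline{S}=S$ once you expand with $z'=z+w$. So this step only proves $S^2\leq S$.

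Your ``better grouping'' does the same thing. Cauchy--Schwarz in $(z,u)$ with $w$ doubled discards $\overline{f(z)}f(z+u)$ and returns $\mathbf{E}_{z,u,w,w'}f(z+w)\overline{f(z+u+w)}\,\overline{f(z+w')}f(z+u+w')e_p(-B(u,w-w'))$. After $t=z+w'$, $v=w-w'$ this is again $\overline{S}$, with all four copies of $f$ still present.

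Cauchy--Schwarz never makes $f$-factors cancel ``by symmetry''. It only removes factors that do not depend on the doubled variables. You also cannot interchange $u$ and $w$ in one copy of a product whose copies share those variables. Your last suggestion fails too: for fixed $w$ the factor $e_p(B(w,u)-B(u,w))$ has modulus $1$ pointwise, so Cauchy--Schwarz in $u$ simply erases it.

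The missing idea is to fix the two middle vertices $a=x+h_1$ and $b=x+h_2$ and let the base point $x$ vary. Then both $h_1=a-x$ and $h_2=b-x$ move with the inner variable. (This is presumably the ``two applications of Cauchy--Schwarz'' the paper cites from Green--Tao without writing out.)

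Concretely, $S=\mathbf{E}_{a,b}\overline{f(a)f(b)}\,\mathbf{E}_xf(x)f(a+b-x)e_p(B(a-x,b-x))$, and Cauchy--Schwarz in $(a,b)$ gives $\gamma^2\leq\mathbf{E}_{a,b}|\mathbf{E}_xf(x)f(a+b-x)e_p(B(a-x,b-x))|^2$. Expand $B(a-x,b-x)=B(a,b)-B(a,x)-B(x,b)+B(x,x)$; the term $B(a,b)$ is constant in $x$ and disappears inside the modulus. Substituting $b=s-a$, the inner average has the same modulus as $\mathbf{E}_xg_s(x)e_p(B(x,a)-B(a,x))$, where $g_s(x):=f(x)f(s-x)e_p(B(x,x)-B(x,s))$ is $1$-bounded and independent of $a$. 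The diagonal term is absorbed into $g_s$, and the two cross terms combine into the antisymmetric form.

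Expanding the square and averaging over $a$ first produces $\mathbf{E}_ae_p(B(x-x',a)-B(a,x-x'))$. This is the indicator that $B(x-x',\cdot)=B(\cdot,x-x')$, hence nonnegative. A second Cauchy--Schwarz in $(s,x,x')$, or simply the bound $|g_s(x)\overline{g_s(x')}|\leq 1$, then yields $\gamma^4\leq\mathbf{E}_{x,y}e_p(B(x,y)-B(y,x))$ (in fact with $\gamma^2$).
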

  
  \begin{proof}[Proof of Lemma~\ref{lem:symmetrization}]
    First, there exists a subset $H\subset G$ of density at least
    $\frac{\gamma}{2}$ such that
    \begin{equation*}
      \left|\mathbf{E}_{x,h_1,h_2\in G}\Delta_{h_1,h_2}[\Delta_{h_3}f](x)e_p(T(h_1,h_2,h_3))\right|\geq\frac{\gamma}{2}
    \end{equation*}
    for all $h_3\in H$. By Lemma~\ref{lem:bilsym}, this implies that
    \begin{equation*}
      \mathbf{E}_{h_1,h_2,h_3\in G}e_p(T(h_1,h_2,h_3)-T(h_2,h_1,h_3))\gg\gamma^5.
    \end{equation*}
    Set $T'(h_1,h_2,h_3):=T(h_1,h_2,h_3)-T(h_2,h_1,h_3)$, apply
    Theorem~\ref{thm:rank}, and then let $V$ be the intersection of
    the kernels of all the linear forms $L_i,L_j',L_k''$ produced by
    the theorem. Then, $T'$ vanishes on $V$ and
    $\codim V\ll\log(2/\gamma)$. Repeating the same argument, but
    fixing $h_1$ and swapping $h_2$ and $h_3$ instead, produces a
    subspace $W\leq G$ with $\codim W\ll\log(2/\gamma)$ such that
    $T''(h_1,h_2,h_3):=T(h_1,h_2,h_3)-T(h_1,h_3,h_2)$ vanishes on $W$.

    Set $Z:=V\cap W$. Then, $T'(h_1,h_2,h_3)=T''(h_1,h_2,h_3)=0$
    for all $h_1,h_2,h_3\in Z$, so since the transpositions $(1\ 2)$
    and $(2\ 3)$ generate $S_3$, this means that $T|_{Z}$ is
    symmetric. Thus, if we set $P(x):=-\frac{T(x,x,x)}{6}$, then
    $\partial_{h_1,h_2,h_3}P(x)=T(h_1,h_2,h_3)$ for all
    $h_1,h_2,h_3\in Z$ and $x\in G$.
    
    Splitting up the left-hand side of~\eqref{eq:largetri} into
    averages over cosets of $Z$, we have
    \begin{equation*}
      \mathbf{E}_{a+Z,b+Z,c+Z\in G/Z}\left|\mathbf{E}_{x\in G}\mathbf{E}_{z_1,z_2,z_3\in
      Z}\Delta_{a+z_1,b+z_2,c+z_3}f(x)e_p(T(a+z_1,b+z_2,c+z_3))\right|\geq\gamma.
    \end{equation*}
    By the pigeonhole principle, there exists a triple of cosets
    $(a+Z,b+Z,c+Z)$ for which
    \begin{equation}\label{eq:abcpigeon}
      \left|\mathbf{E}_{x\in G}\mathbf{E}_{z_1,z_2,z_3\in
          Z}\Delta_{a+z_1,b+z_2,c+z_3}f(x)e_p(T(a+z_1,b+z_2,c+z_3))\right|\geq\gamma.
    \end{equation}
    Now, for any $z_1,z_2,z_3\in Z$ and $x\in G$,
    \begin{equation*}
      T(a+z_1,b+z_2,c+z_3)=\partial_{z_1,z_2,z_3}P(x)+F^{(1)}_{a,b,c}(z_2,z_3)+F^{(2)}_{a,b,c}(z_1,z_3)+F^{(3)}_{a,b,c}(z_1,z_2),
    \end{equation*}
    where
    $F^{(1)}_{a,b,c}(z_2,z_3)=T(a,b,c)+T(a,b,z_3)+T(a,z_2,c)+T(a,z_2,z_3)$,
    $F^{(2)}_{a,b,c}(z_1,z_3)=T(z_1,b,c)+T(z_1,b,z_3)$, and
    $F^{(3)}_{a,b,c}(z_1,z_2)=T(z_1,z_2,c)$. On the other hand, by the
    cocycle identity
    $\partial_{a+z}\phi(x)=\partial_z\phi(x)+\partial_{a}\phi(x+z)$
    (which is valid for any $\phi:G\to G'$ and $x,a,z\in G$), we have
    \begin{equation*}
      \partial_{a+z_1,b+z_1,c+z_3}P(x)=\partial_{z_1,z_2,z_3}P(x)+Q_{a,b,c}^{(1)}(z_2,z_3)+Q_{a,b,c}^{(2)}(z_1,z_3)+Q_{a,b,c}^{(3)}(z_1,z_2),
    \end{equation*}
    where
    $Q^{(1)}_{a,b,c}(z_2,z_3)=\partial_{a,b,c}P(x+z_1+z_2+z_3)+\partial_{a,b,z_3}P(x+z_1+z_2)+\partial_{a,z_2,c}P(x+z_1+z_3)+\partial_{a,z_2,z_3}P(x+z_1)$,
    $Q^{(2)}_{a,b,c}(z_1,z_3):=\partial_{z_1,b,c}P(x+z_2+z_3)+\partial_{z_1,b,z_3}P(x+z_2)$,
    and $Q^{(3)}_{a,b,c}(z_1,z_2):=\partial_{z_1,z_2,c}P(x+z_3)$;
    here, we have used that $\partial_{h_1,h_2,h_3}P(x)$ does not
    depend on $x$ since $\deg{P}\leq 3$.

    Thus, setting $\tilde{f}(x):=f(x)e_p(P(x))$, the
    left-hand side of~\eqref{eq:abcpigeon} can be written as
    \begin{equation*}
      p^{3\codim{Z}}\big|\mathbf{E}_{x,h_1,h_2,h_3\in G}\Delta_{a+h_1,b+h_2,c+h_3}\tilde{f}(x)1_Z(h_1)1_Z(h_2)1_Z(h_3)g_1(h_2,h_3)g_2(h_1,h_3)g_3(h_1,h_2)\big|
    \end{equation*}
    for some $1$-bounded functions
    $g_1,g_2,g_3:G^2\to\mathbf{C}$. Expanding
    $\Delta_{a+h_1,b+h_2,c+h_3}\tilde{f}(x)$ and applying the
    Gowers--Cauchy--Schwarz inequality in the form of
    Theorem~\ref{thm:GCSbox} yields
    \begin{equation*}
      \|\tilde{f}\|_{U^3(G)}\gg\gamma^{O(1)}p^{-3\codim{Z}}\gg \gamma^{O(1)},
    \end{equation*}
    since $\codim{Z}\ll\log(2/\gamma)$, and then the
    conclusion of the lemma follows from Theorem~\ref{thm:U3inv}.
  \end{proof}
  
  Now, we can finish deriving Theorem~\ref{thm:main} from Theorem~\ref{thm:approxpoly}
  \begin{proof}[Proof of Theorem~\ref{thm:main}]
    By Lemma~\ref{lem:SfQ}, there exists a polynomial map $Q:G^2\to G$
    of degree at most $2$ for which
    \begin{equation}\label{eq:almostfinal}
      \mathbf{E}_{h_1,h_2\in G}\widehat{S_f}(h_1,h_2,Q(h_1,h_2))\gg \exp(-O(\delta^{-O(1)})).
    \end{equation}
    We can write $Q(x,y)=R_1(x)+R_2(y)+B(x,y)$, where $R_1,R_2:G\to G$ and $B:G^2\to G$ is bilinear. Expanding
    the definition of $\widehat{S_f}$, the left-hand side
    of~\eqref{eq:almostfinal} equals
    \begin{equation*}
      \mathbf{E}_{x,h_1,h_2,h_3\in G}\Delta_{h_1,h_2,h_3}f(x)g_1(h_1,h_3)g_2(h_2,h_3)e_p(T(h_1,h_2,h_3)),
    \end{equation*}
    where $T:G^3\to\mathbf{F}_p$ is the trilinear map defined by
    $T(h_1,h_2,h_3)=-B(h_1,h_2)\cdot h_3$,
    $g_1(h_1,h_3):=e_p(-R_1(h_1)\cdot h_3)$, and
    $g_2(h_2,h_3):=e_p(-R_2(h_2)\cdot h_3)$. Applying the
    Cauchy--Schwarz inequality twice, first to double the variable
    $h_2$ and then again to double the variable $h_1$, yields
    \begin{equation*}
      \mathbf{E}_{x,h_1',h_2',h_3\in G}\Delta_{h_1',h_2',h_3}f(x)e_p(T(h_1',h_2',h_3))\gg \exp(-O(\delta^{-O(1)}))
    \end{equation*}
    after a change of variables. The theorem now follows from
    Lemma~\ref{lem:symmetrization}.
  \end{proof}

  \section{Obtaining a $99\%$ approximate second-order quadratic on a dense set}\label{sec:DRC}

  We now begin the proof of Theorem~\ref{thm:approxpoly} by showing
  that if $\phi:G\to G$ is an approximate quadratic, then there exists
  a subset of $G$ containing many second-order $3$-dimensional cubes,
  almost all of which are respected by $\phi$. To do this, we adapt an
  argument introduced by Gowers in his proof of Szemer\'edi's
  theorem~\cite[Sections 9, 12, and 14]{Gowers2001}. Gowers's argument
  is an example of the dependent random choice method, and constructs
  a set randomly by selecting elements according to a distribution
  favoring instances of a fixed configuration that are respected by
  the function of interest. In both~\cite{Gowers2001} and the work of
  Gowers and
  Mili\'cevi\'c~\cite{GowersMilicevic2017,GowersMilicevic2020}, this
  technique is applied to certain multidimensional configurations,
  and, as we will see, it is not hard to apply it to second-order
  $3$-dimensional cubes as well.

  Before we proceed with the argument, we will need a couple of
  preliminaries from linear algebra. Denote the space of $n\times n$
  matrices with entries in $\mathbf{F}_p$ by $\M_n(\mathbf{F}_p)$.
  
  \begin{lemma}\label{lem:linalg}
    Let $p\geq 5$ and $k\in\mathbf{N}$. The proportion of
    $(a_1,\dots,a_k)\in G^k$ for which
  \begin{equation}\label{eq:forms}
    a_ia_j^T+a_ja_i^T\qquad 1\leq i\leq j\leq k
  \end{equation}
  are linearly dependent in $\M_n(\mathbf{F}_p)$ is at most
  $p^{-n/16}$, provided that $n\geq 32k(k+1)$.
  \end{lemma}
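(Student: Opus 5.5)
We identify symmetric matrices with symmetric bilinear forms. A linear dependence among the matrices \eqref{eq:forms} means there are coefficients $c_{ij}\in\mathbf{F}_p$ ($i\leq j$), not all zero, with $\sum_{i\leq j}c_{ij}(a_ia_j^T+a_ja_i^T)=0$. Setting $M=(m_{ij})$ to be the symmetric $k\times k$ matrix with $m_{ii}=2c_{ii}$ and $m_{ij}=m_{ji}=c_{ij}$ for $i<j$, this reads $AMA^T=0$, where $A$ is the $n\times k$ matrix with columns $a_1,\dots,a_k$. Since $p\geq 5$, $2$ is invertible, so $M\neq 0$. The plan is a union bound over nonzero symmetric $M\in\M_k(\mathbf{F}_p)$ (at most $p^{k(k+1)/2}$ of them) of the probability that $AMA^T=0$ for uniformly random $A$.

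\textbf{Step 1: reduce to diagonal $M$.} Fix a nonzero symmetric $M$ of rank $r\geq1$. Since $p$ is odd, $M$ is congruent to a diagonal matrix: there is an invertible $S\in\M_k(\mathbf{F}_p)$ with $M=SDS^T$, where $D=\mathrm{diag}(d_1,\dots,d_r,0,\dots,0)$ and $d_i\neq0$. Then $AMA^T=(AS)D(AS)^T$, and $AS$ is again uniformly distributed, so it suffices to bound
\begin{equation*}
  \Pr\Big(\sum_{i=1}^r d_ib_ib_i^T=0\Big)
\end{equation*}
for independent uniform $b_1,\dots,b_r\in G$.

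\textbf{Step 2: bound this probability.} If $b_1,\dots,b_r$ are linearly independent, extend them to a basis of $G$; in that basis $\sum_i d_ib_ib_i^T$ is $\mathrm{diag}(d_1,\dots,d_r,0,\dots)$ up to congruence, which is nonzero. Hence the event forces $b_1,\dots,b_r$ to be linearly dependent, and
\begin{equation*}
  \Pr(b_1,\dots,b_r\text{ dependent})\leq\sum_{j\leq r}\Pr(b_j\in\spn\{b_1,\dots,b_{j-1}\})\leq r\,p^{r-1-n}\leq k\,p^{k-n}.
\end{equation*}

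\textbf{Step 3: union bound.} Summing over all $M$ gives a total probability of at most
\begin{equation*}
  p^{k(k+1)/2}\cdot k\,p^{k-n}\leq p^{k(k+1)+k-n}.
\end{equation*}
When $n\geq32k(k+1)$ we have $k(k+1)+k\leq 2k(k+1)\leq n/16$, so this is at most $p^{-15n/16}\leq p^{-n/16}$.

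The only delicate point is Step 1. It uses $p\neq2$ twice: to recover the symmetric $M$ from the coefficients $c_{ij}$, and to diagonalize symmetric bilinear forms. Both are available since $p\geq5$. The stated condition on $n$ leaves ample room for the remaining estimates.
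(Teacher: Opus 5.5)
Your proof is correct, and it takes a genuinely different route from the paper's. The paper fixes a nonzero coefficient vector $(\xi_{i,j})$ and writes the probability that $\sum_{i\leq j}\xi_{i,j}(a_ia_j^T+a_ja_i^T)=0$ as a character sum over $M\in\M_n(\mathbf{F}_p)$. Two applications of Cauchy--Schwarz and H\"older then bound this by the fourth root of the proportion of pairs $(m,\ell)\in G^2$ with $m\ell^T+\ell m^T=0$. Since the diagonal entries are $2m_i\ell_i$, that proportion is at most $(2p-1)^n p^{-2n}\leq p^{-n/2}$, so each coefficient vector contributes at most $p^{-n/8}$. The union bound over roughly $p^{k(k+1)/2}$ vectors is what forces $n\gg k^2$ and the modest exponent $n/16$.

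You replace this analytic estimate with linear algebra. Rewriting the relation as $AMA^T=0$ with $M$ symmetric and nonzero, you show that it forces the columns of $A$ to be dependent, an event of probability $\ll p^{k-n}$. Your argument in fact proves more than you use. If $a_1,\dots,a_k$ are linearly independent, then $A^T$ is surjective and $A$ is injective, so $\rank(AMA^T)=\rank M\geq 1$ for every nonzero symmetric $M$. Hence the matrices $a_ia_j^T+a_ja_i^T$ are independent whenever the $a_i$ are, and the proportion in question is at most $\sum_{j=1}^k p^{j-1-n}<p^{k-n}$. This needs no congruence diagonalization and no union bound, and $n$ only needs to be linear in $k$.

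So your route is shorter, purely algebraic, uses only that $p$ is odd, and gives a far stronger bound. The paper's character-sum route matches the Fourier-analytic style of the rest of the argument, but it is longer and lossier for this particular statement.
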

  \begin{proof}
    For any $M,M'\in \M_n(\mathbf{F}_p)$, we will (temporarily and
    locally for this proof) write $M\cdot M'\in\mathbf{F}_p$ to mean
    the dot product of $M$ and $M'$ viewed as vectors in
    $\mathbf{F}_p^{n^2}$. For each choice of
    $\xi_{i,j}\in\mathbf{F}_p$ for $1\leq i\leq j\leq k$ not all
    zero, we want to count $a_1,\dots,a_k\in G$ such that
    \begin{equation}\label{eq:xisum}
      \sum_{1\leq i\leq j\leq k}\xi_{i,j}(a_ia_j^T+a_ja_i^T)
    \end{equation}
    is the zero matrix. By the symmetry of~\eqref{eq:xisum}, it
    suffices to consider the case when $\xi_{1,1}\neq 0$ and the case
    when $\xi_{1,1}=0$ and $\xi_{1,2}\neq 0$.

    First suppose that $\xi_{1,1}\neq 0$. Then, by orthogonality of
    characters, the proportion of $k$-tuples $(a_1,\dots,a_k)\in G^k$
    for which~\eqref{eq:xisum} equals the zero matrix is
    \begin{equation}\label{eq:xi1count}
      \mathbf{E}_{M\in \M_n(\mathbf{F}_p)}\mathbf{E}_{a_1,\dots,a_k\in G}e_p(M\cdot a_1a_1^T+\phi_M(a_1,\dots,a_k)+\phi_M'(a_2,\dots,a_k)),
    \end{equation}
    for some $\phi_M:G^k\to\mathbf{F}_p$ and
    $\phi'_M:G^{k-1}\to\mathbf{F}_p$ with $\phi_M(a_1,\dots,a_k)$
    depending linearly on $a_1$ for all $M\in \M_n(\mathbf{F}_p)$. By
    applying the Cauchy--Schwarz inequality twice, each time doubling
    the $a_1$ variable, and then H\"older's inequality, we get that the fourth power
    of~\eqref{eq:xi1count} is at most
    \begin{equation}\label{eq:lmavg}
      \mathbf{E}_{M \in \M_n(\mathbf{F}_p)}\mathbf{E}_{m,\ell\in G}e_p(M\cdot(m\ell^T+\ell m^T)),
    \end{equation}
    which counts the proportion of pairs $(m,\ell)\in G^2$ for which
    $m\ell^T+\ell m^T$ equals the zero matrix. The $i$-th entry on the
    diagonal of $m\ell^T+\ell m^T$ is $2m_i\ell_i$, and so since
    $p>2$, at least one of $m_i$ or $\ell_i$ must be zero for each
    $i=1,\dots,n$ if $m\ell^T+\ell m^T$ is the zero matrix. There are
    then at most $(2p-1)^n$ possibilities for $(m,\ell)$, which, by
    the assumption that $p\geq 5$, is at most
    $p^{(1+\log{2}/\log{5})n}\leq p^{3n/2}$. Thus,~\eqref{eq:lmavg} is
    at most $p^{-n/2}$, and so~\eqref{eq:xi1count} is at most
    $p^{-n/8}$.

    The argument in the case that $\xi_{1,1}=0$ and $\xi_{1,2}\neq 0$
    is similar. The proportion of $k$-tuples $(a_1,\dots,a_k)\in G^k$
    for which~\eqref{eq:xisum} equals the zero matrix is
    \begin{equation}\label{eq:xi2count}
      \mathbf{E}_{M\in \M_n(\mathbf{F}_p)}\mathbf{E}_{a_1,\dots,a_k\in G}e_p(M\cdot (a_1a_2^T+a_2a_1^T)+\psi_M(a_1,a_3,\dots,a_k)+\psi'_M(a_2,\dots,a_k))
    \end{equation}
    for some $\psi_M,\psi'_M:G^{k-1}\to\mathbf{F}_p$ with
    $\psi_M(a_1,a_3,\dots,a_k)$ depending linearly on $a_1$ for all
    $M\in\M_n(\mathbf{F}_p)$. Two applications of the Cauchy--Schwarz
    inequality, first to double the variable $a_1$ and then to double
    the variable $a_2$, followed by H\"older's inequality yields
    that~\eqref{eq:xi2count} is also bounded above by the fourth root
    of~\eqref{eq:lmavg}, and therefore is at most $p^{-n/8}$.

    Now, by summing over all $p^{k+\binom{k}{2}}-1$ possible choices
    of the $\xi_{i,j}\in\mathbf{F}_p$ for which not all $\xi_{i,j}$
    are zero, we obtain the conclusion of the lemma when
    $n\geq 32k(k+1)$.
  \end{proof}

  The following basic lemma, whose proof is immediate by inspection,
  concerns systems of linear equations that naturally arise when
  dealing with $U^3$-dual functions.
  
  \begin{lemma}\label{lem:fixedeqns}
    Let $d\in\mathbf{N}$ and $\xi_\omega\in\mathbf{F}_p^d$ for all $\omega\in\{0,1\}^3$. If
    \begin{equation*}
      \begin{cases}
        \xi_{100}+\xi_{110}+\xi_{101}+\xi_{111}=0 \\
        \xi_{010}+\xi_{110}+\xi_{011}+\xi_{111}=0 \\
        \xi_{001}+\xi_{101}+\xi_{011}+\xi_{111}=0 \\
        \xi_{110}+\xi_{111}=0 \\
        \xi_{101}+\xi_{111}=0 \\
        \xi_{011}+\xi_{111}=0
      \end{cases},
    \end{equation*}
    then $\xi_\omega=(-1)^{|\omega|+1}\xi_{111}$ for $\mathbf{0}\neq\omega\in\{0,1\}^3$. We also have $\xi_{000}=-\xi_{111}$ if, in addition,
    \begin{equation*}
      \xi_{000}+\xi_{100}+\xi_{010}+\xi_{001}+\xi_{110}+\xi_{101}+\xi_{011}+\xi_{111}=0.
    \end{equation*}
  \end{lemma}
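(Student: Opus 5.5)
This is a direct elimination: use the last three equations to express the weight-two unknowns in terms of $\xi_{111}$, then the first three to get the weight-one unknowns, and finally the extra equation for $\xi_{000}$. Everything takes place in the abelian group $\mathbf{F}_p^d$, so no division is needed.

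First, the equations $\xi_{110}+\xi_{111}=0$, $\xi_{101}+\xi_{111}=0$ and $\xi_{011}+\xi_{111}=0$ give
\begin{equation*}
\xi_{110}=\xi_{101}=\xi_{011}=-\xi_{111}.
\end{equation*}
This is the claimed formula $\xi_\omega=(-1)^{|\omega|+1}\xi_{111}$ for $|\omega|=2$. The case $|\omega|=3$ is trivial.

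Next, substitute into the first equation. The terms $\xi_{101}+\xi_{111}$ cancel by the fifth equation, leaving $\xi_{100}+\xi_{110}=0$. Hence $\xi_{100}=-\xi_{110}=\xi_{111}$.

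The second and third equations are handled identically. In the second, the terms $\xi_{011}+\xi_{111}$ cancel, so $\xi_{010}=-\xi_{110}=\xi_{111}$. In the third, the terms $\xi_{011}+\xi_{111}$ cancel, so $\xi_{001}=-\xi_{101}=\xi_{111}$. This gives the formula for $|\omega|=1$, completing the first assertion.

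For the second assertion, substitute these values into the final equation. The three weight-one terms contribute $3\xi_{111}$, the three weight-two terms contribute $-3\xi_{111}$, and the last term is $\xi_{111}$. The equation therefore reduces to $\xi_{000}+\xi_{111}=0$, that is, $\xi_{000}=-\xi_{111}$.

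The argument has no real obstacle; the only care needed is tracking the signs $(-1)^{|\omega|+1}$ correctly.
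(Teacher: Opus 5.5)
Your elimination is correct and matches the paper, which simply states the lemma is immediate by inspection. The paper's intended verification is exactly this: the last three equations fix the weight-two entries, the first three then give the weight-one entries, and the extra equation forces $\xi_{000}=-\xi_{111}$.
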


  Now, we can state and prove the main result of this section.
  
  \begin{lemma}\label{lem:DRC}
    Let $p\geq 5$. Suppose that $\phi:G\to G$ is
    $\delta$-approximately quadratic. For any
    $\gamma>0$, if $n\geq 2^{16}\lceil\log_p(2/\delta^{16}\gamma)\rceil$ then
    there exists $A\subset G$ of density
    $\Omega(\delta^{32}\gamma^{2})$ containing at least
    $2^{-55}\delta^{56\cdot 16}\gamma^{55}p^{28n}$ second-order
    $3$-dimensional cubes such that $\phi|_{A}$ respects at least a
    $(1-\gamma)$-proportion of them.
  \end{lemma}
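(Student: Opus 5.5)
We follow Gowers's dependent random choice. The first step is to pass from first-order to second-order cubes. Write $\Phi(x):=(e_p(\xi\cdot\phi(x)))$ and use orthogonality of characters: the proportion of $3$-dimensional cubes respected by $\phi$ equals $\mathbf{E}_{\xi\in G}\|e_p(\xi\cdot\phi)\|_{U^3(G)}^8\geq\delta$.

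Fix $x,h_1,h_2,h_3$ and consider the seven points $x+\omega\cdot h$ with $\omega\neq\mathbf{0}$. The respected second-order cubes count
\begin{equation*}
\mathbf{E}_{\xi}\,\mathbf{E}_{x,h}\,\Big|\mathbf{E}_{k}\prod_{\mathbf{0}\neq\omega'}\mathcal{C}^{|\omega'|}e_p(\xi\cdot\phi)(\cdots)\Big|^2 ,
\end{equation*}
or more precisely an average over $\omega$ of products of $U^3$-type expressions. One application of Cauchy--Schwarz, followed by Gowers--Cauchy--Schwarz in the form of Theorem~\ref{thm:GCSnorm}, bounds the first-order count by the second-order count. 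This shows that $\phi$ respects at least a $\delta^{16}$-proportion of all $p^{28n}$ second-order cubes. Here $28=1+3+3\cdot 8$ counts the free parameters: $x$, the $h_i$, and the $k_i^\omega$.

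The second step is the random construction. Choose $r$ parameters at random, with $r\approx\log(1/\gamma)/\delta^{O(1)}$ in the spirit of Gowers's Sections 9--14. Concretely, pick random $u_1,\dots,u_r\in G$ and, for each $j$, a random first-order cube $\square(u_j;\ldots)$ together with a random element $\eta_j$. Let $A$ be the set of $y$ such that, for every $j$, the value $\phi(y)$ satisfies the linear constraint that the cube through $y$ with prescribed differences is respected, i.e.
\begin{equation*}
\sum_\omega(-1)^{|\omega|}\phi(y+\omega\cdot(\ldots))=0 .
\end{equation*}
We pick this version precisely so that, for any second-order cube in $A$, being respected is forced by the "inner" first-order cubes all being respected.

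An equivalent and cleaner formulation is to define $A$ through conditions on $\xi_j\cdot\phi$. Take random $\xi_1,\dots,\xi_d\in G$ and a random target $c$, and set $A:=\{y:(\xi_j\cdot\phi(y))_j=c\}$. The density of such an $A$ is about $p^{-d}$, which matches $\Omega(\delta^{32}\gamma^2)$ when $d\approx\log_p(1/\delta^{16}\gamma)$. Lemma~\ref{lem:fixedeqns} enters here: it identifies which linear combinations of character values must vanish on a second-order cube.

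The quantitative step is a first and second moment computation. Using Lemma~\ref{lem:linalg} with $k$ up to $2^{16}$ to guarantee independence (this is where $n\geq 2^{16}\lceil\log_p(2/\delta^{16}\gamma)\rceil$ is used), we compute the expected number of second-order cubes in $A$. We then compute the expected number of unrespected ones, and show that the latter is at most $\gamma$ times the former. This uses the fact that a respected cube lies in $A$ with probability boosted by a factor $p^{d}$ relative to an unrespected one, via Lemma~\ref{lem:fixedeqns}.

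Finally, we choose an outcome with
\begin{equation*}
\mathbb{E}[\#\text{cubes}]-\gamma^{-1}\mathbb{E}[\#\text{bad}]-\text{(density penalty)}
\end{equation*}
large. This yields $|A|\gg\delta^{32}\gamma^2p^n$ and at least $2^{-55}\delta^{56\cdot16}\gamma^{55}p^{28n}$ cubes, of which at most a $\gamma$-proportion are unrespected.

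The main obstacle is the independence and moment computation: showing that the distinct points of a typical second-order cube impose independent constraints. This is exactly what Lemma~\ref{lem:linalg} (with $p\geq5$) is designed for, and I would try that first.
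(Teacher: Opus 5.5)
Your first step (one Cauchy--Schwarz doubling $h_1,h_2,h_3$, then Theorem~\ref{thm:GCSnorm} and H\"older, giving a $\delta^{16}$-proportion of respected second-order cubes) matches the paper. The gap is in the random construction.

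Take your set $A=\{y:(\xi_j\cdot\phi(y))_{j\le d}=c\}$ with $\xi_j\in G$ and $c\in\mathbf{F}_p^d$ uniform. A second-order cube with points $x_{\omega,\omega'}$ lies in $A$ with probability exactly $p^{-d(1+r)}$, where $r$ is the dimension of the affine span of the $56$ values $\phi(x_{\omega,\omega'})$. In character language, each constraint contributes $p^{-56}$ times the number of $\zeta\in\mathbf{F}_p^{56}$ with $\sum\zeta_{\omega,\omega'}=0$ and $\sum\zeta_{\omega,\omega'}\phi(x_{\omega,\omega'})=0$. That is the number of \emph{all} affine dependencies among the values, not just the alternating one. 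Being respected supplies one such dependency, so $r\le 54$. But an unrespected cube gets the same gain from any other dependency, and since $\phi$ is an arbitrary map, nothing controls how many such cubes there are.

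Concretely, take $\phi=fe_1$ with $f:G\to\mathbf{F}_p$ random, so $\delta\approx 1/p$. Every cube on which $f$ is nonconstant, respected or not, lies in $A$ with probability exactly $p^{-2d}$. Only the $\approx p^{-55}$-proportion of cubes on which $f$ is constant get $p^{-d}$. With your choice $d\approx\log_p(2/\delta^{16}\gamma)\le 54$ (e.g.\ $\gamma=1/10$), the expected number of unrespected cubes in $A$ is at least a $(1-1/p)/(1+1/p)$ fraction of the expected total. So the claimed $p^{d}$ boost for respected cubes fails for this $A$. Also, Lemma~\ref{lem:linalg} has nothing to act on, since no matrices occur in your construction.

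The missing idea is random quadratic noise in the defining equations. The paper takes $A=\{x: x^TM_jx+v_j\cdot\phi(x)=0,\ j=1,\dots,k\}$ with $M_j\in\M_n(\mathbf{F}_p)$ and $v_j\in G$ uniform. Now each equation contributes $p^{-56}$ times the number of $\xi$ with $\sum\xi_{\omega,\omega'}x_{\omega,\omega'}x_{\omega,\omega'}^T=0$ and $\sum\xi_{\omega,\omega'}\phi(x_{\omega,\omega'})=0$.

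Suppose the $28$ free parameters $y_i$ of the cube are distinct and the matrices $y_iy_j^T+y_jy_i^T$ are linearly independent. This is Lemma~\ref{lem:linalg} with $k=28$, and it fails only for a $p^{-n/16}+351p^{-n}$ proportion of cubes. Then collecting coefficients and applying Lemma~\ref{lem:fixedeqns} shows that the first condition forces $\xi$ to be a multiple of $((-1)^{|\omega|+|\omega'|})$. This kills every spurious dependency and gives exactly $p^{-55k}$ for respected cubes and $p^{-56k}$ for unrespected ones. Choosing $k=\lceil\log_p(2/\delta^{16}\gamma)\rceil$ and making a first-moment choice of outcome then finishes.

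Two smaller corrections. The hypothesis $n\ge 2^{16}k$ serves to make the exceptional term $p^{(28-1/16)n}$ negligible against $\delta^{16}\gamma p^{-55k}p^{28n}$, and to allow Lemma~\ref{lem:linalg} with $k=28$; it is not about applying that lemma with $k$ up to $2^{16}$. The density bound is deduced afterwards, from the fact that a set of density $\beta$ contains at most $\beta^{28}p^{28n}$ second-order cubes, not from $A$ being a level set.
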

  \begin{proof}
    First, we will show that $\phi$ respects many second-order
    $3$-dimensional cubes. By orthogonality of characters, we have
    \begin{equation*}
      \mathbf{E}_{\xi\in G}\mathbf{E}_{x,h_1,h_2,h_3\in G}e_p\left(\xi\cdot\partial_{h_1,h_2,h_3}\phi(x)\right)\geq\delta.
    \end{equation*}
    Applying the Cauchy--Schwarz inequality to the above to double $h_1$, $h_2$, and $h_3$ yields
    \begin{equation*}
      \mathbf{E}_{\xi\in G}\mathbf{E}_{x,h_1,h_2,h_3\in G}\overline{\mathcal{D}[f_\xi](x)}\prod_{\mathbf{0}\neq \omega\in\{0,1\}^3}\mathcal{C}^{|\omega|}f_\xi(x+\omega\cdot(h_1,h_2,h_3))\geq\delta^2,
    \end{equation*}
    where $f_\xi(x):=e_p(\xi\cdot\phi(x))$. Thus, by the
    Gowers--Cauchy--Schwarz inequality in the form of
    Theorem~\ref{thm:GCSnorm} followed by H\"older's inequality,
    $\mathbf{E}_{\xi\in
      G}\|\mathcal{D}[f_\xi]\|^8_{U^3(G)}\geq\delta^{16}$. Expanding
    the definitions of $\mathcal{D}[f_\xi]$ and the $U^3$-norm and
    using orthogonality of characters again, we obtain that $\phi$
    respects at least a $\delta^{16}$-proportion of second order
    $3$-dimensional cubes in $G$.

    Let $k\in\mathbf{N}$ (to be chosen later depending on $\delta$ and
    $\gamma$). Pick $M_1,\dots,M_k\in \M_n(\mathbf{F}_p)$ and
    $v_1,\dots,v_k\in G$ independently and uniformly at random, and
    consider the set
    \begin{equation*}
      A:=\left\{x\in G:x^TM_ix+v_i\cdot\phi(x)=0\right\}.
    \end{equation*}
    For any $56$-tuple
    \begin{equation*}
    (x_{\omega,\omega'})_{\substack{\omega,\omega'\in\{0,1\}^3 \\ \omega'\neq \mathbf{0}}}\in G^{\{0,1\}^3\times (\{0,1\}^3\setminus\{\mathbf{0}\})},  
    \end{equation*}
    the probability that each $x_{\omega,\omega'}$ lies in $A$ is
    \begin{align*}
      \mathbf{E}_{M_1,\dots,M_k\in \M_n(\mathbf{F}_p)}\mathbf{E}_{v_1,\dots,v_k\in G}&\prod_{\substack{\omega,\omega'\in\{0,1\}^3 \\ \omega'\neq \mathbf{0}}}\prod_{j=1}^k1_{\{0\}}(x_{\omega,\omega'}^TM_jx_{\omega,\omega'}+v_j\cdot\phi(x_{\omega,\omega'})) \\
      &=\Bigg(\mathbf{E}_{M\in \M_n(\mathbf{F}_p)}\mathbf{E}_{v\in G}\prod_{\substack{\omega,\omega'\in\{0,1\}^3 \\ \omega'\neq \mathbf{0}}}1_{\{0\}}(x_{\omega,\omega'}^TMx_{\omega,\omega'}+v\cdot\phi(x_{\omega,\omega'}))\Bigg)^k,
    \end{align*}
    which, by orthogonality of characters, equals
    \begin{equation*}
      \Bigg(\mathbf{E}_{\substack{\xi_{\omega,\omega'}\in\mathbf{F}_p\\ \omega,\omega'\in\{0,1\}^3 \\ \omega'\neq \mathbf{0}}}\mathbf{E}_{M\in \M_n(\mathbf{F}_p)}\mathbf{E}_{v\in G}e_p\Bigg(\sum_{\substack{\omega,\omega'\in\{0,1\}^3 \\ \omega'\neq \mathbf{0}}}\xi_{\omega,\omega'}[x_{\omega,\omega'}^TMx_{\omega,\omega'}]+v\cdot\sum_{\substack{\omega,\omega'\in\{0,1\}^3 \\ \omega'\neq \mathbf{0}}}\xi_{\omega,\omega'}\phi(x_{\omega,\omega'})\Bigg)\Bigg)^k.
    \end{equation*}
    
    Observe that the inner average
    \begin{equation}\label{eq:inneraverage}
      \mathbf{E}_{M\in \M_n(\mathbf{F}_p)}\mathbf{E}_{v\in G}e_p\Bigg(\sum_{\substack{\omega,\omega'\in\{0,1\}^3 \\ \omega'\neq \mathbf{0}}}\xi_{\omega,\omega'}[x_{\omega,\omega'}^TMx_{\omega,\omega'}]+v\cdot\sum_{\substack{\omega,\omega'\in\{0,1\}^3 \\ \omega'\neq \mathbf{0}}}\xi_{\omega,\omega'}\phi(x_{\omega,\omega'})\Bigg)
    \end{equation}
    is nonzero if and only if
    \begin{equation*}
      \sum_{\substack{\omega,\omega'\in\{0,1\}^3 \\ \omega'\neq \mathbf{0}}}\xi_{\omega,\omega'}[x_{\omega,\omega'}x_{\omega,\omega'}^T]=0\qquad \text{and}\qquad \sum_{\substack{\omega,\omega'\in\{0,1\}^3 \\ \omega'\neq \mathbf{0}}}\xi_{\omega,\omega'}\phi(x_{\omega,\omega'})=0,
    \end{equation*}
    in which case it equals $1$. If
    \begin{equation}\label{eq:3cube}
      x_{\omega,\omega'}=x+\omega\cdot(h_1,h_2,h_3)+\omega'\cdot(k_1^\omega,k_2^\omega,k_3^\omega),
    \end{equation}
    then obviously $\sum_{\substack{\omega,\omega'\in\{0,1\}^3 \\ \omega'\neq \mathbf{0}}}\xi_{\omega,\omega'}[x_{\omega,\omega'}x_{\omega,\omega'}^T]=0$ if and only if
    \begin{equation}\label{eq:quadeq}
      \sum_{\substack{\omega,\omega'\in\{0,1\}^3 \\ \omega'\neq \mathbf{0}}}\xi_{\omega,\omega'}(x+\omega\cdot(h_1,h_2,h_3)+\omega'\cdot(k_1^\omega,k_2^\omega,k_3^\omega))(x+\omega\cdot(h_1,h_2,h_3)+\omega'\cdot(k_1^\omega,k_2^\omega,k_3^\omega))^T=0.
    \end{equation}
    If, in addition,
    \begin{equation}\label{eq:28}
      \{x,h_1,h_2,h_3\}\cup\left\{k_1^\omega:\omega\in\{0,1\}^3\right\}\cup\left\{k_2^\omega:\omega\in\{0,1\}^3\right\}\cup\left\{k_3^\omega:\omega\in\{0,1\}^3\right\}
    \end{equation}
    consists of $28$ distinct elements $y_1,\dots,y_{28}$ of $G$ and 
    \begin{equation}\label{eq:LI}
      y_iy_j^T+y_jy_i^T\qquad 1\leq i\leq j\leq 28
    \end{equation}
    are linearly independent in $\M_n(\mathbf{F}_p)$, then, by
    expanding and collecting like terms,~\eqref{eq:quadeq} holds if
    and only if
    \begin{equation}\label{eq:system2}
      \begin{cases}
        \xi_{\omega,100}+\xi_{\omega,110}+\xi_{\omega,101}+\xi_{\omega,111}=0 \\
        \xi_{\omega,010}+\xi_{\omega,110}+\xi_{\omega,011}+\xi_{\omega,111}=0 \\
        \xi_{\omega,001}+\xi_{\omega,101}+\xi_{\omega,011}+\xi_{\omega,111}=0 \\
        \xi_{\omega,110}+\xi_{\omega,111}=0 \\
        \xi_{\omega,101}+\xi_{\omega,111}=0 \\
        \xi_{\omega,011}+\xi_{\omega,111}=0
      \end{cases},
    \end{equation}
    for each $\omega\in\{0,1\}^3$ (by looking at the terms involving only the $k_j^\omega$'s) and
    \begin{equation}\label{eq:system3}
    \begin{cases}
      &\sum_{\omega\in\{0,1\}^3}\sum_{\mathbf{0}\neq\omega'\in\{0,1\}^3}\xi_{\omega,\omega'}=0\\
      &\sum_{\mathbf{0}\neq\omega'\in\{0,1\}^3}\xi_{100,\omega'}+\sum_{\mathbf{0}\neq\omega'\in\{0,1\}^3}\xi_{110,\omega'}+\sum_{\mathbf{0}\neq\omega'\in\{0,1\}^3}\xi_{101,\omega'}+\sum_{\mathbf{0}\neq\omega'\in\{0,1\}^3}\xi_{111,\omega'}=0 \\
      &\sum_{\mathbf{0}\neq\omega'\in\{0,1\}^3}\xi_{010,\omega'}+\sum_{\mathbf{0}\neq\omega'\in\{0,1\}^3}\xi_{110,\omega'}+\sum_{\mathbf{0}\neq\omega'\in\{0,1\}^3}\xi_{011,\omega'}+\sum_{\mathbf{0}\neq\omega'\in\{0,1\}^3}\xi_{111,\omega'}=0 \\
      &\sum_{\mathbf{0}\neq\omega'\in\{0,1\}^3}\xi_{001,\omega'}+\sum_{\mathbf{0}\neq\omega'\in\{0,1\}^3}\xi_{101,\omega'}+\sum_{\mathbf{0}\neq\omega'\in\{0,1\}^3}\xi_{011,\omega'}+\sum_{\mathbf{0}\neq\omega'\in\{0,1\}^3}\xi_{111,\omega'}=0 \\
      &\sum_{\mathbf{0}\neq\omega'\in\{0,1\}^3}\xi_{110,\omega'}+\sum_{\mathbf{0}\neq\omega'\in\{0,1\}^3}\xi_{111,\omega'}=0 \\
      &\sum_{\mathbf{0}\neq\omega'\in\{0,1\}^3}\xi_{101,\omega'}+\sum_{\mathbf{0}\neq\omega'\in\{0,1\}^3}\xi_{111,\omega'}=0 \\
      &\sum_{\mathbf{0}\neq\omega'\in\{0,1\}^3}\xi_{011,\omega'}+\sum_{\mathbf{0}\neq\omega'\in\{0,1\}^3}\xi_{111,\omega'}=0
    \end{cases}
    \end{equation}
    (by looking at the terms involving only the $h_i$'s). Applying
    Lemma~\ref{lem:fixedeqns} to~\eqref{eq:system2} for each
    $\omega\in\{0,1\}^3$, we get that
    $\xi_{\omega,\omega'}=(-1)^{|\omega'|+1}\xi_{\omega,111}$ for each
    $\mathbf{0}\neq \omega'\in\{0,1\}^3$, so then by plugging this in
    to~\eqref{eq:system3} and then applying Lemma~\ref{lem:fixedeqns}
    again, it follows that
    $(\xi_{\omega,\omega'})_{\substack{\omega,\omega'\in\{0,1\}^3 \\
        \omega'\neq\mathbf{0}}}$ must be a scalar multiple of
    $((-1)^{|\omega|+|\omega'|})_{\substack{\omega,\omega'\in\{0,1\}^3\\
        \omega'\neq\mathbf{0}}}$.

    The upshot is that if~\eqref{eq:3cube} holds,~\eqref{eq:28}
    consists of $28$ distinct elements, and the matrices~\eqref{eq:LI}
    are linearly independent, then~\eqref{eq:inneraverage} equals $1$
    if and only if $(\xi_{\omega,\omega'})_{\substack{\omega,\omega'\in\{0,1\}^3 \\
        \omega'\neq\mathbf{0}}}$ is a scalar multiple of
    $((-1)^{|\omega|+|\omega'|})_{\substack{\omega,\omega'\in\{0,1\}^3\\
        \omega'\neq\mathbf{0}}}$ and
    $\sum_{\substack{\omega,\omega'\in\{0,1\}^3 \\ \omega'\neq
        \mathbf{0}}}\xi_{\omega,\omega'}\phi(x_{\omega,\omega'})=0$,
    and otherwise~\eqref{eq:inneraverage} equals zero. Thus,
    if~\eqref{eq:28} consists of $28$ distinct elements, the
    matrices~\eqref{eq:LI} are linearly independent, and
    $\square^2(x;h_1,h_2,h_3;\mathbf{k}_1,\mathbf{k}_2,\mathbf{k}_3)$
    is respected by $\phi$, then the probability that
    $\square^2(x;h_1,h_2,h_3;\mathbf{k}_1,\mathbf{k}_2,\mathbf{k}_3)\subset
    A$ is $(p^{-56}\cdot p)^k=p^{-55k}$, and if~\eqref{eq:28} consists
    of $28$ distinct elements,~\eqref{eq:LI} are linearly independent,
    and
    $\square^2(x;h_1,h_2,h_3;\mathbf{k}_1,\mathbf{k}_2,\mathbf{k}_3)$
    is not respected by $\phi$, then the probability that
    $\square^2(x;h_1,h_2,h_3;\mathbf{k}_1,\mathbf{k}_2,\mathbf{k}_3)\subset
    A$ is $(p^{-56}\cdot 1)^k=p^{-56k}$.

    The proportion of
    \begin{equation*}
      (x,h_1,h_2,h_3,\mathbf{k}_1,\mathbf{k}_2,\mathbf{k}_3)\in G^4\times\big(G^{\{0,1\}^3}\big)^3
    \end{equation*}
    for which~\eqref{eq:28} does not consist of $28$ distinct elements
    is at most $\binom{27}{2}p^{-n}=351 p^{-n}$ and, by
    Lemma~\ref{lem:linalg}, the proportion for which the
    matrices~\eqref{eq:LI} are linearly dependent is at most
    $p^{-n/16}$, provided that $n\geq 32\cdot 28\cdot 29$. In this
    case, then, the expected number of second-order $3$-dimensional
    cubes in $A$ respected by $\phi$ is at least
    $\delta^{16} p^{-55k}\cdot p^{28n}$ and the expected number of
    second-order $3$-dimensional cubes in $A$ not respected by $\phi$
    is at most $p^{-56k}\cdot p^{28n}+p^{(28-1/16)n}+351 p^{27n}$.

    Taking $k=\lceil\log_p(2/\delta^{16}\gamma)\rceil$, it follows
    that there exists a choice of
    $M_1,\dots,M_k\in \M_n(\mathbf{F}_p)$ and
    $v_1,\dots,v_k\in G$ such that $A$ contains at least
    $2^{-55}\delta^{56\cdot 16}\gamma^{55}p^{28n}$ second-order
    $3$-dimensional cubes and the proportion that are not respected is
    at most $\gamma$, provided that $n\geq 2^{16}k$, say. Since the number
    of second-order $3$-dimensional cubes in a subset of $G$ of
    density $\beta$ is at most $\beta^{28}p^{28n}$, it further follows
    that this $A$ must have density
    $\gg \delta^{16}\gamma^{55/28}\gg(\delta^{16}\gamma)^2$.
  \end{proof}
  
  \section{A quadratic Bogolyubov-type lemma}\label{sec:bogo}

  Next, we will show that $\mathcal{D}[1_A]$ is very close to constant
  on certain quadratic level sets. The proof is quite straightforward:
  all we need to do is apply a quadratic arithmetic regularity lemma
  to $1_A$, plug the decomposition into $\mathcal{D}[\cdot]$, and then
  use character sum bounds and a bit of linear algebra to handle the
  resulting expressions.

  We begin with some preliminaries on quadratic functions. Let $V$ be
  a $d$-dimensional vector space over $\mathbf{F}_p$. We say that a
  function $Q:V\to\mathbf{F}_p$ is \textit{quadratic} if
  $\partial_{h_1,h_2,h_3}Q(x)=0$ for all $x,h_1,h_2,h_3\in V$. If $W$ is a
  subspace of $V$, then $Q|_{W}:W\to\mathbf{F}_p$ is obviously also
  quadratic. By picking a basis $v_1,\dots,v_d$ for $V$, it is not
  hard to show that $Q$ is quadratic if and only if there exists
  $M\in\M_d(\mathbf{F}_p)$ symmetric (assuming that $p$ is odd, which
  is the case for us), $z\in\mathbf{F}_p^d$, and $c\in \mathbf{F}_p$
  such that
  $Q(x_1v_1+\dots+x_dv_d)=\mathbf{x}^TM\mathbf{x}+z\cdot\mathbf{x}+c$
  for all $\mathbf{x}=(x_1,\dots,x_d)\in\mathbf{F}_p^d$. We then say
  that the \textit{rank} of $Q$ is the rank of the matrix $M$;
  obviously, the rank does not depend on our choice of basis for
  $V$.

  Note that when $V\leq G$ is a subspace and $Q:G\to\mathbf{F}_p$ is a quadratic map
  such that $Q|_V$ has rank $R$, then for any coset $a+V\in G/V$,
  there exists a subspace $V_a'\leq V$ of codimension at most $R+2$
  such that $Q$ is constant on all cosets of $V'_a$ contained in
  $a+V$. Indeed, if $Q|_V$ has rank $R$, then there exist linear
  maps $L,L_1,\dots,L_R:V\to\mathbf{F}_p$ and constants $c\in\mathbf{F}_p$ and $\lambda_1,\dots,\lambda_R\in\mathbf{F}_p^\times$ such that $Q|_V(v)=\lambda_1L_1(v)^2+\dots+\lambda_RL_R(v)^2+L(v)+c$, and we can take
  \begin{equation*}
    V_a':=\ker{L}\cap\ker{L_1}\cap\dots\cap\ker{L_R}\cap\{v\in V:Q(v+a)-Q(v)-Q(a)+Q(0)=0\}.
  \end{equation*}

  We will also recall the following standard bounds for Gauss sums:
  \begin{lemma}\label{lem:GaussSum}
    For any symmetric $M\in\M_d(\mathbf{F}_p)$ with $\rank{M}=R$ and $v,v'\in\mathbf{F}_p^d$, we have
    \begin{equation*}
      \big|\mathbf{E}_{x,y\in \mathbf{F}_p^d}e_p(x^TMy+v\cdot x+v'\cdot y)\big|\leq p^{-R}
    \end{equation*}
    and
    \begin{equation*}
      \big|\mathbf{E}_{x\in \mathbf{F}_p^d}e_p(x^TMx+v\cdot x)\big|\leq p^{-R/2}.
    \end{equation*}
  \end{lemma}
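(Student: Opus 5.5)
We diagonalize $M$ and reduce both averages to products of one-dimensional sums. Since $p$ is odd and $M$ is symmetric of rank $R$, there is an invertible $P\in\M_d(\mathbf{F}_p)$ with $P^TMP=\mathrm{diag}(\lambda_1,\dots,\lambda_R,0,\dots,0)$, where $\lambda_i\in\mathbf{F}_p^\times$. The change of variables $x=P\tilde x$ (and $y=P\tilde y$) is a bijection of $\mathbf{F}_p^d$, so it preserves the uniform averages. It turns the linear terms into $\tilde v\cdot\tilde x$ with $\tilde v=P^Tv$, and similarly for $v'$. We may therefore assume $M$ is diagonal with exactly $R$ nonzero entries.

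For the bilinear sum, the average factors coordinatewise as
\begin{equation*}
\prod_{i=1}^d\mathbf{E}_{x_i,y_i\in\mathbf{F}_p}e_p(\lambda_ix_iy_i+v_ix_i+v_i'y_i).
\end{equation*}
Each factor has absolute value at most $1$, so it suffices to handle the coordinates with $\lambda_i\neq0$. For such $i$, we first average over $y_i$. By orthogonality this average vanishes unless $x_i=-\lambda_i^{-1}v_i'$, in which case it equals $1$. The remaining average over $x_i$ therefore has modulus exactly $p^{-1}$. Multiplying over the $R$ nonzero coordinates gives the bound $p^{-R}$.

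For the quadratic sum, the same factorization gives
\begin{equation*}
\prod_{i}\mathbf{E}_{x_i}e_p(\lambda_ix_i^2+v_ix_i),
\end{equation*}
and again the factors with $\lambda_i=0$ have modulus at most $1$. For $\lambda_i\neq0$ we use the classical Gauss sum bound: squaring the modulus gives $\mathbf{E}_{x,h}e_p(\lambda_i(2xh+h^2)+v_ih)$. Averaging over $x$ forces $2\lambda_ih=0$, hence $h=0$ because $p$ is odd, so the squared modulus is exactly $p^{-1}$. Each such factor thus contributes $p^{-1/2}$, and the product over the $R$ nonzero coordinates gives $p^{-R/2}$.

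There is no real obstacle here. The only points that need care are the existence of the congruence diagonalization, which is standard in odd characteristic, and the observation that the substitution changes the linear terms but not their form. Alternatively, one can avoid diagonalizing the bilinear sum altogether: averaging over $y$ restricts $x$ to a coset of $\ker M$ (note $M^T=M$), which has density $p^{-R}$.
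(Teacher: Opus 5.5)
Your proof is correct: the congruence diagonalization (valid because $p$ is odd), the coordinatewise factorization, and the orthogonality computation are all fine. The bilinear sum gets exactly $p^{-1}$ from each coordinate with $\lambda_i\neq 0$. The squaring argument gives exactly $p^{-1/2}$ per such coordinate for the quadratic sum.

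The paper does not prove this lemma at all; it simply recalls these as standard Gauss sum bounds. So there is no competing argument to compare against, and yours is the standard one.

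As a minor remark, diagonalization is unnecessary for the second bound as well. Squaring directly and writing $x'=x+h$ gives
\[\bigl|\mathbf{E}_{x}e_p(x^TMx+v\cdot x)\bigr|^2=\mathbf{E}_{h}e_p(h^TMh+v\cdot h)1_{2Mh=0}.\]
This has modulus at most the proportion of $h\in\mathbf{F}_p^d$ with $Mh=0$, namely $p^{-R}$. This runs exactly parallel to your alternative treatment of the bilinear sum via $\ker M$.
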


  Lemma~\ref{lem:GaussSum} lets us easily handle character sums that
  naturally arise when analyzing $\mathcal{D}[1_A]$:
  
  \begin{lemma}\label{lem:Qomega}
    Let $p\geq 3$, $V\leq G$ be a subspace of dimension $d$,
    $L_1,L_2,L_3:V\to\mathbf{F}_p$ be linear, and, for each
    $\mathbf{0}\neq\omega\in\{0,1\}^3$, $Q_\omega:V\to\mathbf{F}_p$ be
    a quadratic function. Set
    \begin{equation}\label{eq:QomegaR}
      R=\max_{i=1,2,3}\rank\sum_{\substack{\omega\in\{0,1\}^3 \\ \omega_i=1}}Q_\omega\qquad\text{and}\qquad R'=\max_{1\leq i<j\leq 3}\rank\sum_{\substack{\omega\in\{0,1\}^3 \\ \omega_i=\omega_j=1}}Q_\omega.
    \end{equation}
    Then,
    \begin{equation}
      \label{eq:Qomegaavg}
      \mathbf{E}_{h_1,h_2,h_3\in V}e_p\Big(\sum_{\mathbf{0}\neq \omega\in\{0,1\}^3}Q_\omega(\omega\cdot(h_1,h_2,h_3))+L_1(h_1)+L_2(h_2)+L_3(h_3)\Big)
    \end{equation}
    has modulus at most $p^{-R/2}$ if $R\neq 0$ and modulus at most $p^{-R'}$ if $R=0$.
  \end{lemma}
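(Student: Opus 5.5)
The plan is to expand every $Q_\omega(\omega\cdot(h_1,h_2,h_3))$ into its homogeneous pieces, read off the resulting quadratic form in $(h_1,h_2,h_3)$, and then apply Lemma~\ref{lem:GaussSum} with all but one or two of the variables frozen.

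\textbf{Setup.} Fix a basis of $V$ to identify $V$ with $\mathbf{F}_p^d$. Since $p$ is odd, each quadratic function can be written as $Q_\omega(x)=x^TM_\omega x+z_\omega\cdot x+c_\omega$ with $M_\omega$ symmetric. For $\omega\neq\mathbf{0}$ we have $\omega\cdot(h_1,h_2,h_3)=\sum_{i:\omega_i=1}h_i$, so
\begin{equation*}
  Q_\omega(\omega\cdot(h_1,h_2,h_3))=\sum_{i:\omega_i=1}h_i^TM_\omega h_i+2\sum_{\substack{i<j\\ \omega_i=\omega_j=1}}h_i^TM_\omega h_j+(\text{affine-linear in }h_1,h_2,h_3).
\end{equation*}
Set $N_i:=\sum_{\omega_i=1}M_\omega$ and $N_{ij}:=\sum_{\omega_i=\omega_j=1}M_\omega$. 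These are exactly the matrices of the quadratic parts of $\sum_{\omega_i=1}Q_\omega$ and $\sum_{\omega_i=\omega_j=1}Q_\omega$, so $R=\max_i\rank N_i$ and $R'=\max_{i<j}\rank N_{ij}$. After absorbing the $L_i$ and all linear and constant pieces, the phase in~\eqref{eq:Qomegaavg} becomes
\begin{equation*}
  \sum_{i}h_i^TN_ih_i+2\sum_{i<j}h_i^TN_{ij}h_j+\ell_1(h_1)+\ell_2(h_2)+\ell_3(h_3)+c
\end{equation*}
for some linear forms $\ell_i$ and a constant $c$.

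\textbf{Case $R\neq 0$.} Suppose, by symmetry, that $\rank N_1=R$. Fix $h_2,h_3$. As a function of $h_1$ alone, the phase is $h_1^TN_1h_1+(\text{linear in }h_1)+\text{const}$, where the linear part depends on $h_2,h_3$. The second bound of Lemma~\ref{lem:GaussSum} gives inner average at most $p^{-R/2}$ in modulus, uniformly in $h_2,h_3$. The triangle inequality over $h_2,h_3$ then gives the claimed bound $p^{-R/2}$.

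\textbf{Case $R=0$.} Here every $N_i$ is the zero matrix, so the diagonal terms disappear. Suppose, by symmetry, that $\rank N_{12}=R'$, and fix $h_3$. The phase becomes $h_1^T(2N_{12})h_2+v\cdot h_1+v'\cdot h_2+\text{const}$, with $v,v'$ depending on $h_3$; this uses that $N_{12}$ and $N_{23}$ are symmetric, so the $h_3$-cross terms are linear in $h_1$ and $h_2$ separately. Since $p$ is odd, $2N_{12}$ is symmetric of rank $R'$. The first bound of Lemma~\ref{lem:GaussSum} therefore bounds the $(h_1,h_2)$-average by $p^{-R'}$, and averaging over $h_3$ finishes the proof.

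There is no real obstacle here. The only points needing care are the bookkeeping that identifies $N_i$ and $N_{ij}$ with the matrices defining $R$ and $R'$, and the use of $p\neq 2$ both for the symmetric representation and for the factor $2$ in the cross terms.
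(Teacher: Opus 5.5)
Your proof is correct and is essentially the paper's argument: you expand into the matrices $N_i=\sum_{\omega_i=1}M_\omega$ and $N_{ij}=\sum_{\omega_i=\omega_j=1}M_\omega$, freeze all but one variable (or all but two when $R=0$), and apply the two bounds of Lemma~\ref{lem:GaussSum} together with the triangle inequality. One small correction: in the $R=0$ case, symmetry of $N_{12}$ and $N_{23}$ is not what makes the $h_3$-cross terms linear in $h_1$ and $h_2$; once $h_3$ is frozen, $h_1^TN_{13}h_3$ and $h_2^TN_{23}h_3$ are linear regardless.
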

  \begin{proof}
    By picking a basis $v_1,\dots,v_d$ for $V$, we may write
    $Q_\omega(v)=\mathbf{x}^TM_{\omega}\mathbf{x}+z_\omega\cdot \mathbf{x}+c_\omega$ for
    $v=x_1v_1+\dots+x_dv_d$ with each $M_\omega\in\M_d(\mathbf{F}_p)$ symmetric,
    $z_\omega\in\mathbf{F}_p^d$, and $c_\omega\in\mathbf{F}_p$. Then, there exist linear
    maps $L_1',L_2',L_3':\mathbf{F}_p^d\to\mathbf{F}_p$ such
    that~\eqref{eq:Qomegaavg} and
    \begin{equation}
      \label{eq:matrixavg}
      \mathbf{E}_{h_1,h_2,h_3\in\mathbf{F}_p^d}e_p\Big(\sum_{\mathbf{0}\neq\omega\in\{0,1\}^3}(\omega\cdot(h_1,h_2,h_3))^TM_\omega(\omega\cdot(h_1,h_2,h_3))+L_1'(h_1)+L_2'(h_2)+L_3'(h_3)\Big)
    \end{equation}
    have the same modulus. We may write the phase in the average~\eqref{eq:matrixavg} as
    \begin{align*}
      e_p\Bigg(&\sum_{\substack{\omega\in\{0,1\}^3 \\ \omega_1=1}}h_1^TM_\omega h_1+\sum_{\substack{\omega\in\{0,1\}^3 \\ \omega_2=1}}h_2^TM_\omega h_2+\sum_{\substack{\omega\in\{0,1\}^3 \\ \omega_3=1}}h_3^TM_\omega h_3 \\
      &+2\sum_{\substack{\omega\in\{0,1\}^3 \\ \omega_1=\omega_2=1}}h_1^TM_\omega h_2+2\sum_{\substack{\omega\in\{0,1\}^3 \\ \omega_1=\omega_3=1}}h_1^TM_\omega h_3+2\sum_{\substack{\omega\in\{0,1\}^3 \\ \omega_2=\omega_3=1}}h_2^TM_\omega h_3+\sum_{i=1}^3L_i'(h_i)\Bigg).
    \end{align*}
    Applying the triangle inequality and
    Lemma~\ref{lem:GaussSum}, it follows that~\eqref{eq:matrixavg} is
    at most $p^{-R/2}$. If $R=0$, in which case we must have
    \begin{equation}\label{eq:firstMomega}
      \sum_{\substack{\omega\in\{0,1\}^3 \\ \omega_1=1}}M_\omega=\sum_{\substack{\omega\in\{0,1\}^3 \\ \omega_2=1}}M_\omega=\sum_{\substack{\omega\in\{0,1\}^3 \\ \omega_3=1}}M_\omega=0,
    \end{equation}
    it follows from the triangle inequality and
    Lemma~\ref{lem:GaussSum} again that~\eqref{eq:matrixavg} is at
    most $p^{-R'}$.
  \end{proof}
  
  \subsection{The regularity lemma}\label{subsec:reg}

  The exact statement of the quadratic arithmetic regularity lemma we
  need is not already in the literature, due to the recency of
  polynomial bounds in the $U^3$-inverse theorem, but the proof of it
  essentially is (in several places). For example, using
  Theorem~\ref{thm:U3inv} in the proof of~\cite[Proposition
  3.7]{Green2007} and keeping track of the quantitative bounds yields
  the desired regularity lemma. For the sake of completeness, and
  because the notation introduced will be useful later,
  we will provide a full proof here.
  
  Recall that a \textit{factor} of $G$ is an algebra of subsets of
  $G$. An element $B\in\mathcal{B}$ is said to be an \textit{atom} if
  the only element of $\mathcal{B}$ strictly contained in $B$ is
  $\emptyset$. Note that the atoms of $\mathcal{B}$ form a partition
  of $G$ and that every element of $\mathcal{B}$ is a union of atoms
  of $\mathcal{B}$. We will denote the unique atom of $\mathcal{B}$
  containing $x\in G$ by $\mathcal{B}(x)$.

  A function $f:G\to \mathbf{C}$ is said to be
  $\mathcal{B}$\textit{-measurable} if $f^{-1}(z)\in\mathcal{B}$ for
  all $z\in \mathbf{C}$. We define the
  \textit{conditional expectation}
  $\mathbf{E}(f|\mathcal{B}):G\to\mathbf{C}$ of $f$ with respect to
  $\mathcal{B}$ by
  \begin{equation*}
    \mathbf{E}(f|\mathcal{B})(x):=\mathbf{E}_{y\in\mathcal{B}(x)}f(y),
  \end{equation*}
  so that $\mathbf{E}(f|\mathcal{B})$ is constant on each atom of
  $\mathcal{B}$. It is not hard to show the \textit{Pythagorean identity}
  \begin{equation}\label{eq:pythag}
    \|f\|_{L^2}^2=\|f-\mathbf{E}(f|\mathcal{B})\|_{L^2}^2+\|\mathbf{E}(f|\mathcal{B})\|_{L^2}^2.
  \end{equation}
  Another factor $\mathcal{B}'$ of $G$ \textit{refines} $\mathcal{B}$
  if $\mathcal{B}\subset\mathcal{B}'$. Note that $\mathcal{B}'$
  refines $\mathcal{B}$ if and only if each atom of $\mathcal{B}$ is a
  union of atoms of $\mathcal{B}'$. We also have the inequality
  \begin{equation}\label{eq:refine}
    \|\mathbf{E}(f|\mathcal{B})\|_{L^2}\leq\|\mathbf{E}(f|\mathcal{B}')\|_{L^2},
  \end{equation}
  which follows using the Pythagorean identity from the fact that
  $\mathbf{E}(\mathbf{E}(f|\mathcal{B}')|\mathcal{B})=\mathbf{E}(f|\mathcal{B})$
  whenever $\mathcal{B}'$ refines $\mathcal{B}$. Finally, for any two factors $\mathcal{B}_1$ and
  $\mathcal{B}_2$ of $G$, their \textit{join}
  $\mathcal{B}_1\vee\mathcal{B}_2$ is the minimal common refinement of
  $\mathcal{B}_1$ and $\mathcal{B}_2$; equivalently,
  $\mathcal{B}_1\vee\mathcal{B}_2$ is the factor whose atoms are the
  intersections $B_1\cap B_2$ of atoms $B_1$ of $\mathcal{B}_1$ and $B_2$ of
  $\mathcal{B}_2$.

  The only factors we will consider are those generated by level sets
  of polynomials of degree at most $2$. For any
  $P_1,\dots,P_m:G\to\mathbf{F}_p$, define
  $\mathcal{B}_{(P_1,\dots,P_m)}$ to be the factor whose atoms are the sets
  $P_1^{-1}(a_1)\cap\dots\cap
  P_m^{-1}(a_m)$ as $a_1,\dots,a_m$ range over $\mathbf{F}_p$. A
  factor of the form $\mathcal{B}_{\mathcal{P}}$ with
  $\mathcal{P}=(L_1,\dots,L_r,Q_1,\dots,Q_s)$ for
  $L_1,\dots,L_r:G\to\mathbf{F}_p$ homogeneous linear polynomials and
  $Q_1,\dots,Q_s:G\to\mathbf{F}_p$ quadratic polynomials is called a
  \textit{quadratic factor of complexity} $(r,s)$.

  Now, we can state the version of the quadratic arithmetic regularity lemma we will need.
  \begin{lemma}\label{lem:reglem}
    Let $f:G\to[0,1]$ and $\eta>0$. There exists a quadratic factor
    $\mathcal{B}$ of $G$ of complexity at most
    $(O(\eta^{-O(1)}),O(\eta^{-O(1)}))$ such that
    \begin{equation*}
      \|f-\mathbf{E}(f|\mathcal{B})\|_{U^3(G)}\leq\eta.
    \end{equation*}
  \end{lemma}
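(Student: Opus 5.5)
We will run the standard energy-increment argument, with Theorem~\ref{thm:U3inv} as the only nontrivial input. Start with the trivial factor $\mathcal{B}_0=\{\emptyset,G\}$ of complexity $(0,0)$. Given a quadratic factor $\mathcal{B}_i$ of complexity $(r_i,s_i)$, check whether $\|f-\mathbf{E}(f|\mathcal{B}_i)\|_{U^3(G)}\leq\eta$. If so, stop. Otherwise, apply Theorem~\ref{thm:U3inv} to the $1$-bounded function $g_i:=f-\mathbf{E}(f|\mathcal{B}_i)$, which takes values in $[-1,1]$. This produces a quadratic polynomial $Q$ with $|\mathbf{E}_{x\in G}g_i(x)e_p(Q(x))|\gg\eta^{O(1)}$. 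Then set $\mathcal{B}_{i+1}:=\mathcal{B}_i\vee\mathcal{B}_{(Q)}$, a quadratic factor of complexity $(r_i,s_i+1)$. We never need to add linear forms; the linear part of the complexity simply stays $0$, which is within the claimed bound.

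The key step is the energy increment. Since $e_p(Q)$ is $\mathcal{B}_{(Q)}$-measurable, it is $\mathcal{B}_{i+1}$-measurable. Hence
\[
\mathbf{E}_x g_i(x)e_p(Q(x))=\mathbf{E}_x\mathbf{E}(g_i|\mathcal{B}_{i+1})(x)e_p(Q(x)),
\]
and Cauchy--Schwarz gives $\|\mathbf{E}(g_i|\mathcal{B}_{i+1})\|_{L^2}\gg\eta^{O(1)}$. Next, $\mathbf{E}(g_i|\mathcal{B}_{i+1})=\mathbf{E}(f|\mathcal{B}_{i+1})-\mathbf{E}(f|\mathcal{B}_i)$, because $\mathcal{B}_{i+1}$ refines $\mathcal{B}_i$. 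The Pythagorean identity~\eqref{eq:pythag}, applied to $\mathbf{E}(f|\mathcal{B}_{i+1})$ relative to $\mathcal{B}_i$ and combined with the tower property used for~\eqref{eq:refine}, then gives
\[
\|\mathbf{E}(f|\mathcal{B}_{i+1})\|_{L^2}^2=\|\mathbf{E}(f|\mathcal{B}_i)\|_{L^2}^2+\|\mathbf{E}(f|\mathcal{B}_{i+1})-\mathbf{E}(f|\mathcal{B}_i)\|_{L^2}^2\geq\|\mathbf{E}(f|\mathcal{B}_i)\|_{L^2}^2+c\eta^{C}.
\]

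Termination follows because $0\leq f\leq 1$ gives $\|\mathbf{E}(f|\mathcal{B})\|_{L^2}^2\leq 1$ for every factor, so the iteration stops after at most $c^{-1}\eta^{-C}=O(\eta^{-O(1)})$ steps. The final factor then has complexity $(0,O(\eta^{-O(1)}))$ and satisfies the conclusion.

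There is no real obstacle here. The only points needing care are that $\mathbf{E}(\cdot|\mathcal{B})$ is the orthogonal projection onto $\mathcal{B}$-measurable functions (this underlies~\eqref{eq:pythag}), and that the polynomial bound in Theorem~\ref{thm:U3inv} is what makes the complexity polynomial rather than tower-type in $\eta^{-1}$.
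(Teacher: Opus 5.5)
Your proposal is correct and follows the paper's proof: the same energy-increment iteration driven by Theorem~\ref{thm:U3inv}, with the Pythagorean identity and the bound $\|\mathbf{E}(f|\mathcal{B})\|_{L^2}\leq 1$ limiting the number of steps to $O(\eta^{-O(1)})$. The only cosmetic difference is how the new polynomial is added. The paper splits it into a homogeneous linear form $L$ and a quadratic form and joins with $\mathcal{B}_{(L,Q)}$, adding complexity $(1,1)$ per step. You instead join with the level sets of the full polynomial of degree at most $2$, which is still a quadratic factor under the paper's definition and yields complexity $(0,O(\eta^{-O(1)}))$.
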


  As with~\cite[Proposition 3.7]{Green2007}, Lemma~\ref{lem:reglem}
  will be proven using a (now) standard energy-increment argument. The main
  input will be the following corollary of Theorem~\ref{thm:U3inv}.

\begin{corollary}\label{cor:energy}
  Let $f:G\to\mathbf{C}$ be $1$-bounded with
  $\|f\|_{U^3(G)}\geq\delta$. There exists a quadratic factor
  $\mathcal{B}$ of complexity at most $(1,1)$ such that
  \begin{equation*}
    \|\mathbf{E}(f|\mathcal{B})\|_{L^2}\gg\delta^{O(1)}.
  \end{equation*}
\end{corollary}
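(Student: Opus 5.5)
By Theorem~\ref{thm:U3inv}, there is a polynomial $Q:G\to\mathbf{F}_p$ of degree at most $2$ with
\begin{equation*}
  \left|\mathbf{E}_{x\in G}f(x)e_p(Q(x))\right|\gg\delta^{O(1)}.
\end{equation*}
The plan is to take $\mathcal{B}:=\mathcal{B}_{(Q)}$. This is a quadratic factor of complexity at most $(0,1)$, hence at most $(1,1)$. Its atoms are the level sets $Q^{-1}(a)$ for $a\in\mathbf{F}_p$. Should the definition require a quadratic polynomial of exact degree $2$, one can harmlessly adjoin a linear form, or absorb $Q$ into the linear slot if $\deg Q\leq 1$; this is why the statement allows complexity $(1,1)$.

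The key point is that $e_p(Q(x))$ is $\mathcal{B}$-measurable, since it is constant on each atom. Consequently
\begin{equation*}
  \mathbf{E}_{x\in G}f(x)e_p(Q(x))=\mathbf{E}_{x\in G}\mathbf{E}(f|\mathcal{B})(x)e_p(Q(x)).
\end{equation*}
To see this, split the average over atoms and note that on each atom the phase is a constant, so $f$ can be replaced by its average over that atom. Applying Cauchy--Schwarz (or simply the triangle inequality together with $\|\cdot\|_{L^1}\leq\|\cdot\|_{L^2}$), the modulus of the right-hand side is at most $\|\mathbf{E}(f|\mathcal{B})\|_{L^2}$, because $|e_p(Q)|=1$. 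Therefore $\|\mathbf{E}(f|\mathcal{B})\|_{L^2}\gg\delta^{O(1)}$.

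None of the steps is a real obstacle. The only care needed is in matching the complexity convention, which requires homogeneous linear polynomials in the first slot, and we may simply take that list to be empty. Theorem~\ref{thm:U3inv} as stated has implied constants depending on $p$, and these are suppressed because $p$ is fixed.
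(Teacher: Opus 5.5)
Your proposal is correct and follows the paper's argument: apply Theorem~\ref{thm:U3inv}, note that the phase is constant on atoms so that $f$ may be replaced by $\mathbf{E}(f|\mathcal{B})$, and finish with Cauchy--Schwarz. The only difference is that the paper writes the phase as $x^TMx+v\cdot x$ and uses the finer factor $\mathcal{B}_{(L,Q)}$ with $L(x)=v\cdot x$ and $Q(x)=x^TMx$, whereas you use the single polynomial directly to get complexity $(0,1)$; this is equally valid, since the quadratic polynomials in the paper's factors may carry linear and constant terms.
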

\begin{proof}
  By Theorem~\ref{thm:U3inv}, there exists a polynomial
  $P:G\to\mathbf{F}_p$ of the form $P(x)=x^TMx+v\cdot x$ with
  $M\in\M_n(\mathbf{F}_p)$ symmetric such that
  \begin{equation*}
    \left|\mathbf{E}_{x\in G}f(x)e_p(P(x))\right|\gg\delta^{O(1)}.
  \end{equation*}
  Set $Q(x):=x^TMx$ and $L(x):=v\cdot x$, and define the quadratic
  factor $\mathcal{B}:=\mathcal{B}_{(L,Q)}$, which has complexity at
  most $(1,1)$. Then, since $e_p(P(y))$ is constant on atoms of $\mathcal{B}$,
  \begin{equation*}
   \left|\mathbf{E}_{x\in G}f(x)e_p(P(x))\right|=\left|\mathbf{E}_{x\in G}\mathbf{E}_{y\in\mathcal{B}(x)}f(y)e_p(P(y))\right|\leq\mathbf{E}_{x\in G}\left|\mathbf{E}_{y\in\mathcal{B}(x)}f(y)\right|=\mathbf{E}_{x\in G}\left|\mathbf{E}(f|\mathcal{B})(x)\right|
  \end{equation*}
  The conclusion now follows from the Cauchy--Schwarz inequality.
\end{proof}

Next, we prove the energy-increment lemma that will be iterated to obtain Lemma~\ref{lem:reglem}.
\begin{lemma}\label{lem:iterate}
  Let $f:G\to[0,1]$ and $\mathcal{B}$ be a quadratic factor of $G$ of complexity at most
  $(r,s)$. If
  \begin{equation*}
    \|f-\mathbf{E}(f|\mathcal{B})\|_{U^3(G)}>\eta,
  \end{equation*}
  then there exists a quadratic factor $\mathcal{B}'$ of complexity at most
  $(r+1,s+1)$ refining $\mathcal{B}$ with
  \begin{equation*}
    \|\mathbf{E}(f|\mathcal{B}')\|_{L^2}^2\geq\|\mathbf{E}(f|\mathcal{B})\|_{L^2}^2+\Omega(\eta^{O(1)}).
  \end{equation*}
\end{lemma}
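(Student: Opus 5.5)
We apply Corollary~\ref{cor:energy} to the function $g:=f-\mathbf{E}(f|\mathcal{B})$ and join the resulting factor with $\mathcal{B}$. Since $f$ takes values in $[0,1]$, so does $\mathbf{E}(f|\mathcal{B})$, and hence $g$ is real-valued and $1$-bounded. By hypothesis $\|g\|_{U^3(G)}>\eta$, so Corollary~\ref{cor:energy} produces a quadratic factor $\mathcal{B}_1=\mathcal{B}_{(L,Q)}$ of complexity at most $(1,1)$ with $\|\mathbf{E}(g|\mathcal{B}_1)\|_{L^2}\gg\eta^{O(1)}$. Set $\mathcal{B}':=\mathcal{B}\vee\mathcal{B}_1$. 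If $\mathcal{B}=\mathcal{B}_{(L_1,\dots,L_r,Q_1,\dots,Q_s)}$, then $\mathcal{B}'=\mathcal{B}_{(L_1,\dots,L_r,L,Q_1,\dots,Q_s,Q)}$, because the atoms of the join are the intersections of atoms. This has complexity at most $(r+1,s+1)$ and refines $\mathcal{B}$.

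It remains to show the energy increment. Since $\mathcal{B}'$ refines $\mathcal{B}_1$, inequality~\eqref{eq:refine} gives
\begin{equation*}
  \|\mathbf{E}(g|\mathcal{B}')\|_{L^2}\geq\|\mathbf{E}(g|\mathcal{B}_1)\|_{L^2}\gg\eta^{O(1)}.
\end{equation*}
Because $\mathcal{B}'$ refines $\mathcal{B}$, the function $\mathbf{E}(f|\mathcal{B})$ is $\mathcal{B}'$-measurable. Therefore
\begin{equation*}
  \mathbf{E}(g|\mathcal{B}')=\mathbf{E}(f|\mathcal{B}')-\mathbf{E}(f|\mathcal{B}).
\end{equation*}

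We now need the orthogonality identity
\begin{equation*}
  \|\mathbf{E}(f|\mathcal{B}')\|_{L^2}^2=\|\mathbf{E}(f|\mathcal{B})\|_{L^2}^2+\|\mathbf{E}(f|\mathcal{B}')-\mathbf{E}(f|\mathcal{B})\|_{L^2}^2.
\end{equation*}
This is the Pythagorean identity~\eqref{eq:pythag} applied to the function $\mathbf{E}(f|\mathcal{B}')$ and the factor $\mathcal{B}$, using $\mathbf{E}(\mathbf{E}(f|\mathcal{B}')|\mathcal{B})=\mathbf{E}(f|\mathcal{B})$ (as noted after~\eqref{eq:refine}). Combining it with the lower bound on $\|\mathbf{E}(g|\mathcal{B}')\|_{L^2}$ yields
\begin{equation*}
  \|\mathbf{E}(f|\mathcal{B}')\|_{L^2}^2\geq\|\mathbf{E}(f|\mathcal{B})\|_{L^2}^2+\Omega(\eta^{O(1)}),
\end{equation*}
as required.

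There is no real obstacle. The only points needing care are that the join of two quadratic factors is again a quadratic factor whose complexities add, and that $g$ is $1$-bounded so that Corollary~\ref{cor:energy} applies.
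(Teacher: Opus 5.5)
Your proof is correct and follows the paper's argument. The paper likewise applies Corollary~\ref{cor:energy} to $f-\mathbf{E}(f|\mathcal{B})$, joins the resulting $(1,1)$ factor with $\mathcal{B}$, and combines~\eqref{eq:refine} with~\eqref{eq:pythag}, using that $\mathbf{E}(f|\mathcal{B})$ is $\mathcal{B}'$-measurable; you just make explicit some steps the paper leaves implicit (why $g$ is $1$-bounded, the explicit form of the join, and which instance of the Pythagorean identity is used).
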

\begin{proof}
  Apply Corollary~\ref{cor:energy} to the function
  $f-\mathbf{E}(f|\mathcal{B})$, which is $1$-bounded. This gives a
  quadratic factor $\mathcal{B}_{L,Q}$ of complexity at most $(1,1)$
  such that
  \begin{equation*}
    \|\mathbf{E}(f-\mathbf{E}(f|\mathcal{B})|\mathcal{B}_{L,Q})\|_{L^2}\gg\eta^{O(1)}.
  \end{equation*}
  Set $\mathcal{B}':=\mathcal{B}\vee\mathcal{B}_{L,Q}$, so that
  $\mathcal{B}'$ is a quadratic factor of complexity at most
  $(r+1,s+1)$. Then,
  by~\eqref{eq:refine} and~\eqref{eq:pythag},
  \begin{equation*}
    \eta^{O(1)}\ll\|\mathbf{E}(f-\mathbf{E}(f|\mathcal{B})|\mathcal{B}')\|_{L^2}^2=\|\mathbf{E}(f|\mathcal{B}')-\mathbf{E}(f|\mathcal{B})\|_{L^2}^2=\|\mathbf{E}(f|\mathcal{B}')\|_{L^2}^2-\|\mathbf{E}(f|\mathcal{B})\|_{L^2}^2
  \end{equation*}
  since $\mathbf{E}(f|\mathcal{B})$ is already $\mathcal{B}'$-measurable.
\end{proof}

Now, we can prove Lemma~\ref{lem:reglem}.

\begin{proof}[Proof of Lemma~\ref{lem:reglem}]
  Beginning with the trivial (quadratic) factor
  $\mathcal{B}_0:=\{\emptyset,G\}$, repeatedly apply
  Lemma~\ref{lem:iterate} to refine $\mathcal{B}_0$ until obtaining a
  quadratic factor $\mathcal{B}$ for which
  $\|f-\mathbf{E}(f|\mathcal{B})\|_{U^3(G)}\leq\eta$. Since $f$ is $1$-bounded,
  $\|\mathbf{E}(f|\mathcal{B}')\|_{L^2}\leq 1$ for any factor
  $\mathcal{B}'$ of $G$, and thus this iteration must terminate in
  $\ll\eta^{-O(1)}$ steps, yielding that $\mathcal{B}$ has
  complexity at most $(O(\eta^{-O(1)}),O(\eta^{-O(1)}))$.
\end{proof}

\subsection{Analysis of dual functions}
Observe from the definition~\eqref{eq:dualdef} that
$\mathcal{D}[(f_\omega)_\omega]$ is multilinear in the $f_\omega$ and that,
if each of the functions $f_\omega$ is $1$-bounded, then
$\mathcal{D}[(f_\omega)_\omega]$ is $1$-bounded as well. In fact, more
is true: $\mathcal{D}[(f_\omega)_\omega]$ can be controlled in terms
of the $U^3$-norms of the $f_\omega$.

\begin{lemma}\label{lem:dualU3}
  Let $f_\omega:G\to\mathbf{C}$ be $1$-bounded functions for each $\mathbf{0}\neq\omega\in\{0,1\}^3$. Then,
  \begin{equation*}
    \|\mathcal{D}[(f_\omega)_\omega]\|_{L^\infty}\leq\min_{\mathbf{0}\neq\omega\in\{0,1\}^3}\|f_\omega\|_{U^3(G)}.
  \end{equation*}
\end{lemma}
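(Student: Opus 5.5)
Fix $x\in G$ and a vertex $\omega_0\neq\mathbf{0}$ at which the minimum is attained. The plan is to write $\mathcal{D}[(f_\omega)_\omega](x)$ as a $U^3$-type average in which $f_{\omega_0}$ sits at one vertex of a cube and the point $x$ is treated as an extra "dummy" function. Then Theorem~\ref{thm:GCSnorm} applies directly. Concretely, let $\delta_x:=p^n1_{\{x\}}$, whose average equals $1$. We have
\begin{equation*}
  \mathcal{D}[(f_\omega)_\omega](x)=\mathbf{E}_{y,h_1,h_2,h_3\in G}\delta_x(y)\prod_{\mathbf{0}\neq\omega\in\{0,1\}^3}\mathcal{C}^{|\omega|}f_\omega(y+\omega\cdot(h_1,h_2,h_3)).
\end{equation*}
Applying Theorem~\ref{thm:GCSnorm} naively is useless, because $\|\delta_x\|_{U^3}$ is huge. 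So I would instead run the Cauchy--Schwarz argument underlying Gowers--Cauchy--Schwarz only in the directions that avoid the vertex $\mathbf{0}$.

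The cleaner route is to avoid the dummy and use Theorem~\ref{thm:GCSbox}. After a change of variables we may move the chosen vertex $\omega_0$ to the position $(1,1,1)$. This is done by the reflection $h_i\mapsto -h_i$ for each $i$ with $(\omega_0)_i=0$, together with a translation of the base point. That change sends the cube $\{x+\omega\cdot h\}$ to a cube with a new base point $x'=x+\sum_{i:(\omega_0)_i=0}h_i$. This is awkward because it depends on the $h_i$.

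Instead, I would substitute $h_i=u_i-x/3$, which is legitimate since $p\ge5$ makes $3$ invertible, or more simply write $y_i:=h_i$ and use $x+y_1+y_2+y_3$ for the top vertex. Take $\omega_0=(1,1,1)$ first. Then
\begin{equation*}
  \mathcal{D}(x)=\mathbf{E}_{h_1,h_2,h_3}\overline{f_{111}(x+h_1+h_2+h_3)}\prod_{\omega\neq\mathbf{0},(1,1,1)}\mathcal{C}^{|\omega|}f_\omega(x+\omega\cdot h).
\end{equation*}
Every other factor misses at least one $h_i$. Put $F(z):=\overline{f_{111}(x+z)}$, whose $U^3$-norm equals $\|f_{111}\|_{U^3}$. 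Group each remaining factor $f_\omega$ with some index $i$ satisfying $\omega_i=0$, so that the product of the group for $i$ does not depend on $h_i$. These groups are $1$-bounded, and Theorem~\ref{thm:GCSbox} with $s=3$ gives $|\mathcal{D}(x)|\le\|f_{111}\|_{U^3}$.

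For a general vertex $\omega_0$, say of weight $1$ or $2$, reparametrize so that $f_{\omega_0}$ is evaluated at a sum of three independent uniform variables while every other factor misses one of them. For example, with $\omega_0=(1,0,0)$, substitute $h_1=k_1-k_2-k_3$, $h_2=k_2$, $h_3=k_3$, a bijection of $G^3$. Then $f_{100}$ is evaluated at $x+k_1-k_2-k_3$, which is again a sum of three independent uniform variables after the signs $k_2\mapsto -k_2$, $k_3\mapsto -k_3$. The remaining vertices involve $h_2=k_2$, $h_3=k_3$, and combinations with $h_1$. For each such vertex I must check that the argument misses at least one of the new variables. The clean choice, which I expect to be the main obstacle, is to pick the linear change of variables depending on $\omega_0$. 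Let $m_i:=\omega\cdot h$ for suitable three "complementary" vertices, chosen so that the map $(h_1,h_2,h_3)\mapsto(m_1,m_2,m_3)$ is invertible over $\mathbf{F}_p$ with $\omega_0\cdot h=m_1+m_2+m_3$. Each other vertex $\omega\cdot h$ should then be a function of at most two of the $m_i$. I would verify this case by case: by symmetry only the weights $1$ and $2$ need checking. Once such coordinates are found, Theorem~\ref{thm:GCSbox} finishes exactly as in the case $\omega_0=(1,1,1)$. Taking the minimum over $\omega_0$ and the supremum over $x$ then gives the claim.
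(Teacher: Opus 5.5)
Your argument is correct, and it takes a different route from the paper's. The paper fixes $x$ and applies Cauchy--Schwarz once to double $h_1$; the three factors with $\omega_1=0$ drop out by $1$-boundedness. For each fixed difference $k$ it bounds the resulting average by $\prod_{\omega_1=1}\|\Delta_kf_\omega\|_{U^2(G)}$, using Theorem~\ref{thm:GCSnorm} with base point $x+h_1$. It then finishes with $1$-boundedness, H\"older, and $\|f\|_{U^3(G)}^8=\mathbf{E}_k\|\Delta_kf\|_{U^2(G)}^4$. That single doubling handles all four vertices with $\omega_1=1$ at once, and symmetry in $h_1,h_2,h_3$ covers the other three, so no vertex-by-vertex bookkeeping is needed. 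You instead apply the box form, Theorem~\ref{thm:GCSbox}, in one shot after an invertible linear substitution. The substitution places $f_{\omega_0}$ at $x+m_1+m_2+m_3$ while every other vertex omits some $m_i$. This avoids squaring and H\"older, but the price is that such a substitution must be exhibited for every $\omega_0$.

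That is the one piece you left open. Your weight-one substitution does work: the other offsets become $k_2,k_3,k_1-k_3,k_1-k_2,k_2+k_3,k_1$. Weight two is equally easy, and here is a uniform choice. Let $S$ be the support of $\omega_0$ and fix $i_0\in S$. Put $m_{i_0}=h_{i_0}+\sum_{i\notin S}h_i$, $m_i=h_i$ for $i\in S\setminus\{i_0\}$, and $m_i=-h_i$ for $i\notin S$. This substitution has determinant $\pm1$, so it is a bijection of $G^3$ for every $p$; your $x/3$ idea is unnecessary. Moreover,
\[
\omega\cdot(h_1,h_2,h_3)=\omega_{i_0}m_{i_0}+\sum_{i\in S\setminus\{i_0\}}\omega_im_i+\sum_{i\notin S}(\omega_{i_0}-\omega_i)m_i,
\]
and all three coefficients are nonzero only when $\omega=\omega_0$, in which case the form is $m_1+m_2+m_3$. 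For $\omega_0=(1,1,0)$ this reads $h_1=m_1+m_3$, $h_2=m_2$, $h_3=-m_3$.

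Signs are essential here. Your heuristic of taking each $m_i$ to be a genuine vertex offset, with coefficients in $\{0,1\}$, cannot work (for $p\geq 5$) unless $\omega_0=(1,1,1)$. Any coordinate $j$ with $(\omega_0)_j=0$ would force the $j$-th column of the substitution matrix to vanish. With this filled in, your proof is complete, and the same substitution works verbatim for $U^s$-dual functions.
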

\begin{proof}
  By the symmetry of the average~\eqref{eq:dualdef} defining
  $\mathcal{D}[(f_\omega)_{\omega}]$ in $h_1$, $h_2$, and $h_3$, it
  suffices to prove that $\|\mathcal{D}[(f_\omega)_\omega]\|_{L^\infty}$ is
  bounded above by each of $\|f_{100}\|_{U^3(G)}$, $\|f_{110}\|_{U^3(G)}$,
  and $\|f_{111}\|_{U^3(G)}$. For any $x\in G$, we have by applying the
  Cauchy--Schwarz inequality to double the $h_1$ variable in~\eqref{eq:dualdef} that
  $|\mathcal{D}[(f_\omega)_\omega](x)|^2$ is bounded above by
  \begin{equation*}
    \mathbf{E}_{h_1,h_2,h_3,k\in G}\Delta_{k}\overline{f_{100}}(x+h_1)\Delta_{k}f_{110}(x+h_1+h_2)\Delta_{k}f_{101}(x+h_1+h_3)\Delta_k\overline{f_{111}}(x+h_1+h_2+h_3).
  \end{equation*}
  Thus, by the triangle inequality and then the
  Gowers--Cauchy--Schwarz inequality in the form of
  Theorem~\ref{thm:GCSnorm},
  \begin{equation*}
    |\mathcal{D}[(f_\omega)_\omega](x)|^2 \leq \mathbf{E}_{k\in G}\|\Delta_kf_{100}\|_{U^2(G)}\|\Delta_kf_{110}\|_{U^2(G)}\|\Delta_kf_{101}\|_{U^2(G)}\|\Delta_kf_{111}\|_{U^2(G)}.
  \end{equation*}
  The conclusion of the lemma now follows from the $1$-boundedness of
  the $f_\omega$'s, H\"older's inequality, and the identity
  $\|f\|_{U^3(G)}^8=\mathbf{E}_{k\in G}\|\Delta_kf\|_{U^2(G)}^4$.
\end{proof}

Now, by combining Lemmas~\ref{lem:reglem} and~\ref{lem:dualU3}, we
obtain that $\mathcal{D}[1_A]$ is very well approximated by
$\mathcal{D}[\mathbf{E}(1_A|\mathcal{B})]$ for some quadratic factor
$\mathcal{B}$.

\begin{lemma}\label{lem:dualapprox}
  Let $A\subset G$ and $\eta>0$. There exists a quadratic factor
  $\mathcal{B}$ of $G$ of complexity at most
  $(O(\eta^{-O(1)}),O(\eta^{-O(1)}))$ such that
  \begin{equation*}
    \|\mathcal{D}[1_A]-\mathcal{D}[\mathbf{E}(1_A|\mathcal{B})]\|_{L^\infty}\leq 127\eta.
  \end{equation*}
\end{lemma}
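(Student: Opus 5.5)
Apply Lemma~\ref{lem:reglem} to $f=1_A$ with parameter $\eta$. This gives a quadratic factor $\mathcal{B}$ of complexity at most $(O(\eta^{-O(1)}),O(\eta^{-O(1)}))$ with $\|1_A-\mathbf{E}(1_A|\mathcal{B})\|_{U^3(G)}\leq\eta$. Write $g:=\mathbf{E}(1_A|\mathcal{B})$ and $e:=1_A-g$. Both $1_A$ and $g$ take values in $[0,1]$, so $e$ is real-valued with $|e|\leq 1$, i.e.\ $1$-bounded. The remaining task is to compare $\mathcal{D}[1_A]$ with $\mathcal{D}[g]$ pointwise.

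We use the multilinearity of $\mathcal{D}[(f_\omega)_\omega]$ in its seven arguments. Substituting $1_A=g+e$ in each of the seven slots $\mathbf{0}\neq\omega\in\{0,1\}^3$ and expanding gives
\begin{equation*}
  \mathcal{D}[1_A]=\sum_{S\subset\{0,1\}^3\setminus\{\mathbf{0}\}}\mathcal{D}[(f^S_\omega)_\omega],
\end{equation*}
where $f^S_\omega=e$ for $\omega\in S$ and $f^S_\omega=g$ otherwise. The term with $S=\emptyset$ is exactly $\mathcal{D}[g]$. There are $2^7-1=127$ terms with $S\neq\emptyset$, and each has at least one slot equal to $e$. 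Every function in every slot is $1$-bounded. Lemma~\ref{lem:dualU3} therefore bounds each such term in $L^\infty$ by
\begin{equation*}
  \min_{\omega}\|f^S_\omega\|_{U^3(G)}\leq\|e\|_{U^3(G)}\leq\eta.
\end{equation*}

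By the triangle inequality, $\|\mathcal{D}[1_A]-\mathcal{D}[g]\|_{L^\infty}\leq 127\eta$, as claimed. The only point needing care is that Lemma~\ref{lem:dualU3} requires all seven functions to be $1$-bounded. This holds because $g$ and $1_A$ take values in $[0,1]$, so their difference $e$ lies in $[-1,1]$. The complex conjugations in~\eqref{eq:dualdef} cause no trouble here: $\mathcal{D}$ is multilinear over $\mathbf{R}$, and all the functions involved are real. There is no real obstacle; the statement is a direct combination of Lemmas~\ref{lem:reglem} and~\ref{lem:dualU3}.
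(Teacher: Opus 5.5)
Your proposal is correct and follows the paper's proof exactly: apply Lemma~\ref{lem:reglem} to $1_A$, split $1_A=\mathbf{E}(1_A|\mathcal{B})+(1_A-\mathbf{E}(1_A|\mathcal{B}))$, and expand $\mathcal{D}$ multilinearly. Each of the $2^7-1=127$ terms containing at least one copy of the $1$-bounded error is then bounded in $L^\infty$ by $\eta$ via Lemma~\ref{lem:dualU3}, which gives the stated $127\eta$.
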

\begin{proof}
  Applying Lemma~\ref{lem:reglem} to $1_A$, we obtain a quadratic
  factor $\mathcal{B}$ of $G$ of complexity at most
  $(O(\eta^{-O(1)}),O(\eta^{-O(1)}))$ such that
  $\|1_A-\mathbf{E}(1_A|\mathcal{B})\|_{U^3(G)}\leq\eta$. Writing
  $1_A=\mathbf{E}(1_A|\mathcal{B})+(1_A-\mathbf{E}(1_A|\mathcal{B}))$,
  then using the multilinearity of $\mathcal{D}$ and that both $\mathbf{E}(1_A|\mathcal{B})$
  and $(1_A-\mathbf{E}(1_A|\mathcal{B}))$ are $1$-bounded, it follows from Lemma~\ref{lem:dualU3} that
  \begin{equation*}
    \|\mathcal{D}[1_A]-\mathcal{D}[\mathbf{E}(1_A|\mathcal{B})]\|_{L^\infty}\leq 127\|1_A-\mathbf{E}(1_A|\mathcal{B})\|_{U^3(G)}\leq 127\eta.
  \end{equation*}
\end{proof}

If $\mathcal{B}$ is a quadratic factor of $G$ and $f$ is
$\mathcal{B}$-measurable, then it is not hard to show that
$\mathcal{D}[f]$ is very close to constant on the atoms of some
refinement of $\mathcal{B}$ of complexity not much larger than that of
$\mathcal{B}$ itself. The idea of the proof is to apply orthogonality
of characters to express $\mathcal{D}[f]$ as a linear combination of
character sums of the form appearing in Lemma~\ref{lem:Qomega}, and to
then split the linear combination into two parts depending on the size
of the corresponding $R$ appearing
in~\eqref{eq:QomegaR}. Lemma~\ref{lem:Qomega} says that the
``high-rank'' part of the linear combination is small, and the
quadratic functions appearing in the ``low rank'' part of the linear
combination live in a subspace of $\M_n(\mathbf{F}_p)$ of dimension
bounded in terms of the complexity of $\mathcal{B}$. This leads easily
to a suitable refinement with respect to which the low-rank part of
the linear combination is measurable. We carry out this argument to
prove the following lemma.

\begin{lemma}\label{lem:finalapprox}
  Let $\eta>0$ and $\mathcal{B}$ be a quadratic factor of $G$ of
  complexity at most $(r,s)$. There is a quadratic factor
  $\mathcal{B}'$ of $G$ of complexity at most
  $(15(r+s)^2+2s\log_p(2/\eta),s)$ refining $\mathcal{B}$ such that
  for any $\mathcal{B}$-measurable $f:G\to[0,1]$, there is a
  $\mathcal{B}'$-measurable $F:G\to[0,1]$ with
  \begin{equation*}
    \|\mathcal{D}[f]-F\|_{L^\infty}\leq \eta.
  \end{equation*}
\end{lemma}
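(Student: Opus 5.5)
The plan is to expand $f$ in characters of the factor, substitute this expansion into $\mathcal{D}[f]$, and split the resulting character sums by the rank of the quadratic forms that couple $x$ to $h_i$. High-rank sums are small by Lemma~\ref{lem:Qomega}. Low-rank sums depend on $x$ only through a bounded number of extra linear forms, and these forms can be chosen independently of $f$.

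\textbf{Setup and character expansion.} Write $\mathcal{B}=\mathcal{B}_{(L_1,\dots,L_r,Q_1,\dots,Q_s)}$ and write $L=(L_1,\dots,L_r)$, $Q=(Q_1,\dots,Q_s)$. For $t\in\mathbf{F}_p^s$, let $M_t$ be the symmetric matrix of the homogeneous quadratic part of $t\cdot Q$. Any $\mathcal{B}$-measurable $f$ has the form $f=\sum_{a,b}c_{a,b}1_{L=a,Q=b}$. By orthogonality of characters,
\begin{equation*}
f(x)=\sum_{u\in\mathbf{F}_p^r,\,t\in\mathbf{F}_p^s}\lambda_{u,t}\,e_p(u\cdot L(x)+t\cdot Q(x)),
\end{equation*}
with $|\lambda_{u,t}|\le 1$ and $p^{r+s}$ terms. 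Plugging this into \eqref{eq:dualdef} and using multilinearity writes $\mathcal{D}[f](x)$ as a sum of at most $p^{7(r+s)}$ terms. Each term is indexed by $(u_\omega,t_\omega)_{\omega\neq\mathbf{0}}$, has a coefficient of modulus at most $1$, and is a character sum
\begin{equation*}
S(x)=\mathbf{E}_{h_1,h_2,h_3\in G}e_p\Big(\sum_{\mathbf{0}\neq\omega}(-1)^{|\omega|}\big(u_\omega\cdot L+t_\omega\cdot Q\big)(x+\omega\cdot(h_1,h_2,h_3))\Big).
\end{equation*}
For each fixed $x$, $S(x)$ is exactly of the form \eqref{eq:Qomegaavg} with $V=G$. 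The relevant quadratic forms are those of $\tau_i:=\sum_{\omega_i=1}(-1)^{|\omega|}t_\omega$, for $i=1,2,3$.

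\textbf{High-rank terms.} Set $\rho:=14(r+s)+2\lceil\log_p(2/\eta)\rceil$. Call a term high-rank if $\max_i\rank M_{\tau_i}\ge\rho$. By Lemma~\ref{lem:Qomega}, each high-rank term has modulus at most $p^{-\rho/2}\le p^{-7(r+s)}\eta/2$. Summing over all terms, the high-rank terms contribute at most $\eta/2$ in $L^\infty$.

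\textbf{Low-rank terms and the refined factor.} Expand $Q_j(x+y)$ as $Q_j(x)$ plus a bilinear cross term plus terms depending only on $y$. The only interaction between $x$ and $h_i$ in the phase is then $2x^TM_{\tau_i}h_i$. The remaining $x$-dependence is through $\sum_\omega\pm(u_\omega\cdot L(x)+t_\omega\cdot Q(x))$, which is $\mathcal{B}$-measurable. Hence each low-rank term $S(x)$ is a function of $L(x)$, $Q(x)$ and the vectors $M_{\tau_i}x$.

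Let $\mathcal{T}=\{\tau\in\mathbf{F}_p^s:\rank M_\tau<\rho\}$; this set depends only on $Q$, not on $f$. Pick a basis $\tau^1,\dots,\tau^m$ of $\spn\mathcal{T}$ consisting of elements of $\mathcal{T}$, so $m\le s$. The row space of each $M_{\tau^j}$ is spanned by fewer than $\rho$ linear forms, so all the vectors $M_\tau x$ with $\tau\in\spn\mathcal{T}$ are determined by at most $s\rho$ homogeneous linear forms. Adjoin these forms to $\mathcal{B}$ to get $\mathcal{B}'$. Its complexity is at most
\begin{equation*}
\big(r+s\rho,\;s\big)\le\big(15(r+s)^2+2s\log_p(2/\eta),\;s\big).
\end{equation*}
Every low-rank term is $\mathcal{B}'$-measurable, so their sum $F_0$ is as well, and $\|\mathcal{D}[f]-F_0\|_{L^\infty}\le\eta/2$.

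\textbf{Making $F$ take values in $[0,1]$, and the main obstacle.} Since $f$ takes values in $[0,1]$, so does $\mathcal{D}[f]$. Define $F$ by clipping $\re F_0$ to $[0,1]$. This preserves $\mathcal{B}'$-measurability and does not increase the distance to $\mathcal{D}[f]$, giving the lemma. The step I expect to need care is the low-rank bookkeeping. Low-rank forms are not closed under linear combination, so one has to pass to the span of $\mathcal{T}$ and accept up to $\rho$ linear forms per basis element. Only the $\tau_i$, and not the mixed sums governing $R'$ in Lemma~\ref{lem:Qomega}, couple to $x$, so the $R'$ bound is never needed here.
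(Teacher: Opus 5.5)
Your proposal is correct and follows essentially the same route as the paper. You expand $\mathcal{D}[f]$ into at most $p^{7(r+s)}$ character sums, bound the high-rank ones by Lemma~\ref{lem:Qomega}, and make the low-rank ones measurable by adjoining the row spaces of a basis of low-rank combinations. The differences are cosmetic: you Fourier-expand $f$ directly, you take the basis in $\mathbf{F}_p^s$ rather than in matrix space, and you correctly note that the extra forms $z_k\cdot x$ adjoined in the paper are not needed.

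One small bookkeeping slip: because of the ceiling in $\rho$, your displayed inequality $r+s\rho\le 15(r+s)^2+2s\log_p(2/\eta)$ can fail (for example when $r=0$, $s=1$). Your own observation that each basis element needs at most $\rho-1$ forms, or simply dropping the ceiling, gives the stated complexity.
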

\begin{proof}
  Write $\mathcal{B}=\mathcal{B}_{\mathcal{P}}$ for
  $\mathcal{P}=(L_1,\dots,L_r,Q_1,\dots,Q_s)$ with
  $L_1,\dots,L_r:G\to\mathbf{F}_p$ homogeneous linear polynomials and
  $Q_1,\dots,Q_s:G\to\mathbf{F}_p$ quadratic polynomials. We have
  $Q_i(x)=x^TM_ix+z_i\cdot x+c_i$ for some symmetric
  $M_i\in\M_n(\mathbf{F}_p)$, $z_i\in G$, and $c_i\in\mathbf{F}_p$ for
  every $i\in[s]$. Set $M(\xi_1,\dots,\xi_s):=\xi_1M_1+\dots+\xi_sM_s$
  for each $s$-tuple $(\xi_1,\dots,\xi_s)\in\mathbf{F}_p^s$, and
  consider the collection of matrices
  \begin{equation*}
    \mathcal{M}:=\{M(\xi_1,\dots,\xi_s):\xi_1,\dots,\xi_s\in\mathbf{F}_p\text{ with }\rank{M(\xi_1,\dots,\xi_s)}\leq 14(r+s)+2\log_p(2/\eta)\}.
  \end{equation*}
  The span of $\mathcal{M}$ over $\mathbf{F}_p$ is a subspace of
  dimension at most $s$ in $\M_n(\mathbf{F}_p)$ (as it is contained in
  the span of $M_1,\dots,M_s$), and has a basis of matrices in
  $\mathcal{M}$. This means that there exist matrices
  $M_1',\dots,M_{s'}'\in\M_n(\mathbf{F}_p)$ with $s'\leq s$, each
  satisfying $R_j:=\rank{M_j'}\leq 14(r+s)+2\log_p(2/\eta)$, such that
  every $M\in\mathcal{M}$ can be written as a linear combination of
  $M_1',\dots,M_{s'}'$. For each $j\in[s']$, we have
  \begin{equation*}
    M_j'=\sum_{i=1}^{R_j}\lambda_{i,j}v_{i,j}v_{i,j}^T
  \end{equation*}
  for some $v_{i,j}\in G$ and
  $\lambda_{i,j}\in\mathbf{F}_p^\times$. We then define homogeneous
  linear polynomials $L_{i,j},L_k':G\to\mathbf{F}_p$ by
  $L_{i,j}(x)=v_{i,j}\cdot x$ and $L'_k(x)=z_k\cdot x$, and set
  $\mathcal{B}'=\mathcal{B}_{\mathcal{P}'}$ for
  \begin{equation*}
    \mathcal{P}':=(L_1,\dots,L_r,L_{1,1},\dots,L_{R_1,1},\dots,L_{1,s'},\dots,L_{R_{s'},s'},L'_1,\dots,L'_s,Q_1,\dots,Q_s).
  \end{equation*}
  Note that $\mathcal{B}'$ refines $\mathcal{B}$ and has
  complexity at most $(r+s(14(r+s)+2\log_p(2/\eta)+1),s)$, which is
  certainly at most $(15(r+s)^2+2s\log_p(2/\eta),s)$, say.

  Now, we will show that $\mathcal{D}[f]$ can be approximated by a
  $\mathcal{B}'$-measurable function. Set
  $\mathcal{Q}=(Q_1,\dots,Q_s)$ and $V=\ker{L_1}\cap\dots\cap\ker{L_r}$. The atoms of
  $\mathcal{B}$ are the sets
  $\mathcal{A}(a,\mathbf{b}):=(a+V)\cap \mathcal{Q}^{-1}(\mathbf{b})$
  as $a+V$ ranges over $G/V$ and $\mathbf{b}$ ranges over
  $\mathbf{F}_p^s$. We can thus write $f$ as
  \begin{equation*}
    f=\sum_{a+V\in G/V}\sum_{\mathbf{b}\in\mathbf{F}_p^s}c(a,\mathbf{b})1_{\mathcal{A}(a,\mathbf{b})}
  \end{equation*}
  for $c(a,\mathbf{b})\in[0,1]$, so that
  \begin{equation*}
    \mathcal{D}[f]=\sum_{\substack{a_\omega+V\in G/V\\\mathbf{0}\neq\omega\in\{0,1\}^3 }}\sum_{\substack{ \mathbf{b}^{(\omega)}\in\mathbf{F}_p^s\\ \mathbf{0}\neq\omega\in\{0,1\}^3}}\prod_{\mathbf{0}\neq\omega\in\{0,1\}^3}c(a_\omega,\mathbf{b}^{(\omega)})\mathcal{D}[(1_{\mathcal{A}(a_\omega,\mathbf{b}^{(\omega)})})_\omega].
  \end{equation*}
  By orthogonality of characters, for every septuple of atoms 
  $\mathcal{D}[(1_{\mathcal{A}(a_\omega,\mathbf{b}^{(\omega)})})_{\omega}](x)$
  equals
  \begin{equation*}
    \mathbf{E}_{\mathbf{h}\in G^3}\mathbf{E}_{\substack{\xi^{(\omega)}_i,\eta^{(\omega)}_j\in\mathbf{F}_p\\\mathbf{0}\neq\omega\in\{0,1\}^3 \\ i\in[s],j\in[r]}}e_p\Big(\sum_{\mathbf{0}\neq\omega\in\{0,1\}^3}\Big[\sum_{i=1}^s\xi^{(\omega)}_i(Q_i(x+\omega\cdot\mathbf{h})-b_{i}^{(\omega)})+\sum_{j=1}^r\eta_{j}^{(\omega)}L_j(x+\omega\cdot\mathbf{h}-a_\omega)\Big]\Big).
  \end{equation*}
  Since each $Q_i$ is quadratic, $Q_i(x+y)=Q_i(y)+Q_i'(y,x)$ for some
  function $Q_i':G^2\to\mathbf{F}_p$ with $Q_i'(y,x)$ affine-linear in
  the $y$ variable. Thus, for each fixed choice of
  $\xi_i^{(\omega)}\in\mathbf{F}_p$ for $i\in[s]$ and
  $\mathbf{0}\neq\omega\in\{0,1\}^3$, setting $R:=14(r+s)+2\log_p(2/\eta)$ and
  \begin{equation*}
    Q_\omega:=\sum_{i=1}^s\xi_i^{(\omega)}[Q_i+Q_i'(\cdot,x)-b^{(\omega)}_{i}]
  \end{equation*}
  for each $\mathbf{0}\neq\omega\in\{0,1\}^3$, we see that by Lemma~\ref{lem:Qomega},
  \begin{equation}\label{eq:highrankbound}
    \Big|\mathbf{E}_{\mathbf{h}\in G^3}e_p\Big(\sum_{\mathbf{0}\neq\omega\in\{0,1\}^3}\Big[\sum_{i=1}^s\xi^{(\omega)}_i(Q_i(x+\omega\cdot\mathbf{h})-b_{i}^{(\omega)})+\sum_{j=1}^r\eta_{j}^{(\omega)}L_j(x+\omega\cdot\mathbf{h}-a_\omega)\Big]\Big)\Big|\leq p^{-R/2}
  \end{equation}
  unless
  \begin{equation*}
    \rank\sum_{\substack{\omega\in\{0,1\}^3\\\omega_1=1}}Q_\omega,\ \rank\sum_{\substack{\omega\in\{0,1\}^3\\\omega_2=1}}Q_\omega,\ \rank\sum_{\substack{\omega\in\{0,1\}^3\\\omega_3=1}}Q_\omega< R.
  \end{equation*}
  So, setting
  $M_\omega(\xi^{(\omega)}_1,\dots,\xi^{(\omega)}_s):=\xi_{1}^{(\omega)}M_1+\dots+\xi_{s}^{(\omega)}M_s$
  for each $\omega\in\{0,1\}^3$,~\eqref{eq:highrankbound} holds unless
  \begin{equation}\label{eq:smallrank}
    \sum_{\substack{\omega\in\{0,1\}^3 \\ \omega_1=1}}M_\omega(\xi_1^{(\omega)},\dots,\xi_s^{(\omega)}),\ \sum_{\substack{\omega\in\{0,1\}^3 \\ \omega_2=1}}M_\omega(\xi_1^{(\omega)},\dots,\xi_s^{(\omega)}),\ \sum_{\substack{\omega\in\{0,1\}^3 \\ \omega_3=1}}M_\omega(\xi_1^{(\omega)},\dots,\xi_s^{(\omega)})\in\mathcal{M}.
  \end{equation}

  Observe that, by plugging in the expressions $Q_i(x)=x^TM_ix+z_i\cdot x+c_i$ for each $i\in[s]$,
  \begin{equation*}
    \sum_{\mathbf{0}\neq\omega\in\{0,1\}^3}\sum_{i=1}^s\xi^{(\omega)}_iQ_i(x+\omega\cdot\mathbf{h})
  \end{equation*}
  equals
  \begin{align*}
        \sum_{\mathbf{0}\neq\omega\in\{0,1\}^3}\sum_{i=1}^s&\xi^{(\omega)}_iQ_i(x)+\mathcal{L}(x,\mathbf{h})+2\sum_{\substack{\omega\in\{0,1\}^3 \\ \omega_1=1}}h_1^TM_\omega(\xi_1^{(\omega)},\dots,\xi_s^{(\omega)})x\\
    &+2\sum_{\substack{\omega\in\{0,1\}^3 \\ \omega_2=1}}h_2^TM_\omega(\xi_1^{(\omega)},\dots,\xi_s^{(\omega)})x+2\sum_{\substack{\omega\in\{0,1\}^3 \\ \omega_3=1}}h_3^TM_\omega(\xi_1^{(\omega)},\dots,\xi_s^{(\omega)})x,
  \end{align*}
  where $\mathcal{L}(x,\mathbf{h})$ depends only on $\mathbf{h}$ and
  the values of $z_1\cdot x,\dots,z_s\cdot x$ (and, of course, the
  $\xi_i^{(\omega)}$'s). Thus, when~\eqref{eq:smallrank} holds,
  \begin{equation*}
    \mathbf{E}_{\mathbf{h}\in G^3}e_p\Big(\sum_{\mathbf{0}\neq\omega\in\{0,1\}^3}\Big[\sum_{i=1}^s\xi^{(\omega)}_i(Q_i(x+\omega\cdot\mathbf{h})-b_{i}^{(\omega)})+\sum_{j=1}^r\eta_{j}^{(\omega)}L_j(x+\omega\cdot\mathbf{h}-a_\omega)\Big]\Big)
  \end{equation*}
  is $\mathcal{B}'$-measurable, as it is a linear combination of
  $\mathcal{B}'$-measurable functions.

  Set, for each septuple of atoms of $\mathcal{B}$, $F_{(\mathcal{A}(a_\omega,\mathbf{b}^{(\omega)}))_\omega}(x)$ to equal
  \begin{equation*}
    \frac{1}{p^{7r+7s}}\sideset{}{'}\sum_{\substack{\xi^{(\omega)}_i,\eta^{(\omega)}_j\in\mathbf{F}_p\\\mathbf{0}\neq\omega\in\{0,1\}^3 \\ i\in[s],j\in[r]}} \mathbf{E}_{\mathbf{h}\in G^3}e_p\Big(\sum_{\mathbf{0}\neq\omega\in\{0,1\}^3}\Big[\sum_{i=1}^s\xi^{(\omega)}_i(Q_i(x+\omega\cdot\mathbf{h})-b_{i}^{(\omega)})+\sum_{j=1}^r\eta_{j}^{(\omega)}L_j(x+\omega\cdot\mathbf{h}-a_\omega)\Big]\Big),
  \end{equation*}
  where the sum is restricted to $\xi^{(\omega)}_i$'s
  satisfying~\eqref{eq:smallrank}, and
  $H_{(\mathcal{A}(a_\omega,\mathbf{b}^{(\omega)}))_\omega}:=\mathcal{D}[(1_{\mathcal{A}(a_\omega,\mathbf{b}^{(\omega)})})_\omega]-F_{(\mathcal{A}(a_\omega,\mathbf{b}^{(\omega)}))_\omega}$. Then,
  $F_{(\mathcal{A}(a_\omega,\mathbf{b}^{(\omega)}))_\omega}$ is
  $\mathcal{B}'$-measurable and
  $\|H_{(\mathcal{A}(a_\omega,\mathbf{b}^{(\omega)}))_\omega}\|_{L^\infty}\leq
  p^{-R/2}$ for each septuple of atoms. Since we have

  \begin{equation*}
    \Bigg\|\sum_{\substack{a_\omega+V\in G/V\\\mathbf{0}\neq\omega\in\{0,1\}^3 }}\sum_{\substack{ \mathbf{b}^{(\omega)}\in\mathbf{F}_p^s\\ \mathbf{0}\neq\omega\in\{0,1\}^3}}\Big(\prod_{\mathbf{0}\neq\omega\in\{0,1\}^3}c(a_\omega,\mathbf{b}^{(\omega)})\Big)H_{(\mathcal{A}(a_\omega,\mathbf{b}^{(\omega)}))_\omega}\Bigg\|_{L^\infty}\leq p^{7r+7s}p^{-R/2}\leq\eta
  \end{equation*}
  by the triangle inequality and the choice of $R$, defining $F_0:G\to\mathbf{C}$ by
  \begin{equation*}
    F_0(x):=\sum_{\substack{a_\omega+V\in G/V\\\mathbf{0}\neq\omega\in\{0,1\}^3 }}\sum_{\substack{ \mathbf{b}^{(\omega)}\in\mathbf{F}_p^s\\ \mathbf{0}\neq\omega\in\{0,1\}^3}}\Big(\prod_{\mathbf{0}\neq\omega\in\{0,1\}^3}c(a_\omega,\mathbf{b}^{(\omega)})\Big)F_{(\mathcal{A}(a_\omega,\mathbf{b}^{(\omega)}))_\omega},
  \end{equation*}
  we get that $\|\mathcal{D}[f]-F_0\|_{L^\infty}\leq \eta$ and that
  $F_0$ is $\mathcal{B}'$-measurable. Finally, setting
  $F:=g\circ\re{F_0}$ for $g:\mathbf{R}\to\mathbf{R}$ with $g(x)$
  equaling $0$ on $(-\infty,0]$, $1$ on $[1,\infty)$, and $x$ on
  $[0,1]$, we have $\|\mathcal{D}[f]-F\|_{L^\infty}\leq\eta$ as well
  since $\mathcal{D}[f]$ takes values in $[0,1]$, completing the
  proof.
\end{proof}

Combining Lemmas~\ref{lem:dualapprox} and~\ref{lem:finalapprox} now yields our Bogolyubov-type lemma.
\begin{lemma}\label{lem:bogo}
  Let $A\subset G$ and $\eta>0$. There exists a quadratic factor
  $\mathcal{B}$ of complexity at most
  $(O(\eta^{-O(1)}),O(\eta^{-O(1)}))$ and a
  $\mathcal{B}$-measurable function $F:G\to[0,1]$ such that
  \begin{equation*}
    \|\mathcal{D}[1_A]-F\|_{L^\infty}\leq\eta.
  \end{equation*}
\end{lemma}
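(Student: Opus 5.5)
The plan is to chain Lemma~\ref{lem:dualapprox} and Lemma~\ref{lem:finalapprox}, splitting the error budget $\eta$ between them. First apply Lemma~\ref{lem:dualapprox} to $A$ with parameter $\eta/254$. This produces a quadratic factor $\mathcal{B}_0$ of complexity at most $(r,s)$ with $r,s\ll\eta^{-O(1)}$ such that
\begin{equation*}
  \|\mathcal{D}[1_A]-\mathcal{D}[\mathbf{E}(1_A|\mathcal{B}_0)]\|_{L^\infty}\leq\eta/2.
\end{equation*}
Note that $f:=\mathbf{E}(1_A|\mathcal{B}_0)$ is $\mathcal{B}_0$-measurable, because it is constant on atoms, and it takes values in $[0,1]$. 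So it is a legitimate input for Lemma~\ref{lem:finalapprox}.

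Next apply Lemma~\ref{lem:finalapprox} to the factor $\mathcal{B}_0$ with parameter $\eta/2$. This gives a refinement $\mathcal{B}$ and a $\mathcal{B}$-measurable $F:G\to[0,1]$ with $\|\mathcal{D}[f]-F\|_{L^\infty}\leq\eta/2$. The triangle inequality then yields $\|\mathcal{D}[1_A]-F\|_{L^\infty}\leq\eta$.

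It remains to check the complexity, which is the only point needing care. By Lemma~\ref{lem:finalapprox}, $\mathcal{B}$ has complexity at most $(15(r+s)^2+2s\log_p(4/\eta),\,s)$. Substituting $r,s\ll\eta^{-O(1)}$ gives a first coordinate of size $O(\eta^{-O(1)})$, since a squared polynomial bound and the factor $\log(4/\eta)\ll\eta^{-1}$ stay polynomial in $\eta^{-1}$. The second coordinate is still $s\ll\eta^{-O(1)}$. This matches the required complexity $(O(\eta^{-O(1)}),O(\eta^{-O(1)}))$.

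I do not expect a real obstacle, since all the analytic work sits in the two cited lemmas. The only things to watch are splitting $\eta$ with the constant $127$ absorbed, and confirming that $\mathbf{E}(1_A|\mathcal{B}_0)$ meets the measurability and range hypotheses of Lemma~\ref{lem:finalapprox}.
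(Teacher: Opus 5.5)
Your proposal is correct and is exactly the paper's argument: the paper obtains this lemma simply by combining Lemma~\ref{lem:dualapprox} (applied to $1_A$) with Lemma~\ref{lem:finalapprox} (applied to the $\mathcal{B}_0$-measurable function $\mathbf{E}(1_A|\mathcal{B}_0)$). Your splitting of $\eta$ to absorb the constant $127$, and your check that the resulting complexity stays $(O(\eta^{-O(1)}),O(\eta^{-O(1)}))$, fill in the details the paper leaves implicit.
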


\subsection{Lemmas on quadratic level sets}
We end this section with two basic lemmas, which we will use later to
help us locate a high-rank quadratic level set on which a closely
related function to $\phi$ is a $(1-\varepsilon)$-approximate
quadratic. The first lemma estimates the size of a high-rank quadratic
level set, and the second estimates the number of $3$-dimensional
cubes whose points lie in a fixed octuple of high-rank quadratic level
sets, and both proofs consist simply of applying orthogonality of
characters and then invoking character sum bounds.
\begin{lemma}\label{lem:atomsize}
  Let $V\leq G$ be a subspace of $G$, $Q_1,\dots,Q_s:G\to\mathbf{F}_p$
  be quadratic functions such that
  $\lambda_1Q_1|_V+\dots+\lambda_s Q_s|_V$ has rank at least $R$
  whenever $\lambda_1,\dots,\lambda_s\in\mathbf{F}_p$ are not all
  zero, $a\in G$, and $\mathbf{b}\in\mathbf{F}^s_p$. Then, setting
  $\mathcal{Q}=(Q_1,\dots,Q_s)$, we have
  \begin{equation*}
    \left||(a+V)\cap\mathcal{Q}^{-1}(\mathbf{b})|-p^{-s}|V|\right|\leq p^{-R/2}|V|.
  \end{equation*}
\end{lemma}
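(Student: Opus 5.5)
We count the level set with additive characters. By orthogonality,
\begin{equation*}
  |(a+V)\cap\mathcal{Q}^{-1}(\mathbf{b})|=\sum_{v\in V}\mathbf{E}_{\lambda\in\mathbf{F}_p^s}e_p\Big(\sum_{i=1}^s\lambda_i(Q_i(a+v)-b_i)\Big)=p^{-s}\sum_{\lambda\in\mathbf{F}_p^s}\sum_{v\in V}e_p\Big(\sum_{i=1}^s\lambda_i(Q_i(a+v)-b_i)\Big).
\end{equation*}
The term $\lambda=0$ contributes exactly $p^{-s}|V|$, which is the main term. It remains to show that each of the other $p^s-1$ terms has modulus at most $p^{-R/2}|V|$. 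Summing these errors and multiplying by the prefactor $p^{-s}$ then gives a total error of at most $(1-p^{-s})p^{-R/2}|V|\leq p^{-R/2}|V|$, which is the claimed bound.

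Now fix $\lambda\neq 0$ and set $Q_\lambda:=\sum_i\lambda_iQ_i$. The map $v\mapsto Q_\lambda(a+v)$ is a quadratic function on $V$: it equals $Q_\lambda|_V(v)$ plus the quantity $Q_\lambda(a+v)-Q_\lambda(v)$, which is affine-linear in $v$ since $Q_\lambda$ is quadratic. Its homogeneous quadratic part therefore agrees with that of $\lambda_1Q_1|_V+\dots+\lambda_sQ_s|_V$, so by hypothesis it has rank at least $R$.

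To bound the sum, choose a basis $v_1,\dots,v_d$ of $V$. In these coordinates we can write $Q_\lambda(a+x_1v_1+\dots+x_dv_d)-\sum_i\lambda_ib_i=\mathbf{x}^TM\mathbf{x}+w\cdot\mathbf{x}+c$ with $M$ symmetric and $\rank M\geq R$. The constant $c$ contributes only a unimodular factor. The second bound of Lemma~\ref{lem:GaussSum} then gives
\begin{equation*}
  \Big|\sum_{v\in V}e_p(\cdots)\Big|=|V|\,\big|\mathbf{E}_{\mathbf{x}\in\mathbf{F}_p^d}e_p(\mathbf{x}^TM\mathbf{x}+w\cdot\mathbf{x})\big|\leq p^{-\rank M/2}|V|\leq p^{-R/2}|V|,
\end{equation*}
and this completes the argument.

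There is no real obstacle. The one point to check carefully is that translating by $a$ and subtracting the constants $b_i$ change only the linear and constant parts of the quadratic form, so that its rank on $V$ is preserved. Note also that rank is basis-independent, as was remarked earlier in the paper, so the computation in coordinates is legitimate.
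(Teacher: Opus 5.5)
Your proof is correct and follows the paper's argument essentially verbatim. You expand the indicator of the level set by orthogonality of characters and take the $\lambda=0$ term as the main term $p^{-s}|V|$; you then bound each remaining term by $p^{-R/2}|V|$ via Lemma~\ref{lem:GaussSum}, using that the shift by $a$ and the constants $b_i$ only change the affine-linear part of $\sum_i\lambda_iQ_i|_V$, and your explicit error count $(1-p^{-s})p^{-R/2}|V|$ just spells out the paper's last step.
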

\begin{proof}
  By orthogonality of characters, $|(a+V)\cap\mathcal{Q}^{-1}(\mathbf{b})|$ can be written as
  \begin{equation*}
     \sum_{x\in V}\mathbf{E}_{\xi_1,\dots,\xi_s\in\mathbf{F}_p}e_p\Big(\sum_{i=1}^s\xi_i[Q_i(x+a)-b_i]\Big)= \mathbf{E}_{\xi_1,\dots,\xi_s\in\mathbf{F}_p}\sum_{x\in V}e_p\Big(\sum_{i=1}^s\xi_i[Q_i(x)+Q'_i(x,a)-b_i]\Big)
  \end{equation*}
  for some $Q'_i:G^2\to\mathbf{F}_p$ with $Q'_i(x,a)$ affine-linear
  in the $x$ variable. By Lemma~\ref{lem:GaussSum} and the rank
  assumption on the $Q_i$, we have that
  \begin{equation*}
    \Big|\sum_{x\in V}e_p\Big(\sum_{i=1}^s\xi_i[Q_i(x)+Q'_i(x,a)-b_i]\Big)\Big|\leq |V|p^{-R/2}
  \end{equation*}
  unless $\xi_1=\dots=\xi_s=0$, in which case the sum on the left-hand
  side above equals $|V|$.
\end{proof}
\begin{lemma}\label{lem:count}
  Let $V\leq G$ be a subspace of $G$ of codimension $r$,
  $Q_1,\dots,Q_s:G\to\mathbf{F}_p$ be quadratic functions such that
  $\lambda_1Q_1|_V+\dots+\lambda_sQ_s|_V$ has rank at least $R$ whenever
  $\lambda_1,\dots,\lambda_s\in\mathbf{F}_p$ are not all zero, $a_\omega\in G$ for
  each $\omega\in\{0,1\}^3$, and $\mathbf{b}^{(\omega)}\in\mathbf{F}_p^s$
  for each $\omega\in\{0,1\}^3$. Set $\mathcal{Q}=(Q_1,\dots,Q_s)$ and
  $f_\omega(x):=1_{a_\omega+V}(x)\cdot
  1_{\mathcal{Q}(x)=\mathbf{b}^{(\omega)}}$. If the $a_\omega$'s satisfy
    \begin{equation}\label{eq:acondition}
      \begin{cases}
        &a_{010}-a_{001}-a_{110}+a_{101}\in V \\
        &a_{100}-a_{001}-a_{110}+a_{011}\in V \\
        &a_{100}+a_{010}-a_{001}-2a_{110}+a_{111}\in V \\
        &a_{000}-a_{100}-a_{010}+a_{110}\in V \\
    \end{cases}
    \end{equation}
    and the $\mathbf{b}^{(\omega)}$'s satisfy
    \begin{equation}\label{eq:bcondition}
      \sum_{\omega\in\{0,1\}^3}(-1)^{|\omega|}\mathbf{b}^{(\omega)}=0,
    \end{equation}
    then
    \begin{equation}\label{eq:count}
      \Big|\mathbf{E}_{x,h_1,h_2,h_3\in G}\prod_{\omega\in\{0,1\}^3}f_\omega(x+\omega\cdot(h_1,h_2,h_3))-p^{-4r-7s}\Big|\leq 2p^{-4r-R/2}.
    \end{equation}
  \end{lemma}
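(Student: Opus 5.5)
The plan is to expand the average in \eqref{eq:count} by orthogonality of characters, as in Lemma~\ref{lem:atomsize}, and then bound the non-trivial character sums with Lemma~\ref{lem:Qomega}.

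\textbf{Step 1: the linear part.} First handle the coset constraints. Write $x=a_{000}+v$. The condition $x+\omega\cdot\mathbf{h}\in a_\omega+V$ for all $\omega$ forces $h_i\in a_{e_i}-a_{000}+V$ for $i=1,2,3$. The remaining four coset conditions (at $\omega=110,101,011,111$) then hold automatically. This is exactly what \eqref{eq:acondition} encodes after rearrangement: the conditions say that $\omega\mapsto a_\omega$ is affine modulo $V$.

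Hence the proportion of $(x,h_1,h_2,h_3)$ satisfying all the coset conditions is $p^{-4r}$. On this set we may reparametrize $x=a_{000}+v$ and $h_i=a_{e_i}-a_{000}+u_i$ with $v,u_i\in V$. Then $x+\omega\cdot\mathbf{h}=a'_\omega+v+\omega\cdot\mathbf{u}$, where $a'_\omega:=a_{000}+\omega\cdot(a_{e_i}-a_{000})_i\in a_\omega+V$.

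\textbf{Step 2: the quadratic part.} The average in \eqref{eq:count} becomes $p^{-4r}$ times
\[
\mathbf{E}_{v,u_1,u_2,u_3\in V}\mathbf{E}_{\xi^{(\omega)}\in\mathbf{F}_p^s}\,e_p\Big(\sum_{\omega}\sum_{i}\xi^{(\omega)}_i\big[Q_i(a'_\omega+v+\omega\cdot\mathbf{u})-b^{(\omega)}_i\big]\Big).
\]
Split according to the tuple $(\xi^{(\omega)})_{\omega\in\{0,1\}^3}$.

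If $\xi^{(\omega)}=(-1)^{|\omega|}\lambda$ for some $\lambda\in\mathbf{F}_p^s$, then the phase equals $\lambda\cdot\partial_{\mathbf{u}}$ of $\mathcal{Q}$, which vanishes since each $Q_i$ is quadratic, minus $\lambda\cdot\sum_\omega(-1)^{|\omega|}\mathbf{b}^{(\omega)}$, which is $0$ by \eqref{eq:bcondition}. Two points need care here. First, the shifts $a'_\omega$ form an affine cube, so the combination is a genuine third derivative of $\mathcal{Q}$ at $a_{000}+v$. Second, there are exactly $p^s$ such tuples out of $p^{8s}$. Together these contribute exactly $p^{-7s}$, which gives the main term $p^{-4r-7s}$.

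For every other tuple, fix $v$ and view the phase as a function of $\mathbf{u}\in V^3$. It has the form in Lemma~\ref{lem:Qomega}, with the $\omega=\mathbf{0}$ term constant and
\[
Q_\omega:=\sum_i\xi_i^{(\omega)}Q_i|_V+(\text{affine terms}),
\]
where the affine terms come from expanding $Q_i(a'_\omega+v+\cdot)$. The key algebraic claim is as follows. If the quadratic parts of all of $\sum_{\omega_i=1}Q_\omega$ and $\sum_{\omega_i=\omega_j=1}Q_\omega$ vanish, then the coefficient vector $\mu_\omega:=\sum_i\xi^{(\omega)}_i\lambda$-style combinations is forced to be alternating. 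Concretely, by the rank hypothesis, a combination $\sum_\omega c_\omega\xi^{(\omega)}$ of rank less than $R$ must be zero as a vector in $\mathbf{F}_p^s$. So if all the relevant ranks are below $R$, the linear system of Lemma~\ref{lem:fixedeqns} (the three face sums, the three edge sums, and the single-vertex sum $\xi^{(111)}$) holds up to sign conventions. Lemma~\ref{lem:fixedeqns} then forces $\xi^{(\omega)}=(-1)^{|\omega|}\lambda$, except that the $\omega=\mathbf{0}$ entry is not constrained by $\mathbf{u}$.

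This last gap is the main obstacle. The $\mathbf{0}$-entry must be recovered from the $v$-average. If $\xi^{(\omega)}$ is alternating for $\omega\neq\mathbf{0}$ but $\xi^{(\mathbf{0})}$ is not, the phase reduces to $(\xi^{(\mathbf{0})}-\lambda)\cdot\mathcal{Q}(a_{000}+v)$ plus a constant. Averaging over $v\in V$ with Lemma~\ref{lem:GaussSum} then gives modulus at most $p^{-R/2}$.

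\textbf{Step 3: collecting the bounds.} For every non-alternating tuple, either some $R$ or $R'$ in Lemma~\ref{lem:Qomega} is at least $R$, giving a bound of $p^{-R/2}$ for each $v$, or the $v$-average gives $p^{-R/2}$ as above. Averaging over the tuples, the error is at most $p^{-4r}\cdot p^{-R/2}$ times the proportion of bad tuples, which is at most $1$. This already yields the bound $2p^{-4r-R/2}$, with slack to absorb the small error from Step 1.
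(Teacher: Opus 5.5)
Your proposal is correct and follows essentially the same route as the paper's proof. You reduce the coset conditions exactly to a factor $p^{-4r}$, since the conditions on the $a_\omega$ say precisely that $\omega\mapsto a_\omega$ is affine modulo $V$. You then expand the level-set conditions by orthogonality and dispose of every tuple whose $\omega\neq\mathbf{0}$ part is not alternating, using Lemma~\ref{lem:Qomega}, the rank dichotomy (each face or edge combination has rank $0$ or at least $R$), and Lemma~\ref{lem:fixedeqns}. Finally, a mismatched $\xi^{(\mathbf{0})}$ is handled by a Gauss sum over $v\in V$.

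There are two small slips. First, the system in Lemma~\ref{lem:fixedeqns} consists only of the face and edge sums; imposing a condition on $\xi^{(111)}$ alone would wrongly kill the main term. Second, your Step~1 is exact, so there is no error to absorb, and your argument in fact gives the slightly sharper bound $p^{-4r-R/2}$.
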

  \begin{proof}
    Setting $a_0:=a_{100}+a_{010}-a_{110}$, $a_1:=a_{110}-a_{010}$,
    $a_2:=a_{110}-a_{100}$, $a_3:=a_{110}-a_{100}-a_{010}+a_{001}$,
    we have by~\eqref{eq:acondition} that the average appearing on the left-hand side of~\eqref{eq:count} equals
    \begin{align*}
      p^{-4r}\mathbf{E}_{x,h_1,h_2,h_3\in V}\prod_{\omega\in\{0,1\}^3}1_{\mathcal{Q}(x+a_0+\omega\cdot(h_1+a_1,h_2+a_2,h_3+a_3))=\mathbf{b}^{(\omega)}}.
    \end{align*}
    By orthogonality of characters, the right-hand side above can be written as $p^{-4r}$ times
    \begin{equation}\label{eq:charsumrep}
      \mathbf{E}_{\substack{\xi_i^{(\omega)}\in\mathbf{F}_p \\ i\in[s],\omega\in\{0,1\}^3}}\mathbf{E}_{x,h_1,h_2,h_3\in V}e_p\Big(\sum_{\omega\in\{0,1\}^3}\sum_{i=1}^s\xi_i^{(\omega)}[Q_i(x+a_0+\omega\cdot(h_1+a_1,h_2+a_2,h_3+a_3))-b_i^{(\omega)}]\Big).
    \end{equation}
    Since each $Q_i$ is quadratic, $Q_i(x+y)=Q_i(y)+Q'_i(y,x)$ for some function $Q'_i:G^2\to\mathbf{F}_p$ with $Q'_i(y,x)$ affine-linear in the $y$ variable. Thus, for each fixed choice of $\xi_i^{(\omega)}\in\mathbf{F}_p$ for $i\in[s]$ and $\omega\in\{0,1\}^3$, setting
    \begin{equation*}
      Q_\omega:=\sum_{i=1}^s\xi_i^{(\omega)}[Q_i+Q'_i(\cdot,x+a_0+\omega\cdot(a_1,a_2,a_3))-b_i^{(\omega)}],
    \end{equation*}
    we see that by Lemma~\ref{lem:Qomega},
    \begin{equation}\label{eq:R2}
      \Big|\mathbf{E}_{h_1,h_2,h_3\in V}e_p\Big(\sum_{\omega\in\{0,1\}^3}\sum_{i=1}^s\xi_i^{(\omega)}[Q_i(x+a_0+\omega\cdot(h_1+a_1,h_2+a_2,h_3+a_3))-b_i^{(\omega)}]\Big) \Big|\leq p^{-R/2}
    \end{equation}
    for each $x\in G$, unless
    \begin{equation*}
      \max_{i=1,2,3}\rank\sum_{\substack{\omega\in\{0,1\}^3 \\ \omega_i=1}}Q_\omega|_V,\ \max_{1\leq i<j\leq 3}\rank\sum_{\substack{\omega\in\{0,1\}^3 \\ \omega_i=\omega_j=1}}Q_\omega|_V<R.
    \end{equation*}

    By Lemma~\ref{lem:fixedeqns} and the rank assumption on
    $Q_1,\dots,Q_s$, this means that~\eqref{eq:R2} holds unless
    $\xi_i^{(\omega)}=(-1)^{|\omega|+1}\xi^{(111)}_i$ for all
    $i\in[s]$ and $\mathbf{0}\neq\omega\in\{0,1\}^3$. Thus, using that
    all $3$-fold additive discrete derivatives of $Q_i$ vanish, we get
    that~\eqref{eq:charsumrep} equals $p^{-7s}$ times
    \begin{equation*}
      \sum_{\eta_1,\dots,\eta_s\in\mathbf{F}_p}\mathbf{E}_{\xi_1,\dots,\xi_s\in\mathbf{F}_p}\mathbf{E}_{x\in V}e_p\Big(\sum_{i=1}^s\Big(\xi_i\Big[Q_i(x+a_0)-b_i^{(000)}\Big]+\eta_i\Big[Q_i(x+a_0)+\sum_{\mathbf{0}\neq \omega\in\{0,1\}^3}(-1)^{|\omega|}b_i^{(\omega)}\Big]\Big)\Big)
    \end{equation*}
    plus an error of size at most $p^{-R/2}$. By
    Lemma~\ref{lem:GaussSum} and the expression
    $Q_i(x+y)=Q_i(y)+Q'_i(y,x)$ from above, the innermost average above
    has size at most $p^{-R/2}$ unless $\xi_i=-\eta_i$ for all
    $i\in[s]$. The conclusion of the lemma now follows
    from~\eqref{eq:bcondition}.
  \end{proof}

\section{Obtaining a $99\%$ approximate quadratic on a quadratic level set}\label{sec:random}

In Section~\ref{sec:DRC}, we showed that if $\phi:G\to G$ is
$\delta$-approximately quadratic, then there exists a dense subset
$A\subset G$ containing many second-order $3$-dimensional cubes,
almost all of which are respected by $\phi$. We will now combine
Lemma~\ref{lem:bogo} with a random sampling argument to find a closely
related $(1-\varepsilon)$-approximate quadratic on a high-rank
quadratic level set.

This section and the previous one roughly correspond to Sections 4--6
of~\cite{GowersMilicevic2017}, but, instead of employing a random
sampling argument, Gowers and Mili\'cevi\'c work with functions to the
group algebra $\mathbf{C}[G]$. In our setting, this is equivalent to
defining a function on the support of $\mathcal{D}[1_A]$ by setting
its value to equal~\eqref{eq:psiprelim} for $h_1,h_2,h_3\in G$ chosen
uniformly at random subject to the restriction that the partial
$3$-dimensional cube $\square^*(x;h_1,h_2,h_3):=\{x+\omega\cdot(h_1,h_2,h_3):\mathbf{0}\neq\omega\in\{0,1\}^3\}$ is
contained in $A$. By modifying this distribution slightly, we can
obtain a suitable approximate quadratic quite easily by first moment
considerations. The following lemma describes our random sampling
procedure, along with its key properties.

\begin{lemma}\label{lem:rselect}
  Let $\theta>0$, $A\subset G$, and $\phi:A\to G$, and set, for all $x,b\in G$, $\nu_x(b)$ to equal
  \begin{equation*}
    \Big|\Big\{(h_1,h_2,h_3)\in G^3:\square^{*}(x;h_1,h_2,h_3)\subset A\text{ and }\sum_{\mathbf{0}\neq\omega\in\{0,1\}^3}(-1)^{|\omega|}\phi(x+\omega\cdot(h_1,h_2,h_3))=b\Big\}\Big|.
  \end{equation*}
  Define $\psi:G\to G$ randomly as follows: for each $x$
  independently,
  \begin{enumerate}
  \item if $\mathcal{D}[1_A](x)<\theta$, set $\psi(x)=c$ for a uniformly random $c\in G$, and
  \item if $\mathcal{D}[1_A](x)\geq\theta$, pick $b\in G$ at random with probability
    \begin{equation*}
      \nu'_x(b):=\frac{\nu_x(b)}{|G|^3\mathcal{D}[1_A](x)};
    \end{equation*}
    \begin{enumerate}
    \item if $\nu'_x(b)<\theta$, set $\psi(x)=c$ for a uniformly random $c\in G$, and
    \item if $\nu'_x(b)\geq\theta$, set $\psi(x)=b$.
    \end{enumerate}
  \end{enumerate}
  Then,
  \begin{equation}\label{eq:expectedpsi}
    \mathbf{E}_{\psi}\mathbf{E}_{x,h_1,h_2,h_3\in G}\Delta_{h_1,h_2,h_3}\mathcal{D}[1_A](x)1_{\partial_{h_1,h_2,h_3}\psi(x)=0}
  \end{equation}
  is at least
  \begin{equation}\label{eq:expnumber}
    \frac{|\{\text{second-order }3\text{-dimensional cubes in }A\text{ respected by }\phi\}|}{|G|^{28}}-60(\theta+p^{-n}).
  \end{equation}
  Further, setting
  \begin{equation*}
    S_{\phi,A,\psi}(\theta):=\left\{x\in G:\nu_x(\psi(x))\geq\theta^2|G|^3\right\},
  \end{equation*}
  then with probability at least $1-\frac{45}{p^{n/2}}$, the number of $3$-dimensional cubes
  $\square(x;h_1,h_2,h_3)\not\subset S_{\phi,A,\psi}(\theta)$ that are respected by $\psi$ is at most $p^{7n/2}$.
\end{lemma}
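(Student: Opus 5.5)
The plan has two independent parts. For the bound~\eqref{eq:expnumber}, I would expand $\Delta_{h_1,h_2,h_3}\mathcal{D}[1_A](x)$ as a count and exploit independence of $\psi$ at distinct points. For the second statement, I would use a first-moment computation followed by Markov's inequality, as in Lemma~\ref{lem:randomfn}.

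\emph{First part: setting up the expansion.} Write $x_\omega:=x+\omega\cdot(h_1,h_2,h_3)$. Note that $\mathcal{D}[1_A]$ is real, and $|G|^3\mathcal{D}[1_A](y)=\sum_b\nu_y(b)$. Hence $\Delta_{h_1,h_2,h_3}\mathcal{D}[1_A](x)=\prod_\omega\mathcal{D}[1_A](x_\omega)$ expands as $|G|^{-24}$ times a sum over triples $\mathbf{k}^\omega$ (one for each $\omega$) with $\square^*(x_\omega;\mathbf{k}^\omega)\subset A$. For each $\omega$, the triple $\mathbf{k}^\omega$ produces the value $b_\omega=\sum_{\omega'\neq\mathbf{0}}(-1)^{|\omega'|}\phi(x_\omega+\omega'\cdot\mathbf{k}^\omega)$. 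The resulting $56$-point configuration is exactly a second-order cube in $A$, and it is respected by $\phi$ iff $\sum_\omega(-1)^{|\omega|}b_\omega=0$. Summing over $x,h_1,h_2,h_3$ therefore gives
\begin{equation*}
|G|^{-28}\#\{\text{respected second-order cubes in }A\}=\mathbf{E}_{x,\mathbf{h}}\sum_{(b_\omega):\,\sum(-1)^{|\omega|}b_\omega=0}\ \prod_\omega \mathcal{D}[1_A](x_\omega)\nu'_{x_\omega}(b_\omega).
\end{equation*}
Degenerate cubes $\square(x;h_1,h_2,h_3)$ form a proportion at most $28p^{-n}$ of all $(x,\mathbf{h})$, and they contribute at most that much since everything is $1$-bounded. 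On nondegenerate cubes the values $\psi(x_\omega)$ are independent. Thus $\mathbf{E}_\psi 1_{\partial\psi(x)=0}=\sum_{(c_\omega)}\prod_\omega\mathbf{P}(\psi(x_\omega)=c_\omega)$, where the sum runs over $(c_\omega)$ with $\sum(-1)^{|\omega|}c_\omega=0$. When $\mathcal{D}[1_A](x_\omega)\ge\theta$, we have $\mathbf{P}(\psi(x_\omega)=c)\ge\nu'_{x_\omega}(c)1_{\nu'_{x_\omega}(c)\ge\theta}$, because the uniform-fallback contributions are nonnegative and can be discarded for a lower bound.

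\emph{First part: bounding the losses.} Let $\mu_y:=\nu'_y$ if $\mathcal{D}[1_A](y)\ge\theta$, and $\mu_y:=0$ otherwise. Write $\mu_y=\mu^{\mathrm{big}}_y+\mu^{\mathrm{small}}_y$, where $\mu^{\mathrm{small}}_y(c)=\mu_y(c)1_{\mu_y(c)<\theta}$. Then $\mathbf{E}_\psi1_{\partial\psi(x)=0}\ge\sum\prod_\omega\mu^{\mathrm{big}}_{x_\omega}(c_\omega)$. The main step is to compare $\prod_\omega\mathcal{D}[1_A](x_\omega)\mu^{\mathrm{big}}_{x_\omega}$ with $\prod_\omega\mathcal{D}[1_A](x_\omega)\nu'_{x_\omega}$ summed over the hyperplane $\sum(-1)^{|\omega|}c_\omega=0$. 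The difference telescopes into at most $16$ terms. In each term one distinguished vertex $\omega_0$ carries either the factor $\mathcal{D}[1_A](x_{\omega_0})\nu'_{x_{\omega_0}}$ with $\mathcal{D}[1_A](x_{\omega_0})<\theta$, or the factor $\mu^{\mathrm{small}}_{x_{\omega_0}}$. Every other vertex carries a nonnegative measure of total mass at most $1$.
\begin{itemize}
\item In the first case the weight $\mathcal{D}[1_A](x_{\omega_0})<\theta$ bounds the term by $\theta$.
\item In the second case, summing over the seven other $c_\omega$ produces a signed convolution $\lambda$, a nonnegative measure of mass at most $1$, evaluated at the value $c_{\omega_0}$ forced by the hyperplane. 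The term is then $\sum_c\mu^{\mathrm{small}}(c)\lambda(c)\le\theta\sum_c\lambda(c)\le\theta$, using the pointwise bound $\mu^{\mathrm{small}}<\theta$.
\end{itemize}
This pointwise-times-mass trick is what gives a loss linear in $\theta$, and I expect it to be the crux of the first part. Collecting the at most $16\theta$ from these terms and the $O(p^{-n})$ from degenerate cubes gives~\eqref{eq:expnumber}, with room to spare in the constant $60$.

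\emph{Second part.} Fix a nondegenerate cube, so its eight vertices are distinct and the values $\psi(x_\omega)$ are independent. If the cube is respected and not contained in $S_{\phi,A,\psi}(\theta)$, then some vertex $x_{\omega_0}\notin S_{\phi,A,\psi}(\theta)$ takes the value $v$ determined by the other seven values. The key observation is that $\mathbf{P}(x_{\omega_0}\notin S_{\phi,A,\psi}(\theta),\ \psi(x_{\omega_0})=v)\le p^{-n}$ for every $v$. To see this, note that if $\psi(x_{\omega_0})$ was set to some $b$ with $\nu'_{x_{\omega_0}}(b)\ge\theta$, then $\nu_{x_{\omega_0}}(b)\ge\theta\cdot\theta|G|^3$, so $x_{\omega_0}\in S_{\phi,A,\psi}(\theta)$. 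Hence the event can only occur through a uniform choice, which has probability at most $p^{-n}$ of hitting $v$. Conditioning on the other seven values and taking a union over the $8$ vertices, the expected number of such nondegenerate cubes is at most $8p^{4n}\cdot p^{-n}$. Degenerate cubes number at most $28p^{3n}$, so the expectation is at most $36p^{3n}$. Markov's inequality with threshold $p^{7n/2}$ then gives failure probability at most $36p^{-n/2}\le45p^{-n/2}$.
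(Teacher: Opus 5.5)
Your proof is correct and follows essentially the same route as the paper. For the first claim you expand $\Delta_{h_1,h_2,h_3}\mathcal{D}[1_A](x)$ into respected second-order cubes and bound the low-$\nu'$ losses by a telescoping pointwise-bound-times-mass estimate; for the second claim you use the same first-moment computation with Markov's inequality. The only differences are bookkeeping: you fold the vertices with $\mathcal{D}[1_A]<\theta$ into the telescoping sum instead of discarding those tuples up front, and you merge the paper's two cases ($\mathcal{D}[1_A](x)<\theta$ and $\nu'_x(\psi(x))<\theta$) into the single observation that $x\notin S_{\phi,A,\psi}(\theta)$ forces $\psi(x)$ to come from a uniform draw, which gives a slightly smaller expected count.
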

\begin{proof}
  By linearity of expectation,~\eqref{eq:expectedpsi} equals
  \begin{equation}\label{eq:expectedpsi2}
    \mathbf{E}_{x,h_1,h_2,h_3\in G}\Delta_{h_1,h_2,h_3}\mathcal{D}[1_A](x)\mathbf{E}_{\psi}1_{\partial_{h_1,h_2,h_3}\psi(x)=0}.
  \end{equation}
  The contribution to~\eqref{eq:expectedpsi2} coming from degenerate
  $\square(x;h_1,h_2,h_3)$ is at most $\binom{8}{2}p^{-n}=28 p^{-n}$,
  and the contribution to~\eqref{eq:expectedpsi2} coming from
  $(x,h_1,h_2,h_3)\in G^4$ for which $\mathcal{D}[1_A](y)<\theta$ for
  some $y=x+\omega\cdot(h_1,h_2,h_3)$ is less than $\theta$. When
  $\square(x;h_1,h_2,h_3)$ is nondegenerate and
  $\mathcal{D}[1_A](y)\geq\theta$ for all
  $y=x+\omega\cdot(h_1,h_2,h_3)$, we have that $\mathbf{E}_{\psi}1_{\partial_{h_1,h_2,h_3}\psi(x)=0}$ equals
  \begin{equation*}
    \sum_{\substack{b_{\omega}\in G \\ \sum_{\omega\in\{0,1\}^3}(-1)^{|\omega|}b_\omega=0}}\prod_{\omega\in\{0,1\}^3}\mathbf{E}_{\psi}1_{\psi(x+\omega\cdot(h_1,h_2,h_3))=b_\omega},
  \end{equation*}
  which is bounded below by
  \begin{equation}\label{eq:nuprime}
    \sum_{\substack{b_{\omega}\in G \\ \sum_{\omega\in\{0,1\}^3}(-1)^{|\omega|}b_\omega=0}}\prod_{\omega\in\{0,1\}^3}\nu'_{x+\omega\cdot(h_1,h_2,h_3)}(b_\omega)1_{\nu'_{x+\omega\cdot(h_1,h_2,h_3)}(b_\omega)\geq\theta}.
  \end{equation}
  Writing
  $1_{\nu'_{x+\omega\cdot(h_1,h_2,h_3)}(b_\omega)\geq\theta}=1-1_{\nu'_{x+\omega\cdot(h_1,h_2,h_3)}(b_\omega)<\theta}$
  and using that $\sum_{b\in G}\nu'_{y}(b)=1$ for all $y\in G$ for
  which $\mathcal{D}[1_A](y)\neq 0$ shows that~\eqref{eq:nuprime} is at least
  \begin{equation*}
    \sum_{\substack{b_{\omega}\in G \\ \sum_{\omega\in\{0,1\}^3}(-1)^{|\omega|}b_\omega=0}}\prod_{\omega\in\{0,1\}^3}\nu'_{x+\omega\cdot(h_1,h_2,h_3)}(b_\omega)-8\theta.
  \end{equation*}
  
  Now note that, by expanding the definition of $\nu_y(b)$, the sum
  \begin{equation}\label{eq:unnormalized}
    \sum_{x,h_1,h_2,h_3\in G}\sum_{\substack{b_{\omega}\in G \\ \sum_{\omega\in\{0,1\}^3}(-1)^{|\omega|}b_\omega=0}}\prod_{\omega\in\{0,1\}^3}\nu_{x+\omega\cdot(h_1,h_2,h_3)}(b_\omega)
  \end{equation}
  equals the number of second-order $3$-dimensional cubes in $A$ respected by $\phi$, and that
  \begin{equation*}
    \Delta_{h_1,h_2,h_3}\mathcal{D}[1_A](x)\cdot \prod_{\omega\in\{0,1\}^3}\nu'_{x+\omega\cdot(h_1,h_2,h_3)}(b_\omega)=|G|^{-24}\prod_{\omega\in\{0,1\}^3}\nu_{x+\omega\cdot(h_1,h_2,h_3)}(b_\omega)
  \end{equation*}
  when $\Delta_{h_1,h_2,h_3}\mathcal{D}[1_A](x)>0$. To complete the proof of the first
  claim in the lemma, it suffices to observe that, for all
  $x,h_1,h_2,h_3\in G$,
  \begin{equation*}
    0\leq \sum_{\substack{b_{\omega}\in G \\ \sum_{\omega\in\{0,1\}^3}(-1)^{|\omega|}b_\omega=0}}\prod_{\omega\in\{0,1\}^3}\nu_{x+\omega\cdot(h_1,h_2,h_3)}(b_\omega)\leq \prod_{\omega\in\{0,1\}^3}|G|^3\mathcal{D}[1_A](x+\omega\cdot(h_1,h_2,h_3)).
  \end{equation*}
  Thus, the contribution to~\eqref{eq:unnormalized} coming from
  degenerate $\square(x;h_1,h_2,h_3)$ is at most
  $28|G|^{27}$ and from $(x,h_1,h_2,h_3)$ such
  that $\mathcal{D}[1_A](y)<\theta$ for some
  $y\in\square(x;h_1,h_2,h_3)$ is at most
  $\theta|G|^{28}$. It follows
  that~\eqref{eq:expectedpsi2} is at least~\eqref{eq:expnumber}, as
  desired.
  
  To prove the second claim, note that by the symmetry of
  $x+\omega\cdot(h_1,h_2,h_3)$, an upper bound on the expected number
  of nondegenerate $\square(x;h_1,h_2,h_3)$ respected by $\psi$ that
  are not contained in $S_{\phi,A,\psi}(\theta)$ is $8$ times
  \begin{equation*}
    \mathbf{E}_{\psi}\sum_{x,h_1,h_2,h_3\in G}1_{\partial_{h_1,h_2,h_3}\psi(x)=0}1_{x\notin S_{\phi,A,\psi}(\theta)},
  \end{equation*}
  which is bounded above by
  \begin{align*}
    &\mathbf{E}_{\psi}\sum_{x,h_1,h_2,h_3\in G}1_{\partial_{h_1,h_2,h_3}\psi(x)=0}\left(1_{\mathcal{D}[1_A](x)<\theta}+1_{\mathcal{D}[1_A](x)\geq\theta\text{ and }\nu'_x(\psi(x))<\theta}\right)\\
    &= \sum_{x,h_1,h_2,h_3\in G}1_{\mathcal{D}[1_A](x)<\theta}\mathbf{E}_{\psi}1_{\partial_{h_1,h_2,h_3}\psi(x)=0}+\sum_{x,h_1,h_2,h_3\in G}1_{\mathcal{D}[1_A](x)\geq\theta}\mathbf{E}_{\psi}1_{\partial_{h_1,h_2,h_3}\psi(x)=0}1_{\nu'_x(\psi(x))<\theta}.
  \end{align*}
  The first sum above is at most $p^{3n}$, and the second sum can be written as
  \begin{align*}
    \sum_{x,h_1,h_2,h_3\in G}1_{\mathcal{D}[1_A](x)\geq\theta}\mathbf{E}_{\substack{\psi(x+\omega\cdot(h_1,h_2,h_3)) \\ \mathbf{0}\neq \omega\in\{0,1\}^3}}&1_{\nu'_x(-\sum_{\mathbf{0}\neq\omega\in\{0,1\}^3}(-1)^{|\omega|}\psi(x+\omega\cdot(h_1,h_2,h_3)))<\theta} \\
                                                                                                                                               &\cdot\mathbf{E}_{\psi(x)}1_{\psi(x)=-\sum_{\mathbf{0}\neq\omega\in\{0,1\}^3}(-1)^{|\omega|}\psi(x+\omega\cdot(h_1,h_2,h_3))},
  \end{align*}
  which is also at most $p^{3n}$. Since the number of degenerate
  $3$-dimensional cubes in $G$ is at most $28 p^{3n}$, the second
  claim in the lemma follows from Markov's inequality.
\end{proof}

We will also need a pair of straightforward lemmas. The first will let
us replace $\mathcal{D}[1_A]$ with a $\mathcal{B}$-measurable function
in~\eqref{eq:expectedpsi}, and the second will let us refine
collections of atoms of $\mathcal{B}$ to high-rank quadratic level
sets.
\begin{lemma}\label{lem:dapproxinf}
  Let $F,F':G\to[0,1]$, and assume that $\|F\|_{U^3(G)}^8\geq\gamma'$, $\|F-F'\|_{L^\infty}\leq\varepsilon$, and 
  \begin{equation*}
    \mathbf{E}_{x,h_1,h_2,h_3\in G}\Delta_{h_1,h_2,h_3}F(x)1_{\partial_{h_1,h_2,h_3}\psi(x)=0}\geq(1-\gamma)\|F\|_{U^3(G)}^8.
  \end{equation*}
  If $\varepsilon\leq\frac{\gamma\gamma'}{2^8}$, then
  \begin{equation*}
    \mathbf{E}_{x,h_1,h_2,h_3\in G}\Delta_{h_1,h_2,h_3}F'(x)1_{\partial_{h_1,h_2,h_3}\psi(x)=0}\geq(1-4\gamma)\|F'\|_{U^3(G)}^8.
  \end{equation*}
\end{lemma}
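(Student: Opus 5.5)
We will compare the three quantities $\mathbf{E}\Delta_{h_1,h_2,h_3}F'(x)1_{\partial_{h_1,h_2,h_3}\psi(x)=0}$, $\mathbf{E}\Delta_{h_1,h_2,h_3}F(x)1_{\partial_{h_1,h_2,h_3}\psi(x)=0}$, $\|F\|_{U^3(G)}^8$ and $\|F'\|_{U^3(G)}^8$ by a telescoping argument. Since $F$ and $F'$ are real and take values in $[0,1]$, $\Delta_{h_1,h_2,h_3}F(x)$ is simply the product $\prod_{\omega\in\{0,1\}^3}F(x+\omega\cdot(h_1,h_2,h_3))$. Write $F'=F+E$ with $\|E\|_{L^\infty}\leq\varepsilon$.

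First, replace the factors of $F$ by factors of $F'$ one vertex at a time. For each of the $8$ swaps, the difference of the two products is a product of seven functions bounded by $1$ and one copy of $E$, so it is pointwise at most $\varepsilon$. Hence, for any indicator weight $w(x,h_1,h_2,h_3)\in\{0,1\}$,
\begin{equation*}
  \Big|\mathbf{E}_{x,h_1,h_2,h_3\in G}\big(\Delta_{h_1,h_2,h_3}F'(x)-\Delta_{h_1,h_2,h_3}F(x)\big)w(x,h_1,h_2,h_3)\Big|\leq 8\varepsilon.
\end{equation*}
Applying this with $w=1_{\partial_{h_1,h_2,h_3}\psi(x)=0}$ gives
\begin{equation*}
  \mathbf{E}\Delta_{h_1,h_2,h_3}F'(x)1_{\partial_{h_1,h_2,h_3}\psi(x)=0}\geq(1-\gamma)\|F\|_{U^3(G)}^8-8\varepsilon.
\end{equation*}
Applying it with $w\equiv 1$ gives $\big|\|F'\|_{U^3(G)}^8-\|F\|_{U^3(G)}^8\big|\leq 8\varepsilon$.

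Next, combine the two bounds. From the second one, $\|F\|_{U^3(G)}^8\geq\|F'\|_{U^3(G)}^8-8\varepsilon$. Substituting into the first,
\begin{equation*}
  \mathbf{E}\Delta_{h_1,h_2,h_3}F'(x)1_{\partial_{h_1,h_2,h_3}\psi(x)=0}\geq(1-\gamma)\|F'\|_{U^3(G)}^8-16\varepsilon.
\end{equation*}
The error $16\varepsilon$ is then absorbed into $3\gamma\|F'\|_{U^3(G)}^8$. For this, lower bound $\|F'\|_{U^3(G)}^8\geq\gamma'-8\varepsilon$. Since $\varepsilon\leq\gamma\gamma'/2^8$ and we may assume $\gamma\leq 1$ (otherwise the conclusion is trivial, as its right-hand side is nonpositive while the left-hand side is nonnegative), we have $8\varepsilon\leq\gamma'/32$, so $\|F'\|_{U^3(G)}^8\geq\tfrac{31}{32}\gamma'$. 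Then
\begin{equation*}
  16\varepsilon\leq\frac{16\gamma\gamma'}{2^8}=\frac{\gamma\gamma'}{16}\leq\frac{2\gamma}{31}\|F'\|_{U^3(G)}^8\leq 3\gamma\|F'\|_{U^3(G)}^8,
\end{equation*}
which yields the claimed bound $(1-4\gamma)\|F'\|_{U^3(G)}^8$.

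There is no real obstacle here; the only point needing care is bookkeeping the constants. In particular, one must use the hypothesis $\|F\|_{U^3(G)}^8\geq\gamma'$ to turn the additive error $O(\varepsilon)$ into a multiplicative loss of $O(\gamma)$, which requires passing through the lower bound on $\|F'\|_{U^3(G)}^8$ rather than using $\|F\|_{U^3(G)}^8$ directly.
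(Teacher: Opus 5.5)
Your proof is correct and follows essentially the paper's argument: perturb $F$ to $F'$ using only $L^\infty$ control, compare $\|F\|_{U^3(G)}^8$ with $\|F'\|_{U^3(G)}^8$, and use $\|F\|_{U^3(G)}^8\ge\gamma'$ to turn the additive error into a multiplicative loss. The only differences are minor improvements. Your vertex-by-vertex telescoping costs $8\varepsilon$, whereas the paper fully expands $F'=F+(F'-F)$ and pays up to $(2^8-1)\varepsilon$. Your telescoping also gives $\big|\|F'\|_{U^3(G)}^8-\|F\|_{U^3(G)}^8\big|\le 8\varepsilon$ directly, without invoking the triangle inequality for the $U^3$-norm.
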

\begin{proof}
  By the reverse triangle inequality for the $U^3$-norm followed by the triangle
  inequality,
  \begin{equation*}
    \left|\|F\|_{U^3(G)}-\|F'\|_{U^3(G)}\right|\leq \|F-F'\|_{U^3(G)}\leq \varepsilon.
  \end{equation*}
  Given $f_\omega:G\to\mathbf{C}$ $1$-bounded for each
  $\omega\in\{0,1\}^3$, using the triangle inequality again yields
  \begin{equation*}
    \Big|\mathbf{E}_{x,h_1,h_2,h_3\in G}\prod_{\omega\in\{0,1\}^3}f_\omega(x+\omega\cdot(h_1,h_2,h_3))1_{\partial_{h_1,h_2,h_3}\psi(x)=0}\Big|\leq \min_{\omega\in\{0,1\}^3}\|f_\omega\|_{L^\infty}.
  \end{equation*}
  Hence, writing $F'=F+F'-F$ and expanding, as long as
  $\varepsilon<\frac{\gamma\gamma'}{2^8}$, we have that
  \begin{equation*}
    \mathbf{E}_{h_1,h_2,h_3\in G}\Delta_{h_1,h_2,h_3}F'(x)1_{\partial_{h_1,h_2,h_3}\psi(x)=0}\geq(1-2\gamma)\|F\|_{U^3(G)}^8\geq (1-2\gamma)\|F'\|_{U^3(G)}^8-8\varepsilon,
  \end{equation*}
  which is greater than $(1-4\gamma)\|F'\|_{U^3(G)}$.
\end{proof}

  \begin{lemma}\label{lem:rankrefine}
    Let $V\leq G$ be a subspace of $G$,
    $Q_1,\dots,Q_s:G\to\mathbf{F}_p$ be quadratic functions with $s\geq 1$,
    $a_\omega\in G$ for each $\omega\in\{0,1\}^3$, and
    $\mathbf{b}^{(\omega)}\in\mathbf{F}^s_p$ for each
    $\omega\in\{0,1\}^3$. Suppose that there exist
    $\lambda_1,\dots,\lambda_{s-1}\in\mathbf{F}_p$ such that
    $\lambda_1Q_1|_V+\dots+\lambda_{s-1}Q_{s-1}|_{V}-Q_s|_V$ has rank less than $R$ and that
    \begin{align*}
      \mathbf{E}_{x,h_1,h_2,h_3\in G}\prod_{\omega\in\{0,1\}^3}&1_{\mathcal{A}_\omega}(x+\omega\cdot(h_1,h_2,h_3))1_{\partial_{h_1,h_2,h_3}\psi(x)=0}\\
      &\geq(1-\gamma)\mathbf{E}_{x,h_1,h_2,h_3\in G}\prod_{\omega\in\{0,1\}^3}1_{\mathcal{A}_\omega}(x+\omega\cdot(h_1,h_2,h_3))>0,
    \end{align*}
    where
    $\mathcal{A}_\omega:=(a_\omega+V)\cap
    Q_1^{-1}(b_1^{(\omega)})\cap\dots\cap Q_s^{-1}(b_s^{(\omega)})$
    for each $\omega\in\{0,1\}^3$. Then, there exists a subspace
    $W\leq V$ of codimension at most $8(R+1)$ in $V$ and quadratic level sets $\mathcal{A}'_\omega=(a_\omega'+W)\cap
Q_1^{-1}(b_1^{(\omega)})\cap\dots\cap
Q_{s-1}^{-1}(b_{s-1}^{(\omega)})$ for each $\omega\in\{0,1\}^3$ such that
    \begin{align*}
      \mathbf{E}_{x,h_1,h_2,h_3\in G}\prod_{\omega\in\{0,1\}^3}&1_{\mathcal{A}'_\omega}(x+\omega\cdot(h_1,h_2,h_3))1_{\partial_{h_1,h_2,h_3}\psi(x)=0}\\
                                                               &\geq(1-\gamma)\mathbf{E}_{x,h_1,h_2,h_3\in G}\prod_{\omega\in\{0,1\}^3}1_{\mathcal{A}'_\omega}(x+\omega\cdot(h_1,h_2,h_3))>0.
    \end{align*}
  \end{lemma}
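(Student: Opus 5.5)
The plan is to remove $Q_s$ by exploiting that, on $V$, it differs from a combination of $Q_1,\dots,Q_{s-1}$ by a low-rank quadratic. Set $P:=\lambda_1Q_1+\dots+\lambda_{s-1}Q_{s-1}-Q_s$, so $P|_V$ has rank less than $R$. As noted before Lemma~\ref{lem:GaussSum}, for each coset $a_\omega+V$ there is a subspace $V'_{a_\omega}\leq V$ of codimension at most $R+2$ on whose cosets inside $a_\omega+V$ the function $P$ is constant. Take
\begin{equation*}
W:=\bigcap_{\omega\in\{0,1\}^3}V'_{a_\omega},
\end{equation*}
which has codimension at most $8(R+2)$. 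With a little care one can get $8(R+1)$: the $\ker L$ and $\ker L_i$ pieces are common to all $\omega$, so only the one extra linear condition per coset needs to be counted separately, and the count is easily arranged. The function $P$ is then constant on every coset $a'+W\subset a_\omega+V$.

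The key observation concerns a coset $a'+W\subset a_\omega+V$ on which $P$ is constant, say equal to $c$. Inside it, the conditions $Q_i=b_i^{(\omega)}$ for $i<s$ force
\begin{equation*}
Q_s=\sum_{i<s}\lambda_ib_i^{(\omega)}-c.
\end{equation*}
Hence $(a'+W)\cap\mathcal{A}_\omega$ is either empty or equal to
\begin{equation*}
(a'+W)\cap Q_1^{-1}(b_1^{(\omega)})\cap\dots\cap Q_{s-1}^{-1}(b_{s-1}^{(\omega)}),
\end{equation*}
which has exactly the required form $\mathcal{A}'_\omega$. Every $3$-dimensional cube contributing to the hypothesis has its vertex $x+\omega\cdot\mathbf{h}$ in $a_\omega+V$. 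The cubes with $\square(x;\mathbf{h})$ having $\omega$-vertex in $\mathcal{A}_\omega$ therefore partition according to the octuple of $W$-cosets $(a'_\omega+W)_\omega$ containing the vertices.

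Next comes the averaging step. Partition by the octuples $\mathbf{c}=(a'_\omega+W)_\omega$ with $a'_\omega+W\subset a_\omega+V$, and let
\begin{equation*}
N(\mathbf{c}):=\mathbf{E}_{x,h_1,h_2,h_3\in G}\prod_{\omega\in\{0,1\}^3}1_{\mathcal{A}_\omega\cap(a'_\omega+W)}(x+\omega\cdot(h_1,h_2,h_3)),
\end{equation*}
with $N_\psi(\mathbf{c})$ defined the same way but with the factor $1_{\partial_{h_1,h_2,h_3}\psi(x)=0}$ inserted. Both the full average and the $\psi$-weighted average in the hypothesis are sums over $\mathbf{c}$ of these. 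Since $\sum_{\mathbf{c}}N_\psi(\mathbf{c})\geq(1-\gamma)\sum_{\mathbf{c}}N(\mathbf{c})>0$, pigeonholing gives some $\mathbf{c}$ with $N(\mathbf{c})>0$ and $N_\psi(\mathbf{c})\geq(1-\gamma)N(\mathbf{c})$.

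For that $\mathbf{c}$, each $\mathcal{A}_\omega\cap(a'_\omega+W)$ is nonempty because $N(\mathbf{c})>0$. By the key observation it therefore equals $\mathcal{A}'_\omega$, and the two averages for $\mathbf{c}$ are precisely the two sides of the desired conclusion.

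The main obstacle is the first step: making sure that a single subspace $W$ works for all eight cosets $a_\omega+V$ simultaneously, and checking the codimension count. The cross-term condition $\{v: P(v+a)-P(v)-P(a)+P(0)=0\}$ is a linear condition on $V$ that depends on $a$, which is why one takes the intersection over $\omega$. Everything after that is bookkeeping.
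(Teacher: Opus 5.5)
Your proposal is correct and follows essentially the same route as the paper: you intersect the eight subspaces $V'_{a_\omega}$ on whose cosets $P$ is constant, note that $Q_s$ is then determined on each $W$-coset by $Q_1,\dots,Q_{s-1}$, decompose both averages over octuples of $W$-cosets, and pigeonhole. For the codimension count, the quickest fix is that rank less than $R$ means rank at most $R-1$, so each $V'_{a_\omega}$ already has codimension at most $(R-1)+2=R+1$; this is how the paper gets $8(R+1)$, though your shared-kernel observation also works and even gives the bound $R+8$.
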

  \begin{proof}
    Define $Q:G\to\mathbf{F}_p$ by
    $Q:=\lambda_1Q_1+\dots+\lambda_{s-1}Q_{s-1}-Q_s$, so that $Q|_V$
    has rank less than $R$ on $V$. By the discussion at the
    beginning of Section~\ref{sec:bogo}, for each $\omega\in\{0,1\}^3$
    there exists a subspace $V'_{a_\omega}$ of codimension at most
    $R+1$ such that $Q$ is constant on the cosets of $V'_{a_\omega}$
    contained in $a_\omega+V$. Set
    \begin{equation*}
      W:=\bigcap_{\omega\in\{0,1\}^3}V'_{a_\omega},
    \end{equation*}
    so that $W$ has codimension at most $8(R+1)$ in $V$. Since $Q_s(x)$
    is completely determined by the values of
    $Q_1(x),\dots,Q_{s-1}(x)$, and $Q(x)$, it follows that each
    quadratic level set $\mathcal{A}_\omega$ can be partitioned into
    quadratic level sets $\mathcal{A}'_\omega$ of the form
    $\mathcal{A}'(a',b_1,\dots,b_{s-1}):=(a'+W)\cap Q_1^{-1}(b_1)\cap\dots\cap Q_{s-1}^{-1}(b_{s-1})$ with
    $a'\in a_{\omega}+V$ and $b_1,\dots,b_{s-1}\in\mathbf{F}_p$. Thus, we can write
    \begin{equation*}
      1_{\mathcal{A}_\omega}(x)=\sum_{a+W\subset a_\omega+V}\epsilon_\omega(a)1_{\mathcal{A}'(a,b_1^{(\omega)},\dots,b_{s-1}^{(\omega)})}
    \end{equation*}
    for each $\omega\in\{0,1\}^3$, where $\epsilon(a)_\omega\in\{0,1\}$ for each $a\in G$. So,
    \begin{align*}
      &\sum_{\substack{ a'_\omega+W\subset a_\omega+V \\ \omega\in\{0,1\}^3}}\mathbf{E}_{x,h_1,h_2,h_3\in G}\prod_{\omega\in\{0,1\}^3}\epsilon(a_\omega')1_{\mathcal{A}'(a_\omega',b_1^{(\omega)},\dots,b_{s-1}^{(\omega)})}(x+\omega\cdot(h_1,h_2,h_3))1_{\partial_{h_1,h_2,h_3}\psi(x)=0}\\
      &\geq(1-\gamma)\sum_{\substack{ a'_\omega+W\subset a_\omega+V \\ \omega\in\{0,1\}^3}}\mathbf{E}_{x,h_1,h_2,h_3\in G}\prod_{\omega\in\{0,1\}^3}\epsilon(a_\omega')1_{\mathcal{A}'(a_\omega',b_1^{(\omega)},\dots,b_{s-1}^{(\omega)})}(x+\omega\cdot(h_1,h_2,h_3))>0
    \end{align*}
    and the conclusion of the lemma now follows from the pigeonhole principle.
  \end{proof}
  
Finally, combining the main results of the previous three sections
with the above lemmas and the pigeonhole principle produces our desired
$(1-\varepsilon)$-approximate quadratic.

  \begin{lemma}\label{lem:psiapprox}
    Fix $p\geq 5$. There exists a constant $C\geq 1$ such that the
    following holds. Let $\phi:G\to G$ be $\delta$-approximately
    quadratic. Assume that
    $n\geq 2^{100}\lceil\log_p(2/\delta^{10000})\rceil$ and
    $R\geq C\delta^{-C}$. Then, there exists a real number
    $\theta=\Omega((\delta/p)^{O(1)})$, a subspace $V\leq G$ with
    $\codim{V}\ll\delta^{-O(1)}R$, quadratic functions
    $Q_1,\dots,Q_s:G\to\mathbf{F}_p$ with $s\ll\delta^{-O(1)}$ and
    $\lambda_1Q_1|_V+\dots+\lambda_sQ_s|_V$ having rank at least $R$
    whenever all $\lambda_1,\dots,\lambda_s\in\mathbf{F}_p$ are not
    all zero, and a function $\psi: G\to G$ that is
    $(1-2^{-40}p^{-20}\delta^{100})$-approximately quadratic on some
    quadratic level set
    $(a+V)\cap Q_1^{-1}(b_1)\cap\dots\cap Q_s^{-1}(b_s)$ and respects
    at most $p^{7n/2}$ $3$-dimensional cubes not wholly contained in
    \begin{equation}\label{eq:Sphi}
      \Big\{x\in G:\big|\big\{(h_1,h_2,h_3)\in G^3:\sum_{\mathbf{0}\neq\omega\in\{0,1\}^3}(-1)^{|\omega|}\phi(x+\omega\cdot(h_1,h_2,h_3))=\psi(x)\big\}\big|\geq\theta|G|^3\Big\}.
    \end{equation}
  \end{lemma}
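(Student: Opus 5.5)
We will chain Lemma~\ref{lem:DRC}, Lemma~\ref{lem:rselect}, Lemma~\ref{lem:bogo}, Lemma~\ref{lem:dapproxinf} and Lemma~\ref{lem:rankrefine}, and finish with the counting lemmas of Section~\ref{sec:bogo}.

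\textbf{Step 1: a dense set and a random function.} Apply Lemma~\ref{lem:DRC} with $\gamma$ a small power of $\delta$ and $p$, say $\gamma=c\,p^{-20}\delta^{100}$. This gives $A$ of density $\Omega(\delta^{O(1)})$ containing $\gg\delta^{O(1)}p^{28n}$ second-order cubes, a $(1-\gamma)$-proportion of which $\phi|_A$ respects. The count of second-order cubes in $A$ equals $|G|^{28}\|\mathcal{D}[1_A]\|_{U^3}^8$, so
\begin{equation*}
\|\mathcal{D}[1_A]\|_{U^3}^8=:\gamma'\gg\delta^{O(1)}.
\end{equation*}
Now apply Lemma~\ref{lem:rselect} with $\theta=\gamma\gamma'/1000$; the hypothesis on $n$ makes $p^{-n}$ negligible. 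The expectation~\eqref{eq:expectedpsi} is then at least $(1-2\gamma)\|\mathcal{D}[1_A]\|_{U^3}^8$. The set $S_{\phi,A,\psi}(\theta)$ is contained in~\eqref{eq:Sphi} with $\theta$ replaced by $\theta^2$ (rename $\theta$ accordingly). The function $\mathcal{D}[1_A](x)\,\partial\psi$-weighted average is bounded by $\|\mathcal{D}[1_A]\|_{U^3}^8$, so a reverse Markov argument lets us fix one $\psi$ that satisfies both conclusions. This needs only that the expectation exceeds $(1-3\gamma)$ of the maximum with positive probability, and that the bad event has probability $45p^{-n/2}$.

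\textbf{Step 2: replace $\mathcal{D}[1_A]$ by a structured function.} Lemma~\ref{lem:bogo} with $\eta\le\gamma\gamma'/2^8$ yields a quadratic factor $\mathcal{B}=\mathcal{B}_{(L_1,\dots,L_r,Q_1,\dots,Q_s)}$ with $r,s\ll\delta^{-O(1)}$ and a $\mathcal{B}$-measurable $F$ with $\|\mathcal{D}[1_A]-F\|_\infty\le\eta$. Lemma~\ref{lem:dapproxinf} then gives the $(1-O(\gamma))$ inequality for $F$. Write $F=\sum c_B1_B$ over atoms and expand $\Delta_{h_1,h_2,h_3}F$ as a nonnegative combination over octuples of atoms. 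By pigeonhole there is an octuple $(\mathcal{A}_\omega)$ with positive cube count on which the proportion of cubes respected by $\psi$ is at least $1-O(\gamma)$.

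\textbf{Step 3: rank refinement and conclusion.} This is the step I expect to be the main obstacle. If some nontrivial combination of the $Q_i|_V$ (with $V=\bigcap\ker L_j$) has rank $<R$, relabel so that $Q_s$ has coefficient $-1$. Lemma~\ref{lem:rankrefine} then removes $Q_s$ at the cost of codimension $8(R+1)$, preserving the $(1-O(\gamma))$ property. Iterating at most $s$ times gives a factor of rank $\ge R$, with $\codim V\ll r+8s(R+1)\ll\delta^{-O(1)}R$.

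The remaining difficulty is that we have an octuple of possibly different level sets, whereas the conclusion needs a single level set. Positivity of the cube count, together with Lemma~\ref{lem:count} and $R\ge C\delta^{-C}$, forces the $a_\omega$ and $\mathbf{b}^{(\omega)}$ to satisfy~\eqref{eq:acondition} and~\eqref{eq:bcondition}, and the count is then $\approx p^{-4r'-7s}$. So I would translate: set $\psi'(y):=\psi(y+t_\omega)$ on each piece, or more simply pass to the cube structure inside $(a_{000}+V)\cap\mathcal{Q}^{-1}(\mathbf{b}^{(000)})$.

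First I would try the following averaging argument. Each cube in the octuple is determined by $x\in\mathcal{A}_{000}$ and the $h_i$, and by Lemma~\ref{lem:count} the cube counts are uniform across compatible octuples. Averaging $1_{\partial\psi=0}$ over octuples with fixed $\mathcal{A}_{000}$, and using that every coset and level-value pattern satisfying the constraints occurs, lets us choose the octuple to be "diagonal", i.e.\ all $\mathcal{A}_\omega$ equal to one level set. Concretely, in Step 2 we pigeonhole only among octuples of the form $\square$-compatible translates of a single atom, so the octuple can be taken diagonal and the exceptional proportion stays below $2^{-40}p^{-20}\delta^{100}$. The bound on cubes outside~\eqref{eq:Sphi} is inherited from Step 1.
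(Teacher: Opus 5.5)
Up through the rank refinement your plan is the paper's own argument: the dependent random choice lemma, the random $\psi$ from Lemma~\ref{lem:rselect}, the Bogolyubov lemma with Lemma~\ref{lem:dapproxinf}, pigeonholing over octuples of atoms, and iterating Lemma~\ref{lem:rankrefine}. The gap is the last step. You correctly flag it, but your resolution does not work.

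\textbf{Why the diagonal restriction fails.} After expanding $F=\sum c(\mathcal{A})1_{\mathcal{A}}$, the $(1-O(\gamma))$ inequality holds only for the weighted sum over \emph{all} compatible octuples. You cannot pigeonhole among diagonal octuples alone. For a fixed $\omega=\mathbf{0}$ atom, the diagonal octuple is one of about $p^{3r+6s}$ compatible octuples: the three edge cosets modulo $V$ are free, and so are the $\mathbf{b}^{(\omega)}$ subject to~\eqref{eq:bcondition}. So the diagonal octuples can carry a negligible share of the weight, and they could be exactly where $\psi$ fails. Averaging over octuples that share $\mathcal{A}_{\mathbf{0}}$ only tells you about cubes based in $\mathcal{A}_{\mathbf{0}}$ whose other vertices lie in other atoms. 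That says nothing directly about cubes lying inside one level set. Redefining $\psi$ piecewise by translates is also not available. The conclusion concerns a single function $\psi$, which must also satisfy the bound on cubes outside~\eqref{eq:Sphi}.

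\textbf{The missing idea.} What is needed is a Gowers--Cauchy--Schwarz transfer from the octuple $(\mathcal{A}'_\omega)_\omega$ to one of its members. Write $1_{\partial_{h_1,h_2,h_3}\psi(x)=0}=\mathbf{E}_{\xi\in G}e_p(\xi\cdot\partial_{h_1,h_2,h_3}\psi(x))$ and set $f_{\omega,\xi}:=1_{\mathcal{A}'_\omega}e_p(\xi\cdot\psi)$. The proportion of cubes in the octuple respected by $\psi$ is then $\mathbf{E}_\xi$ of the Gowers inner product of $(f_{\omega,\xi})_\omega$. By Theorem~\ref{thm:GCSnorm} and $|f_{\omega,\xi}|\le 1_{\mathcal{A}'_\omega}$, this is at most $\prod_{\mathbf{0}\neq\omega}\|1_{\mathcal{A}'_\omega}\|_{U^3(G)}\cdot\mathbf{E}_{\xi}\|f_{\mathbf{0},\xi}\|_{U^3(G)}$.

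Now apply Lemma~\ref{lem:count} twice. First apply it to the given octuple, which satisfies~\eqref{eq:acondition} and~\eqref{eq:bcondition} by positivity. Then apply it to each diagonal octuple, which satisfies them trivially. Together these show that the octuple count and every $\|1_{\mathcal{A}'_\omega}\|_{U^3(G)}^8$ equal $p^{-4r'-7s'}\bigl(1+O(p^{7s'-R/2})\bigr)$. This is where the high rank obtained in Step~3 is used again.

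Consequently $\mathbf{E}_\xi\|f_{\mathbf{0},\xi}\|_{U^3(G)}\geq(1-O(\gamma))p^{-(4r'+7s')/8}$. H\"older then gives $\mathbf{E}_\xi\|f_{\mathbf{0},\xi}\|_{U^3(G)}^8\geq(1-O(\gamma))\|1_{\mathcal{A}'_{\mathbf{0}}}\|_{U^3(G)}^8$. Expanded by orthogonality of characters, this says exactly that $\psi$ is $(1-O(\gamma))$-approximately quadratic on the single level set $\mathcal{A}'_{\mathbf{0}}$. The paper loses a factor $2^{16}$ here, so you need the constant $c$ in your $\gamma$ small enough to reach the stated $2^{-40}p^{-20}\delta^{100}$.
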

  \begin{proof}
    By applying Lemma~\ref{lem:DRC} with
    $\gamma=2^{-60}p^{-20}\delta^{100}$, we get that there exists
    $A\subset G$ of density $\Omega((\delta/p)^{O(1)})$ containing at
    least $2^{-3500}p^{-1100}\delta^{7000}p^{28n}$ second-order
    $3$-dimensional cubes such that $\phi|_A$ respects at least a
    $(1-\gamma)$-proportion of them. Setting
    $\theta=2^{-8000}p^{-2400}\delta^{10000}$ and applying
    Lemma~\ref{lem:rselect}, we get that
    \begin{equation*}
      \mathbf{E}_{\psi}\mathbf{E}_{x,h_1,h_2,h_3\in G}\Delta_{h_1,h_2,h_3}\mathcal{D}[1_A](x)1_{\partial_{h_1,h_2,h_3}\psi(x)=0}\geq(1-2\gamma)\|\mathcal{D}[1_A]\|_{U^3(G)}^8
    \end{equation*}
    and that, with probability at least $1-2^{-10000}p^{-3000}\delta^{100000}$, say,
    $\psi$ respects at most $p^{7n/2}$ $3$-dimensional cubes not
    wholly contained in $S_{\phi,A,\psi}(\theta)$ (which, note, is a subset of~\eqref{eq:Sphi}). Thus, there exists a $\psi:G\to G$
    respecting at most $p^{7n/2}$ $3$-dimensional cubes not wholly
    contained in~\eqref{eq:Sphi} and for which
    \begin{equation*}
      \mathbf{E}_{x,h_1,h_2,h_3\in G}\Delta_{h_1,h_2,h_3}\mathcal{D}[1_A](x)1_{\partial_{h_1,h_2,h_3}\psi(x)=0}\geq(1-4\gamma)\|\mathcal{D}[1_A]\|_{U^3(G)}^8.
    \end{equation*}
    We now apply Lemma~\ref{lem:bogo} with
    $\eta=2^{-4000}p^{-1200}\delta^{10000}$ to obtain a quadratic
    factor $\mathcal{B}$ of $G$ of complexity $(r,s)$ at most
    $(O(\delta^{-O(1)}),O(\delta^{-O(1)}))$ and a
    $\mathcal{B}$-measurable function $F:G\to[0,1]$ such that
    $\|\mathcal{D}[1_A]-F\|_{L^\infty}\leq\eta$. Thus, as a
    consequence of Lemma~\ref{lem:dapproxinf}, we have that
    \begin{equation}\label{eq:theabove}
      \mathbf{E}_{x,h_1,h_2,h_3\in G}\Delta_{h_1,h_2,h_3}F(x)1_{\partial_{h_1,h_2,h_3}\psi(x)=0}\geq(1-16\gamma)\|F\|_{U^3(G)}^8.
    \end{equation}
    Further, since $A$ contains at least
    $2^{-3500}p^{-1100}\delta^{7000}p^{28n}$ second-order
    $3$-dimensional cubes, we certainly have that
    $\|F\|_{U^3}^8\geq
    2^{-3500}p^{-1100}\delta^{7000}-8\eta>0$.

    As $F$ is constant on atoms of $\mathcal{B}$, we can write
    \begin{equation*}
      F(x)=\sum_{\mathcal{A}\text{ atom of }\mathcal{B}}c(\mathcal{A})1_{\mathcal{A}}(x)
    \end{equation*}
    for some $c(\mathcal{A})\in[0,1]$. Plugging this
    into~\eqref{eq:theabove} yields
    \begin{align*}
      \sideset{}{'}\sum_{(\mathcal{A}_\omega)_{\omega}}&\Big(\prod_{\omega\in\{0,1\}^3}c(\mathcal{A}_\omega)\Big)\mathbf{E}_{x,h_1,h_2,h_3\in G}\prod_{\omega\in\{0,1\}^3}1_{\mathcal{A}_\omega}(x+\omega\cdot(h_1,h_2,h_3))1_{\partial_{h_1,h_2,h_3}\psi(x)=0} \\
                                                                                                        &\geq (1-16\gamma)\sideset{}{'}\sum_{(\mathcal{A}_\omega)_{\omega}}\Big(\prod_{\omega\in\{0,1\}^3}c(\mathcal{A}_\omega)\Big)\mathbf{E}_{x,h_1,h_2,h_3\in G}\prod_{\omega\in\{0,1\}^3}1_{\mathcal{A}_\omega}(x+\omega\cdot(h_1,h_2,h_3))
    \end{align*}
    where $'$ restricts summation to octuples of atoms
    $\mathcal{A}_\omega=\mathcal{A}(a_\omega,\mathbf{b}^{(\omega)})$ of
    $\mathcal{B}$ with the $a_\omega$'s
    satisfying~\eqref{eq:acondition} and the $\mathbf{b}^{(\omega)}$'s
    satisfying~\eqref{eq:bcondition}. It follows from the pigeonhole
    principle and the fact that $\|F\|_{U^3}>0$ that there exists an
    octuple of atoms for which
    \begin{align*}
      \mathbf{E}_{x,h_1,h_2,h_3\in G}\prod_{\omega\in\{0,1\}^3}&1_{\mathcal{A}_\omega}(x+\omega\cdot(h_1,h_2,h_3))1_{\partial_{h_1,h_2,h_3}\psi(x)=0}\\
      &\geq(1-16\gamma)\mathbf{E}_{x,h_1,h_2,h_3\in G}\prod_{\omega\in\{0,1\}^3}1_{\mathcal{A}_\omega}(x+\omega\cdot(h_1,h_2,h_3)).
    \end{align*}

    By repeated applications of Lemma~\ref{lem:rankrefine}, there
    exists a subspace $W\leq G$ of codimension $r'\ll\delta^{-O(1)}R$
    and some $s'\leq s\ll\delta^{-O(1)}$ such that
    $\lambda_1Q_1|_W+\dots+\lambda_{s'}Q_{s'}|_W$ has rank at least
    $R$ on $W$ whenever $\lambda_1,\dots,\lambda_{s'}\in\mathbf{F}_p$
    are not all zero and quadratic level sets
    $\mathcal{A}'_\omega=(a_\omega'+W)\cap
    Q_1^{-1}(b_1^{(\omega)})\cap\dots\cap
    Q_{s'}^{-1}(b_{s'}^{(\omega)})$ for each $\omega\in\{0,1\}^3$, we
    have
\begin{align*}
  \mathbf{E}_{x,h_1,h_2,h_3\in G}\prod_{\omega\in\{0,1\}^3}&1_{\mathcal{A}'_\omega}(x+\omega\cdot(h_1,h_2,h_3))1_{\partial_{h_1,h_2,h_3}\psi(x)=0}\\
                                                           &\geq(1-16\gamma)\mathbf{E}_{x,h_1,h_2,h_3\in G}\prod_{\omega\in\{0,1\}^3}1_{\mathcal{A}'_\omega}(x+\omega\cdot(h_1,h_2,h_3))>0.
\end{align*}
For the right-hand side above to be nonzero, the $a'_\omega$'s and
$b_{i}^{(\omega)}$'s must satisfy~\eqref{eq:acondition}
and~\eqref{eq:bcondition}. It follows from orthogonality of characters
and the Gowers--Cauchy--Schwarz inequality in the form of Theorem~\ref{thm:GCSnorm} that
  \begin{equation*}
    (1-32\gamma)p^{-4r'-7s'}\leq \mathbf{E}_{\xi\in G}\prod_{\omega\in\{0,1\}^3}\|f_{\omega,\xi}\|_{U^3(G)}\leq\prod_{\mathbf{0}\neq\omega\in\{0,1\}^3}\|1_{\mathcal{A}_\omega}\|_{U^3(G)} \cdot\mathbf{E}_{\xi\in G}\|f_{\mathbf{0},\xi}\|_{U^3(G)},
  \end{equation*}
  where $f_{\omega,\xi}(x)=e_p(\xi\cdot\psi(x))1_{\mathcal{A}_\omega'}(x)$. Thus, by Lemma~\ref{lem:count} and H\"older's inequality,
  \begin{equation*}
    \mathbf{E}_{\xi\in G}\|f_{\mathbf{0},\xi}\|_{U^3(G)}^8\geq (1-2^{15}\gamma)p^{-4r'-7s'}.
  \end{equation*}
  Expanding the definition of $f_{\mathbf{0},\xi}$ and using
  orthogonality of characters again yields that $\psi$ is
  $(1-2^{16}\gamma)$-approximately quadratic on
  $\mathcal{A}_{\mathbf{0}}'$, completing the proof.
\end{proof}
  
  \section{A relative inverse theorem for $99\%$ Freiman homomorphisms}\label{sec:psdhom}

  We could now quickly finish the proof of
  Theorem~\ref{thm:approxpoly} by invoking results of Kazhdan and
  Ziegler~\cite{KazhdanZiegler2017,KazhdanZiegler2019,KazhdanZiegler2020}
  implying that a $(1-\varepsilon)$-approximate quadratic on a
  high-rank quadratic level set must agree almost everywhere with a
  genuine quadratic map. However, we have a new proof of this fact,
  which we will present in the next section. For the sake of
  completeness, in this section we will give a short proof of one of
  the ingredients: the inverse theorem for
  $(1-\varepsilon)$-approximately linear functions on $U^2$-uniform
  sets, which is a special case (with bounds that are slightly worse
  in an inconsequential way) of another result of Kazhdan and
  Ziegler~\cite[Theorem 1.7]{KazhdanZiegler2019}. We begin by
  recalling the definition of uniform sets:

  \begin{definition}
    For any finite abelian group $G_1$, a subset $A\subset G_1$ of
    density $\alpha$ is \textit{ $\delta$-uniform} if
    $\|1_A-\alpha\|_{U^2(G_1)}\leq\delta$.
  \end{definition}

  The main result of this section is a relative version of (a
  corollary of) the famous linearity test of Blum, Luby, and
  Rubinfeld~\cite{BlumLubyRubinfeld1993}, which we also recall.
  
  \begin{theorem}\label{thm:BLR}
    Let $G_1$ and $G_2$ be finite abelian groups and
    $0<\varepsilon<\frac{1}{6}$, and suppose that $\phi:G_1\to G_2$
    satisfies $\phi(x+y)=\phi(x)+\phi(y)$ for at least a
    $(1-\varepsilon)$-proportion of pairs $(x,y)\in G_1^2$. Then,
    there exists a homomorphism $\Phi:G_1\to G_2$ such that
    $\phi(x)=\Phi(x)$ for at least a $(1-2\varepsilon)$-proportion of
    $x\in G_1$.
  \end{theorem}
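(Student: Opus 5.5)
The plan is to run the standard majority-vote (BLR self-correction) argument. For each $x\in G_1$ I would set $\Phi(x)$ to be the most common value of $\phi(x+y)-\phi(y)$ as $y$ ranges over $G_1$. The proof then has three steps: show this value is very popular, show $\Phi$ is a homomorphism, and show $\Phi$ agrees with $\phi$ on a $(1-2\varepsilon)$-proportion of $G_1$.

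\textbf{Popularity.} For each $x$, let
\[
p_x:=\Pr_{y,z}\big[\phi(x+y)-\phi(y)=\phi(x+z)-\phi(z)\big].
\]
Rewrite the event as $\phi(x+y)+\phi(z)=\phi(x+z)+\phi(y)$. This holds whenever both $\phi(x+y)+\phi(z)=\phi(x+y+z)$ and $\phi(x+z)+\phi(y)=\phi(x+y+z)$ hold. Each of those two pairs, $(x+y,z)$ and $(x+z,y)$, is uniformly distributed in $G_1^2$. By the union bound, $p_x\geq 1-2\varepsilon$.

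Since $p_x=\sum_b q_x(b)^2\leq\max_b q_x(b)$, where $q_x(b)$ is the probability that $\phi(x+y)-\phi(y)=b$, the majority value $\Phi(x)$ is attained for at least a $(1-2\varepsilon)$-proportion of $y$. This is more than $2/3$, using $\varepsilon<1/6$.

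\textbf{Homomorphism.} Fix $x,x'$ and choose $y$ uniformly at random. Each of the following holds with probability greater than $2/3$:
\[
\Phi(x)=\phi(x+x'+y)-\phi(x'+y),\quad \Phi(x')=\phi(x'+y)-\phi(y),\quad \Phi(x+x')=\phi(x+x'+y)-\phi(y).
\]
The first uses the substitution $y\mapsto x'+y$. The first two hold jointly with probability greater than $1/3$. Jointly with the third we only get probability greater than $0$, which is enough: for any $y$ where all three hold, they combine to give $\Phi(x+x')=\Phi(x)+\Phi(x')$.

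This step is the one I expect to be delicate. I need to use the strict bound ($>2/3$ each) correctly. If the margin proved tight, I would fall back on using the full $1-2\varepsilon>2/3$ for each event.

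\textbf{Agreement.} Let $E=\{x:\phi(x)\neq\Phi(x)\}$. For $x\in E$, every $y$ in the popular set has $\phi(x+y)-\phi(y)=\Phi(x)\ne\phi(x)$, so the additivity relation $\phi(x)+\phi(y)=\phi(x+y)$ fails. That happens for more than a $(1-2\varepsilon)$-proportion of $y$, and $1-2\varepsilon>1/2$ when $\varepsilon<1/4$.

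The failure proportion of pairs is at most $\varepsilon$, so
\[
\tfrac12\cdot\frac{|E|}{|G_1|}\leq\varepsilon,
\]
which gives $|E|\leq 2\varepsilon|G_1|$. This is exactly the claimed agreement on a $(1-2\varepsilon)$-proportion of $x$.
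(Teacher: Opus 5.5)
Your proof is correct and complete. It is the standard Blum--Luby--Rubinfeld majority-vote (self-correction) argument, which the paper does not reprove but simply cites from~\cite{BlumLubyRubinfeld1993}; your constants check out, since each of the three events in the homomorphism step fails with probability at most $2\varepsilon$ (so all three hold with probability at least $1-6\varepsilon>0$), and the agreement step needs only $1-2\varepsilon>\frac{1}{2}$.
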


  It is well-known that one can obtain a version of this theorem for
  approximately linear functions by making minor modifications to the
  argument in~\cite{BlumLubyRubinfeld1993}. One can also derive it
  using the cocycle identity; we include the short proof below for
  completeness.
    \begin{corollary}\label{cor:99afflin}
      Let $G_1$ and $G_2$ be finite abelian groups and
      $0<\varepsilon<2^{-10}$, and suppose that $\psi:G_1\to G_2$ is
      $(1-\varepsilon)$-approximately linear. Then, there exists a
      homomorphism $\Psi:G_1\to G_2$ and an element $c\in G_2$ such that $\psi(x)=\Psi(x)+c$ for
      at least a $(1-7\sqrt{\varepsilon})$-proportion of $x\in G_1$.
    \end{corollary}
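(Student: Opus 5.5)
We reduce Corollary~\ref{cor:99afflin} to Theorem~\ref{thm:BLR} by passing from additive quadruples to pairs: we fix a well-chosen base point and show that, after recentring at it, $\psi$ satisfies the hypothesis of Theorem~\ref{thm:BLR}. Being $(1-\varepsilon)$-approximately linear on all of $G_1$ means that $\partial_{h_1,h_2}\psi(x)=0$ for at least a $(1-\varepsilon)$-proportion of triples $(x,h_1,h_2)\in G_1^3$. Here $\|1_{G_1}\|_{U^2}=1$, so the normalisation in the definition is just the full count.

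\textbf{Step 1: choose the base point.} By averaging over $x$, there is some $x_0\in G_1$ for which
\begin{equation*}
\psi(x_0)-\psi(x_0+h_1)-\psi(x_0+h_2)+\psi(x_0+h_1+h_2)=0
\end{equation*}
holds for at least a $(1-\varepsilon)$-proportion of pairs $(h_1,h_2)$. Define
\begin{equation*}
\phi(h):=\psi(x_0+h)-\psi(x_0).
\end{equation*}
The identity above is then exactly $\phi(h_1+h_2)=\phi(h_1)+\phi(h_2)$, so $\phi$ satisfies this relation for a $(1-\varepsilon)$-proportion of pairs. Since $\varepsilon<2^{-10}<1/6$, Theorem~\ref{thm:BLR} gives a homomorphism $\Phi$ with $\phi=\Phi$ on a $(1-2\varepsilon)$-proportion of $G_1$.

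\textbf{Step 2: translate back.} On that set, for $x=x_0+h$ we have
\begin{equation*}
\psi(x)=\Phi(x)-\Phi(x_0)+\psi(x_0).
\end{equation*}
So we take $\Psi:=\Phi$ and $c:=\psi(x_0)-\Phi(x_0)$. This gives agreement on a $(1-2\varepsilon)$-proportion of $G_1$, which is at least $(1-7\sqrt{\varepsilon})$ because $2\varepsilon\le 7\sqrt{\varepsilon}$.

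\textbf{Remarks on the main obstacle.} This argument is almost immediate, and I do not see where the weaker $\sqrt{\varepsilon}$ bound would be needed. That suggests the intended argument avoids the averaging trick in Step 1, perhaps by using the cocycle identity to build $\Phi$ directly. One such route is to set $\Phi(h)$ equal to the majority value of $\psi(x+h)-\psi(x)$ over $x$, and then argue via Markov's inequality that for most $h$ this majority holds with probability at least $1-\sqrt{\varepsilon}$. That route is where constants like $7\sqrt{\varepsilon}$ and the threshold $2^{-10}$ would naturally arise.

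In the Step 1 reduction, the only point needing care is that the definition of approximate linearity counts triples $(x,h_1,h_2)$ including degenerate ones. This matches the pair count in Theorem~\ref{thm:BLR} exactly, so there is no loss. If the paper's normalisation differed, the averaging step would still go through, with at worst a change of constants.
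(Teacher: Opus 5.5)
Your proof is correct, and it takes a different and more economical route than the paper. You pigeonhole over the base point at the outset. For a fixed $x_0$, the condition $\partial_{h_1,h_2}\psi(x_0)=0$ is literally the relation $\phi(h_1+h_2)=\phi(h_1)+\phi(h_2)$ for $\phi(h)=\psi(x_0+h)-\psi(x_0)$. So Theorem~\ref{thm:BLR} applies immediately, and you get agreement on a $(1-2\varepsilon)$-proportion of $G_1$, needing only $\varepsilon<1/6$.

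The paper instead follows essentially the route you guessed in your remarks:
\begin{itemize}
\item By Markov's inequality, for a $(1-\sqrt{\varepsilon})$-proportion of $h$ the derivative $\partial_h\psi$ takes a single value on most of $G_1$, and it sets $\phi(h)$ to be that popular value.
\item It uses the cocycle identity $\partial_{h+k}\psi(x)=\partial_h\psi(x)+\partial_k\psi(x+h)$ and a union bound to see that $\phi$ is additive on a $(1-3\sqrt{\varepsilon})$-proportion of pairs.
\item It applies Theorem~\ref{thm:BLR} to $\phi$ to obtain $\Phi$.
\item It integrates back: $\partial_h\psi(x)=\Phi(h)$ for most $(x,h)$, then substitutes $h\mapsto h-x$ and pigeonholes in $x$, ending with $\Psi=-\Phi$.
\end{itemize}
The square-root loss, and hence the $7\sqrt{\varepsilon}$, comes entirely from the majority-vote step.

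The paper's version mirrors the derivative--cocycle--integration template it uses again in the proof of Lemma~\ref{lem:quadext}. However, it yields nothing that your conclusion does not already imply: if $\psi=\Psi+c$ on a $(1-\eta)$-proportion of points, then $\partial_h\psi(x)=-\Psi(h)$ for a $(1-2\eta)$-proportion of pairs. For this statement your reduction is shorter and gives strictly better bounds. Incidentally, your fix-the-base-point trick is exactly how the paper proves Lemma~\ref{lem:cocycin}.
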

    \begin{proof}
      By hypothesis, $\partial_h\psi(x)=\partial_h\psi(y)$ for at least a
      $(1-\sqrt{\varepsilon})$-proportion of $(x,y)\in G_1^2$ for at
      least a $(1-\sqrt{\varepsilon})$-proportion of $h\in G_1$. For
      each such $h$, let $\phi(h)$ be the most popular value of
      $\partial_{h}\psi(x)$, and pick $\phi(h)$ arbitrarily for all
      other $h\in G_1$. By the cocycle identity
      $\partial_{h+k}\phi(x)=\partial_h\phi(x)+\partial_k\phi(x+h)$
      and the union bound, it follows that $\phi(h+k)=\phi(h)+\phi(k)$
      for at least a $(1-3\sqrt{\varepsilon})$-proportion of pairs
      $(h,k)\in G_1^2$. Theorem~\ref{thm:BLR} can now be applied to
      $\phi$, and yields a homomorphism $\Phi:G_1\to G_2$ such that
      $\phi(h)=\Phi(h)$ for at least a
      $(1-6\sqrt{\varepsilon})$-proportion of $h\in G_1$. It follows
      that $\psi(x)-\psi(x+h)=\Phi(h)$ for at least a
      $(1-7\sqrt{\varepsilon})$-proportion of pairs $(x,h)\in
      G_1^2$. Making the change of variables $h\mapsto h-x$ and
      applying the pigeonhole principle to fix $x$ yields
      $\psi(h)=-\Phi(h)+\Phi(x)+\psi(x)$ for at least a
      $(1-7\sqrt{\varepsilon})$-proportion of $h\in
      G_1$, completing the proof.
    \end{proof}

    We will show that the above corollary still holds when $\psi$ is
    $(1-\varepsilon)$-approximately linear on a sufficiently uniform
    subset of $G_1$. The proof uses some of the same ideas as the proof of
    Theorem~\ref{thm:approxpoly}, but is much simpler.

    For $f:G_1\to \mathbf{C}$, define its \textit{$U^2$-dual function}
    by
    \begin{equation*}
      \mathcal{D}_2[f](x):=\mathbf{E}_{h_1,h_2\in G_1}\overline{f(x+h_1)f(x+h_2)}f(x+h_1+h_2).
    \end{equation*}
    If $A\subset G_1$ has density $\alpha$ and is $\delta$-uniform for
    $\delta$ sufficiently small, then $\mathcal{D}_2[1_A]$ is very
    close to the constant function $\alpha^3$ on $G_1$. Indeed, for all $x\in G_1$, we have
    \begin{equation*}
      \mathcal{D}_2[1_A](x)=\alpha^3+\mathbf{E}_{h_1,h_2\in G_1}1_A(x+h_1)1_A(x+h_2)(1_A-\alpha)(x+h_1+h_2)
    \end{equation*}
    It then follows from the Gowers--Cauchy--Schwarz inequality in the
    form of Theorem~\ref{thm:GCSbox} that
    $\|\mathcal{D}_2[1_A]-\alpha^3\|_{L^\infty}\leq\|1_A-\alpha\|_{U^2(G_1)}\leq\delta$, which
    says that all elements $x\in G_1$ can be expressed as
    $x=a_1+a_2-a_3$ with $a_1,a_2,a_3\in A$ in very close to $\alpha^3|G_1|^2$ different
    ways. If $\phi$ is $(1-\varepsilon)$-approximately linear, and
    thus respects almost all additive quadruples in $A$, then $\phi$
    also respects almost all additive sextuples
    $a_1+a_2+a_3=a_1'+a_2'+a_3'$ in $A$ by an application of the
    Cauchy--Schwarz inequality combined with the $\delta$-uniformity
    of $A$. A further application of the Gowers--Cauchy--Schwarz
    inequality combined with the $\delta$-uniformity of $A$ similarly
    shows that $\phi$ respects almost all second-order $2$-dimensional
    cubes in $A$. If we then define $\psi:G_1\to G_2$ by setting
    $\psi(x)$ to equal $\phi(a_1)+\phi(a_2)-\phi(a_3)$ for
    $a_1+a_2-a_3=x$ chosen uniformly at random, it follows that $\psi$
    typically agrees with $\phi$ on almost all of $A$ and respects
    almost all $2$-dimensional cubes in $G_1$. Applying
    Corollary~\ref{cor:99afflin} to $\psi$ and making a change of
    variables then proves that $\phi$ almost always agrees with an
    affine-linear function on $A$.

    We will state and prove the following lemma in the setting of
    finite dimensional vector spaces over finite fields, since that is
    the case we will need, but of course the argument sketched above does not require
    this.
  \begin{lemma}\label{lem:inv}
    Let $G_1$ and $G_2$ be finite-dimensional vector spaces over
    $\mathbf{F}_p$, $0<\varepsilon<2^{-15}$, $A\subset G_1$ have
    density $\alpha>0$, and $\phi:A\to G_2$ be
    $(1-\varepsilon)$-approximately linear on $A$. If $A$ is $\delta$-uniform
    with $\delta<(\alpha\varepsilon)^{32}$, then there exists an
    affine-linear $\Phi:G_1\to G_2$ such that $\phi(x)=\Phi(x)$ for at
    least a $(1-16\sqrt[4]{\varepsilon})$-proportion of $x\in A$.
  \end{lemma}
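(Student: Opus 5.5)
We follow the sketch given just before the statement. First, we record the consequences of $\delta$-uniformity. Writing $\mathcal{D}_2[1_A](x)=\alpha^3+\mathbf{E}_{h_1,h_2}1_A(x+h_1)1_A(x+h_2)(1_A-\alpha)(x+h_1+h_2)$ and applying Theorem~\ref{thm:GCSbox} gives $\|\mathcal{D}_2[1_A]-\alpha^3\|_{L^\infty}\leq\delta$. Hence every $x\in G_1$ has $(\alpha^3+O(\delta))|G_1|^2$ representations $x=a_1+a_2-a_3$ with $a_i\in A$. Similarly, by telescoping $1_A=\alpha+(1_A-\alpha)$ in each of the four vertices and applying Theorem~\ref{thm:GCSnorm}, the number of additive quadruples in $A$ is $(\alpha^4+O(\delta))|G_1|^3$. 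The same holds for additive sextuples and second-order $2$-dimensional cubes, with counts $\alpha^{k}+O(\delta)$ times the trivial count. Since $\delta<(\alpha\varepsilon)^{32}$, all of these error terms are negligible relative to $\varepsilon$ times the main terms.

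Next, we upgrade the hypothesis from quadruples to second-order cubes. For an additive quadruple $a_1+a_2=a_3+a_4$ in $A$, let $\sigma:=\phi(a_1)+\phi(a_2)-\phi(a_3)-\phi(a_4)$. A second-order $2$-dimensional cube in $A$ is the gluing of additive quadruples along a common ``difference''. Concretely, the twelve points split into quadruples with prescribed alternating sums, and $\phi$ respects the second-order cube if the corresponding alternating sum of $\sigma$'s vanishes. By a union bound, it suffices that each constituent quadruple is respected. To make the union bound valid, we need each constituent quadruple, as the second-order cube ranges uniformly over all second-order cubes in $A$, to be nearly uniformly distributed over additive quadruples in $A$. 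We check this by Fourier/Cauchy--Schwarz: the marginal density is controlled by $U^2$-counts of $1_A$, and uniformity gives it up to factor $1+O(\delta\alpha^{-O(1)})$. Consequently $\phi$ respects at least a $(1-O(\varepsilon))$-proportion of second-order $2$-dimensional cubes in $A$. The same argument shows that $\phi$ respects a $(1-O(\varepsilon))$-proportion of additive sextuples $a_1+a_2+a_3'=a_1'+a_2'+a_3$ in $A$.

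Now define $\psi:G_1\to G_2$ randomly by $\psi(x):=\phi(a_1)+\phi(a_2)-\phi(a_3)$ for a uniformly random representation $x=a_1+a_2-a_3$ with $a_i\in A$. Two first-moment computations follow. First, for $x\in A$, $\mathbf{P}(\psi(x)\neq\phi(x))$ is at most the proportion of non-respected quadruples $(x,a_3,a_1,a_2)$ through $x$; averaging over $x\in A$ gives $\mathbf{E}_\psi\mathbf{P}_{x\in A}(\psi(x)\neq\phi(x))\ll\varepsilon$ by near-regularity of the representation counts. Second, for a $2$-dimensional cube $\square(x;h_1,h_2)$ in $G_1$, choose independent representations at its four vertices. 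The resulting twelve points form a second-order $2$-dimensional cube in $A$, distributed nearly uniformly, again by $\|\mathcal{D}_2[1_A]-\alpha^3\|_\infty\leq\delta$. Whenever it is respected, $\partial_{h_1,h_2}\psi(x)=0$. Hence $\mathbf{E}_\psi$ of the proportion of respected cubes is at least $1-O(\varepsilon)$. Fix a $\psi$ for which both bounds hold within a constant factor, so that $\psi$ is $(1-C\varepsilon)$-approximately linear on $G_1$ and agrees with $\phi$ on a $(1-C\varepsilon)$-proportion of $A$. Corollary~\ref{cor:99afflin} then gives $\Psi$ and $c$ with $\psi=\Psi+c$ on a $(1-7\sqrt{C\varepsilon})$-proportion of $G_1$. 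We transfer this to $A$: the exceptional set has size at most $7\sqrt{C\varepsilon}|G_1|$, which is at most $\alpha\sqrt[4]{\varepsilon}|G_1|$ only if $\alpha$ is not too small. This is an issue, since $\varepsilon$ and $\alpha$ are independent.

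\textbf{The main obstacle} is precisely this loss of $\alpha$ in the transfer, together with doing the union bounds with constants fitting $16\sqrt[4]{\varepsilon}$. The fix I would try is to avoid passing through $G_1$ density. Use instead the $\psi$-construction at points $x\in A$ only through their representations, together with the change of variables from the proof of Corollary~\ref{cor:99afflin}. Show directly that $\phi(a)-\phi(a')=\Psi(a-a')$ for a $(1-O(\sqrt{\varepsilon}))$-proportion of pairs $(a,a')\in A^2$. This uses that, for $(a,a')\in A^2$, the difference $a-a'$ is nearly uniform in $G_1$ by uniformity, so exceptional $h$'s for $\Psi$ carry weight only $O(\sqrt\varepsilon)$. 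Then pigeonhole $a'$. The resulting square roots compound to the fourth root in the statement.
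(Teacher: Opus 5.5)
There is a genuine gap, at exactly the step you flag, and the proposed fix does not close it. You assert that $\phi(a)-\phi(a')=\Psi(a-a')$ for a $(1-O(\sqrt{\varepsilon}))$-proportion of $(a,a')\in A^2$, but you do not derive it. The justification you give, near-uniformity of $a-a'$, only controls exceptional sets that depend on $h=a-a'$ alone. If $\Psi$ comes from applying Corollary~\ref{cor:99afflin} to $\psi$, what you actually know is that $\psi(x+h)-\psi(x)=\Psi(h)$ for a $(1-O(\sqrt{\varepsilon}))$-proportion of $(x,h)\in G_1^2$. That exceptional set is not a union of fibres $G_1\times\{h\}$. The pairs $(x,h)$ with $x,x+h\in A$ make up only about an $\alpha^2$-fraction of $G_1^2$, so the exceptional set may contain an $O(\sqrt{\varepsilon}/\alpha^2)$-fraction of them. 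Knowing $\psi=\phi$ on most of $A$ does not help either: whether $\psi$ agrees with $\Psi+c$ on most of $A$ is just the original problem again. So nothing in the proposal links differences of $\phi$ on $A$ to $\Psi$ without losing a power of $\alpha$.

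The missing ingredient in the paper is to record, for every $x\in G_1$ and not only for $x\in A$, how popular the value $\psi(x)$ is among the representations of $x$. Let $\nu'_x(b)$ be the proportion of representations $x=a_1+a_2-a_3$, $a_i\in A$, with $\phi(a_1)+\phi(a_2)-\phi(a_3)=b$. Then $\mathbf{E}_\psi\sum_{x\in G_1}\nu'_x(\psi(x))=\sum_x\sum_b\nu'_x(b)^2\geq(1-8\varepsilon)|G_1|$ by the sextuple bound, which you state but never use. So one can fix a $\psi$ with this property that is also $(1-16\varepsilon)$-approximately linear. Write $\Psi'$ for the affine map from Corollary~\ref{cor:99afflin}. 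Markov's inequality then gives a $(1-8\sqrt[4]{\varepsilon})$-proportion of $x\in G_1$ with $\psi(x)=\Psi'(x)$ and $\nu'_x(\psi(x))\geq 1-4\sqrt[4]{\varepsilon}$. Since $\|\mathcal{D}_2[1_A]-\alpha^3\|_{L^\infty}\leq\delta$, every fibre of $(a_1,a_2,a_3)\mapsto a_1+a_2-a_3$ on $A^3$ has size $(\alpha^3\pm\delta)|G_1|^2$. Hence this small exceptional set in $G_1$ pulls back to a small exceptional set in $A^3$ with no loss of $\alpha$. This gives $\phi(a_1)+\phi(a_2)-\phi(a_3)=\Psi'(a_1+a_2-a_3)$ for $(1-16\sqrt[4]{\varepsilon})|A|^3$ triples, and pigeonholing $(a_2,a_3)$ finishes.

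Your pair-based idea can also be rescued, but only by building the homomorphism from $\phi$ rather than from $\psi$. Let $\tau(h)$ be the most popular value of $\phi(a+h)-\phi(a)$ over $a\in A\cap(A-h)$. Quadruple-respect, together with the $L^2$-near-uniformity of $a'-a$, gives $\phi(a')-\phi(a)=\tau(a'-a)$ for most $(a,a')\in A^2$. A union bound over the three pairs of a uniform triple in $A^3$ then gives $\tau(h+k)=\tau(h)+\tau(k)$, where $(h,k)$ is $L^2$-close to uniform on $G_1^2$ by $U^2$-uniformity. Apply Theorem~\ref{thm:BLR} and pigeonhole; this route never uses $\psi$.

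Separately, the twelve points of a second-order $2$-dimensional cube satisfy only one linear relation, and the same holds for the six points of a sextuple. So they cannot be split into additive quadruples of points of $A$ as you describe. Your union bound would need auxiliary points of $A$ and an equidistribution check for the enlarged configuration. The paper instead obtains both counts from Cauchy--Schwarz, Theorem~\ref{thm:GCSnorm}, and H\"older.
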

  \begin{proof}
    Using orthogonality of characters, $\phi$ being
    $(1-\varepsilon)$-approximately linear on $A$ means
    \begin{equation*}
      \mathbf{E}_{\xi\in G_2}\mathbf{E}_{x,h_1,h_2\in G_1}\Delta_{h_1,h_2}f_\xi(x)\geq(1-\varepsilon)\|1_A\|_{U^2(G_1)}^4,
    \end{equation*}
    where $f_\xi(x):=1_A(x)e_p(\xi\cdot \phi(x))$. By the Cauchy--Schwarz inequality,
    \begin{equation*}
      \mathbf{E}_{\xi\in G_2}\mathbf{E}_{x\in G_1}1_A(x)\big|\mathbf{E}_{h_1,h_2\in G_1}\overline{f_\xi(x+h_1)f_\xi(x+h_2)}f_\xi(x+h_1+h_2)\big|^2\geq \alpha^{-1}(1-\varepsilon)^2\|1_A\|_{U^2(G_1)}^8.
    \end{equation*}
    Now, any average of the form
    \begin{equation*}
      \mathbf{E}_{x,h_1,h_2,k_1,k_2\in G_1}g_0(x)g_1(x+h_1)g_2(x+h_2)g_3(x+h_1+h_2)g_4(x+k_1)g_5(x+k_2)g_6(x+k_1+k_2)
    \end{equation*}
    for $1$-bounded $g_i$ is bounded above by $\|g_0\|_{U^2(G_1)}$, as
    can be seen by making the change of variables $h_1\mapsto h_1-x$,
    $h_2\mapsto h_2-x$, $k_1\mapsto k_1-x$, $k_2\mapsto k_2-x$, and
    $x\mapsto x+h_1+k_1$ and then applying the Gowers--Cauchy--Schwarz
    inequality in the form of Theorem~\ref{thm:GCSbox}. Thus, by writing $1_A=\alpha+1_A-\alpha$, we obtain using the
    $\delta$-uniformity of $A$ that
    \begin{equation*}
      \mathbf{E}_{\xi\in G_2}\mathbf{E}_{x,h_1,h_2\in G_1}\mathcal{D}_2[f_{-\xi}](x)\overline{f_\xi(x+h_1)f_\xi(x+h_2)}f_\xi(x+h_1+h_2) \geq \alpha^{-2}(1-\varepsilon)^2\|1_A\|_{U^2(G_1)}^8-\alpha^{-1}\delta.
    \end{equation*}
    Since $|\|1_A\|_{U^2(G_1)}-\alpha|\leq \delta$ by the reverse
    triangle inequality and the assumption that $A$ is
    $\delta$-uniform, it follows that as long as
    $\delta<\varepsilon\alpha^{16}$, say, the right-hand side above is
    greater than
    $\alpha^{-2}(1-3\varepsilon)\|1_A\|_{U^2(G_1)}^8$. There are two
    consequences of this (again provided that $\delta<\varepsilon\alpha^{16}$): $\phi$ respects at least a
    $(1-4\varepsilon)$-proportion of the additive sextuples in $A$
    (as, by the bound
    $\|\mathcal{D}_2[1_A]-\alpha^3\|_{L^\infty}\leq
    \delta$, the number of additive sextuples in $A$
    differs from $\alpha^6|G_1|^5$ by a quantity of size at most
    $\delta|G_1|^5$), and, by the
    Gowers--Cauchy--Schwarz inequality in the form of
    Theorem~\ref{thm:GCSnorm}, we have that
    \begin{equation*}
      \mathbf{E}_{\xi\in G_2}\|\mathcal{D}_2[f_\xi]\|_{U^2(G_1)}\geq \alpha^{-2}(1-3\varepsilon)\|1_A\|_{U^2(G_1)}^5\geq(1-4\varepsilon)\alpha^{3}.
    \end{equation*}
    Using that
    $\|\mathcal{D}_2[1_A]-\alpha^3\|_{L^\infty}\leq\delta$ again, we get that the count of second-order
    $2$-dimensional cubes in $A$ differs from $\alpha^{12}|G_1|^{11}$ by
    a quantity of size at most $\delta^{2}|G_1|^{11}$. It
    now follows from H\"older's inequality that $\phi$ respects at
    least a $(1-17\varepsilon)$-proportion of second-order
    $2$-dimensional cubes in $A$.

    For each pair $(x,b)\in G_1\times G_2$, set
    \[
      \nu_x'(b):=\frac{|\{(a_1,a_2,a_3)\in A: a_1+a_2-a_3=x\text{ and
        }\phi(a_1)+\phi(a_2)-\phi(a_3)=b\}|}{|G_1|^2\mathcal{D}_2[1_A](x)}
    \]
    (noting that the upper bound on $\delta$ ensures that
    $\mathcal{D}_2[1_A](x)>0$), and define $\psi:G_1\to G_2$ randomly by
    setting $\psi(x)=b$ with probability $\nu'_x(b)$ for each $x\in G_1$
    independently. First, the expected proportion of $x\in G_1$ for which
    $\psi(x)$ is a ``popular'' value of
    $\phi(a_1)+\phi(a_2)-\phi(a_3)$ is large. Indeed, by linearity of
    expectation and the uniformity of $A$, we have
    \begin{align*}
      \mathbf{E}_{\psi}\sum_{x\in G_1}\nu'_x(\psi(x))=\sum_{x\in G_1}\mathbf{E}_{\psi}\nu'_x(\psi(x))=\sum_{x\in G_1}\sum_{b\in G_2}\nu'_{x}(b)^2\geq (1-8\varepsilon)|G_1|
    \end{align*}
    since $\phi$ respects at least a $(1-4\varepsilon)$-proportion of
    additive sextuples in $A$. Similarly, the expected proportion of
    additive quadruples in $G_1$ respected by $\psi$ is at least
    \begin{equation}\label{eq:U2nu'}
      \mathbf{E}_{x,h_1,h_2\in G_1}\sum_{\substack{a,b,c,d\in G_2 \\ a+b=c+d}}\nu'_{x}(a)\nu'_{x+h_1}(b)\nu'_{x+h_2}(c)\nu'_{x+h_1+h_2}(d)-3|G_1|^{-3},
    \end{equation}
    where the second term comes from the degenerate $2$-dimensional cubes. Expanding the definition
    of $\nu'_b$ and using the uniformity of $A$ yields that the first term in~\eqref{eq:U2nu'} is at least
    \begin{equation*}
      \frac{(1-\delta)^4}{\alpha^{12}}\cdot\frac{|\{\text{second-order }2\text{-dimensional cubes in }A\text{ respected by }\phi\}|}{|G_1|^{11}}\geq 1-5\varepsilon.
    \end{equation*}
    
    If $|G_1|\geq\varepsilon^{-1}$, then~\eqref{eq:U2nu'} is at least
    $1-8\varepsilon$. By Markov's inequality, there must exist a
    choice of $\psi$ that is $(1-16\varepsilon)$-approximately linear
    and for which
    $\mathbf{E}_{x\in G_1}\nu'_x(\psi(x))\geq 1-16\varepsilon$. We can
    now apply Corollary~\ref{cor:99afflin} to $\psi$ to obtain an
    affine-linear function $\Psi:G_1\to G_2$ such that
    $\psi(x)=\Psi(x)$ for at least a
    $(1-32\sqrt{\varepsilon})$-proportion of $x\in G_1$. Since
    $\mathbf{E}_{x\in G_1}\nu'_x(\psi(x))\geq 1-16\varepsilon$, we
    have that $\nu'_x(\psi(x))\geq 1-4\sqrt[4]{\varepsilon}$ for at
    least a $(1-4\sqrt[4]{\varepsilon})$-proportion of the $x\in
    G_1$. Thus, for at least a $(1-8\sqrt[4]{\varepsilon})$-proportion
    of $x\in G_1$, we have
    $\nu'_x(\psi(x))\geq 1-4\sqrt[4]{\varepsilon}$ and
    $\psi(x)=\Psi(x)$. This means that there exist at least
    $(1-16\sqrt[4]{\varepsilon})|A|^3$ choices of
    $(a_1,a_2,a_3)\in A^3$ such that
    $\phi(a_1)+\phi(a_2)-\phi(a_3)=\Psi(a_1+a_2-a_3)$. By the
    pigeonhole principle, it follows that there exist $a_2,a_3\in A$
    such that $\phi(a)=\Psi(a)+\Psi(a_2-a_3)+\phi(a_3)-\phi(a_2)$ for
    at least a $(1-16\sqrt[4]{\varepsilon})$-proportion of $a\in A$,
    as desired.

    When $|G_1|<\varepsilon^{-1}$, the only nonempty subset of
    $G_1$ that is $\delta$-uniform for any $\delta<\varepsilon^{4}$ is
    $G_1$ itself (as can easily be seen using Parseval's identity),
    and Corollary~\ref{cor:99afflin} yields the result.
  \end{proof}

  In the next section, we will need to apply Lemma~\ref{lem:inv} on
  affine subspaces of $G$. To that end, we will define the notion of
  uniformity relative to an affine subspace.
  
  \begin{definition}
    Let $V\leq G$ be a subspace and $A\subset V$ have density
    $\alpha$. We say that $A$ is \textit{$\delta$-uniform relative to
      $V$} if $\|1_A-\alpha\|_{U^2(V)}\leq\delta$. If $w+V$ is a coset
    of $V$ and $B\subset w+V$, we say that $B$ is
    \textit{$\delta$-uniform relative to $w+V$} if $B-w$ is
    $\delta$-uniform relative to $V$.
  \end{definition}

  We then obtain the following immediate consequence of
  Lemma~\ref{lem:inv}

  \begin{lemma}\label{lem:inv2}
    Let $0<\varepsilon<2^{-15}$, $W\subset G$ be an affine subspace,
    $A\subset W$ be a subset of density $\alpha>0$ that is
    $\delta$-uniform relative to $W$, and $\phi:A\to G$ be
    $(1-\varepsilon)$-approximately linear on $A$. If
    $\delta<(\alpha\varepsilon)^{32}$, then there exists an
    affine-linear $\Phi:W\to G$ such that $\phi(x)=\Phi(x)$ for at
    least a $(1-16\sqrt[4]{\varepsilon})$-proportion of $x\in A$.
  \end{lemma}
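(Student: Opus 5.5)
The plan is to reduce Lemma~\ref{lem:inv2} directly to Lemma~\ref{lem:inv} by translating the affine subspace to a linear one. Write $W=w+V$ with $V\leq G$ a linear subspace and $w\in G$. We view $V$ as a finite-dimensional vector space $G_1$ over $\mathbf{F}_p$ and take $G_2:=G$. Then we set $A':=A-w\subset V$ and $\phi':A'\to G$ by $\phi'(y):=\phi(y+w)$. The density of $A'$ in $V$ equals the density $\alpha$ of $A$ in $W$. By the definition of relative uniformity, $A'$ is $\delta$-uniform relative to $V$, i.e.\ $\|1_{A'}-\alpha\|_{U^2(V)}\leq\delta$, which is exactly $\delta$-uniformity of $A'$ as a subset of the group $G_1=V$.

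Next I would check that approximate linearity transfers. The intended reading of ``$(1-\varepsilon)$-approximately linear on $A$'' for $A\subset W$ is that $\phi$ respects at least a $(1-\varepsilon)$-proportion of the additive quadruples $\square(x;h_1,h_2)\subset A$. The natural ambient group for counting these is $V$: any such quadruple has $h_1,h_2\in V$, and the normalisation is by the number of quadruples in $A$. Translation by $w$ is a bijection between the quadruples $\square(x;h_1,h_2)\subset A$ and the quadruples $\square(x-w;h_1,h_2)\subset A'$. Moreover $\partial_{h_1,h_2}\phi(x)=\partial_{h_1,h_2}\phi'(x-w)$. Hence $\phi'$ is $(1-\varepsilon)$-approximately linear on $A'\subset V$, with cube counts taken in $V$.

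Since $0<\varepsilon<2^{-15}$ and $\delta<(\alpha\varepsilon)^{32}$, Lemma~\ref{lem:inv} applies to $\phi':A'\to G$. It gives an affine-linear $\Phi':V\to G$ with $\phi'(y)=\Phi'(y)$ for at least a $(1-16\sqrt[4]{\varepsilon})$-proportion of $y\in A'$. We then define $\Phi:W\to G$ by $\Phi(x):=\Phi'(x-w)$. This map is affine-linear on the affine subspace $W$, and $\phi(x)=\Phi(x)$ exactly when $\phi'(x-w)=\Phi'(x-w)$, so the same proportion of $x\in A$ is achieved.

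There is no real obstacle here. The only point needing care is bookkeeping: all normalisations (density, the $U^2$ norm, and the proportion of respected cubes) should be taken relative to $V$ rather than $G$. With that convention they match the hypotheses of Lemma~\ref{lem:inv} exactly.
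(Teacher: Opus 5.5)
Your proposal is correct and is exactly the argument the paper has in mind: the paper records Lemma~\ref{lem:inv2} as an immediate consequence of Lemma~\ref{lem:inv}, obtained by translating $W=w+V$ to $V$ and applying Lemma~\ref{lem:inv} with $G_1=V$ and $G_2=G$. Your bookkeeping also checks out. The number of additive quadruples $\square(x;h_1,h_2)\subset A$ does not depend on whether the ambient group is $G$ or $V$, since all such quadruples have $h_1,h_2\in V$. The $U^2$-norm is translation-invariant, so the choice of $w$ does not matter either.
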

  
  \section{Classifying $99\%$ approximate quadratics on quadratic level sets}\label{sec:coho}
  
  In this section we will, as promised, prove that a
  $(1-\varepsilon)$-approximate quadratic on a high-rank quadratic
  level set almost always agrees with the restriction of a quadratic
  map.
  
  \begin{lemma}\label{lem:quadext}
    Let $p\geq 3$, $R\in\mathbf{N}$, and
    $Q_1,\dots,Q_s:\mathbf{F}_p^d\to\mathbf{F}_p$ be quadratic
    functions satisfying
    \begin{equation}\label{eq:rank}
      \min_{(0,\dots,0)\neq (\lambda_1,\dots,\lambda_s)\in\mathbf{F}_p^s}\rank(\lambda_1Q_1+\dots+\lambda_sQ_s)\geq R,
    \end{equation}
    and set $Z:=Q_1^{-1}(0)\cap\dots\cap Q_s^{-1}(0)$ when $s\geq 1$
    and $Z=\mathbf{F}_p^d$ when $s=0$. Assume that
    $0<\varepsilon<2^{-160}p^{-16}$ and
    $R\geq 2^{20}(s+\log_p{\varepsilon^{-1}})$. If $\psi:Z\to G$ is
    $(1-\varepsilon)$-approximately quadratic, then there exists a
    quadratic map $Q:\mathbf{F}_p^d\to G$ with $\psi(x)=Q(x)$ for at
    least a $(1-2^{10}\sqrt[16]{\varepsilon})$-proportion of $x\in Z$.
  \end{lemma}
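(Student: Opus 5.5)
We plan to reduce the quadratic problem on $Z$ to the linear problem of Lemma~\ref{lem:inv2}, applied to the discrete derivatives of $\psi$, and then to glue the resulting local linear pieces using the cocycle identity. This is the group-cohomology argument promised in the introduction.

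\emph{Step 1: derivatives are approximately linear.} For $h\in\mathbf{F}_p^d$, set $Z_h:=Z\cap(Z-h)$. Because $\lambda_1Q_1+\dots+\lambda_sQ_s$ has rank at least $R$ for every nonzero $\lambda$, the set $Z_h$ is the zero set of the $2s$ quadratics $Q_i$ and $Q_i(\cdot+h)$. For all but a $p^{-\Omega(R)}$-proportion of $h$, this enlarged system still has rank $\geq R-O(s)$, and the same holds for its restrictions to typical affine subspaces. The counting arguments of Lemmas~\ref{lem:atomsize} and~\ref{lem:count} (Gauss sums and orthogonality of characters) then give three facts. First, $Z_h$ has density $\approx p^{-2s}$. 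Second, $Z_h$ is $p^{-\Omega(R)}$-uniform relative to $\mathbf{F}_p^d$, after intersecting with a suitable coset of a bounded-codimension subspace if necessary. Third, the $3$-dimensional cubes in $Z$ are equidistributed: a typical cube $\square(x;h_1,h_2,h_3)\subset Z$ arises from a typical pair $(h_3,\square(x;h_1,h_2))$ with $\square(x;h_1,h_2)\subset Z_{h_3}$. By Markov's inequality, $\partial_h\psi$ is then $(1-\sqrt{\varepsilon})$-approximately linear on $Z_h$ for a $(1-O(\sqrt\varepsilon))$-proportion of $h$. Lemma~\ref{lem:inv} (or Lemma~\ref{lem:inv2}) produces an affine map $\Phi_h$ with $\partial_h\psi=\Phi_h$ on a $(1-O(\varepsilon^{1/8}))$-proportion of $Z_h$. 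The rank hypothesis $R\geq 2^{20}(s+\log_p\varepsilon^{-1})$ is exactly what makes the uniformity parameter smaller than $(\alpha\varepsilon)^{32}$ with $\alpha\approx p^{-2s}$.

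\emph{Step 2: the cocycle identity.} The identity $\partial_{h+k}\psi(x)=\partial_h\psi(x)+\partial_k\psi(x+h)$ holds on $Z\cap(Z-h)\cap(Z-h-k)$. This triple intersection is again a high-rank level set for typical $(h,k)$, hence is large and, in a suitable sense, Zariski dense for affine maps. Combining it with Step 1, we get
\[
\Phi_{h+k}(x)=\Phi_h(x)+\Phi_k(x+h)
\]
on a large portion of this set. A high-rank level set of small codimension is not contained in a proper affine subspace, so an affine map vanishing on most of it vanishes identically. Hence the displayed identity holds for all $x\in\mathbf{F}_p^d$, for a $(1-O(\varepsilon^{1/8}))$-proportion of pairs $(h,k)$. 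In other words, $h\mapsto\Phi_h$ is a $99\%$ cocycle with values in the affine maps $\mathbf{F}_p^d\to G$.

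\emph{Step 3: integrating the cocycle.} Write $\Phi_h(x)=B(h)x+c(h)$. The cocycle relation says that $h\mapsto B(h)$ is a $99\%$ homomorphism, and that $c(h+k)-c(h)-c(k)=B(k)h$ for most $(h,k)$. By Theorem~\ref{thm:BLR}, $B$ agrees with a linear map $\tilde B$ for most $h$. Next we symmetrize $\tilde B$: comparing the two expressions for $\Phi_{h+k}$ forces $\tilde B(k)h=\tilde B(h)k$, using again that an affine identity holding on most of a dense set holds everywhere. Here $p\geq3$ lets us divide by $2$ and write $\tilde B(k)h=-2\beta(h,k)$ with $\beta$ symmetric bilinear, which is the form produced by $Q_0(x):=\beta(x,x)$. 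Subtracting $\partial_hQ_0$ from $\Phi_h$ turns the linear parts into zero, so the corrected $c$ is a $99\%$ homomorphism. Theorem~\ref{thm:BLR} then linearises it, giving a linear map $L$.

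\emph{Step 4: conclusion.} Set $Q:=Q_0+L$. Then $\partial_h(\psi-Q)=$ const on most of $Z_h$ for most $h$. Fixing a popular base point $x_0$ and changing variables as in Corollary~\ref{cor:99afflin} gives $\psi=Q+$const on a $(1-2^{10}\varepsilon^{1/16})$-proportion of $Z$.

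The main obstacle is Step 1. We must verify that the sets $Z_h$ (and the triple intersections in Step 2) are uniform enough relative to the ambient space for Lemma~\ref{lem:inv2} to apply, and that cubes in $Z$ are equidistributed over the fibres $Z_h$. Both rest on rank bounds for the enlarged quadratic systems. The first thing to try is to show that adjoining the shifted quadratics $Q_i(\cdot+h)$ costs only $O(s)$ in rank, since they differ from $Q_i$ by linear forms. Then Lemma~\ref{lem:GaussSum} would control the Fourier coefficients of $1_{Z_h}$.
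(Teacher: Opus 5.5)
There is a genuine gap. It appears in Step~2, but it starts in Step~1.

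Write $Q_i(x)=x^TM_ix+z_i\cdot x+c_i$. The difference $Q_i(x+h)-Q_i(x)=2h^TM_ix+h^TM_ih+z_i\cdot h$ is affine-linear in $x$. Hence $Z_h=Z\cap(Z-h)=Z\cap W_h$, where $W_h=\{x:\partial_hQ_i(x)=0\ \forall i\}$ is an affine subspace. For typical $h$ it has codimension $s$ and is a coset of $\spn(M_1h,\dots,M_sh)^{\perp}$. This has three consequences:
\begin{itemize}
\item The enlarged system $\{Q_i,Q_i(\cdot+h)\}$ does \emph{not} have high rank, since the combinations $Q_i(\cdot+h)-Q_i$ have rank $0$.
\item $Z_h$ is far from uniform in $\mathbf{F}_p^d$. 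Your hedge of passing to a coset is forced: Lemma~\ref{lem:inv2} can only be applied relative to $W_h$.
\item What that lemma returns is an affine map $\Phi_h(x)=L[h]x+f(h)$ determined only on $W_h$. So $L[h]$ is defined only modulo $n\times d$ matrices whose rows lie in $\spn(h^TM_1,\dots,h^TM_s)$, and nothing in your argument says how to choose these extensions coherently in $h$.
\end{itemize}

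Likewise $Z\cap(Z-h)\cap(Z-h-k)=Z\cap W_{h,k}$ lies in an affine subspace $W_{h,k}$ of codimension $2s$. Your claim that it is not contained in a proper affine subspace is therefore false as soon as $s\geq 1$. The cocycle identity transfers only to $x\in W_{h,k}$, and via Lemma~\ref{lem:span} what you actually get is
\[
L[h+k]-L[h]-L[k]=D(h,k)M(h)+E(h,k)M(k).
\]
Here $M(h)$ is the $s\times d$ matrix with rows $h^TM_i$, and $D(h,k),E(h,k)$ are unknown $n\times s$ matrices. So $h\mapsto L[h]$ is not a $99\%$ homomorphism, and Theorem~\ref{thm:BLR} has nothing to act on in Step~3. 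Only when $s=0$ does your argument go through as written.

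The missing ingredient is the gauge-fixing step, which is the heart of the paper's proof.
\begin{enumerate}
\item Since $d^1L[h,k]:=L[h+k]-L[h]-L[k]$ is a $2$-coboundary, it satisfies the $2$-cocycle identity.
\item For triples where $M_i^Th,M_i^Tk,M_i^Tl$ are linearly independent (Lemma~\ref{lem:hkl}), this identity splits row by row into three equations in $D$ and $E$.
\item Two of these say that $D$ and $E$ are $99\%$ $1$-cocycles for the shift action. Lemma~\ref{lem:cocycin} then gives $D(h,k)=D'(h+k)-D'(h)$ and $E(h,k)=E'(h+k)-E'(k)$ for most pairs.
\item The mixed equation makes $D'-E'$ approximately linear, and Corollary~\ref{cor:99afflin} gives $E'=D'+\Phi+N$ with $\Phi$ linear.
\item Replace $L[h]$ by $L'[h]:=L[h]-D'(h)M(h)$. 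This changes $\Phi_h$ only by a constant on $W_h$, and yields $d^1L'[h,k]=\Phi(h)M(k)$.
\item The symmetry of $d^1L'$ in $h,k$, together with the independence of the rows of $M(h)$ and $M(k)$, forces $d^1L'[h,k]=0$ for most pairs.
\end{enumerate}
Only at this point can BLR be applied. From there on, your remaining steps agree with the paper: symmetrizing via $c$ and Lemma~\ref{lem:sym}, absorbing $B(x,x)/2$, and finishing with Lemma~\ref{lem:inv} on the uniform set $Z$.
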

  
  Our proof of Lemma~\ref{lem:quadext} is more cohomological in flavor
  than those of Kazhdan and Ziegler, and proceeds as follows. Assume
  for simplicity that each $Q_i$ has the form $Q_i(x)=x^TM_ix$ for
  some symmetric $M_i\in\M_d(\mathbf{F}_p)$. By the high-rank
  condition, $\psi:Z\to G$ being $(1-\varepsilon)$-approximately
  quadratic implies that the derivatives $\partial_{h}\psi$ are
  $(1-\sqrt{\varepsilon})$-approximately linear on a highly uniform
  subset of a coset $W_h$ of the subspace
  $W_h^0:=\spn(M_1h,\dots,M_sh)^{\perp}$ for almost all $h$. Applying
  Lemma~\ref{lem:inv2} yields, for each such $h$, a linear map
  $L[h]$ from $W_h$ to $G$ and a function $f:\mathbf{F}_p^d\to G$ such
  that $L[h]x+f(h)=\partial_h\psi(x)$ for almost every
  $x\in Z\cap W_h$.

  By the cocycle equation for $\partial_h\psi$ and
  the high-rank assumption, it follows that, for almost all
  $h,k\in \mathbf{F}_p^d$, we must have $(L[h+k]-L[h]-L[k])x=0$ for
  all $x\in W^0_h\cap W^0_k$, which forces the rows of the matrix
  $d^1L[h,k]:=L[h+k]-L[h]-L[k]$ to lie in the span of
  $h^TM_1,k^TM_1,\dots,h^TM_s,k^TM_s$. For such $h$ and $k$, this
  means that there exist $n\times s$ matrices $D(h,k)$ and $E(h,k)$
  with entries in $\mathbf{F}_p$ such that
  \begin{equation*}
    d^1L[h,k]=
    D(h,k)\begin{pmatrix}
      h^TM_1 \\
      \vdots \\
      h^TM_s
    \end{pmatrix}+E(h,k)\begin{pmatrix}
      k^TM_1 \\
      \vdots \\
      k^TM_s
    \end{pmatrix}
  \end{equation*}
  Now, the key idea is that, since $d^1L[h,k]$ is a $2$-coboundary
  $\mathbf{F}_p^d\times\mathbf{F}_p^d\to\mathbf{F}_p^{n\times d}$, it is a $2$-cocycle,
  i.e., it satisfies the equation
  \begin{equation}
    \label{eq:2cocycleeqn}
    d^1L[k,l]-d^1L[h+k,l]+d^1L[h,k+l]-d^1L[h,k]=0
  \end{equation}
  for all $h,k,l\in\mathbf{F}_p^d$. From this, one can deduce that
  $D(h,k)$ and $E(h,k)$ almost always satisfy a certain system of
  three equations (\eqref{eq:A}, \eqref{eq:AB}, and \eqref{eq:B}
  below). Equations~\eqref{eq:A} and~\eqref{eq:B} force $D(h,k)$ and
  $E(h,k)$ to almost always agree with $1$-coboundaries
  $\mathbf{F}_p^d\to(\mathbf{F}_p^d\to\mathbf{F}_p^{n\times s})$ for
  the nontrivial $\mathbf{F}_p^d$-action $m\cdot F(x)=F(x+m)$, i.e.,
  there exist $D',E':\mathbf{F}_p^d\to\mathbf{F}_p^{n\times s}$ such
  that $D(h,k)$ and $E(h,k)$ almost always agree with $D'(h+k)-D'(h)$
  and $E'(h+k)-E'(k)$, respectively. Equation~\eqref{eq:AB} further implies
  that $D'-E'$ must be $(1-O(\varepsilon^{\Omega(1)}))$-approximately
  linear, and then Lemma~\ref{lem:quadext} follows from
  Corollary~\ref{cor:99afflin} and standard manipulations.
  
  To execute this proof, we will need a few preliminaries. We begin
  with three very simple lemmas from linear algebra.
  \begin{lemma}\label{lem:span}
    Let $w_1,\dots,w_k\in \mathbf{F}_p^d$ and set
    $V=\spn(w_1,\dots,w_k)^{\perp}$. If $M\in\mathbf{F}_p^{n\times d}$ is an $n\times d$ matrix such that $Mv=0$ for all $v\in V$, then
    the rows of $M$ all lie in $\spn(w_1^T,\dots,w_k^T)$.
  \end{lemma}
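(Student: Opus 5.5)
The plan is to treat this as a statement about the row space of $M$, reducing it to the standard duality $(U^{\perp})^{\perp}=U$ for subspaces of $\mathbf{F}_p^d$ under the dot product. Set $U:=\spn(w_1,\dots,w_k)\leq\mathbf{F}_p^d$, so that $V=U^{\perp}$. Let $m_1^T,\dots,m_n^T$ be the rows of $M$, with $m_i\in\mathbf{F}_p^d$. The hypothesis $Mv=0$ for all $v\in V$ says exactly that $m_i\cdot v=0$ for every $i$ and every $v\in V$. In other words, each $m_i$ lies in $V^{\perp}=(U^{\perp})^{\perp}$.

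It then remains to verify $(U^{\perp})^{\perp}=U$ over $\mathbf{F}_p$, which I would do by a dimension count.

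First, the inclusion $U\subseteq (U^\perp)^\perp$ is immediate from the definitions.

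Second, the dot product on $\mathbf{F}_p^d$ is nondegenerate, since $e_j\cdot x=x_j$. Hence for any subspace $U$ we have $\dim U^{\perp}=d-\dim U$. To see this, pick a basis $u_1,\dots,u_r$ of $U$. Then $U^\perp$ is the kernel of the $r\times d$ matrix with rows $u_1^T,\dots,u_r^T$. That matrix has rank $r$, so by rank–nullity its kernel has dimension $d-r$.

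Applying the dimension formula twice gives $\dim (U^\perp)^\perp=d-(d-\dim U)=\dim U$. Combined with the inclusion, this yields $(U^\perp)^\perp=U$.

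Therefore each $m_i$ lies in $U$, which means each row $m_i^T$ lies in $\spn(w_1^T,\dots,w_k^T)$, as claimed.

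Nothing here is a genuine obstacle. The only point needing care is that over a finite field $U\cap U^\perp$ may be nonzero, so one cannot argue via an orthogonal complement decomposition $\mathbf{F}_p^d=U\oplus U^\perp$. The dimension-count argument avoids this, because it uses only nondegeneracy of the pairing, not positive-definiteness.
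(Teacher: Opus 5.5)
Your proof is correct and takes essentially the paper's approach. The paper takes $L$ to be the matrix with rows $w_1^T,\dots,w_k^T$, so that $\ker L=V\leq\ker M$, and concludes via $\Img M^T=(\ker M)^{\perp}\leq(\ker L)^{\perp}=\Img L^T$; this rests on the same double-orthogonal-complement duality that you prove directly with the rank--nullity dimension count.
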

  \begin{proof}
    Let $L$ be the matrix with rows $w_1^T,\dots,w_k^T$, so that
    $\ker{L}=V\leq \ker{M}$. Since $\Img{N^T}=(\ker{N})^{\perp}$ for
    any matrix $N$, we have
    $\Img{M^T}=(\ker{M})^{\perp}\leq(\ker{L})^{\perp}=\Img{L^T}$.
  \end{proof}
  \begin{lemma}\label{lem:sym}
    Let $V$ be a vector space over $\mathbf{F}_p$ and
    $B:\mathbf{F}_p^d\times\mathbf{F}_p^d\to V$ be a bilinear map such
    that $B(h,k)=B(k,h)$ for at least a $(1-\varepsilon)$-proportion
    of pairs $(h,k)\in\mathbf{F}_p^d\times\mathbf{F}_p^d$. If
    $\varepsilon<(\frac{p-1}{p})^2$, then $B(h,k)=B(k,h)$ for all
    $h,k\in\mathbf{F}_p^d$.
  \end{lemma}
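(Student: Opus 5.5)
Consider the antisymmetric part $C(h,k):=B(h,k)-B(k,h)$. It is a bilinear map $\mathbf{F}_p^d\times\mathbf{F}_p^d\to V$, and the hypothesis says it vanishes on at least a $(1-\varepsilon)$-proportion of pairs. It therefore suffices to show that a nonzero bilinear map vanishes on at most a proportion $1-(\frac{p-1}{p})^2$ of pairs. Since $\varepsilon<(\frac{p-1}{p})^2$ gives $1-\varepsilon>1-(\frac{p-1}{p})^2$, this forces $C\equiv 0$, which is the claim.

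To bound the zero set of a nonzero $C$, fix a nonzero linear functional $\lambda:V\to\mathbf{F}_p$ such that $\lambda\circ C$ is not identically zero; one exists because $C\neq 0$. It is enough to bound the zeros of the scalar bilinear form $\beta:=\lambda\circ C$, because every zero of $C$ is a zero of $\beta$. Write $\beta(h,k)=h^TNk$ with $N\neq 0$.

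Count the pairs $(h,k)$ with $\beta(h,k)\neq 0$. A vector $h$ with $h^TN\neq 0$ occurs with probability $1-p^{-\rank N}\geq\frac{p-1}{p}$, since $\{h:h^TN=0\}$ is a proper subspace. For such an $h$, the map $k\mapsto h^TNk$ is a nonzero linear functional, so it is nonzero for a proportion exactly $\frac{p-1}{p}$ of $k$. Hence $\beta\neq 0$ on at least a $(\frac{p-1}{p})^2$-proportion of pairs. Equivalently, $C$ vanishes on at most a $1-(\frac{p-1}{p})^2$ proportion, which gives the contradiction above.

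No step here is a real obstacle. The only thing to check is that the threshold matches, and the bound from the two-step count is exactly $(\frac{p-1}{p})^2$.
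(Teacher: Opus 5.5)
Your proof is correct and uses the same idea as the paper: a subspace of $\mathbf{F}_p^d$ of density greater than $1/p$ must be all of $\mathbf{F}_p^d$, applied once in each variable. The paper states it directly, using Markov's inequality at level $\sqrt{\varepsilon}$ so that $1-\sqrt{\varepsilon}>1/p$, whereas you give the contrapositive count of nonzeros of $C$. The reduction to a scalar form via $\lambda$ is harmless but unnecessary, since $\{k: C(h,k)=0\}$ and $\{h: C(h,\cdot)\equiv 0\}$ are already subspaces when $C$ is $V$-valued.
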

  \begin{proof}
    By hypothesis,
    $|\ker{B(h,\cdot)-B(\cdot,h)}|\geq(1-\sqrt{\varepsilon})p^d>p^{d-1}$
    for at least a $(1-\sqrt{\varepsilon})$-proportion of
    $h\in \mathbf{F}^d_p$, and thus we must have $B(h,k)=B(k,h)$ for
    all $k\in\mathbf{F}_p^d$ for at least a
    $(1-\sqrt{\varepsilon})$-proportion of $h\in\mathbf{F}_p^d$. This
    similarly forces $\ker{B(\cdot,k)-B(k,\cdot)}=\mathbf{F}_p^d$ for
    all $k\in\mathbf{F}_p^d$, so that $B$ must be symmetric.
  \end{proof} 
  
  \begin{lemma}\label{lem:hkl}
    Let $M_1,\dots,M_s\in\M_d(\mathbf{F}_p)$ be symmetric matrices
    such that $\lambda_1M_1+\dots+\lambda_sM_s$ has rank at least $R$
    whenever $\lambda_1,\dots,\lambda_s\in\mathbf{F}_p$ are not all zero. The
    proportion of triples
    $(h,k,l)\in\mathbf{F}_p^d\times\mathbf{F}_p^d\times\mathbf{F}_p^d$
    for which
    \begin{equation}\label{eq:hkl1}
      M_1^Th,M_1^Tk,M_1^Tl,\dots,M_s^Th,M_s^Tk,M_s^Tl
    \end{equation}
    are linearly independent is at least $1-p^{3s-R}$.
  \end{lemma}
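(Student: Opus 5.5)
We would prove Lemma~\ref{lem:hkl} by a union bound over all possible linear dependencies, using the rank hypothesis to control each one. Since the $M_i$ are symmetric, $M_i^T=M_i$. The $3s$ vectors in~\eqref{eq:hkl1} are linearly dependent exactly when there are coefficients $(\lambda_i,\mu_i,\nu_i)_{i\in[s]}\in\mathbf{F}_p^{3s}$, not all zero, with
\begin{equation*}
  M(\lambda)h+M(\mu)k+M(\nu)l=0,
\end{equation*}
where $M(\lambda):=\lambda_1M_1+\dots+\lambda_sM_s$ and similarly for $\mu$ and $\nu$. There are fewer than $p^{3s}$ such coefficient vectors, so it suffices to show that each fixed nonzero vector of coefficients is satisfied by at most a $p^{-R}$-proportion of triples $(h,k,l)$.

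Fix such a nonzero coefficient vector. At least one of $\lambda,\mu,\nu$ is nonzero; by symmetry of the roles of $h,k,l$, assume $\lambda\neq 0$. By hypothesis, $M(\lambda)$ then has rank at least $R$. Now fix $k$ and $l$ arbitrarily. The set of $h$ with $M(\lambda)h=-M(\mu)k-M(\nu)l$ is either empty or a coset of $\ker M(\lambda)$. This kernel has size at most $p^{d-R}$, so the proportion of valid $h$ is at most $p^{-R}$.

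Averaging over $k$ and $l$ gives the same bound for the proportion of triples satisfying this particular relation. Summing over all fewer than $p^{3s}$ nonzero coefficient vectors, the proportion of dependent triples is at most $p^{3s-R}$. This gives the stated bound $1-p^{3s-R}$ for the independent triples.

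None of these steps is a genuine obstacle. The one point to check is that conditioning on $k$ and $l$ is legitimate, which holds because $h,k,l$ are chosen independently and uniformly.
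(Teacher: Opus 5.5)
Your proposal is correct and is essentially the paper's proof: a union bound over the fewer than $p^{3s}$ nonzero coefficient vectors, taking a nonzero block (say the one attached to $h$), and using that $\sum_i\lambda_iM_i$ has rank at least $R$ so that, for fixed $k,l$, at most $p^{d-R}$ values of $h$ satisfy the relation. This gives at most $p^{3d+3s-R}$ dependent triples, which is exactly the paper's count.
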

  \begin{proof}
    If the vectors~\eqref{eq:hkl1} are linearly dependent, then there are
    $\lambda_1,\eta_1,\xi_1,\dots,\lambda_s,\eta_s,\xi_s\in\mathbf{F}_p$
    not all zero for which
    \begin{equation*}
      \lambda_1M_1^Th+\eta_1M_1^Tk+\xi_1M_1^Tl+\dots+\lambda_sM_s^Th+\eta_sM_s^Tk+\xi_sM_s^Tl=0.
    \end{equation*}
    Without loss of generality, we may as well assume that some
    $\lambda_i$ is nonzero, so that for any fixed
    $k,l\in\mathbf{F}_p^d$ there are at most $p^{d-R}$ possible
    choices of $h\in\mathbf{F}_p^d$ for which the above equality
    holds. Thus, there are at most $p^{3s+2d+d-R}=p^{3d+3s-R}$ possible choices of
    triples $(h,k,l)$ for which the vectors~\eqref{eq:hkl1} are not linearly
    independent.
  \end{proof}

  We will also require various facts about high rank quadratic level
  sets, almost all of whose proofs are standard applications of the
  Gauss sum bounds in Lemma~\ref{lem:GaussSum}.
  
  \begin{lemma}\label{lem:varietyfacts}
    Let $p\geq 3$, $R\in\mathbf{N}$, and
    $Q_1,\dots,Q_s:\mathbf{F}_p^d\to\mathbf{F}_p$ be quadratic
    functions satisfying the rank assumption~\eqref{eq:rank}. Set $\mathcal{Q}:=(Q_1,\dots,Q_s)$, $Z:=\mathcal{Q}^{-1}(0)$, and, for all $h,k\in\mathbf{F}_p^d$,
    \begin{equation*}
      W_h:=\{x\in\mathbf{F}_p^d: \partial_hQ_i(x)=0\text{ for all }i=1,\dots,s\},  
    \end{equation*}
    $W_h^0:=W_h-W_h$, $Z_h:=Z\cap (Z-h)$,
    $W_{h,k}:=W_h\cap W_{h+k}\cap (W_k-h)$,
    $W_{h,k}^0:=W_h^0\cap W_{k}^0$, and $Z_{h,k}:= Z_h\cap Z_{h+k}\cap (Z_k-h)$. Denote the set of
    $h\in\mathbf{F}_p^d$ for which $\codim W_h^0=s$ by $X$ and the set
    of pairs $(h,k)\in\mathbf{F}_p^d\times\mathbf{F}_p^d$ for which
    $\codim W_{h,k}^0=2s$ by $Y$. Then,
    \begin{enumerate}
    \item $Z$ is $p^{-R/4}$-uniform and has density
      $p^{-s}+\theta p^{-R/2}$ for some $\theta\in[-1,1]$,
    \item for all $h\in X$, $Z_h$ is $p^{s/2-R/4}$-uniform relative to
      $W_h$ and the density of $Z_h$ in $W_h$ equals
      $p^{-s}+\theta(h)p^{s-R/2}$ for some $\theta(h)\in[-1,1]$,
    \item for all $h\in \mathbf{F}_p^d$ and $x\in Z_h$, the expected
      proportion of $k\in\mathbf{F}_p^d$ for which $x\in Z_{h,k}$ is at least
      $p^{-s}-p^{-R/2}$, for all $k\in\mathbf{F}_p^d$ and
      $x\in Z_{k}-h$, the expected proportion of $h\in\mathbf{F}_p^d$
      for which $x\in Z_{h,k}$ is at least $p^{-s}-p^{-R/2}$, and
      for all $l\in\mathbf{F}_p^d$ and $x\in Z_l$, the expected
      proportion of $(h,k)\in(\mathbf{F}_p^d)^2$ with $h+k=l$ for which
      $x\in Z_{h,k}$ is at least $p^{-s}-p^{-R/2}$,
    \item for all $(h,k)\in Y$, $Z_{h,k}$ is $p^{s-R/4}$-uniform
      relative to $W_{h,k}$ and the density of $Z_{h,k}$ in $W_{h,k}$
      equals $p^{-s}+\theta'(h,k)p^{2s-R/2}$ for some
      $\theta'(h,k)\in[-1,1]$,
    \item for all $(h,k)\in Y$, the intersection of $Z_{h,k}$ with any
      codimension $1$ affine subspace of $W_{h,k}$ has size at least $p^{d-3s-1}-2p^{d+2s+1-R/2}$, and
    \item we have $|\mathbf{F}_p^d\setminus X|\leq p^{d-(R-s)}$ and $|\mathbf{F}_p^d\times\mathbf{F}_p^d\setminus Y|\leq p^{2d-(R-2s)}$.
    \end{enumerate}
  \end{lemma}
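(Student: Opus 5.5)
All six items will follow from orthogonality of characters combined with the Gauss sum bounds of Lemma~\ref{lem:GaussSum}. Throughout, write $Q_i(x)=x^TM_ix+z_i\cdot x+c_i$. The rank hypothesis~\eqref{eq:rank} says that every nontrivial combination $\sum_i\lambda_iM_i$ has rank at least $R$.

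\textbf{Items (6) and (1).} For (6), $h\notin X$ means $\sum_i\lambda_iM_ih=0$ for some nonzero $\lambda$. For each such $\lambda$ the set of these $h$ has size at most $p^{d-R}$, so summing over the $p^s$ choices of $\lambda$ gives $|\mathbf{F}_p^d\setminus X|\le p^{d-(R-s)}$. The pairs $(h,k)\notin Y$ are counted in the same way: there is a nonzero $(\lambda,\mu)$ with $\sum_i\lambda_iM_ih+\mu_iM_ik=0$, and fixing whichever of $h,k$ carries a nonzero coefficient gives at most $p^{2d-R}$ pairs per choice, hence the bound $p^{2d-(R-2s)}$.

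For (1), the density statement is Lemma~\ref{lem:atomsize} with $V=\mathbf{F}_p^d$. For uniformity, expand
\[
\widehat{1_Z}(r)=\mathbf{E}_\xi\mathbf{E}_x e_p\Big(\sum_i\xi_iQ_i(x)-r\cdot x\Big).
\]
The $\xi=0$ term contributes only at $r=0$, where it equals $p^{-s}$. Every other term has size at most $p^{-R/2}$. Hence
\[
\|1_Z-\alpha\|_{U^2}^4=\sum_{r\ne0}|\widehat{1_Z}(r)|^4\le \max_{r\ne0}|\widehat{1_Z}(r)|^2\cdot\|1_Z\|_2^2\le p^{-R},
\]
which gives $p^{-R/4}$-uniformity.

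\textbf{Items (2) and (4).} Here the observation is that $Z_h=Z\cap(W_h\cap\mathcal{Q}^{-1}(0))$. Indeed, for $x\in Z$ we have $x+h\in Z$ if and only if $\partial_hQ_i(x)=0$ for all $i$, because $Q_i(x+h)-Q_i(x)$ equals $-\partial_hQ_i(x)$. The set $W_h$ is an affine subspace defined by the $s$ linear conditions $2h^TM_ix+(\text{const})=0$, and for $h\in X$ these conditions are independent. So $Z_h$ is a quadratic level set of $\mathcal{Q}|_{W_h}$.

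The restricted forms still have high rank: restricting a form of rank $\ge R$ to a subspace of codimension $s$ leaves rank $\ge R-2s$. The Gauss sum argument of (1), run inside $W_h$, then gives a density error $p^{-(R-2s)/2}=p^{s-R/2}$ and uniformity $p^{s/2-R/4}$.

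Item (4) works the same way. For $x\in Z$, the conditions $x\in Z_h$, $x\in Z_{h+k}$ and $x+h\in Z_k$ (that is, $x\in Z_k-h$) are each equivalent to $x$ lying in the corresponding affine subspace among $W_h$, $W_{h+k}$, $W_k-h$. Their intersection $W_{h,k}$ is cut out by the conditions defining $W_h$ together with those defining $W_k-h$. The condition for $W_{h+k}$ is then automatic, since $\partial_{h+k}Q_i(x)=\partial_hQ_i(x)+\partial_kQ_i(x+h)$ by the cocycle identity. So for $(h,k)\in Y$ the subspace $W_{h,k}$ has codimension exactly $2s$, with direction space $W^0_{h,k}$. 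Restricted ranks drop by at most $4s$, which yields the errors $p^{2s-R/2}$ and $p^{s-R/4}$.

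\textbf{Items (3) and (5).} For the first part of (3), fix $h$ and $x\in Z_h$. Then $x\in Z_{h,k}$ requires $x+h+k\in Z$; the remaining conditions follow from $x\in Z_h$ and the cocycle identity. Counting $k$ with $Q_i(x+h+k)=0$ for all $i$ is exactly the density statement (1), evaluated at a translate, so the proportion is at least $p^{-s}-p^{-R/2}$. The other two parts of (3) reduce the same way, with the free point running over a translate of $Z$ in each case.

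For (5), count the points of $Z_{h,k}$ lying on a hyperplane $H$ of $W_{h,k}$ by orthogonality of characters on $H$. The main term is $p^{-s}|H|=p^{d-3s-1}$. Each nonprincipal character term is at most $|H|p^{-(R-4s-2)/2}$, and there are $p^s$ of them; after crude bounding this is at most $2p^{d+2s+1-R/2}$.

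The main obstacle will be the rank bookkeeping after restriction. Restricting to a codimension-$c$ subspace lowers the rank of a form by at most $2c$, so the exponent constants have to be tracked carefully, and I expect they may need generous rounding. The other point to verify is the cocycle reduction $W_{h,k}=W_h\cap(W_k-h)$, which in particular shows that $W_{h,k}$ is nonempty.
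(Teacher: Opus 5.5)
Your proposal is correct. For items (1), (3) and (6) it is essentially the paper's argument. Items (1) and (6) are identical. In (3), your observation that once $x\in Z_h$ the only remaining condition is $x+h+k\in Z$ (so the proportion is just the density of $Z$) is a cleaner way of packaging the paper's Gauss sum over $k$, which counts exactly those $k$.

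The genuine difference is in (2), (4) and (5).

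\emph{The paper's route.} The paper never restricts the forms. It writes $\mathbf{E}_{w\in W_h}1_Z(w)e_p(-\xi\cdot w)$ as $p^{s}$ times an average over all $w\in\mathbf{F}_p^d$, with extra phases $\eta_i(2h^TM_iw+\cdots)$ enforcing $w\in W_h$. In (4) it adds further $\zeta_i$-phases for $W_{h+k}$. Lemma~\ref{lem:GaussSum} then applies with the full rank $R$, and the loss appears only as the normalizing factor $p^{s}$ (resp.\ $p^{2s}$). The hypothesis $h\in X$ (resp.\ $(h,k)\in Y$) is what makes the $\lambda=0$, $\xi=0$ terms with $\eta\neq 0$ vanish. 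Part (5) is then deduced by Fourier-expanding the hyperplane indicator inside $W_{h,k}$ and reusing the bounds from (4).

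\emph{Your route.} You parametrize the affine subspace and use that restricting a symmetric form to a subspace of codimension $c$ lowers its rank by at most $2c$. You then rerun (1) verbatim in the smaller space. The bookkeeping matches exactly: a rank loss of $2s$ costs $p^{s}$ in the Gauss sum, the same as the paper's normalization, so no extra rounding is needed. In (5), your direct count on the hyperplane, keeping the $p^{-s}$ weight of the $\lambda$-average, gives $p^{d-3s-1}-p^{d-R/2}$, which is better than the stated bound.

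\emph{Comparison.} Your route is more modular, but it rests on the rank-drop fact, which you should justify. For instance, by Sylvester's inequality $\rank(P^TMP)\geq\rank(MP)-c\geq\rank(M)-2c$, where $P$ is the $d\times(d-c)$ matrix of a basis of the subspace. The paper's route avoids this fact at the cost of tracking the auxiliary character variables.

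One small imprecision: in (4), for $x\in Z$ the condition $x\in Z_k-h$ is not by itself equivalent to $x\in W_k-h$, since you also need $x+h\in Z$. Together with $x\in Z_h$, however, your conclusion $Z_{h,k}=Z\cap W_{h,k}$ is correct.
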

  \begin{proof}
    For each $i=1,\dots,s$, write $Q_i(x)=x^TM_ix+z_i\cdot x+c_i$ for
    $M_i\in\M_d(\mathbf{F}_p)$ symmetric, $z_i\in\mathbf{F}_p^d$, and
    $c_i\in\mathbf{F}_p$, so that
    $W_h^0=\spn(M_1h,\dots,M_sh)^{\perp}$. By orthogonality of
    characters,
    \begin{equation*}
      \mathbf{E}_{x\in\mathbf{F}_p^d}1_Z(x)e_p(-\xi\cdot x)=\mathbf{E}_{\lambda_1,\dots,\lambda_s\in\mathbf{F}_p}\mathbf{E}_{x\in\mathbf{F}_p^d}e_p\Big(x^T\Big[\sum_{i=1}^s\lambda_iM_i\Big]x+\Big[\sum_{i=1}^s\lambda_iz_i\Big]\cdot x+\sum_{i=1}^s\lambda_ic_i-\xi\cdot x\Big).
    \end{equation*}
    The inner average has magnitude at most $p^{-R/2}$ whenever
    $(\lambda_1,\dots,\lambda_s)\neq (0,\dots,0)$ by
    Lemma~\ref{lem:GaussSum}. If $\lambda_1=\dots=\lambda_s=0$, then
    the inner average is zero unless $\xi=0$, in which case it
    equals $1$. Part (1) of the lemma
    now immediately follows from the fact that
    $\|f\|_{U^2(\mathbf{F}_p^d)}^4=\|\widehat{f}\|_{\ell^4}^4\leq\|\widehat{f}\|_{L^\infty}^2\|f\|_{L^2}^2\leq
    \|\widehat{f}\|_{L^\infty}^2$ for any $1$-bounded
    $f:\mathbf{F}_p^d\to\mathbf{C}$.
    
    The proofs of parts (2)--(5) of the lemma are similar. Observe
    that $Z_h=Z\cap W_h$. To show part (2) of the lemma, then, we want
    to estimate $\mathbf{E}_{w\in W_h}1_Z(w)e_p(-\xi\cdot w)$ for
    $\xi=0$ and $\xi\notin (W_h^0)^{\perp}$. By orthogonality of
    characters, this average equals $p^s$ times
    \begin{equation*}
      \mathbf{E}_{\substack{\lambda_i,\eta_i\in\mathbf{F}_p \\ i\in[s]}}\mathbf{E}_{w\in \mathbf{F}_p^s}e_p\Big(\sum_{i=1}^s\lambda_iQ_i(w)+\sum_{i=1}^s\eta_i(2h^TM_iw+h^TM_ih+z_i^Th)-\xi\cdot w\Big).
    \end{equation*}
    The inner average has magnitude at most $p^{-R/2}$ whenever
    $\lambda_1,\dots,\lambda_s$ are not all zero by
    Lemma~\ref{lem:GaussSum}. If $\lambda_1=\dots=\lambda_s=0$, then
    the inner average is zero when $\xi\notin(W_h^0)^\perp$ or
    $\xi=0$ and $\eta_1,\dots,\eta_s$ are not all zero, and equals $1$
    when $\xi=0$ and $\eta_1=\dots=\eta_s=0$. Part (2) now follows
    analogously to part (1).

    For parts (3)--(6), observe that
    \begin{equation*}
      2k^TM_i(x+h)+k^TM_ik=2(h+k)^TM_ix+(h+k)^TM_i(h+k)-(2h^TM_ix+h^TM_ih)
    \end{equation*}
    for all $x,h,k\in\mathbf{F}_p^n$ and $i=1,\dots,s$. Thus,
    $x\in W_{h}\cap W_{h+k}$ if and only if $x\in W_h\cap (W_k-h)$,
    and hence $W_{h,k}=W_h\cap (W_{k}-h)=W_h\cap W_{h+k}$. It follows
    that $W_{h,k}$ is a coset of the subspace
    $W_{h,k}^0$. Using that $Z_l= Z\cap W_l$ for all $l\in H$, it also
    follows that $Z_{h,k}=Z\cap W_{h,k}$.

    Thus, for any fixed $x\in Z_h$, the proportion of $k\in\mathbf{F}_p^d$ for which $x\in Z_{h,k}$ equals
    \begin{equation*}
      \mathbf{E}_{\lambda_1,\dots,\lambda_s\in\mathbf{F}_p}\mathbf{E}_{k\in\mathbf{F}_p^d}e_p\Big(\sum_{i=1}^s\lambda_i(2(h+k)^TM_ix+(h+k)^TM_i(h+k))\Big).
    \end{equation*}
    The inner average has magnitude at most $p^{-R/2}$ whenever
    $\lambda_1,\dots,\lambda_s$ are not all zero, and equals $1$ when
    $\lambda_1=\dots=\lambda_s=0$, yielding the first claim of part
    (3). The other two claims follow analogously.
    
    For parts (4) and (5) we want to estimate
    $\mathbf{E}_{w\in W_{h}\cap W_{h+k}}1_{Z}(w)e_p(-\xi\cdot w)$ for
    $\xi=0$ and $\xi\notin(W_{h,k}^0)^{\perp}$. By orthogonality of
    characters, the average equals $p^{2s}$ times
    \begin{align*}
      \mathbf{E}_{\substack{\lambda_i,\eta_i,\zeta_i\in\mathbf{F}_p \\ i\in[s]}}\mathbf{E}_{w\in \mathbf{F}_p^s}e_p\Big(&\sum_{i=1}^s\lambda_iQ_i(w)+\sum_{i=1}^s\eta_i(2h^TM_iw+h^TM_ih+z_i^Th)\\
      &+\sum_{i=1}^s\zeta_i(2(h+k)^TM_iw+(h+k)^TM_i(h+k)+z_i^T(h+k))-\xi\cdot w\Big).
    \end{align*}
    By Lemma~\ref{lem:GaussSum}, the inner average has magnitude
    at most $p^{-R/2}$ whenever $\lambda_1,\dots,\lambda_s$ are not
    all zero. If $\lambda_1=\dots=\lambda_s=0$, then the inner
    average is zero when $\xi\notin(W_{h,k}^0)^\perp$ or $\xi=0$ and
    $\eta_1,\zeta_1,\dots,\eta_s,\zeta_s$ are not all zero and equals
    $1$ when $\xi=0$ and $\eta_1=\zeta_1=\dots=\eta_s=\zeta_s$. Part
    (4) now follows in the same manner as parts (1) and (2).

    For part (5), let $(h,k)\in Y$ and
    $W'=W_{h,k}\cap\{x\in\mathbf{F}_p^d:\xi\cdot x=c\}$ for some
    vector $\xi\notin (W_{h,k}^0)^{\perp}$ and
    $c\in\mathbf{F}_p$. Then, $|Z_{h,k}\cap W'|$ equals $p^{d-2s}$
    times
    \begin{equation*}
      \mathbf{E}_{w\in W_{h,k}}1_{Z}(w)1_{W'}(w)=\frac{|Z_{h,k}|}{p^{d-2s-1}}+\frac{1}{p}\sum_{0\neq\eta\in\mathbf{F}_p}\widehat{1_Z}(\eta\xi)e_p(-\eta c)\geq p^{-s-1}-p^{4s+1-R/2}-p^{2s-R/2}
    \end{equation*}
    using the bounds obtained in the proof of part (4), and so the desired
    conclusion follows.

    Since $\codim{W_h^0}=s$ and $\codim{W_{h,k}^0}=2s$, respectively,
    exactly when $M_1^Th,\dots,M_s^Th$ and
    $M_1^Th,\dots,M_s^Th,M_1^Tk,\dots,M_s^Tk$ are linearly
    independent, part (6) of the lemma follows from the same argument
    used to prove Lemma~\ref{lem:hkl}.
  \end{proof}

  Finally, we will need the 99\% inverse theorem for what are
  essentially $1$-cocycles with coefficients in the space of functions
  $\mathbf{F}_p^d\to\mathbf{F}_p^{n\times s}$.
  \begin{lemma}\label{lem:cocycin}
    Let $0<\varepsilon\leq 1$, and suppose that
    $f:\mathbf{F}_p^d\times\mathbf{F}_p^d\to \mathbf{F}_p^{n\times s}$ satisfies
    \begin{equation*}
      f(x,y+z)=f(x,y)+f(x+y,z)
    \end{equation*}
    for at least a $(1-\varepsilon)$-proportion of
    $(x,y,z)\in\mathbf{F}_p^d\times\mathbf{F}_p^d\times\mathbf{F}_p^d$. Then,
    there exists $F:\mathbf{F}_p^d\to\mathbf{F}^{n\times s}_p$ such
    that $f(x,y)=F(x+y)-F(x)$ for at least a
    $(1-\varepsilon)$-proportion of
    $(x,y)\in\mathbf{F}_p^d\times\mathbf{F}_p^d$.
  \end{lemma}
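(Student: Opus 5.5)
The plan is to fix the first variable and define $F$ by pigeonholing over it. For each $x\in\mathbf{F}_p^d$, let $\varepsilon_x$ be the proportion of pairs $(y,z)$ for which the identity $f(x,y+z)=f(x,y)+f(x+y,z)$ fails. Then $\mathbf{E}_x\varepsilon_x\leq\varepsilon$, so there is some $x_0$ with $\varepsilon_{x_0}\leq\varepsilon$. The natural candidate is
\begin{equation*}
  F(w):=f(x_0,w-x_0),
\end{equation*}
which would satisfy $F(x+y)-F(x)=f(x_0,x+y-x_0)-f(x_0,x-x_0)$.

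To test this, apply the hypothesis at the point $(x_0,y',z)$ with $y'=x-x_0$ and $z=y$. This gives
\begin{equation*}
  f(x_0,x-x_0+y)=f(x_0,x-x_0)+f(x,y)
\end{equation*}
whenever the identity holds at $(x_0,x-x_0,y)$. That is exactly $f(x,y)=F(x+y)-F(x)$.

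The map $(x,y)\mapsto(x-x_0,y)$ is a bijection of $\mathbf{F}_p^d\times\mathbf{F}_p^d$. Hence the proportion of pairs $(x,y)$ at which this fails equals the proportion of $(y',z)$ at which the identity fails with first coordinate $x_0$. That proportion is $\varepsilon_{x_0}\leq\varepsilon$, which gives the claimed $(1-\varepsilon)$-proportion.

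I see no real obstacle here. The only point requiring care is that the averaging over the first coordinate loses nothing, since $\min_x\varepsilon_x\leq\mathbf{E}_x\varepsilon_x$. The lemma is therefore essentially immediate, which is the standard fact that a 99\% $1$-cocycle for the translation action is a coboundary, with the coboundary obtained by ``evaluating at a base point''.
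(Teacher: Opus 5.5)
Your proposal is correct and is essentially the paper's proof: pigeonhole a base point $x_0$ in the first coordinate at which the cocycle identity holds for at least a $(1-\varepsilon)$-proportion of pairs, set $F(w):=f(x_0,w-x_0)$, and read off $f(x,y)=F(x+y)-F(x)$ via the bijective change of variables $(x,y)\mapsto(x-x_0,y)$. The only cosmetic difference is that the paper performs the change of variables $y\mapsto y-x$ before pigeonholing rather than after.
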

  \begin{proof}
    Making the change of variables $y\mapsto y-x$, we get that
    \begin{equation}\label{eq:cov}
      f(y,z)=f(x,y+z-x)-f(x,y-x)
    \end{equation}
    for at least a $(1-\varepsilon)$-proportion of triples
    $(x,y,z)\in\mathbf{F}_p^d\times\mathbf{F}_p^d\times\mathbf{F}_p^d$. So,
    by the pigeonhole principle, there exists $x\in\mathbf{F}_p^d$ for
    which~\eqref{eq:cov} holds for at least a
    $(1-\varepsilon)$-proportion of pairs
    $(y,z)\in\mathbf{F}_p^d\times\mathbf{F}_p^d$. The conclusion of
    the lemma now follows by setting $F(w):=f(x,w-x)$.
  \end{proof}

  Now, we can prove Lemma~\ref{lem:quadext}.

  \begin{proof}[Proof of Lemma~\ref{lem:quadext}]
    We begin by applying Lemma~\ref{lem:inv} to $\partial_h\psi$. Let
    $h\in X$. By part~(2) of Lemma~\ref{lem:varietyfacts}, $Z_h$ has
    density $\alpha(h)=p^{-s}+\theta(h)p^{s-R/2}$ with
    $|\theta(h)|\leq 1$ and is $p^{s/2-R/4}$-uniform relative to
    $W_h$. By the lower bound assumption on $R$, we certainly have
    that $Z_h$ is $\delta(h)$-uniform relative to $W_h$ for
    $\delta(h)<(\alpha(h)\varepsilon)^{32}$, and by hypothesis, we have
    that $\partial_h\psi$ is $(1-\sqrt{\varepsilon})$-approximately
    linear on $Z_h$ for at least a $(1-\sqrt{\varepsilon})$-proportion
    of $h\in \mathbf{F}_p^d$. Thus, by part (5) of
    Lemma~\ref{lem:varietyfacts}, there exists $X'\subset X$ of
    density at least $(1-\sqrt{\varepsilon}-p^{d-R})$ in
    $\mathbf{F}_p^d$ such that $\partial_h\psi$ is
    $(1-\sqrt{\varepsilon})$-approximately linear on $Z_h$ and such
    that $Z_h$ is $\delta(h)$-uniform relative to $W_h$ for
    $\delta(h)<(\alpha(h)\varepsilon)^{32}$. It follows from
    Lemma~\ref{lem:inv2} that, for all $h\in X'$, there exists an
    $n\times d$ matrix $L[h]$ with coefficients in $\mathbf{F}_p$ and
    $f(h)\in G$ such that
    \begin{equation*}
      \partial_h\psi(x)=L[h]x+f(h)
    \end{equation*}
    for at least a $(1-16\sqrt[8]{\varepsilon})$-proportion of
    $x\in Z_h$. Denote the set of such $x$ by $Z'_h$, and extend $L$
    and $f$ to functions on all of $\mathbf{F}_p^d$ by picking their
    values on $\mathbf{F}_p^d\setminus X'$ arbitrarily from
    $\mathbf{F}_p^{n\times d}$ and $G$, respectively.

    Now, for any fixed $h\in X'$, we have by part (3) of
    Lemma~\ref{lem:varietyfacts} and linearity of expectation that the
    expected size of the intersection of $Z_h'$ with $Z_{h,k}$ is at
    least $(1-16\sqrt[8]{\varepsilon})|Z_h|(p^{-s}-p^{-R/2})$, which,
    by parts (2), (4), and (6) of Lemma~\ref{lem:varietyfacts} and the
    lower bound on $R$, is at least
    $(1-32\sqrt[8]{\varepsilon})|Z_{h,k}|$. Thus, by Markov's
    inequality,
    $|Z'_h\cap Z_{h,k}|\geq (1-32\sqrt[16]{\varepsilon})|Z_{h,k}|$ for
    at least a $(1-\sqrt[16]{\varepsilon})$-proportion of
    $k\in\mathbf{F}_p^d$. Similarly, for any fixed $k\in X'$,
    $|(Z'_k-h)\cap Z_{h,k}|\geq (1-32\sqrt[16]{\varepsilon})|Z_{h,k}|$
    for at least a $(1-\sqrt[16]{\varepsilon})$-proportion of
    $h\in\mathbf{F}_p^d$ and, for any fixed $l\in X'$,
    $|Z_{l}\cap Z_{h,k}|\geq(1-32\sqrt[16]{\varepsilon})|Z_{h,k}|$ for
    at least a $(1-\sqrt[16]{\varepsilon})$-proportion of
    $(h,k)\in\mathbf{F}_p^d\times\mathbf{F}_p^d$ with $h+k=l$. It
    follows that, for at least
    $(1-32\sqrt[16]{\varepsilon})$-proportion of pairs
    $(h,k)\in\mathbf{F}_p^d\times\mathbf{F}_p^d$, we have
    $h,k,h+k\in X'$ and that $\partial_h\psi(x)=L[h]x+f(h)$,
    $\partial_k\psi(x+h)=L[k](x+h)+f(k)$, and
    $\partial_{h+k}\psi(x)=L[h+k]x+f(h+k)$ for at least a
    $(1-128\sqrt[16]{\varepsilon})$-proportion of $x\in
    Z_{h,k}$. Denote the set of such pairs $(h,k)$ by $X'$.
    
    Thus, by the cocycle equation
    $\partial_{h+k}\psi(x)=\partial_h\psi(x)+\partial_k\psi(x+h)$, we
    have that
    \begin{equation}\label{eq:Leq}
      (L[h+k]-L[h]-L[k])x=f(h)+f(k)-f(h+k)+L(k)h
    \end{equation}
    for at least a $(1-128\sqrt[16]{\varepsilon})$-proportion of
    $x\in Z_{h,k}$ whenever $(h,k)\in X''$. Note that the set of
    $x\in W_{h,k}$ for which~\eqref{eq:Leq} holds is an affine
    subspace of $W_{h,k}$. If this affine subspace were a proper
    subset of $W_{h,k}$, then its complement in $W_{h,k}$ must contain
    an affine subspace of codimension $1$ in $W_{h,k}$, which is
    impossible when $(h,k)\in Y$ by part (4) of
    Lemma~\ref{lem:varietyfacts}, the upper bound assumption on
    $\varepsilon$, and the lower bound assumption on
    $R$. Thus,~\eqref{eq:Leq} must hold for all $x\in W_{h,k}$. Since
    $W_{h,k}$ is a coset of $W_{h,k}^0$, for all
    $(h,k)\in Y':=X''\cap Y$ we have
    \begin{equation}
      \label{eq:Leq2}
      (L[h+k]-L[h]-L[k])x=0
    \end{equation}
    for every $x\in W_{h,k}^0$.

    Whenever $(h,k)\in Y'$, Lemma~\ref{lem:span} says that the rows of
    the matrix
    \[
      d^1L[h,k]:=L[h+k]-L[h]-L[k]
    \]
    must lie in $\spn(h^TM_1,\dots,h^TM_s,k^TM_1,\dots,k^TM_s)$. Thus,
    denoting the matrix with rows $h^TM_1,\dots,h^TM_s$ by $M(h)$, we can
    write
    \begin{equation*}
    d^1L[h,k]=
    D(h,k)M(h)+E(h,k)M(k)
  \end{equation*}
  for some $n\times s$ matrices
  $D(h,k),E(h,k)\in\mathbf{F}_p^{n\times s}$ for all $(h,k)\in Y'$
  (when $s=0$, this simply says that $d^1L[h,k]=0$ for all
  $(h,k)\in Y'$, and the rest of this paragraph and the next can
  be skipped). Since $d^1L[h,k]$ is a $2$-coboundary, it is a
  $2$-cocycle, meaning that it satisfies~\eqref{eq:2cocycleeqn} for
  all $h,k,l\in\mathbf{F}_p^d$. It follows that, whenever
  $(k,l),(h+k,l),(h,k+l),(h,k)\in Y'$,
  \begin{align*}
    &D(k,l)M(k)+E(k,l)M(l)-D(h+k,l)M(h+k)-E(h+k,l)M(l)\\
    &+D(h,k+l)M(h)+E(h,k+l)M(k+l)-D(h,k)M(h)-E(h,k)M(k)=0.
  \end{align*}
  So, if, in addition,
  $M_1^Th,M_1^Tk,M_1^Tl,\dots,M_d^Th,M_d^Tk,M_d^Tl$ are linearly
  independent, then, by looking at each row of the left-hand side above, we see that
    \begin{equation}\label{eq:A}
      -D(h+k,l)+D(h,k+l)-D(h,k)=0,
    \end{equation}
    \begin{equation}
      \label{eq:AB}
      D(k,l)-D(h+k,l)+E(h,k+l)-E(h,k)=0,
    \end{equation}
    and
    \begin{equation}
      \label{eq:B}
      E(k,l)-E(h+k,l)+E(h,k+l)=0
    \end{equation}
    must all hold. Thus, by the lower bound on $R$, part (5) of
    Lemma~\ref{lem:varietyfacts}, and Lemma~\ref{lem:hkl}, the
    equations \eqref{eq:A}, \eqref{eq:AB}, and \eqref{eq:B} must
    simultaneously hold for at least a
    $(1-16\sqrt{\varepsilon})$-proportion of triples
    $(h,k,l)\in\mathbf{F}_p^d\times\mathbf{F}_p^d\times\mathbf{F}_p^d$.

    Applying Lemma~\ref{lem:cocycin} (for~\eqref{eq:A} and~\eqref{eq:B}) now produces
    $D',E':\mathbf{F}_p^d\to\mathbf{F}_p^{n\times s}$ such that
    \begin{equation*}
      D(x,y)=D'(x+y)-D'(x)\qquad\text{and}\qquad E(x,y)=E'(x+y)-E'(y)
    \end{equation*}
    for at least a $(1-32\sqrt{\varepsilon})$-proportion of pairs
    $(x,y)\in\mathbf{F}_p^d\times\mathbf{F}_p^d$. Plugging this
    into~\eqref{eq:AB} then implies that $(D'-E')$ is a
    $(1-128\sqrt{\varepsilon})$-approximately linear function
    $\mathbf{F}_p^d\to \mathbf{F}_p^{n\times s}$. Thus, by
    Corollary~\ref{cor:99afflin}, there exists a homomorphism
    $\Phi:\mathbf{F}_p^d\to\mathbf{F}_p^{n\times s}$ and an
    $n\times s$ matrix $N$ such that $E'(x)=D'(x)+\Phi(x)+N$ for at
    least a $(1-2^{10}\sqrt[4]{\varepsilon})$-proportion of
    $x\in\mathbf{F}_p^d$.

    The upshot is that, for at least a
    $(1-2^{11}\sqrt[4]{\varepsilon})$-proportion of pairs
    $(h,k)\in\mathbf{F}_p^d\times\mathbf{F}_p^d$,
    \begin{equation*}
      d^1L[h,k]=(D'(h+k)-D'(h))M(h)+(D'(h+k)-D'(k)+\Phi(h))M(k),
    \end{equation*}
    i.e., that
    \begin{equation*}
      d^1L'[h,k]=\Phi(h)M(k),
    \end{equation*}
    where $L'[h]:=L[h]-D'(h)M(h)$. By the symmetry of $d^1L'[h,k]$ in
    $h$ and $k$, there exists $Y''\subset Y'$ of size at
    least $(1-2^{13}\sqrt[4]{\varepsilon})p^{2d}$ such that
    $\Phi(h)M(k)=\Phi(k)M(h)$ for all $(h,k)\in Y''$. However, the
    rows of $\Phi(h)M(k)$ are linear combinations of the rows of
    $M(k)$ and the rows of $\Phi(k)M(h)$ are linear combinations of
    the rows of $M(h)$, while $h^TM_1,k^TM_1,\dots,h^TM_s,k^TM_s$ are
    linearly independent whenever $Y''\subset Y$. Hence,
    $d^1L'(h,k)=0$ for all $(h,k)\in Y''$, and thus for at least a
    $(1-2^{13}\sqrt[4]{\varepsilon})$-proportion of
    $(h,k)\in\mathbf{F}_p^d\times\mathbf{F}_p^d$.

    By Theorem~\ref{thm:BLR}, it follows that there exists a linear
    map $\Psi:\mathbf{F}_p^d\to\mathbf{F}_p^{n\times d}$ such that
    $L'[h]=\Psi[h]$ for at least a
    $(1-2^{14}\sqrt{\varepsilon})$-proportion of
    $h\in\mathbf{F}_p^d$. Recalling our definition of $L'$, this means
    that, for at least a $(1-2^{15}\sqrt{\varepsilon})$-proportion of
    $h\in\mathbf{F}_p^d$, we have
    $\partial_h\psi(x)=[\Psi[h]+D'(h)M(h)]x+f(h)$ for at least a
    $(1-16\sqrt[8]{\varepsilon})$-proportion of $x\in Z_h$. Since
    $M(h)x=0$ for all $x\in W_h^0$, we have that $D'(h)M(h)x$ is
    constant on $Z_h$. Thus, there exists $f':\mathbf{F}_p^d\to G$
    such that, for at least a
    $(1-2^{15}\sqrt{\varepsilon})$-proportion of $h\in\mathbf{F}_p^d$,
    \begin{equation*}
      \partial_h\psi(x)=\Psi[h]x+f'(h)
    \end{equation*}
    for at least a $(1-16\sqrt[8]{\varepsilon})$-proportion of
    $x\in Z_h$.

    We would now like to absorb the bilinear term
    $B(h,x):=\Psi[h]x$ into $\partial_h\psi$, but we can only do
    this if $B$ is symmetric. By applying the cocycle equation for
    $\psi$ again, we get that
    \begin{equation*}
      B(h+k,x)+f'(h+k)=B(h,x)+f'(h)+B(k,x+h)+f'(k),
    \end{equation*}
    i.e., using the bilinearity of $B$,
    \begin{equation*}
      f'(h+k)-f'(h)-f'(k)=B(h,k)
    \end{equation*}
    for at least a $(1-2^{17}\sqrt[8]{\varepsilon})$-proportion of
    $h,k\in\mathbf{F}_p^d$. By the symmetry of the left-hand side of
    this equation in $h$ and $k$, it follows that $B(h,k)=B(k,h)$ for
    at least a $(1-2^{18}\sqrt[8]{\varepsilon})$-proportion of pairs
    $(h,k)\in\mathbf{F}_p^d\times\mathbf{F}_p^d$. Lemma~\ref{lem:sym}
    then says that $B$ must be symmetric.

    Define $Q:\mathbf{F}_p^d\to G$ by $Q(x):=\frac{B(x,x)}{2}$, and
    set $\psi'(x):=\psi(x)+Q(x)$ and
    $f''(h):=f'(h)+\frac{B(h,h)}{2}$. Then, for at least a
    $(1-2^{15}\sqrt{\varepsilon})$-proportion of $h\in\mathbf{F}_p^d$,
    we have $\partial_h\psi'(x)=f''(h)$ for at least a
    $(1-16\sqrt[8]{\varepsilon})$-proportion of $x\in Z_h$. Doubling
    the $x$ variable, we get that, for at least a
    $(1-2^{15}\sqrt{\varepsilon})$-proportion of $h\in\mathbf{F}_p^d$,
    $\partial_{h}\psi'(x)=\partial_{h}\psi'(x+k)$ for at least a
    $(1-32\sqrt[8]{\varepsilon})$-proportion of $x,x+k\in Z_h$. This
    means that $\psi'$ is
    $(1-2^{16}\sqrt[8]{\varepsilon})$-approximately linear on
    $Z$. Since $Z$ is $p^{-R/4}$-uniform of density
    $p^{-s}+\theta p^{-R/2}$ in $\mathbf{F}_p^d$ for some
    $|\theta|\leq 1$, it now follows from Lemma~\ref{lem:inv} and the
    lower bound on $R$ that there exists an affine-linear
    $\Phi:\mathbf{F}_p^d\to G$ such that $\psi'(x)=\Phi(x)$ for at
    least a $(1-2^{10}\sqrt[16]{\varepsilon})$-proportion of $x\in
    Z$. Recalling the definition of $\phi$ completes the proof.
  \end{proof}

  \section{Finishing the proof}\label{sec:finish}
  
  Now, we can finish the proof of Theorem~\ref{thm:approxpoly}.
  \begin{proof}[Proof of Theorem~\ref{thm:approxpoly}]
    Let $R\geq C\delta^{-C}$ for $C$ the constant from
    Lemma~\ref{lem:psiapprox}, and assume that
    $n\geq 2^{100}\lceil\log_p(2/\delta^{10000})\rceil$. By
    Lemma~\ref{lem:psiapprox}, there exists a quadratic level set
    $\mathcal{A}:=(a+V)\cap\mathcal{Q}^{-1}(\mathbf{b})$ where
    $V\leq G$ is a subspace of codimension $r\ll\delta^{-O(1)}R$,
    $\mathcal{Q}=(Q_1,\dots,Q_s)$ with $s\ll \delta^{-O(1)}$ where
    $Q_1,\dots,Q_s:G\to\mathbf{F}_p$ are quadratic functions such that
    the rank of $\lambda_1Q_1|_V+\dots+\lambda_s Q_s|_{V}$ on $V$ is
    always at least $R$ when
    $\lambda_1,\dots,\lambda_s\in\mathbf{F}_p$ are not all zero,
    $a\in G$, and $\mathbf{b}\in\mathbf{F}_p^s$, along with a real
    number $\theta\gg(\delta/p)^{O(1)}$ and a map $\psi:G\to G$ that
    is $(1-2^{-40}p^{-20}\delta^{100})$-approximately quadratic on
    $\mathcal{A}$ and such that the number of $3$-dimensional cubes
    not wholly contained in the set~\eqref{eq:Sphi}, which we will
    denote by $S$, respected by $\psi$ is at most $p^{7n/2}$. By
    making a change of variables, the function $\psi':G\to G$ defined
    by $\psi(x-a)$ is a $(1-2^{-40}p^{-20}\delta^{100})$-approximate
    quadratic on the quadratic level set
    $\mathcal{A}':=V\cap\mathcal{Q}'(\mathbf{b})$, where
    $\mathcal{Q}'=(Q_1',\dots,Q_s')$ with $Q_i'(x)=Q_i(x-a)$. By
    fixing a basis and identifying $V$ with $\mathbf{F}_p^{n-r}$, as
    long as $R\geq 2^{40}(s+\log_p(2/\delta^{100}))$, say, we can
    apply Lemma~\ref{lem:quadext} to obtain a quadratic map $Q:V\to G$
    such that $\psi(x)=Q(x)$ for at least a
    $(1-2^{11}\delta^{8})$-proportion of $x\in\mathcal{A}$.

    Extend $Q$ to a quadratic map defined on all of $G$ and fix
    $R:=2^{40}\lceil s+\log_p(2/\delta^{100})\rceil$. We now have that
    $\psi(x)=Q(x)$ for at least $c_\delta p^n$ elements $x\in G$,
    where $c_\delta\gg p^{-1/\delta^{O(1)}}$ by the upper bounds on
    $r$ and $s$ and Lemma~\ref{lem:atomsize}. Arguing as in the proof
    of Lemma~\ref{lem:SfQ}, we will show that, in fact $\psi(x)=Q(x)$
    for at least $\frac{c_\delta}{2}p^n$ elements of $S$. Indeed, if
    $\psi(x)=Q(x)$ for more than $\frac{c_\delta}{2}$ elements of
    $S':= G\setminus S$, then by the Gowers--Cauchy--Schwarz
    inequality in the form of Theorem~\ref{thm:GCSnorm}, $\psi$ must
    respect more than $(\frac{c_\delta}{2})^8p^{4n}$ $3$-dimensional
    cubes not contained in $S'$, which forces
    $(\frac{c_\delta}{2})^8p^{4n}\leq p^{7n/2}$. This yields a
    contradiction as long as $p^n>(\frac{2}{c_\delta})^{16}$.

    Thus, as long as $p^n>(\frac{2}{c_\delta})^{16}$, we have that
    $\psi(x)=Q(x)$ for at least $\frac{c_\delta}{2}|G|$ elements of
    $S$. By the definition of $S$, it follows that
    \begin{equation*}
      \sum_{\mathbf{0}\neq\omega\in\{0,1\}^3}(-1)^{|\omega|}\phi(x+\omega\cdot(h_1,h_2,h_3))=Q(x)
    \end{equation*}
    for $\gg\delta^{O(1)}c_\delta|G|^4$ quadruples
    $(x,h_1,h_2,h_3)\in G^4$. Setting $\phi'(x)=\phi(x)+Q(x)$ and
    using that $Q$ is a quadratic map, we obtain that
    \begin{equation*}
      \sum_{\mathbf{0}\neq\omega\in\{0,1\}^3}(-1)^{|\omega|}\phi'(x+\omega\cdot(h_1,h_2,h_3))=0
    \end{equation*}
    for $\gg \delta^{O(1)}c_\delta|G|^4$ choices of
    $x,h_1,h_2,h_3\in G$. Making the change of variables
    $h_3\mapsto h_3-x$ above and using the Cauchy--Schwarz
    inequality to double the $x$ variable yields
    \begin{equation*}
      \partial_k\phi'(x+h_1)+\partial_k\phi'(x+h_2)-\partial_k\phi'(x+h_1+h_2)=0
    \end{equation*}
    for $\gg\delta^{O(1)}c_\delta^2|G|^4$ choices of
    $x,h_1,h_2,k\in G$. Making the change of variables
    $h_1\mapsto h_1-x$ and applying the Cauchy--Schwarz inequality
    again to double the $x$ variable reveals that $\phi'$ is
    $\Omega(\delta^{O(1)}c_\delta^4)$-approximately linear on $G$. The
    theorem now follows by applying Theorem~\ref{thm:approxlinear}. If
    $p^n\leq(\frac{2}{c_\delta})^{16}$, then the theorem follows
    trivially by taking $Q\equiv \phi(0)$.
  \end{proof}

  \bibliographystyle{plain} \bibliography{bib}

\end{document}